\numberwithin{equation}{section}
\numberwithin{figure}{section}
\theoremstyle{plain}
\newtheorem{thm}{\protect\theoremname}
  \theoremstyle{definition}
  \newtheorem{defn}[thm]{\protect\definitionname}
  \theoremstyle{plain}
  \newtheorem{prop}[thm]{\protect\propositionname}
  \theoremstyle{plain}
  \newtheorem{lem}[thm]{\protect\lemmaname}
  \theoremstyle{plain}
  \newtheorem{cor}[thm]{\protect\corollaryname}
  \theoremstyle{remark}
  \newtheorem{rem}[thm]{\protect\remarkname}
  \theoremstyle{definition}
  \newtheorem{example}[thm]{\protect\examplename}
  \theoremstyle{remark}
  \newtheorem{claim}[thm]{\protect\claimname}
  \theoremstyle{plain}
  \newtheorem{fact}[thm]{\protect\factname}
\numberwithin{thm}{section}
  \providecommand{\claimname}{Claim}
  \providecommand{\corollaryname}{Corollary}
  \providecommand{\definitionname}{Definition}
  \providecommand{\examplename}{Example}
  \providecommand{\factname}{Fact}
  \providecommand{\lemmaname}{Lemma}
  \providecommand{\propositionname}{Proposition}
  \providecommand{\remarkname}{Remark}
\providecommand{\theoremname}{Theorem}
\begin{document}
\begin{onehalfspace}

\title{{\large Hjorth Analysis of General Polish Group Actions}}
\author{Ohad Drucker}
\begin{abstract}

Hjorth has introduced a Scott analysis for general Polish group actions,
and has asked whether his notion of rank satisfies a boundedness principle
similar to the one of Scott rank - namely, the orbit equivalence relation
is Borel if and only if Hjorth ranks are bounded.

We present the principles of Hjorth analysis and Hjorth rank, and answer Hjorth's question positively. From that we get a positive answer to a conjecture due to Hjorth - for every limit ordinal $\alpha$ , the set of elements whose orbit is of complexity less than $\alpha$  is a Borel set. We then show Nadel's theorem for Hjorth
rank - the rank of $x$ is no more than $\omega_{1}^{ck(x)}$.

\end{abstract}
\maketitle

\section{Introduction}

In 1965, Scott has introduced a method for completely characterizing
a countable model by formulas of $\mathcal{L}_{\omega_{1},\omega}$.
This method has quickly opened the way to a better understanding of
isomorphism of countable models. To name a few examples, it turned
out that the isomorphism relation of models of a theory $T$ is a well behaved and well understood ``limit'' of Borel equivalence relations, and that this isomorphism relation is Borel if and only if the ranks of the characterizing formulas of the $T$ - models are uniformly bounded.

The isomorphism of countable models of a theory $T$ is just another example of an
orbit equivalence relation induced by a Polish action, namely the
action of $S_{\infty}$ on the collection of countable models, $Mod_{\mathcal{L}}{(T)}.$
Hence, a natural question arises - can a similar method be developed
for the general scenario ? Since Scott analysis heavily uses the internal
structure of the points of $Mod_{\mathcal{L}}{(T)}$, it was very unclear how the
general method will look like.

A substantial progress was achieved by Hjorth in 2000, introducing
a Scott analysis for actions of $S_{\infty}$ {\cite{key-1}}. The same idea was
developed by him in \cite{key-10,key-2} to form a Scott analysis for general Polish group actions,
but the work was never published.

In what follows, we review Hjorth's work and continue it, showing
that Hjorth has indeed found a decent Scott analysis for general Polish
group actions. We prove various interesting properties of the new Hjorth analysis.

\bigskip{}

Let us begin with a general description of the main results, which will not be complete without the following definition:

\begin{defn} \label{Definition - Scott analysis}

A \textit{Scott analysis of Polish actions} is a method defining for each Polish
$G$ - space $X$ a decreasing sequence of equivalence relations $\equiv_{\alpha}$
and for each $x\in X$ a countable ordinal $\delta(x)$ such that:

\begin{enumerate}

\item $\equiv_{\alpha}$ are Borel and invariant under $G$.
\selectlanguage{english}%
\item $E_{G}^{X}=\bigcap_{\alpha<\omega_{1}}\equiv_{\alpha}.$
\selectlanguage{british}%
\item The function $\delta:X\to(\omega_{1},<)$ is Borel and invariant under
the action of $G$ .
\item There is an $\alpha<\omega_{1}$ such that for every $x,y\in X$,
$x$ and $y$ are orbit equivalent if and only if $x\equiv_{\delta(x)+\alpha}y.$

\end{enumerate}
\end{defn}

The first thing to be shown is that there is a Scott analysis of Polish actions. Sections 3 and 4 , due to Hjorth, present the outlines of his method
and show that it is indeed a Scott analysis of Polish actions. The construction relies on a relation $\leq_{\alpha}$ which is
non-symmetric and transitive. The relation is between pairs, each
pair has an object of the Polish space and an open set of $G$. When
we step up the ordinals, $(x,U)\leq_{\alpha}(y,V)$ is trying to tell
us more about how does the two actions, of $U$ on $x$ and of $V$
on $y$ , compare to each other. We then define the equivalence relation
$\equiv_{\alpha}$ . The definition is what we believe to be a somewhat
improved version of the one originally given by Hjorth. Roughly speaking, $x$ and $y$ are
$\alpha$ equivalent if they belong to the same $\mathbf\Pi^0_\alpha$ invariant sets, 
although that is precisely true only for limit $\alpha$'s. It is then shown that
$\equiv_{\alpha}$ is Borel and invariant under $G$, and that the
intersection of all $\equiv_{\alpha}$ is the orbit equivalence relation.
That leads to a different proof of the well known theorem of Sami - if all orbits are $\mathbf\Pi^0_\alpha$ then $E^X_G$ is Borel.

Next, Hjorth rank $\delta(x)$ is defined. The definition differs
only cosmetically from the one originally given by Hjorth. It is shown
that the rank is Borel definable and invariant under $G$. A back
and forth argument proves Scott's isomorphism theorem
for that scenario - if $x\equiv_{\delta(x)+\omega}y$ then $x$ and
$y$ are orbit equivalent.

The main result of section 5 is a boundedness principle for Hjorth
rank:
\begin{thm}
Let $(G,X)$ be a Polish action and
$\mathbb{B}\subset X$ an invariant Borel set. Then $E_{G}^{\mathbb{B}}$
is Borel if and only if there is an $\alpha$ such that for every
$x\in\mathbb{B}$, $\delta(x)\leq\alpha.$
\end{thm}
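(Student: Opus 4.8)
The plan is to establish the two implications by quite different means. The implication ``bounded ranks $\Rightarrow E_{G}^{\mathbb{B}}$ is Borel'' is essentially immediate from the Scott isomorphism theorem for Hjorth analysis proved in Section~4, while ``$E_{G}^{\mathbb{B}}$ Borel $\Rightarrow$ bounded ranks'' is the real content, the exact analogue of the classical fact that isomorphism of countable models is Borel iff Scott ranks are bounded; I would obtain it by a $\mathbf{\Sigma}^{1}_{1}$-boundedness argument applied to a coanalytic rank read off from the hierarchy $\langle\equiv_{\alpha}\rangle$.

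For the first implication, suppose $\delta(x)\le\alpha$ for all $x\in\mathbb{B}$. I claim $E_{G}^{\mathbb{B}}=\equiv_{\alpha+\omega}\cap(\mathbb{B}\times\mathbb{B})$. If $x,y\in\mathbb{B}$ and $x\equiv_{\alpha+\omega}y$, then, since the relations $\equiv_{\beta}$ are decreasing and $\delta(x)+\omega\le\alpha+\omega$, we get $x\equiv_{\delta(x)+\omega}y$, hence $x\,E_{G}\,y$ by Scott's isomorphism theorem in this setting; the reverse inclusion is clear from $E_{G}^{X}=\bigcap_{\beta<\omega_{1}}\equiv_{\beta}$. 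As $\equiv_{\alpha+\omega}$ is Borel and $\mathbb{B}$ is Borel, $E_{G}^{\mathbb{B}}$ is Borel.

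For the second implication, assume $E_{G}^{\mathbb{B}}$ is Borel. The set $P:=\{(x,y)\in\mathbb{B}^{2}:\neg(x\,E_{G}\,y)\}$ is coanalytic in general — it is the complement, within the Borel set $\mathbb{B}^{2}$, of the analytic equivalence relation $E_{G}$ — and by hypothesis it is Borel, hence analytic. On $P$ define $\varphi(x,y)$ to be the least $\alpha$ with $\neg(x\equiv_{\alpha}y)$; this is a well-defined countable ordinal precisely because $E_{G}^{X}=\bigcap_{\alpha<\omega_{1}}\equiv_{\alpha}$. The crucial point — and this is where the actual construction of Sections~3 and~4 is used, not merely its formal properties — is that the hierarchy $\langle\equiv_{\alpha}\rangle$ arises from a transfinite recursion (running through the pair relations $\leq_{\alpha}$) which is uniform in an ordinal code, so that $\varphi$ is a $\mathbf{\Pi}^{1}_{1}$-rank on $P$. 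By $\mathbf{\Sigma}^{1}_{1}$-boundedness, $\varphi$ is then bounded on the analytic set $P$: there is $\alpha_{0}<\omega_{1}$ with $\varphi(x,y)<\alpha_{0}$ for all $(x,y)\in P$, and, enlarging $\alpha_{0}$, we may take it to be a limit ordinal past the level at which $\mathbb{B}$ sits in the Borel hierarchy. Thus for $x,y\in\mathbb{B}$, $x\equiv_{\alpha_{0}}y$ implies $x\,E_{G}\,y$. Now fix $x\in\mathbb{B}$: since $\mathbb{B}$ is invariant and Borel and $\alpha_{0}$ is a limit ordinal past its complexity, any $y\in X$ with $x\equiv_{\alpha_{0}}y$ already lies in $\mathbb{B}$ (using the description of $\equiv_{\alpha_{0}}$ through invariant $\mathbf{\Pi}^{0}_{\alpha_{0}}$ sets), so $x\,E_{G}\,y$; that is, the $\equiv_{\alpha_{0}}$-class of $x$ is exactly its orbit $Gx$. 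Unwinding the definition of $\delta$ — the rank of $x$ is, up to a fixed additive constant, the least stage at which the analysis around $x$ stabilizes in this sense — we conclude that $\delta$ is bounded on $\mathbb{B}$.

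The main obstacle is the verification, in the second implication, that $\varphi$ really is a $\mathbf{\Pi}^{1}_{1}$-rank, i.e.\ that the relations $\varphi(x,y)\le\varphi(x',y')$ and $\varphi(x,y)<\varphi(x',y')$ are coanalytic; this calls for presenting the recursion defining the $\equiv_{\alpha}$ (through the $\leq_{\alpha}$) in an effective, monotone-inductive fashion, and is where the machinery of Sections~3 and~4 does genuine work. A subtler point is the final step — passing from ``$\equiv_{\alpha_{0}}$ agrees with $E_{G}$ on $\mathbb{B}\times\mathbb{B}$'' to an honest bound on $\delta(x)$ for every $x\in\mathbb{B}$ — where one must exploit that $\mathbb{B}$ is invariant and Borel, so that the $\equiv_{\alpha_{0}}$-class of a point of $\mathbb{B}$ cannot leave $\mathbb{B}$ and the argument does not escape $\mathbb{B}$. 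The remaining ingredients — the first implication and $\mathbf{\Sigma}^{1}_{1}$-boundedness of $\mathbf{\Pi}^{1}_{1}$-ranks — are standard.
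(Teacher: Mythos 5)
Your first implication is fine and agrees with the paper (which uses Theorem \ref{main theorem section 2} plus Sami's theorem; your $\equiv_{\alpha+\omega}$ via Proposition \ref{main proposition Scott} works equally well). The problem is in the second implication, at the step you dismiss as ``unwinding the definition of $\delta$.'' The Hjorth rank $\delta(x)$ is \emph{not} ``up to a fixed additive constant, the least stage at which the $\equiv_\alpha$-class of $x$ shrinks to its orbit'': it is the least $\alpha$ at which the pair relations $(x,V_0)\leq_{\alpha}(x,V_1)$, for basic open $V_0,V_1$, stop changing (modulo shrinking/expanding the open sets). Your boundedness argument, applied to the norm ``least $\alpha$ with $x\not\equiv_\alpha y$'' on $\neg E_G\cap\mathbb{B}^2$, only yields an $\alpha_0$ such that $\equiv_{\alpha_0}$ coincides with $E_G$ on $\mathbb{B}$ — equivalently, that every orbit in $\mathbb{B}$ is $\mathbf{\Pi}^0_{\alpha_0+2}$. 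Passing from that to a bound on $\delta(x)$ is exactly the non-trivial content of the paper's Section~5: one needs Proposition \ref{Complexity of Ux relative with Gx} (if $G\cdot x$ is $\mathbf{\Pi}^0_{\alpha+1}$ then so is every $U\cdot x$, proved by a change-of-topology argument), Proposition \ref{equivalent definition <=00003Dalpha for every alpha} ($(y,W)\leq_\alpha(x,V)$ for all $\alpha$ iff $y\in(V\cdot x)^{*W}$), and Proposition \ref{(y,W)<=00003D(x,v) for all alpha} (if all $V\cdot x$ are $\mathbf{\Pi}^0_\alpha$ then $\leq_\alpha$ into $(x,V)$ already implies $\leq_\beta$ for all $\beta$, whence $\delta(x)\leq\alpha$). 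None of this is a formal consequence of the stabilization of $\equiv_\alpha$, and your proposal supplies no substitute for it.

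Two further remarks. First, the paper's own proof of the hard direction does not use $\mathbf{\Sigma}^1_1$-boundedness at all: it quotes the standard fact that a Borel equivalence relation has all classes $\mathbf{\Pi}^0_{\alpha+1}$ for a single $\alpha$, and then applies the three propositions above to get $\delta(x)\leq\alpha+1$ directly; your boundedness detour (the analogue of the classical Becker--Kechris argument for the logic action) is a legitimate alternative route to the uniform bound on orbit complexity, but it replaces only the easy first step. Second, if you do want a pure boundedness proof that reaches $\delta$ directly, the norm should be placed on quadruples $(y,W,x,V)$ — ``least $\alpha$ with $(y,W)\not\leq_\alpha(x,V)$,'' defined on the complement of $\{(y,W,x,V):y\in(V\cdot x)^{*W}\}$ — since specializing to $y=x$ then bounds $\delta(x)$ by definition; but showing that this complement is $\mathbf{\Sigma}^1_1$ (indeed Borel) on $\mathbb{B}$ when $E_G^{\mathbb{B}}$ is Borel again requires Proposition \ref{equivalent definition <=00003Dalpha for every alpha} together with something like Theorem \ref{Becker Kechris 7.1.2}, so you do not escape the paper's machinery. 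The unverified claim that your $\varphi$ is a $\mathbf{\Pi}^1_1$-rank is repairable by presenting $\bigcup_\alpha(\not\leq_\alpha)$ as the least fixed point of a positive arithmetical operator over a $\mathbf{\Sigma}^0_2$ base, but that is the smaller of the two gaps.
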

This is done by first showing that sets of the form $U\cdot x$,
for $U\subseteq G$ open, are Borel, and their complexity is almost
the same as the complexity of the orbit $G\cdot x.$ It is then proved
that if for $x\in X$, the complexities of the $U\cdot x$'s are bounded,
then Hjorth rank is no higher than their bound. A proof of the boundedness
principle follows easily, as well as a Borel definition of \[\{x\ :\ [x]\ is\    \mathbf{\Pi_{\alpha}^{0}\mbox{ for }\alpha<\beta\}}\] for $\beta$ limit.  That positively answers a conjecture of Hjorth (see \cite{key-10}).The Becker Kechris theorem for the logic action is then reproved using the newly developed tools.

The last section is dedicated to the proofs of Nadel's and Sacks' theorems
for Hjorth analysis:
\begin{thm}

(Nadel) For every $x\in X$, $\delta(x)\leq\omega_{1}^{ck(x)}.$

\end{thm}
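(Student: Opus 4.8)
The plan is to run the classical proof of Nadel's theorem for Scott rank (as in Barwise's treatment) in the relativized, effective setting, with $(X,x)$ and the $G$-action in the role of the countable structure. Fix recursive presentations of $G$ and $X$, with recursive enumerations of bases for their topologies; then $\omega_1^{ck(x)}$ is the least ordinal not recursive in a real coding $x$, and $\omega_1^{ck(x)}=o(\mathbb{A}_x)$, where $\mathbb{A}_x$ is the least admissible set having (a real coding) $x$ as a member --- equivalently, $L_{\omega_1^{ck(x)}}[x]$ is admissible. Everything hinges on one point: the transfinite recursion defining the relations $\leq_\alpha$, and with them $\equiv_\alpha$ and whatever data of Section~4 pins down $\delta(x)$, becomes --- after discarding everything not relevant to $x$ --- a $\Sigma$-recursion over $\mathbb{A}_x$; and a $\Sigma$-recursion of the right shape over an admissible set stabilizes no later than stage $o(\mathbb{A}_x)$.

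The substance is the effectivization. The clause defining $(y,U)\leq_{\alpha+1}(z,V)$ reads, schematically, ``for every basic open $U'$ bearing the prescribed relation to $U$ there is a basic open $V'$ bearing the prescribed relation to $V$ such that the associated shrunk pairs are $\leq_\alpha$-related'', and read literally this quantifies over the uncountable spaces $G$ and $X$. But in computing $\delta(x)$ only a countable fragment is consulted: the open-set quantifiers may be taken over the fixed bases (the clause is continuous in those coordinates), and the only points entering are $x$ and its translates, while $g\cdot x\in W$ (for $W$ basic open in $X$) is governed by the open set $\{g:g\cdot x\in W\}\subseteq G$, so all point-level information resides in $x$ together with the countable family of such open sets and the associated Vaught transforms $W\cdot x$. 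By the analysis of Sections~3--5 --- in particular that the sets $U\cdot x$ and the relevant Vaught transforms carry Borel codes that are, stage by stage, recursive in $x$ (even when their limiting complexity is unbounded) --- this entire skeleton is coded by a single real recursive in $x$, hence a member of $\mathbb{A}_x$. Consequently the successor clause is $\Delta_0$ over $\mathbb{A}_x$ (with that real as parameter), uniformly in $\alpha$, the limit clause is intersection, and by $\Sigma$-recursion the map $\alpha\mapsto{\leq_\alpha}$ restricted to the ($\mathbb{A}_x$-coded, countable) pool $P_x$ of pairs relevant to $x$ is $\Sigma_1$ over $\mathbb{A}_x$ with every value in $\mathbb{A}_x$, and likewise for $\equiv_\alpha$ and for the piece of the recursion whose stabilization stage is $\delta(x)$.

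The conclusion now follows from the standard bound on closure ordinals of $\Sigma$-inductive definitions over an admissible set. Reading the clause negated, ``$(y,U)\not\leq_{\alpha+1}(z,V)$ iff there is a basic open $U'$ such that for \emph{all} basic open $V'$ the associated pairs are $\not\leq_\alpha$-related'', exhibits ``eventually $\not\leq$'' on $P_x$ as the least fixed point of a $\Sigma$ operator over $\mathbb{A}_x$ whose premises, once the witness $U'$ is fixed, form the countable set indexed by $V'$, an element of $\mathbb{A}_x$. For such an operator the closure ordinal is $\le o(\mathbb{A}_x)$: the stages lie in $\mathbb{A}_x$ by $\Sigma$-recursion, and $\Sigma_1$-collection applied to the premise-set of any instance derived at stage $o(\mathbb{A}_x)$ bounds it strictly below $o(\mathbb{A}_x)$, so that $\bigcup_{\alpha<o(\mathbb{A}_x)}{\leq_\alpha}$ is already a fixed point --- this is where admissibility of $\mathbb{A}_x$ is used essentially. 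Since $\delta(x)$ is, unwinding the Section~4 definition, the stage at which the $x$-piece of this recursion stabilizes (up to a fixed cosmetic shift), we get $\delta(x)\le o(\mathbb{A}_x)=\omega_1^{ck(x)}$; the $+\omega$ slack in the isomorphism theorem costs nothing, since $\omega_1^{ck(x)}$ is closed under addition of $\omega$.

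The main obstacle is the effectivization of the second paragraph: isolating the countable, $x$-recursively presented pool $P_x$ and the skeleton real, and verifying that the successor clause really consults nothing outside them. In the classical Scott case this is immediate, since the tuples already sit inside the countable model; here the points inhabit an uncountable Polish space, and the remedy is exactly the open-set and Vaught-transform machinery of Sections~3--5 (continuity of the action, $U\cdot x$ Borel with stagewise-recursive codes). Care is also needed to match the open-set bookkeeping in the definition of $\leq_\alpha$ to a recursive indexing and to keep the presentation of the $\alpha$-th stage uniform in $\alpha$, so that complexity never accumulates past ``recursive in $x$ and in a code for $\alpha$''. Once Nadel's bound is established, Sacks' theorem --- that the bound $\omega_1^{ck(x)}$ is attained --- should follow from the usual construction of a real $x$ of prescribed $\omega_1^{ck(x)}$ carrying an action whose associated recursion is forced to run the full length.
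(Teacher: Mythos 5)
Your proof is essentially correct, but it takes a genuinely different route from the paper's. You observe that, once the open-set quantifiers are restricted to the countable basis $\mathfrak{B}_0$ (legitimate by Lemma \ref{countable definition <=00003D}), the map $\alpha\mapsto\{(k,m):(x,V_k)\not\leq_\alpha(x,V_m)\}$ is an increasing, positive inductive definition on $\omega\times\omega$ whose base case and successor operator are arithmetic in $x_G$, and you then bound its closure ordinal by $o(\mathbb{A}_x)=\omega_1^{ck(x)}$ via Gandy's theorem on $\Sigma$-recursions over the admissible set $L_{\omega_1^{ck(x)}}[x]$; since $\delta(x)$ is at most this stabilization stage, Nadel's bound follows. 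The paper effectivizes the same countable skeleton (its arithmetic relation $S(x_G,z,R)$ is exactly your ``stage-by-stage recursive in $x$ and a code for $\alpha$'' bookkeeping), but then avoids admissible set theory entirely: assuming $\delta(x)>\omega_1^{ck(x)}$, it extracts a witness pair $(i,m^*)$ and a family of ordinals $\alpha_j<\omega_1^{ck(x)}$ that must be cofinal, exhibits the set of codes $\{(j,R):tp(R)=\alpha_j\}$ as $\Sigma^1_1(x)$ using $S$, and contradicts the $\Sigma^1_1$-boundedness theorem. The two arguments are the two classical proofs of Nadel's theorem (Barwise-style admissibility versus lightface boundedness); yours buys a cleaner conceptual statement (closure ordinals of positive arithmetic inductions) at the cost of importing KP machinery, while the paper's stays inside effective descriptive set theory and reuses its relation $S$ verbatim for the Sacks-type results that follow. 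One wobble worth fixing: your appeal to ``Borel codes for $U\cdot x$ and the Vaught transforms that are stage-by-stage recursive in $x$'' is neither true in general (the sets $U\cdot x$ can have unbounded Borel complexity and their codes need not be recursive in $x_G$) nor needed; the correct and sufficient statement is simply that $(x,V_k)\leq_1(x,V_m)$ holds iff every basic open set meeting $V_k\cdot x$ meets $V_m\cdot x$, which is $\Pi^0_1(x_G)$ by the definition of $x_G$, and that the successor clause consults only the recursive containment relation on basis indices.
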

\begin{thm}

(Sacks) If for every $x$, $\delta(x)\leq\alpha$ or
$\delta(x)<\omega_{1}^{ck(x)}$ , then Hjorth ranks are bounded.

\end{thm}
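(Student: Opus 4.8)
The plan is to adapt the classical Sacks reflection argument. One argues by contradiction: suppose the Hjorth rank is unbounded on $X$, i.e. $\sup_{x\in X}\delta(x)=\omega_1$. Since $\{x:\delta(x)\le\alpha\}$ is Borel and $\delta$ remains unbounded after we delete it, we may pass to the invariant Borel set $X'=\{x\in X:\delta(x)>\alpha\}$, on which the hypothesis simplifies to ``$\delta(x)<\omega_1^{ck(x)}$ for every $x\in X'$''. Fix a real $q$ coding the Polish $G$-space and the action (so that, as in Nadel's theorem, $\omega_1^{ck(x)}$ is read relative to $q$) together with the effective content of Sections 3--4. As the proof of Nadel's theorem shows, $\delta(x)$ is the rank of a wellfounded relation $R_x$ on $\omega$ obtained from $x\oplus q$ by a recursion that is $\Delta_1$ over the admissible set generated by $x\oplus q$; for $x\in X'$ this recursion closes off at the stage $\delta(x)<\omega_1^{ck(x)}\le\omega_1^{ck(x\oplus q)}$, so $R_x$ is hyperarithmetic in $x\oplus q$, uniformly in $x\in X'$. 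Moreover, since $\delta(x)<\omega_1^{ck(x\oplus q)}$, the ordinal $\delta(x)$ is $(x\oplus q)$-recursive, so there is an $(x\oplus q)$-recursive wellorder of type exactly $\delta(x)$.

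Next I would introduce the relation $T\subseteq X'\times\omega^{\omega}$ asserting that $w$ codes a linear order on a subset of $\omega$ and that there is a relation $\hookrightarrow$ between the field of $w$ and the field of $R_x$, total on the former, satisfying $a\hookrightarrow c\,\wedge\,b<_w a\ \Rightarrow\ \exists d<_{R_x}c\ (b\hookrightarrow d)$ --- a ``simulation'' of $w$ by $R_x$. Since $R_x$ is uniformly hyperarithmetic in $x\oplus q$ and $\hookrightarrow$ is quantified over a single real against an arithmetic matrix, $T$ is analytic (with parameter $q$). Two properties are then immediate. First, as $R_x$ is wellfounded, no simulation $\hookrightarrow$ can exist when $w$ has an infinite descending chain; hence $T(x,w)$ forces $w\in\mathrm{WO}$, the set of codes for countable wellorders, and, since a simulation transports ranks, $|w|\le\mathrm{rank}(R_x)=\delta(x)$. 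Second, a wellorder simulates into $R_x$ precisely when its length is at most $\mathrm{rank}(R_x)=\delta(x)$; so the $(x\oplus q)$-recursive wellorder of type $\delta(x)$ fixed above is a $w$ with $T(x,w)$ and $|w|=\delta(x)$.

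Put $A=\{\,w:\exists x\in X'\ T(x,w)\,\}$. As a projection of an analytic set, $A$ is analytic, and $A\subseteq\mathrm{WO}$ by the first property. On the other hand, let $\gamma<\omega_1$ be arbitrary; since $\delta$ is unbounded on $X'$ there is $x\in X'$ with $\delta(x)\ge\gamma$, and the second property then yields $w\in A$ with $|w|=\delta(x)\ge\gamma$. Thus $\{\,|w|:w\in A\,\}$ is cofinal in $\omega_1$, contradicting the classical boundedness theorem that an analytic subset of $\mathrm{WO}$ has its order types bounded below $\omega_1$. Hence $\delta$ is bounded on $X$, which is the assertion of the theorem.

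I expect the one step demanding genuine work to be the effective input invoked in the first paragraph: that the Hjorth rank can be presented as the rank of a wellfounded relation $R_x$ built from $x$ by a recursion which is $\Delta_1$ over the admissible set of $x$ (relative to the ambient data), so that $R_x$ turns hyperarithmetic in $x\oplus q$ exactly on the region where $\delta(x)<\omega_1^{ck(x\oplus q)}$. This is precisely the information one extracts while proving Nadel's theorem --- there one verifies that the back-and-forth recursion governing the relations $\le_\alpha$, and hence the closure ordinal $\delta$, is $\Sigma_1$-definable over the relevant admissible levels and closes off at an admissible stage --- and reading that proof as producing such an $R_x$ uniformly in $x$ is all that the reflection argument above needs; from there the only external ingredient is the classical $\Sigma^1_1$-boundedness of $\mathrm{WO}$. (No effective forcing or Gandy basis theorem is required.)
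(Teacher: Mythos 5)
Your global strategy --- manufacture from the hypothesis a $\mathbf{\Sigma_{1}^{1}}$ set of wellorder codes whose order types are cofinal in $\omega_{1}$, then contradict $\Sigma_{1}^{1}$-boundedness --- is the same as the paper's, but the way you produce that set has a genuine gap, and it sits exactly where the work of the theorem lives. The problem is the sentence ``so $R_x$ is hyperarithmetic in $x\oplus q$, uniformly in $x\in X'$,'' together with the ensuing claim that $T$ is analytic. The hypothesis $\delta(x)<\omega_{1}^{ck(x)}$ only tells you that the recursion defining the relations $\leq_{\alpha}$ closes off at an $(x\oplus q)$-recursive stage, so the resulting relation is $\Delta_{1}^{1}(x\oplus q)$ \emph{with an index depending on $x$} (it depends on a notation for $\delta(x)$). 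Non-uniform hyperarithmeticity of the sections does not make the graph $\{(x,n,m)\ :\ n\,R_{x}\,m\}$ analytic, and without that your simulation predicate $T$ has no reason to be $\mathbf{\Sigma_{1}^{1}}$: written out, $R_{x}$ must enter either as a $\Pi_{1}^{1}$ least fixed point (which ruins the positive occurrence in ``$\exists d<_{R_{x}}c$'') or via an existential quantifier over codes whose correctness is itself a $\Pi_{1}^{1}$ condition. Note also that if you genuinely had an analytic family $\{(x,n,m)\ :\ n\,R_{x}\,m\}$ of wellfounded relations with $\mathrm{rank}(R_{x})=\delta(x)$, the theorem would follow immediately from Kunen--Martin boundedness applied to the single analytic wellfounded relation $(x,n)\succ(x,m)\iff n\,R_{x}\,m$, with no use of the hypothesis at all --- a sign that the uniformity you are assuming is essentially equivalent to what is to be proved.

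The paper's proof shows what is actually needed to convert the non-uniform information into an analytic predicate. It first establishes (Lemma \ref{lem:good and bad}) that for $R\in WO$ the statement ``$\delta(x)\leq tp(R)$'' is expressible both by a $\Sigma_{1}^{1}$ and by a $\Pi_{1}^{1}$ formula, via the arithmetic relation $S$ which checks that a real $z$ correctly tabulates the relations $\leq_{tp(n)}$ along $R$. Then, fixing $y$ computing $\alpha$ (so that every $x$ satisfies $\delta(x)<\omega_{1}^{ck(x,y)}$, making $\delta(x)$ itself $(x,y)$-recursive), the set $H=\{(R,x)\ :\ tp(R)=\delta(x)\}$ is shown to be $\Sigma_{1}^{1}(y)$ by writing ``$tp(R)\leq\delta(x)$'' as: for every program $e$, if $e[x,y]$ computes a wellorder $Q$ with $\delta(x)\leq tp(Q)$, then $R$ embeds onto an initial segment of $Q$ --- the $\Pi_{1}^{1}$ form of ``$\delta(x)\leq tp(Q)$'' sits in the hypothesis of an implication, so the clause stays $\Sigma_{1}^{1}(y)$. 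This double definability is the device your sketch is missing; your ``second property'' (using the $(x\oplus q)$-recursive wellorder of type $\delta(x)$ to witness cofinality) is the right idea, but it has to be routed through such a predicate rather than through a uniformly hyperarithmetic $R_{x}$.
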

The exact definition of $\omega_{1}^{ck(x)}$ will be given later. Informally, this is the first ordinal not recursive in a sequence $x_G$ that contains all the information about the action of $G$ on $x$.
The same techniques are then used to establish the complexities of Hjorth rank comparisons, such as
\[\{\ (x,y)\ :\ \delta(x)\leq\delta(y)\}\] etc.
\\

The next section intends to cover most of the knowledge this paper assumes, so that the text will be as self contained as possible. Although knowledge of forcing might help, it is not obligatory in any way - the reader can safely use the standard definition of Vaught transforms, and translate the single forcing proof of the paper to Vaught transforms' terms.

We remark that Hjorth analysis can be rephrased in terms of forcing - see the statement of lemma \ref{Main lemma section 2} for more details.
\subsection{Acknowledgments}
This paper is based on the master thesis of the author \cite{key-16}, written under the supervision of Menachem Magidor. The author wishes to thank him for his patient guidance, insightful views, and outstanding intuition, all invaluable for this work.

\section{Preliminaries}

\subsection{Descriptive Set Theory of Polish Group Actions}

For the basics of descriptive set theory and much more, the reader is encouraged
to consult \cite{key-9}.

We review basic facts about the descriptive set theory of Polish group
actions. All the following can be found in \cite{key-7,key-11}.

\selectlanguage{english}%
A \textit{Polish Topology} is a separable topology induced by a complete
metric. A \textit{Polish Space} is a topological space whose topology
is polish. A subspace of a Polish space is Polish if and only it is
$G_{\delta}$. The product of a countable collection of Polish spaces
is Polish. In particular, $\omega^{\omega}$ and $2^{\omega}$ , with
the product topology of the discrete topologies, are both Polish.
Universality properties indicate a strong connection between these
two spaces and all other Polish spaces.

A \textit{Standard Borel Space} is a set $X$ equipped with a $\sigma$-
algebra $S$ such that there is a Polish topology $\tau$ on $X$
whose Borel $\sigma$ - algebra is $S$. Given a Polish space $X$,
the \textit{Effros Borel space} of $X$ , $F(X)$ , is the set of
closed sets of $X$ with the $\sigma$ - algebra generated by
\[
\{F\in F(X)\ :\ F\cap U\neq\emptyset\}
\]
for $U\subseteq X$ open. This is a standard Borel space.

A \textit{Polish Group} is a topological group
whose topology is polish.\foreignlanguage{british}{ One important
example is $S_{\infty},$ the group of permutations of natural numbers.
This group is readily contained in $\omega^{\omega}$, and one can
easily show it is a $G_{\delta}$ subset of $\omega^{\omega}$, and
hence Polish.}

\selectlanguage{british}%
Let $G$ be a Polish group, and $H\leq G$ a subgroup. Then $H$ is
Polish if and only if it is closed. In this case, the quotient ${\nicefrac{G}{H}}$,
as the set of left cosets of $H$, is a Polish space under the quotient
topology. If $H$ is normal, it is a Polish group as well.

A continuous action of a Polish group $G$ on a Polish space $X$
\foreignlanguage{english}{is called a \textit{Polish action}. We will
say that $X$ is a \textit{Polish $G$ - space}. A polish action naturally
induces an orbit equivalence relation on $X$ , which we will denote
by $E_{G}^{X}$. For $x\in X$, the stabilizer of $x$ is $G_{x}=\{g\ :\ g\cdot x=x\}$. Its left translations $g \cdot G_x$ are all of the form $G_{x,y}=\{g\ :\ g\cdot x=y\}$ for some $y\in X$.
Since $G_{x}$ is a closed subgroup, ${\nicefrac{G}{G_x}}$ is a Polish
space. Consider the canonical bijection
\[
{\nicefrac{G}{G_x}} \to G\cdot x
\]
It is clearly continuous, so that $G\cdot x$ is the continuous injective
image of a Polish space, hence Borel. However, the canonical bijection
is a homeomorphism only when $G\cdot x$ is Polish:}

\selectlanguage{english}%
\begin{thm}

\textbf{(Effros)\label{thm:(-Effros-)} ${\nicefrac{G}{G_x}} \to G\cdot x$
}is an homeomorphism iff $G\cdot x$ is $G_{\delta}$ iff $G\cdot x$
is non - meager in its topology.
\end{thm}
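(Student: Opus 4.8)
The plan is to reduce to the closure of the orbit and then run a Baire-category cycle of implications. Set $Y=\overline{G\cdot x}$. Since $G$ acts by homeomorphisms, $Y$ is a $G$-invariant closed, hence Polish, subspace of $X$, and $G\cdot x$ is dense in $Y$. As a closed subset of a metrizable space is itself $G_{\delta}$, and $G_{\delta}$-ness of a subset is unaffected by passing between $X$ and the intermediate closed set $Y$, the orbit $G\cdot x$ is $G_{\delta}$ in $X$ iff it is $G_{\delta}$ in $Y$; likewise whether $G\cdot x$ is non-meager in its topology can be tested inside $Y$, since a dense subset of $Y$ is meager in itself iff it is meager in $Y$. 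So I may work inside $Y$. Recall from the discussion above that $G_{x}$ is closed, that $G/G_{x}$ is Polish, and that the canonical bijection $\phi\colon G/G_{x}\to G\cdot x$, $gG_{x}\mapsto g\cdot x$, is continuous and $G$-equivariant.

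I would then prove three implications forming a cycle. First, if $\phi$ is a homeomorphism then $G\cdot x\cong G/G_{x}$ is Polish, and a Polish subspace of the Polish space $Y$ is $G_{\delta}$. Second, if $G\cdot x$ is $G_{\delta}$ in $Y$ then it is a dense $G_{\delta}$ subset of the nonempty Polish, hence Baire, space $Y$, so $Y\setminus G\cdot x$ is a meager $F_{\sigma}$ and $G\cdot x$ is comeager, in particular non-meager. Third — the substantive step, which is Effros's theorem proper — if $G\cdot x$ is non-meager in $Y$ then $\phi$ is a homeomorphism.

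For the third implication, since $\phi$ is already a continuous bijection it suffices to show it is open, and by $G$-equivariance it is enough to show that for every open neighborhood $V$ of $e\in G$ the set $V\cdot x$ is a neighborhood of $x$ in $G\cdot x$. Fix $V$ and pick a symmetric open $W\ni e$ with $WW\subseteq V$. Since $G$ is second countable it is covered by countably many left translates $g_{n}W$, so $G\cdot x=\bigcup_{n}g_{n}\cdot(W\cdot x)$; as $G\cdot x$ is non-meager and each $g_{n}$ is a homeomorphism of $Y$, some $W\cdot x$ is non-meager. Being the continuous image of the Polish space $W$, the set $W\cdot x$ is analytic and so has the Baire property, hence is comeager in some nonempty open $O\subseteq Y$; choosing $h\in W$ with $h\cdot x\in O$, the set $h^{-1}\cdot(W\cdot x)=(h^{-1}W)\cdot x\subseteq WW\cdot x\subseteq V\cdot x$ is comeager in the open neighborhood $h^{-1}\cdot O$ of $x$. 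The remaining — and delicate — task is to upgrade "$V\cdot x$ is comeager near $x$" to "$V\cdot x$ contains a neighborhood of $x$ in $G\cdot x$", which is done by a Pettis-style argument comparing a comeager set with a translate of itself inside a common open neighborhood of $x$; here one must be careful not to assume that $G_{x}$ is normal, and in the literature this is handled cleanly with Vaught transforms. Concatenating the three implications gives the stated equivalences.

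The main obstacle is exactly this last upgrade in the third implication: the reduction to $Y$ and the first two implications use only the standard facts recalled above (subspaces of Polish spaces are Polish iff $G_{\delta}$, the Baire category theorem, and that $G/G_{x}$ is Polish), whereas the passage from non-meagerness to openness of $\phi$ is the genuine content of Effros's theorem and needs the Baire-property/Vaught-transform machinery. Since the statement is classical one may alternatively just cite Effros; the outline above indicates how a self-contained proof would go.
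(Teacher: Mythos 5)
The paper does not prove this statement at all: it appears in the preliminaries as a quoted classical fact, with the blanket remark that everything in that subsection can be found in \cite{key-7,key-11}. So there is no in-paper argument to compare with; what you have written is the standard Baire-category proof, and your reductions are all sound -- passing to $Y=\overline{G\cdot x}$ is legitimate because $Y$ is closed and invariant, $G_{\delta}$-ness is preserved in both directions between $X$ and a closed subspace, and a dense subset of $Y$ is meager in itself iff it is meager in $Y$. The cycle of implications is the right decomposition, and the localization step (covering $G$ by countably many translates $g_nW$, extracting a non-meager $W\cdot x$, using the Baire property of the analytic set $W\cdot x$ to find $O$ in which it is comeager, and translating back by $h^{-1}$ for $h\in W$ with $h\cdot x\in O$) is carried out correctly. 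The one thing you leave unexecuted is precisely the content of Effros's theorem, so for completeness let me indicate how it closes: having arranged that $W'\cdot x$ is comeager in an open $P\ni x$ with $W'=h^{-1}W\subseteq V'$ for a prescribed symmetric neighborhood $V'$ of $e$ with $V'^{-1}V'\subseteq V$, take any $y=g\cdot x\in P\cap G\cdot x$ and run the same localization argument at $y$ to produce a symmetric open $W''\subseteq V'$ with $W''\cdot y$ comeager in some open $Q\ni y$, which one may shrink so that $Q\subseteq P$; then $W'\cdot x$ and $W''\cdot y$ are both comeager in the nonempty open Baire space $Q$, hence meet, giving $w'\cdot x=w''\cdot y$ and so $y\in (W'')^{-1}W'\cdot x\subseteq V\cdot x$. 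The subtlety you should be explicit about is that the comparison is not between a set and a single translate of itself but between the two localized sets $W'\cdot x$ and $W''\cdot y$, and that the second localization must again be pulled back to a neighborhood of $y$ itself; once the product-budgeting ($W$ chosen so that the accumulated products stay inside $V$) is done up front, no normality of $G_x$ is ever needed. With that step written out, your proof is complete and is exactly the argument the paper implicitly defers to.
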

\selectlanguage{british}%
An immediate corollary is that whenever an orbit is non meager, it
must be $G_{\delta}$.
\\

The orbit equivalence relation $E_{G}^{X}$ is analytic, but not always
Borel. If it is Borel, there is an $\alpha<\omega_{1}$ such that
all orbits are $\mathbf{\Pi_{\alpha}^{0}}$. It turns out that the
opposite is true as well:
\begin{thm}
(Sami) Let $X$ be a Polish $G$ - space. If there is an $\alpha<\omega_{1}$
such that all orbits are $\mathbf{\Pi_{\alpha}^{0}}$ then $E_{G}^{X}$
is Borel.\end{thm}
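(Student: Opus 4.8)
The plan is to carry out a Scott-style analysis of the action, along the lines sketched in the introduction: build for each $\beta<\omega_{1}$ a $G$-invariant Borel equivalence relation $E_{\beta}$ on $X$, decreasing in $\beta$, with $\bigcap_{\beta<\omega_{1}}E_{\beta}=E_{G}^{X}$, and then observe that the hypothesis forces this sequence to stabilise at a countable stage, so that $E_{G}^{X}=E_{\beta}$ for some $\beta<\omega_{1}$ and hence is Borel. The tool for manufacturing the $E_{\beta}$ is the Vaught transform. Fix countable bases $\{W_{m}\}$ for $X$ and $\{U_{n}\}$ for $G$ with $U_{0}=G$, and recall that for Borel $A\subseteq X$ and open $U\subseteq G$ the transforms $A^{*U}=\{x:\{g\in U:g\cdot x\in A\}\text{ is non-meager}\}$ and $A^{\triangle U}=\{x:\{g\in U:g\cdot x\in A\}\text{ is comeager in }U\}$ are again Borel, that $A^{*G}=A^{\triangle G}=A$ whenever $A$ is $G$-invariant, and that these operations raise Borel rank by an amount bounded independently of $U$.

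I would define the $E_{\beta}$ by transfinite recursion through a family $\mathcal{A}_{\beta}$ of $G$-invariant Borel subsets of $X$: set $x\,E_{\beta}\,y$ iff $x$ and $y$ belong to exactly the same members of $\mathcal{A}_{\beta}$. Here $\mathcal{A}_{0}$ is the $\sigma$-algebra generated by the open invariant sets $\{x:G\cdot x\cap W_{m}\neq\emptyset\}$; $\mathcal{A}_{\beta+1}$ is the $\sigma$-algebra generated by $\mathcal{A}_{\beta}$ together with all Vaught transforms $(A\cap W_{m})^{*U_{n}}$ and $(A\cap W_{m})^{\triangle U_{n}}$ with $A$ in a fixed countable generating set of $\mathcal{A}_{\beta}$; and $\mathcal{A}_{\lambda}=\bigcup_{\beta<\lambda}\mathcal{A}_{\beta}$ at limits. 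The point of this bookkeeping is that each $\mathcal{A}_{\beta}$ is countably generated by sets of Borel rank bounded by a fixed countable ordinal $f(\beta)$; granting that, $E_{\beta}$ is Borel, because $x\,E_{\beta}\,y$ is a countable conjunction of Borel conditions, one per generator, and $E_{\beta}$ is visibly $G$-invariant and decreasing in $\beta$. The substantive claim to prove is that every $G$-invariant $\mathbf\Pi^{0}_{\beta}$ subset of $X$ in fact lies in $\mathcal{A}_{\gamma}$ for some $\gamma=\gamma(\beta)<\omega_{1}$ depending only on $\beta$ (not on $X$, $G$, or the set).

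Granting this, the theorem follows at once. Let $\alpha<\omega_{1}$ witness that all orbits are $\mathbf\Pi^{0}_{\alpha}$ and put $\gamma=\gamma(\alpha)$. For each $x$ the orbit $G\cdot x$ is a $G$-invariant $\mathbf\Pi^{0}_{\alpha}$ set, hence $G\cdot x\in\mathcal{A}_{\gamma}$; since $x\in G\cdot x$, every $y$ with $x\,E_{\gamma}\,y$ has the same $\mathcal{A}_{\gamma}$-membership type as $x$ and so lies in $G\cdot x$. The reverse inclusion $E_{G}^{X}\subseteq E_{\gamma}$ is immediate from the $G$-invariance of $E_{\gamma}$. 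Hence $E_{G}^{X}=E_{\gamma}$ is Borel. (The same reasoning, applied to the invariant Borel set $G\cdot x$---which we already know is Borel, being a continuous injective image of a Polish space---for an arbitrary non-equivalent pair, gives $\bigcap_{\beta}E_{\beta}=E_{G}^{X}$, so the $E_{\beta}$ genuinely approximate the orbit equivalence relation.)

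The main obstacle is the substantive claim: representing an arbitrary $G$-invariant Borel set by Vaught transforms of basic open sets with a genuine, space-independent bound on the complexity. The delicate point is that $\ast$ and $\triangle$ do not commute with all Boolean operations---$A\mapsto A^{*U}$ commutes with countable unions and $A\mapsto A^{\triangle U}$ with countable intersections, but $\bigl(\bigcup_{m}A_{m}\bigr)^{\triangle U}$ is strictly larger than $\bigcup_{m}A_{m}^{\triangle U}$ in general---so one must first find the correct ``local'' expression for such a transform in terms of transforms over the smaller basic sets $U_{n}$, and then show that the Borel rank of a set controls the rank of that expression uniformly. Carrying out this computation by induction on Borel rank, uniformly in the space and the group, is what produces the ordinal function $\gamma(\beta)$ and is the real technical heart of the matter; the Borelness and invariance of the $E_{\beta}$ and the stabilisation argument above are routine by comparison.
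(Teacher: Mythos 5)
Your proposal has a genuine gap, and it sits exactly where you locate ``the real technical heart of the matter'' --- but the problem is not merely that the substantive claim is left unproved; it is that the architecture around it cannot close. Your relation $E_{\beta}$ is defined as agreement on the members of $\mathcal{A}_{\beta}$, and you need two incompatible things from it. For Borelness you need $E_{\beta}$ to be a countable conjunction over the generators of $\mathcal{A}_{\beta}$; but those generators include the transforms $(A\cap W_{m})^{*U_{n}}$ with $U_{n}$ a proper open subset of $G$, and such sets are \emph{not} $G$-invariant (for $y=h\cdot x$ the relevant set of group elements is $\{g\in U_{n}:gh\cdot x\in A\cap W_{m}\}$, not a translate of the one for $x$). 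Hence $E_{\beta}$ as generated is not invariant and the inclusion $E_{G}^{X}\subseteq E_{\gamma}$, which you call immediate, fails. If instead you define $E_{\beta}$ as agreement only on the \emph{invariant} members of $\mathcal{A}_{\beta}$, then $E_{G}^{X}\subseteq E_{\beta}$ does hold, but Borelness is lost: the invariant members form a sub-$\sigma$-algebra of a countably generated $\sigma$-algebra, and such sub-algebras need not be countably generated --- indeed, when $E_{G}^{X}$ is not Borel the invariant Borel sets cannot form a countably generated family separating orbits, since otherwise $E_{G}^{X}$ would be Borel outright. You cannot discard the non-invariant local transforms either, because the identity $A^{*G}=\bigcap_{n}A^{\triangle U_{n}}$ that drives your induction passes through them essentially. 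Repairing this forces you to track the local data $(x,U)$ explicitly and symmetrize only at the end with a $\forall V\,\exists W$ quantifier pattern --- which is precisely the $\leq_{\alpha}$ and $\equiv_{\alpha}$ machinery of the Hjorth analysis developed in Section 3, a substantial construction rather than a routine verification.

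For contrast, the paper's proof of Sami's theorem sidesteps all of this by \emph{not} asking the approximations to be Borel. It sets $xE_{\alpha}y$ iff $x$ and $y$ lie in the same $\mathbf{\Pi_{\alpha}^{0}}$ invariant sets, uses a universal set for the $\mathbf{\Pi_{\alpha}^{0}}$ invariant sets to see that $E_{\alpha}$ is $\mathbf{\Pi_{1}^{1}}$ (a coanalytic $\forall f$ quantifier over the parameter space replaces your countable conjunction), and notes that under the hypothesis $E_{\alpha}=E_{G}^{X}$ for large enough $\alpha$. Since $E_{G}^{X}$ is always analytic, being also coanalytic makes it Borel by Suslin's theorem. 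If you want a proof in the spirit of your proposal, with genuinely Borel approximating relations, that is exactly what the $\equiv_{\alpha}$ of Section 3 provide via Theorem \ref{Property III}; but it is not obtainable by the $\sigma$-algebra bookkeeping you describe.
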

\begin{proof}
Define for $\alpha$ countable the following equivalence relation
$E_{\alpha}:$
\[
xE_{\alpha}y\iff\forall A\ \ \mathbf{\Pi_{\alpha}^{0}}\ invariant:\ x\in A\iff y\in A
\]
There is a universal set for $\mathbf{\Pi_{\alpha}^{0}}$ invariant
sets - namely, there is $U\subseteq\omega^{\omega}\times X$ a $\mathbf{\Pi_{\alpha}^{0}}$
set such that the set of sections $\{U_{f}\ :\ f\in\omega^{\omega}\}$
is the set of $\mathbf{\Pi_{\alpha}^{0}}$ invariant sets of $X$.
Now it is easy to see that $E_{\alpha}$ is $\mathbf{\Pi_{1}^{1}},$
and under the conditions of the theorem $E_{\alpha}=E_{G}^{X}$ for
a large enough $\alpha$.

In fact, using Louveau's separation theorem of \cite{key-13}, one
can show that the $E_{\alpha}$'s are Borel, under the additional assumption that $X$ is recursively presented.
\end{proof}
We mention another characterization of Borel orbit equivalence relations, due to Becker and Kechris:
\begin{thm}
\label{Becker Kechris 7.1.2} Let $X$ be a Polish $G$ - space. The following are equivalent:
\begin{enumerate}
\item $E_{G}^{X}$ is Borel.
\item The map $x \to G_x$ from $X$ into $F(G)$ is Borel.
\item The map $(x,y) \to G_{(x,y)}$ from $X^2$ into $F(G)$ is Borel.
\end{enumerate}
\end{thm}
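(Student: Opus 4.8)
The plan is to prove the cycle $(1)\Rightarrow(3)\Rightarrow(2)$, $(3)\Rightarrow(1)$, and $(2)\Rightarrow(1)$, so that the real content is $(1)\Leftrightarrow(3)$ together with $(2)\Rightarrow(1)$, the remaining arrows being formal. For $(3)\Rightarrow(2)$ one observes that $G_{x}=G_{(x,x)}$ and composes the map of (3) with the continuous diagonal $x\mapsto(x,x)$. For $(3)\Rightarrow(1)$, note $E_{G}^{X}=\{(x,y):G_{(x,y)}\neq\emptyset\}$, and that $\{F\in F(G):F\neq\emptyset\}=\{F:F\cap G\neq\emptyset\}$ is one of the sets generating the Effros $\sigma$-algebra, hence Borel; thus $E_{G}^{X}$ is the preimage of a Borel set under the Borel map of (3).

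The heart is $(1)\Rightarrow(3)$. Assume $E_{G}^{X}$ is Borel; as recalled above this forces all orbits to be $\mathbf{\Pi}^{0}_{\alpha}$ for a single countable $\alpha$. Since $G_{(x,y)}$ is always closed, it suffices to show that for each $V$ in a fixed countable basis of $G$ the set $\{(x,y):G_{(x,y)}\cap V\neq\emptyset\}=\{(x,y):y\in V\cdot x\}$ is Borel. A priori this is only analytic — it is the projection to $X^{2}$ of the locally closed set $\{(g,x,y):g\in V,\ g\cdot x=y\}\subseteq G\times X^{2}$ — so the hypothesis must be used to collapse it. Two ingredients are in play. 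First, the closed analogue $\{(x,y):y\in\overline{V\cdot x}\}$ is already Borel, in fact $G_{\delta}$: for a compatible metric $d$ on $X$ it equals $\bigcap_{m}\{(x,y):\exists g\in V,\ d(g\cdot x,y)<1/m\}$, a countable intersection of projections of open sets. Second, along a single orbit the gap between $V\cdot x$ and $\overline{V\cdot x}$ is controlled: by the Effros theorem a $G_{\delta}$ orbit is homeomorphic to $G/G_{x}$ with $V\cdot x$ relatively open there, and in general one argues via the topological Vaught transform, which carries $\mathbf{\Pi}^{0}_{\alpha}$ sets to $\mathbf{\Pi}^{0}_{\alpha}$ sets; combining this with the uniform bound $\alpha$ one expresses $\{(x,y):y\in V\cdot x\}$ uniformly as a Borel combination of Vaught transforms of $E_{G}^{X}$ and of the $G_{\delta}$ set just described. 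Carrying out this last step — in particular making the passage from $\overline{V\cdot x}$ to $V\cdot x$ uniform in $(x,y)$ — is the main obstacle; the rest is soft.

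Finally, $(2)\Rightarrow(1)$: one must see that a Borel stabilizer map already forces $E_{G}^{X}$ to be Borel. From $x\mapsto G_{x}$ Borel, the uniform form of the Kuratowski--Ryll-Nardzewski selection theorem produces Borel maps $d_{n}:X\to G$ with $\{d_{n}(x):n\in\omega\}$ dense in $G_{x}$ for every $x$. Using these, the plan is to decide Borel-measurably, given $(x,y)$, whether some $g$ satisfies $g\cdot x=y$ and, when so, to manufacture such a $g$ by successive approximation along basic open sets of $G$: at each stage only the open condition ``$W\cdot x$ meets a small ball around $y$'' is needed, while density of $\{d_{n}(x)\}$ in $G_{x}$ guarantees that the nested approximations close up inside the coset $G_{(x,y)}$ rather than drifting off the orbit. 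This exhibits $E_{G}^{X}=\{(x,y):\exists g\ g\cdot x=y\}$ as Borel. I expect this implication to be technical but considerably shallower than $(1)\Rightarrow(3)$.
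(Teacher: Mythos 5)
The paper gives no proof of this statement: it is quoted as background from Becker--Kechris \cite{key-11} (their Theorem 7.1.2), so your attempt has to stand on its own. The formal arrows are fine: $(3)\Rightarrow(2)$ via the diagonal and $(3)\Rightarrow(1)$ via pulling back $\{F\ :\ F\cap G\neq\emptyset\}$ are correct. For $(1)\Rightarrow(3)$ you correctly isolate the difficulty --- making the passage from $\overline{V\cdot x}$ to $V\cdot x$ uniform in $(x,y)$ --- but you stop exactly there, and ``a Borel combination of Vaught transforms'' is a hope, not an argument. The step can be completed, and the paper's Section 5 supplies precisely the missing tool: $y\in U\cdot x$ iff there are basic $V,W$ with $W^{-1}V\subseteq U$ and $(y,W)\leq_{\alpha}(x,V)$ for all $\alpha$ (one direction because $y\in(V\cdot x)^{*W}$ yields $g\in W$, $h\in V$ with $g\cdot y=h\cdot x$, by Proposition \ref{equivalent definition <=00003Dalpha for every alpha}; the other by shrinking $W$ around the identity and $V$ around a witness $g_{0}$ so that $Wg_{0}\subseteq V$). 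When $E_{G}^{X}$ is Borel, all $V\cdot x$ are $\mathbf{\Pi_{\alpha_{0}+1}^{0}}$ for a single $\alpha_{0}$ (Proposition \ref{Complexity of Ux relative with Gx}), so by Proposition \ref{(y,W)<=00003D(x,v) for all alpha} the quantifier over all $\alpha$ collapses to the single Borel relation $\leq_{\alpha_{0}+1}$, and $\{(x,y)\ :\ G_{(x,y)}\cap U\neq\emptyset\}$ is Borel. You should either carry this out or cite it; as written the heart of the theorem is absent.

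The more serious gap is $(2)\Rightarrow(1)$. Your successive approximation only ever tests open conditions of the form ``$W\cdot x$ meets a small ball around $y$''; the existence of arbitrarily long finite chains of such approximations certifies only that $y$ lies in suitable closures $\overline{W\cdot x}$, i.e.\ it re-derives analyticity of $E_{G}^{X}$, which was never in doubt. The assertion that density of the selectors $d_{n}(x)$ in $G_{x}$ forces the nested sets to ``close up inside $G_{(x,y)}$ rather than drifting off the orbit'' is exactly the Effros-type rigidity ($h\cdot x$ near $g_{0}\cdot x$ forces $h$ near $g_{0}G_{x}$), which by Theorem \ref{thm:(-Effros-)} holds precisely for non-meager ($G_{\delta}$) orbits and fails in general; nothing in your sketch explains how the $d_{n}$ substitute for it when the orbit is meager. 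So this implication needs a genuinely different idea, and since your cycle closes through it, the proof as proposed does not establish the theorem.
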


\selectlanguage{english}%
\subsection{The Logic Action}
One of the most important example of Polish actions is the \textit{logic action}.
Let $\mathcal{L}$ be a countable relational language, $\mathcal{L}=(R_{i})_{i\in\omega}$
when $R_{i}$ is an $n_{i}-ary$ relation. We denote by $Mod(\mathcal{L})$
the collection of countable models, and assume that all the models
have the set of natural numbers as their universe. Then every $\mathcal{M\in}Mod(\mathcal{L})$
can in fact be coded as an element of $\Pi_{i\in\omega}2^{\omega^{n_{i}}}$
(which is homeomorphic to $2^{\omega}$). In particular, $Mod(\mathcal{L})$
inherits the topology of $\Pi_{i\in\omega}2^{\omega^{n_{i}}}$. This
is exactly the topology generated by
\[
A_{\phi,\bar{a}}=\{\mathcal{M}\ :\ \mathcal{M}\models\phi(\bar{a})\}
\]
where $\phi$ is an atomic formula or a negation of one, and $\bar{a}\in\omega^{<\omega}.$

\selectlanguage{british}%

This is indeed a very natural topology for $Mod(\mathcal{L})$ , as
the following theorem demonstrates:

\begin{thm}

(Lopez - Escobar) $B\subseteq Mod(\mathcal{L})$ is Borel invariant
if and only if there is a sentence $\phi\in\mathcal{L}_{\omega_{1},\omega}$
such that $B=Mod(\phi).$

\textup{A proof can be found in} \cite{key-7}.

\end{thm}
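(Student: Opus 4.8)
The plan: prove $(\Leftarrow)$ by a straightforward induction on formulas, and $(\Rightarrow)$ by a recursion on Borel complexity that uses Vaught transforms to translate the topology of $S_{\infty}$ into infinitary quantifiers; the final sentence is read off at the bottom level of that recursion.

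For $(\Leftarrow)$, one checks by induction on $\varphi\in\mathcal{L}_{\omega_{1},\omega}$ that for every $\bar a\in\omega^{<\omega}$ the set $\{\mathcal{M}:\mathcal{M}\models\varphi(\bar a)\}$ is Borel: atomic $\varphi$ give the basic clopen sets $A_{\phi,\bar a}$, negation gives complements, countable $\bigwedge,\bigvee$ give countable intersections and unions, and — since every $\mathcal{M}$ has universe $\omega$ — the quantifiers $\exists v_{n},\forall v_{n}$ become $\bigcup_{m\in\omega},\bigcap_{m\in\omega}$ of the instances $\{\mathcal{M}:\mathcal{M}\models\varphi(\bar a,m)\}$. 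Hence $Mod(\varphi)$ is Borel, and a parallel induction (isomorphic models satisfy the same formulas) gives invariance.

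For $(\Rightarrow)$, fix an invariant Borel $B$. For a finite injective $q=(a_{0},\dots,a_{n-1})$ put $N^{q}=\{g\in S_{\infty}:g^{-1}(i)=a_{i}\text{ for }i<n\}$, a basic clopen subset of $S_{\infty}$, and write $A^{*N^{q}}=\{\mathcal{M}:\{g\in N^{q}:g\cdot\mathcal{M}\in A\}\text{ is non-meager in }N^{q}\}$ for the Vaught $*$-transform (its preimage description shows this set has the Baire property whenever $A$ is Borel). Since $B$ is invariant, $B^{*N^{\emptyset}}=B^{*S_{\infty}}=B$, so it suffices to produce, for every Borel $A$ and every $n$, a formula $\theta^{n}_{A}(v_{0},\dots,v_{n-1})\in\mathcal{L}_{\omega_{1},\omega}$ with
\[
\mathcal{M}\models\theta^{n}_{A}(a_{0},\dots,a_{n-1})\iff\mathcal{M}\in A^{*N^{q}}
\]
for all $\mathcal{M}$ and all injective $q=(a_{0},\dots,a_{n-1})$; then $\theta^{0}_{B}$ is the desired sentence. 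We build $\theta^{n}_{A}$ by recursion on the generation of Borel sets from basic clopen sets under complementation and countable unions (well-founded by the Borel-hierarchy rank). If $A$ is basic clopen, cut out by a quantifier-free condition on parameters $\bar b$, then $\mathcal{M}\in A^{*N^{q}}$ unwinds to the existence of $g\in N^{q}$ realizing that condition: parameters $b_{j}$ occurring in $q$ contribute the variable $v_{b_{j}}$, the others contribute fresh existentially quantified variables (pairwise distinct and distinct from $\bar v$), so $\theta^{n}_{A}$ is first-order — this is the only place genuine quantifiers are born. Countable unions commute with the $*$-transform (a countable union of meager sets is meager), so $\theta^{n}_{\bigcup_{m}A_{m}}=\bigvee_{m}\theta^{n}_{A_{m}}$. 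For the complement one uses that a set with the Baire property is comeager in $N^{q}$ iff it is non-meager in every basic open $N^{q'}\subseteq N^{q}$; hence $\mathcal{M}\in (A')^{\triangle N^{q}}$ (the dual transform) iff $\mathcal{M}\in (A')^{*N^{q'}}$ for every finite $q'\supseteq q$, and since $(\neg A')^{*N^{q}}$ is the complement of $(A')^{\triangle N^{q}}$ we may take
\[
\theta^{n}_{\neg A'}(\bar v)=\neg\bigwedge_{k<\omega}\forall w_{0}\cdots w_{k-1}\big(\mathrm{inj}(\bar v,\bar w)\to\theta^{n+k}_{A'}(v_{0},\dots,v_{n-1},w_{0},\dots,w_{k-1})\big),
\]
where $\mathrm{inj}(\bar v,\bar w)$ is the quantifier-free statement that all displayed variables denote distinct elements (so non-injective assignments are vacuously allowed, matching the absence of such $q'$); this again lies in $\mathcal{L}_{\omega_{1},\omega}$ since the conjunction runs over countably many $k$, and it calls $\theta^{n+k}_{A'}$ for the strictly simpler set $A'$.

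The delicate point — and the place where the level $n$ must grow and the nested quantifier blocks appear — is the complement clause: one must verify the ``comeager iff everywhere non-meager'' reduction of the $\triangle$-transform to a countable combination of $*$-transforms at higher levels, and one must be careful with the convention $N^{q}=\{g:g^{-1}\supseteq q\}$ rather than $\{g:g\supseteq q\}$, so that in the base case the model-elements substituted into atomic formulas really are those named by $v_{0},\dots,v_{n-1}$ (with the naive convention the base case does not match up). Everything else is Baire-category bookkeeping — Kuratowski--Ulam and the stability of the $*$-transform under the Boolean operations — plus the trivial check that every connective produced is countably indexed, so the $\theta^{n}_{A}$ stay inside $\mathcal{L}_{\omega_{1},\omega}$.
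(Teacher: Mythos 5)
Your proof is correct and is essentially the standard Vaught-transform argument that the paper itself does not spell out but defers to \cite{key-7}: the hard direction via formulas $\theta^{n}_{A}$ tracking the transforms over the basic clopen sets $N^{q}$, with the recursion on a Borel construction of $B$, is exactly the cited proof, and your handling of the two delicate points (the $g^{-1}$ convention in the base case, and reducing the comeager transform to non-meager transforms over all finite extensions $q'\supseteq q$ in the complement step) is sound. The only cosmetic caveat is that your $*$ denotes the non-meager transform and your $\triangle$ the comeager one, which is the reverse of the paper's convention in its Vaught-transform lemma, but your usage is internally consistent throughout.
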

\selectlanguage{english}%

We now define a Polish action of $S_{\infty}$ on $Mod(L)$ by:
\[
R^{gM}(a_{1},\dots,a_{n})\iff R^{M}(g^{-1}(a_{1}),\dots,g^{-1}(a_{n}))
\]
This action is called the logic action, and it is easy to verify that
the orbit equivalence relation is exactly the isomorphism of $L$
- models.

\selectlanguage{british}%
In many cases in mathematics, we will want to study the isomorphism
classes of models of a certain first order theory, for example, the
isomorphism classes of groups or rings. We will therefore want to
restrict the action of $S_{\infty}$ only to the models of this theory.
So let $T$ be a first order theory in a countable language $\mathcal{L}$.
The collection of countable models of $T$ , $Mod_{\mathcal{L}}(T)$,
is a Borel invariant subset of $Mod(\mathcal{L})$. The continuous
action of $S_{\infty}$ on the Borel invariant subset $Mod_{\mathcal{L}}(T)$
induces the isomorphism of models of $T$ as its orbit equivalence
relation. That important example is one of the reasons we will prefer
to state our thoerem for $B$ Borel invariant and not necessarily
Polish.

\subsection{Scott Analysis}

We review the basic properties of Scott analysis, as were established
in \cite{key-14}. A more detailed review can also be found in \cite{key-4,key-7}.

\selectlanguage{english}%
\begin{defn}

Let \textbf{$\mathcal{M},\mathcal{N}\in Mod(\mathcal{L})$, $\bar{a},\bar{b}\in\omega^{<\omega}$
}of the same length.

\begin{itemize}

\item $(\mathcal{M},\bar{a})\equiv_{0}(\mathcal{N},\bar{b})$ if for every
$\phi(\bar{x})$ atomic, $\mathcal{M}\models\phi(\bar{a})\iff\mathcal{N}\models\phi(\bar{b})$.
\item $(\mathcal{M},\bar{a})\equiv_{\alpha+1}(\mathcal{N},\bar{b})$ if
for every $c\in\omega$ there is $d\in\omega$ s.t. $(\mathcal{M},\bar{a}^{\frown}c)\equiv_{\alpha}(\mathcal{N},\bar{b}^{\frown}d)$
and for every $d\in\omega$ there is $c\in\omega$ s.t. $(\mathcal{N},\bar{b}^{\frown}d)\equiv_{\alpha}(\mathcal{M},\bar{a}^{\frown}c)$.
\item For $\lambda$ limit, $(\mathcal{M},\bar{a})\equiv_{\lambda}(\mathcal{N},\bar{b})$
if for every $\alpha<\lambda$, $(\mathcal{M},\bar{a})\equiv_{\alpha}(\mathcal{N},\bar{b})$.

\end{itemize}
\end{defn}

Saying that $(\mathcal{M},\bar{a})\equiv_{\alpha}(\mathcal{N},\bar{b})$
expresses a certain similarity between the tuple $\bar{a}$ in $\mathcal{M}$
and the tuple $\bar{b}$ in $\mathcal{N}$. The similarity improves
as $\alpha$ increases. We will see that if $\alpha$ is large enough,
$(\mathcal{M},\bar{a})\simeq(\mathcal{N},\bar{b})$ , which is, there
is an isomorphism between $\mathcal{M}$ and $\mathcal{N}$ which
takes $\bar{a}$ to $\bar{b}$.

\begin{defn}

We say that $\mathcal{M}\equiv_{\alpha}\mathcal{N}$ if $(\mathcal{M},\emptyset)\equiv_{\alpha}(\mathcal{N},\emptyset).$

\end{defn}

$\equiv_{\alpha}$ is a decreasing sequence of equivalence relations.
\foreignlanguage{british}{The following is easily proved by induction:}

\begin{prop}

For every $\alpha$, $\equiv_{\alpha}$ is Borel, and in fact $\mathbf{\Pi_{1+2\cdot\alpha}^{0}.}$

\end{prop}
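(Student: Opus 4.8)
The plan is a transfinite induction on $\alpha$, proving the slightly stronger statement that for \emph{every} pair of tuples $\bar{a},\bar{b}\in\omega^{<\omega}$ of the same length $n$, the set $E^{\bar a,\bar b}_\alpha=\{(\mathcal{M},\mathcal{N})\ :\ (\mathcal{M},\bar{a})\equiv_{\alpha}(\mathcal{N},\bar{b})\}$ is $\mathbf{\Pi^0_{1+2\cdot\alpha}}$ in $Mod(\mathcal{L})^{2}$, with a complexity bound that does not depend on $\bar a,\bar b$ or on $n$. Since $\omega^{<\omega}$ is countable, this uniformity is exactly what lets us conclude that $\equiv_{\alpha}$, viewed as a single subset of the Polish space $\bigsqcup_{n}Mod(\mathcal{L})\times\omega^{n}\times Mod(\mathcal{L})\times\omega^{n}$, is $\mathbf{\Pi^0_{1+2\cdot\alpha}}$.

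For the base case $\alpha=0$: each $A_{\phi,\bar a}=\{\mathcal{M}\ :\ \mathcal{M}\models\phi(\bar a)\}$ with $\phi$ atomic is clopen, being a basic open set whose complement is the basic open set $A_{\neg\phi,\bar a}$. Hence, for fixed $\bar a,\bar b$ of equal length,
\[
E^{\bar a,\bar b}_0=\bigcap_{\phi\ \mathrm{atomic}}\bigl[(A_{\phi,\bar a}\times A_{\phi,\bar b})\cup(A_{\neg\phi,\bar a}\times A_{\neg\phi,\bar b})\bigr]
\]
is a countable intersection of clopen sets, hence closed, i.e. $\mathbf{\Pi^0_{1}}=\mathbf{\Pi^0_{1+2\cdot 0}}$, uniformly. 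For the successor step, assume the claim at $\alpha$ and set $\gamma=1+2\cdot\alpha\geq 1$. By the induction hypothesis applied to the length-$(n{+}1)$ tuples $\bar a{}^{\frown}c$, $\bar b{}^{\frown}d$ (and the fact that permuting the two factors of $Mod(\mathcal{L})^{2}$ is a homeomorphism), each set $\{(\mathcal{M},\mathcal{N})\ :\ (\mathcal{M},\bar a{}^{\frown}c)\equiv_{\alpha}(\mathcal{N},\bar b{}^{\frown}d)\}$ and its mirror are $\mathbf{\Pi^0_{\gamma}}$. Then $\{(\mathcal{M},\mathcal{N})\ :\ \exists d\ (\mathcal{M},\bar a{}^{\frown}c)\equiv_{\alpha}(\mathcal{N},\bar b{}^{\frown}d)\}$, being a countable union of $\mathbf{\Pi^0_{\gamma}}$ sets, is $\mathbf{\Sigma^0_{\gamma+1}}$; intersecting over $c\in\omega$ yields a countable intersection of $\mathbf{\Sigma^0_{\gamma+1}}$ sets, which is $\mathbf{\Pi^0_{\gamma+2}}$. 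The symmetric conjunct in the definition of $\equiv_{\alpha+1}$ is likewise $\mathbf{\Pi^0_{\gamma+2}}$, and $\mathbf{\Pi^0_{\gamma+2}}$ is closed under finite intersections, so $E^{\bar a,\bar b}_{\alpha+1}\in\mathbf{\Pi^0_{\gamma+2}}=\mathbf{\Pi^0_{1+2\cdot(\alpha+1)}}$, uniformly.

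For the limit step, let $\lambda$ be a limit ordinal. Elementary ordinal arithmetic gives $1+2\cdot\alpha<\lambda$ for every $\alpha<\lambda$, and $E^{\bar a,\bar b}_{\lambda}=\bigcap_{\alpha<\lambda}E^{\bar a,\bar b}_{\alpha}$ is therefore a countable intersection of sets each of which is $\mathbf{\Pi^0_{\beta}}$ for some $\beta<\lambda$; by definition such a set is $\mathbf{\Pi^0_{\lambda}}$. Since $1+2\cdot\lambda=\lambda$ for limit $\lambda$, this is precisely $\mathbf{\Pi^0_{1+2\cdot\lambda}}$, and the induction closes. There is no serious obstacle here — the only points requiring a moment's care are the ordinal bookkeeping ($1+2\cdot\alpha<\lambda$ for $\alpha<\lambda$ and $1+2\cdot\lambda=\lambda$ at limits) and keeping track of the fact that every complexity bound obtained along the way is independent of the tuples $\bar a,\bar b$, which is what makes the passage from the fixed-tuple statement to the statement about $\equiv_{\alpha}$ legitimate.
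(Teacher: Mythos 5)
Your proof is correct and follows exactly the route the paper intends: the paper itself offers no details beyond ``easily proved by induction,'' and your transfinite induction (clopen base case, $+2$ per successor via $\exists d$ then $\forall c$, and the identities $1+2\cdot\alpha<\lambda$ and $1+2\cdot\lambda=\lambda$ at limits) is the standard argument with the uniformity over tuples $\bar a,\bar b$ correctly tracked. Nothing to add.
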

\selectlanguage{british}%
This equivalence relation carries information about the collection
of $\mathbf{\Pi_{\alpha}^{0}}$ invariant sets to which the models
belong, as the following theorem demonstrates:
\begin{thm}

\label{property III Scott-1}Let $A\subseteq Mod(\mathcal{L})$ be
a $\mathbf{\Pi_{\alpha}^{0}}$ invariant set. If $\mathcal{M}\equiv_{\omega\cdot\alpha}\mathcal{N}$
then $\mathcal{M}\in A\iff\mathcal{N}\in A$.

\end{thm}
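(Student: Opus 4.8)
The plan is to prove by induction on $\alpha$ the following sharper statement: if $A$ is $\mathbf{\Pi^0_\alpha}$ and invariant, and $\mathcal M\equiv_{\omega\cdot\alpha}\mathcal N$, then $\mathcal M\in A\iff\mathcal N\in A$; and symmetrically for $\mathbf{\Sigma^0_\alpha}$ sets (the two cases feed each other through complementation, since the complement of an invariant $\mathbf{\Pi^0_\alpha}$ set is invariant $\mathbf{\Sigma^0_\alpha}$). It is convenient to prove simultaneously the version with parameters: for tuples $\bar a,\bar b$ with $(\mathcal M,\bar a)\equiv_{\omega\cdot\alpha}(\mathcal N,\bar b)$, the models satisfy the same $\mathbf{\Pi^0_\alpha}$ formulas of $\mathcal L_{\omega_1,\omega}$ with free variables filled by $\bar a,\bar b$ respectively — here I would invoke Lopez--Escobar to pass freely between invariant Borel sets and $\mathcal L_{\omega_1,\omega}$-formulas, and recall that a $\mathbf{\Pi^0_\alpha}$ invariant set corresponds to a formula whose quantifier-and-negation complexity over the atomic level is controlled by $\alpha$.

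For the base case $\alpha=0$: a $\mathbf{\Pi^0_0}=\mathbf{\Sigma^0_0}$ (clopen) invariant set is, by Lopez--Escobar, defined by an open-and-closed formula, which one checks is equivalent to a quantifier-free one; and $(\mathcal M,\bar a)\equiv_0(\mathcal N,\bar b)$ is exactly agreement on quantifier-free (atomic) formulas. For the successor step, write a $\mathbf{\Pi^0_{\alpha+1}}$ invariant set as a countable intersection $\bigcap_n B_n$ of invariant $\mathbf{\Sigma^0_\alpha}$ sets. Each $B_n$, by Lopez--Escobar and the induction hypothesis, is "decided" at level $\omega\cdot\alpha$ up to one layer of existential quantification: concretely $B_n$ is defined by a formula of the shape $\bigvee_k \exists \bar y\,\psi_{n,k}(\bar x,\bar y)$ with each $\psi_{n,k}$ of $\mathbf{\Pi^0_\alpha}$-type, so membership in $B_n$ is preserved once we can match finitely many extra witnesses and then apply the $\equiv_{\omega\cdot\alpha}$-hypothesis to the extended tuples. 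The point is that $(\mathcal M,\bar a)\equiv_{\omega\cdot(\alpha+1)}(\mathcal N,\bar b)$ means $(\mathcal M,\bar a)\equiv_{\omega\cdot\alpha+n}(\mathcal N,\bar b)$ for all $n$, and unwinding the definition of $\equiv_{\beta+n}$ gives exactly the back-and-forth on $n$-tuples of witnesses at level $\omega\cdot\alpha$ that we need: given the witness $\bar y$ in $\mathcal M$ for $\exists\bar y\,\psi_{n,k}$, play the forth part $|\bar y|$-many times to obtain a matching $\bar y'$ in $\mathcal N$ with $(\mathcal M,\bar a^\frown\bar y)\equiv_{\omega\cdot\alpha}(\mathcal N,\bar b^\frown\bar y')$, then apply the induction hypothesis to the $\mathbf{\Pi^0_\alpha}$-formula $\psi_{n,k}$. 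The limit case $\alpha=\lambda$ is immediate: a $\mathbf{\Pi^0_\lambda}$ set is a countable intersection of sets each $\mathbf{\Sigma^0_{\beta_n}}$ for $\beta_n<\lambda$, and $\mathcal M\equiv_{\omega\cdot\lambda}\mathcal N$ implies $\mathcal M\equiv_{\omega\cdot\beta_n}\mathcal N$ for each $n$, so each conjunct is preserved by the ($\Sigma$-side of the) induction hypothesis.

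The main obstacle is bookkeeping the exact correspondence between the Borel hierarchy level $\mathbf{\Pi^0_\alpha}$ and the normal form of the corresponding $\mathcal L_{\omega_1,\omega}$-sentence — i.e., verifying that raising the topological complexity by one corresponds to exactly one alternation of countable conjunction/disjunction together with a number quantifier, so that the ordinal budget $\omega\cdot\alpha$ (as opposed to, say, $\alpha$ or $\alpha+1$) is precisely what the back-and-forth consumes. I expect the cleanest route is not to fix a syntactic normal form at all, but to argue semantically: define $A_\beta^{\bar a}=\{(\mathcal M,\bar a): \mathcal M\in A\}$-type sections and show by a direct transfinite recursion on the Borel rank of $A$ that the set $A$ is a union of $\equiv_{\omega\cdot\alpha}$-classes of tuples, using that $\mathbf{\Sigma^0_{\alpha+1}}$ sets are countable unions of "$\mathbf{\Pi^0_\alpha}$ plus a projection along the $S_\infty$-action" which, restricted to the standard clopen basis $A_{\phi,\bar a}$, is witnessed by extending the tuple by one coordinate. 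Everything else is routine induction once this dictionary is pinned down.
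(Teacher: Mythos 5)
First, a point of comparison: the paper does not prove this theorem at all --- it appears in the preliminaries as a known fact of Scott analysis, with the proof deferred to the cited literature (Scott, Marker, Gao). So your plan can only be measured against the standard argument and against the route the paper's own later machinery would supply. Your overall strategy --- simultaneous induction on the Borel rank for $\mathbf{\Pi}^0_\alpha$ and $\mathbf{\Sigma}^0_\alpha$ sets, carried out for the parametrized relations $(\mathcal M,\bar a)\equiv_{\omega\cdot\alpha}(\mathcal N,\bar b)$, with the factor $\omega$ paying for finitely many rounds of witness-matching at each successor step --- is indeed the standard and correct one.

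The genuine gap is at the successor step as written. When you decompose a $\mathbf{\Pi}^0_{\alpha+1}$ invariant set as $A=\bigcap_n B_n$ with $B_n$ in $\mathbf{\Sigma}^0_\alpha$, the sets $B_n$ are not invariant, so neither Lopez--Escobar nor your induction hypothesis applies to them; the assertion that ``each $B_n$ is decided at level $\omega\cdot\alpha$ up to one layer of existential quantification'' is precisely what needs proof, not a consequence of anything you have established. Moreover, the plain Lopez--Escobar theorem gives no control of the syntactic complexity of the defining sentence in terms of the Borel rank of $A$; the quantitative, level-by-level version of Lopez--Escobar is essentially equivalent to the theorem being proved, so invoking it here is circular. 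The standard repair --- which you gesture at in your closing paragraph but do not carry out, and which is the entire technical content of the proof --- is to replace each $B_n$ by its Vaught transforms $B_n^{\triangle V_{\bar a,\bar b}}$ over the basic open sets of $S_\infty$: these have the same Borel level (by the complexity-preservation properties of Vaught transforms listed in Section 2.4), satisfy $A=A^{*}=\bigcap_{n}\bigcap_{\bar a,\bar b}B_n^{\triangle V_{\bar a,\bar b}}$, and are invariant relative to the parameter $\bar a$, which is exactly the form of invariance your parametrized induction hypothesis can consume. (The base case should also start at $\alpha=1$, i.e.\ invariant open sets against $\equiv_\omega$, rather than at clopen sets against $\equiv_0$, where the indexing does not match.) Note finally that the paper's own general results give a complete proof of this statement as a special case: Proposition \ref{Main lemma section 2} and Theorem \ref{Property III}, combined with the proposition of Section 5.1 showing that $(\mathcal M,\bar a)\equiv_{\omega\cdot\alpha}(\mathcal N,\bar a')$ implies $(\mathcal M,V_{\bar a,\bar b})\leq_\alpha(\mathcal N,V_{\bar a',\bar b})$, yield precisely that $\equiv_{\omega\cdot\alpha}$-equivalent models lie in the same $\mathbf{\Pi}^0_\alpha$ invariant sets.
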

Toward defining Scott rank, we need to show:
\selectlanguage{english}%
\begin{prop}

Given $\mathcal{M}\in Mod(\mathcal{L})$, there is $\alpha<\omega_{1}$
such that if $(\mathcal{M},\bar{a})\equiv_{\alpha}(\mathcal{M},\bar{b})$
then $(\mathcal{M},\bar{a})\equiv_{\alpha+1}(\mathcal{M},\bar{b})$.
Moreover, for such an $\alpha$, $(\mathcal{M},\bar{a})\equiv_{\alpha}(\mathcal{M},\bar{b})$
implies that for all $\beta$, $(\mathcal{M},\bar{a})\equiv_{\beta}(\mathcal{M},\bar{b})$.
\end{prop}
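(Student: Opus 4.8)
The plan is to argue by a cardinality/counting argument on the equivalence classes of $\equiv_\alpha$, exactly as in the classical Scott analysis. First I would observe that for each $\alpha$, the relation $\equiv_\alpha$ restricted to tuples from the fixed model $\mathcal{M}$ is an equivalence relation on $\bigcup_n \omega^n$, a countable set; hence it has at most countably many classes. As $\alpha$ increases, $\equiv_{\alpha+1}$ refines $\equiv_\alpha$ (on tuples of fixed length), so the partition of $\bigcup_n \omega^n$ by $\equiv_\alpha$ gets finer (or stays the same) as $\alpha$ grows. A strictly decreasing sequence of partitions of a countable set must stabilize at some countable ordinal: otherwise, at each successor step that genuinely refines we could pick a pair $(\bar a,\bar b)$ with $\bar a \equiv_\alpha \bar b$ but $\bar a \not\equiv_{\alpha+1}\bar b$, and these pairs are distinct for distinct $\alpha$, giving an injection of $\omega_1$ into the countable set $(\bigcup_n\omega^n)^2$, a contradiction. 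So there is a countable $\alpha$ with $\equiv_\alpha\,=\,\equiv_{\alpha+1}$ on tuples from $\mathcal{M}$, which is the first assertion (with $\bar a \equiv_\alpha \bar b \Rightarrow \bar a \equiv_{\alpha+1}\bar b$ — in fact equivalence, but we only need the one direction).

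For the ``moreover'' clause, fix such an $\alpha$ and prove by transfinite induction on $\beta \ge \alpha$ that $(\mathcal{M},\bar a)\equiv_\alpha(\mathcal{M},\bar b)$ implies $(\mathcal{M},\bar a)\equiv_\beta(\mathcal{M},\bar b)$, for all tuples $\bar a,\bar b$ simultaneously. The base case $\beta=\alpha$ is trivial. For a limit $\beta$, the definition of $\equiv_\beta$ is the conjunction of $\equiv_\gamma$ for $\gamma<\beta$, so the inductive hypotheses for those $\gamma$ give it immediately. The successor step $\beta\to\beta+1$ is the only one with content: assume $(\mathcal{M},\bar a)\equiv_\alpha(\mathcal{M},\bar b)$; by the stabilization property this gives $(\mathcal{M},\bar a)\equiv_{\alpha+1}(\mathcal{M},\bar b)$, so for every $c\in\omega$ there is $d\in\omega$ with $(\mathcal{M},\bar a^\frown c)\equiv_\alpha(\mathcal{M},\bar b^\frown d)$ (and symmetrically). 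By the induction hypothesis applied to the longer tuples $\bar a^\frown c$ and $\bar b^\frown d$, this upgrades to $(\mathcal{M},\bar a^\frown c)\equiv_\beta(\mathcal{M},\bar b^\frown d)$; feeding this back-and-forth matching into the definition of $\equiv_{\beta+1}$ yields $(\mathcal{M},\bar a)\equiv_{\beta+1}(\mathcal{M},\bar b)$, closing the induction.

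The subtlety to be careful about — and the step I'd flag as the one most likely to be gotten wrong — is that the induction on $\beta$ must be run uniformly over all pairs of tuples of all lengths at once, because the successor step reduces a claim about $\bar a,\bar b$ at stage $\beta+1$ to claims about the extended tuples $\bar a^\frown c,\bar b^\frown d$ at stage $\beta$; one cannot fix the tuples first and then induct on $\beta$. Equivalently, the stabilization at level $\alpha$ needs to hold for tuples of \emph{every} length (which is automatic, since the counting argument above already quantified over all lengths), so that the ``$\equiv_\alpha\Rightarrow\equiv_{\alpha+1}$'' implication is available for the extended tuples appearing in the back-and-forth clause. With that understood, everything else is a routine unwinding of the definitions of $\equiv_{\alpha+1}$ and $\equiv_\lambda$.
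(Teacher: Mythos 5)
Your proof is correct and follows essentially the same route as the paper: the existence of a stabilization point is obtained by the same counting argument (the paper phrases it as the increasing sequence of sets $A_{\alpha}=\{(\bar a,\bar b):\neg(\mathcal{M},\bar a)\equiv_{\alpha}(\mathcal{M},\bar b)\}$ in a countable set stabilizing, which is your injection argument in complemented form). The ``moreover'' clause, which the paper leaves implicit, is exactly the back-and-forth induction you spell out, and your cautionary remark about running the induction uniformly over tuples of all lengths is the right point to flag.
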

\begin{proof}

Define for every $\alpha<\omega_{1}$ : $A_{\alpha}=\{(\bar{a},\bar{b})\ :\ \neg(\mathcal{M},\bar{a})\equiv_{\alpha}(\mathcal{M},\bar{b})\}.$
This is an increasing sequence of subsets of $\omega^{<\omega}$ ,
and strictly increasing till it stabilizes. Hence, it must stabilize
at a certain point.
\end{proof}
\begin{defn}

For $\mathcal{M}\in Mod(\mathcal{L})$, $\delta(\mathcal{M})$, the
\textit{Scott rank} of $\mathcal{M}$, is the least $\alpha$ such
that for all $\bar{a},\bar{b}\in\omega^{<\omega},$ $(\mathcal{M},\bar{a})\equiv_{\alpha}(\mathcal{M},\bar{b})$
implies $(\mathcal{M},\bar{a})\equiv_{\alpha+1}(\mathcal{M},\bar{b})$.

\end{defn}

If $\mathcal{M}\equiv_{\delta(\mathcal{M})+\omega}\mathcal{N}$ then
$(\mathcal{M},\bar{a})\equiv_{\delta(\mathcal{M})}(\mathcal{N},\bar{b})$
implies $(\mathcal{M},\bar{a})\equiv_{\delta(\mathcal{M})+1}(\mathcal{N},\bar{b})$
. The main theorem follows:

\begin{thm}

\textbf{(Scott Isomorphism Theorem) }Let $\mathcal{M},\mathcal{N}\in Mod(\mathcal{L})$
such that $\mathcal{M}\equiv_{\delta(\mathcal{M})+\omega}\mathcal{N}.$
Then $\mathcal{M}\simeq\mathcal{N}$.
\end{thm}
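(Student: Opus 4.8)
The plan is to run the classical back-and-forth argument, with essentially all the real work delegated to the fact recorded just above: under the hypothesis $\mathcal{M}\equiv_{\delta(\mathcal{M})+\omega}\mathcal{N}$, any tuples $\bar a,\bar b$ of equal length with $(\mathcal{M},\bar a)\equiv_{\delta(\mathcal{M})}(\mathcal{N},\bar b)$ in fact satisfy $(\mathcal{M},\bar a)\equiv_{\delta(\mathcal{M})+1}(\mathcal{N},\bar b)$. Write $\rho=\delta(\mathcal{M})$ and set
\[ F=\{\,(\bar a,\bar b)\ :\ \bar a,\bar b\in\omega^{<\omega},\ |\bar a|=|\bar b|,\ (\mathcal{M},\bar a)\equiv_{\rho}(\mathcal{N},\bar b)\,\}. \]
I would first observe that $(\emptyset,\emptyset)\in F$: since the $\equiv_{\alpha}$ are decreasing, $\mathcal{M}\equiv_{\rho+\omega}\mathcal{N}$ gives $\mathcal{M}\equiv_{\rho}\mathcal{N}$.

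Next I would check that $F$ is a back-and-forth system. Let $(\bar a,\bar b)\in F$. The preparatory fact upgrades this to $(\mathcal{M},\bar a)\equiv_{\rho+1}(\mathcal{N},\bar b)$, and unwinding the definition of $\equiv_{\rho+1}$ yields both directions at once: for every $c\in\omega$ there is $d\in\omega$ with $(\mathcal{M},\bar a^{\frown}c)\equiv_{\rho}(\mathcal{N},\bar b^{\frown}d)$, and for every $d\in\omega$ there is $c\in\omega$ with $(\mathcal{N},\bar b^{\frown}d)\equiv_{\rho}(\mathcal{M},\bar a^{\frown}c)$; as $\equiv_{\rho}$ is an equivalence relation, hence symmetric, in either case $(\bar a^{\frown}c,\bar b^{\frown}d)\in F$. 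The key point is that the preparatory fact is quantified over all tuples, so it applies again to the lengthened pair and the extension process never stalls.

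Finally I would assemble the isomorphism in the usual way. Both $\mathcal{M}$ and $\mathcal{N}$ have universe $\omega$, so I would build an increasing chain of finite partial maps $\emptyset=p_0\subseteq p_1\subseteq\cdots$ such that, writing the domain and range of $p_n$ (in some fixed order) as tuples $\bar a_n,\bar b_n$, we keep $(\bar a_n,\bar b_n)\in F$; at stage $2n$ I use the forth clause to force $n$ into the domain and at stage $2n+1$ the back clause to force $n$ into the range. Because membership in $F$ implies $\equiv_{0}$-equivalence, each $p_n$ preserves and reflects all atomic formulas, in particular equalities — which forces the witnesses chosen at each step to lie outside the finitely many points already used, so the $p_n$ remain injective — and the union $g=\bigcup_n p_n$ is a bijection $\omega\to\omega$ with $R^{\mathcal{M}}(\bar a)\iff R^{\mathcal{N}}(g\bar a)$ for every atomic $R$. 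Thus $g\colon\mathcal{M}\to\mathcal{N}$ is an isomorphism.

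I do not expect a serious obstacle: the content that makes Scott rank work — that $\equiv_{\rho}$-equivalence of tuples across the two models is ``self-propagating'' up to $\equiv_{\rho+1}$ — has already been isolated in the statement preceding the theorem. The only delicate spot is the bookkeeping in the last paragraph, ensuring the back-and-forth witnesses can always be chosen fresh; but this is automatic once one notes that $\equiv_{0}$ records exactly which coordinates of a tuple coincide, so a witness duplicating an already-used point would force a coincidence in the other model that contradicts freshness there.
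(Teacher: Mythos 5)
Your proof is correct and takes essentially the same approach as the paper, whose entire proof is the one-line remark that this is done by a back-and-forth argument: you carry out exactly that argument, correctly resting the forth and back steps on the upgrade from $\equiv_{\delta(\mathcal{M})}$ to $\equiv_{\delta(\mathcal{M})+1}$ recorded immediately before the theorem. The closing bookkeeping (injectivity of the limit map via equality being among the atomic formulas preserved by $\equiv_{0}$) is the standard and correct way to finish.
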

\begin{proof}

This is done by a back \& forth argument.

\end{proof}

For a first order theory $T$ in a countable language $\mathcal{L}$,
we denote by $\simeq_{T}$ the isomorphism of models of $T$. We can
now state 3 characterizations of Borel $\simeq_{T}$:

\begin{thm}

\label{Becker Kechris Logic case}(Becker - Kechris) $\simeq_{T}$
is Borel if and only if there is an $\alpha<\omega_{1}$ such that
for every $\mathcal{M}\in Mod_{\mathcal{L}}{(T)}$, $\delta(\mathcal{M})<\alpha$

\end{thm}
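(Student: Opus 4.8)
\emph{Overview.} I would prove the two implications separately. The implication ``bounded Scott ranks $\Rightarrow$ $\simeq_T$ Borel'' is immediate from the Scott Isomorphism Theorem, so I treat it first. Suppose $\delta(\mathcal{M})<\alpha$ for every $\mathcal{M}\in Mod_{\mathcal{L}}(T)$. Isomorphic models are $\equiv_\beta$-equivalent for all $\beta$ (an isomorphism is already a back-and-forth system), so $\simeq_T\,\subseteq\,\equiv_{\alpha+\omega}$ on $Mod_{\mathcal{L}}(T)$. Conversely, if $\mathcal{M},\mathcal{N}\in Mod_{\mathcal{L}}(T)$ and $\mathcal{M}\equiv_{\alpha+\omega}\mathcal{N}$, then since $\equiv_\beta$ decreases in $\beta$ and $\delta(\mathcal{M})+\omega\leq\alpha+\omega$ we get $\mathcal{M}\equiv_{\delta(\mathcal{M})+\omega}\mathcal{N}$, whence $\mathcal{M}\simeq\mathcal{N}$ by the Scott Isomorphism Theorem. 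Thus $\simeq_T$ coincides on $Mod_{\mathcal{L}}(T)$ with $\equiv_{\alpha+\omega}$, which is $\mathbf{\Pi^0_{1+2\cdot(\alpha+\omega)}}$ by the proposition on the complexity of the $\equiv_\beta$'s; since $Mod_{\mathcal{L}}(T)$ is Borel, $\simeq_T$ is Borel.

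\emph{From Borelness to bounded ranks.} The substantial direction I would obtain from the boundedness theorem for $\mathbf{\Pi^1_1}$-ranks. For $\mathcal{M},\mathcal{N}\in Mod(\mathcal{L})$ one builds, Borel-uniformly in the pair, a tree $S(\mathcal{M},\mathcal{N})$ of finite approximations to a back-and-forth system -- finite sequences of pairs $(a_i,b_i)$ with $(\mathcal{M},a_1\dots a_k)\equiv_0(\mathcal{N},b_1\dots b_k)$, with the ``forth'' and ``back'' demands interleaved so that a level-by-level descent mirrors the inductive clauses defining $\equiv_\beta$ -- so that $S(\mathcal{M},\mathcal{N})$ is ill-founded iff $\mathcal{M}\simeq\mathcal{N}$ and, when well-founded, its ordinal rank determines $\min\{\beta:\neg(\mathcal{M}\equiv_\beta\mathcal{N})\}$ up to a fixed order-preserving substitution. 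Consequently $(\mathcal{M},\mathcal{N})\mapsto S(\mathcal{M},\mathcal{N})$ exhibits the first-failure function as a variant of a $\mathbf{\Pi^1_1}$-rank on the $\mathbf{\Pi^1_1}$ set $Mod_{\mathcal{L}}(T)^2\setminus\simeq_T$. Now if $\simeq_T$ is Borel, that set is $\mathbf{\Sigma^1_1}$, so by $\mathbf{\Sigma^1_1}$-boundedness the first-failure function is bounded on it, say by $\gamma<\omega_1$; that is, $\simeq_T\,=\,\equiv_\gamma$ on $Mod_{\mathcal{L}}(T)$.

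\emph{Passing from models to tuples.} Scott ranks concern tuples inside a single model, so to convert $\simeq_T=\equiv_\gamma$ into a bound on $\delta$ I would apply the previous step to the language $\mathcal{L}'=\mathcal{L}\cup\{c_n:n<\omega\}$ with countably many new constants: a model of $T$ in $\mathcal{L}'$ is a model of $T$ together with a distinguished $\omega$-sequence, and a routine check -- via Theorem \ref{Becker Kechris 7.1.2}, or directly -- shows that Borelness of $\simeq_T$ in $\mathcal{L}$ implies Borelness of isomorphism of such expanded structures. The previous step then gives $\gamma'<\omega_1$ such that for every $\mathcal{M}\in Mod_{\mathcal{L}}(T)$ and all finite $\bar a,\bar b$ of equal length from $\mathcal{M}$, $(\mathcal{M},\bar a)\equiv_{\gamma'}(\mathcal{M},\bar b)$ forces an automorphism of $\mathcal{M}$ to carry $\bar a$ to $\bar b$, hence $(\mathcal{M},\bar a)\equiv_{\gamma'+1}(\mathcal{M},\bar b)$. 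Therefore $\delta(\mathcal{M})\leq\gamma'$ for all $\mathcal{M}\in Mod_{\mathcal{L}}(T)$, completing the proof.

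\emph{Main obstacle.} The delicate point is the construction and analysis of the back-and-forth trees $S(\mathcal{M},\mathcal{N})$: one must get the alternation of the ``forth'' and ``back'' requirements and the treatment of limit levels exactly right, so that the well-founded rank of $S(\mathcal{M},\mathcal{N})$ is an honest, Borel-uniform measure of the least stage at which $\mathcal{M}\equiv_\beta\mathcal{N}$ breaks down. Granted that bookkeeping lemma, the appeal to $\mathbf{\Sigma^1_1}$-boundedness and the reduction via added constants are entirely standard.
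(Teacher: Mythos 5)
Your proposal is correct, but it takes a genuinely different route from the one the paper actually follows. You give the classical Becker--Kechris/Gao argument: the easy direction via the Scott Isomorphism Theorem and the Borelness of $\equiv_{\alpha+\omega}$, and the hard direction by turning the first ordinal at which $\equiv_\beta$ fails into a $\mathbf{\Pi^1_1}$-rank on the set of non-isomorphic pairs (via well-founded back-and-forth trees), invoking $\mathbf{\Sigma^1_1}$-boundedness, and then relativizing to tuples by adding constants, using Theorem \ref{Becker Kechris 7.1.2} (equivalently, clause (2) of Theorem \ref{Becker Kechris 7.1.4}) to keep the expanded isomorphism relation Borel. The paper instead states this theorem in the preliminaries and re-derives the substantive direction in Section 5.1 as a corollary of its general machinery: Borelness of $E_G^{\mathbb B}$ forces the \emph{Hjorth} ranks to be bounded (Theorem \ref{Becker Kechris Theorem}, proved through the complexity of the sets $U\cdot x$ and topology refinements rather than through tree ranks), and a comparison proposition showing $(\mathcal M,\bar a)\equiv_{\omega\cdot\alpha}(\mathcal N,\bar a')$ implies $(\mathcal M,V_{\bar a,\bar b})\leq_\alpha(\mathcal N,V_{\bar a',\bar b})$ then converts a Hjorth-rank bound $\alpha$ into the Scott-rank bound $\omega\cdot(\alpha+1)$. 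Your route is self-contained for the logic action and needs none of the Hjorth apparatus, but its correctness hinges entirely on the bookkeeping lemma you flag (that the tree rank faithfully tracks $\min\{\beta:\neg(\mathcal M\equiv_\beta\mathcal N)\}$ and that this is genuinely a $\mathbf{\Pi^1_1}$-rank), which you state but do not prove; the paper's route trades that combinatorial work for the general boundedness theorem, applies verbatim to arbitrary Polish group actions, and produces an explicit quantitative bound relating the two ranks. Both arguments are sound.
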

\begin{thm}
\label{Becker Kechris 7.1.4}(Becker - Kechris) The following are equivalent:
\begin{enumerate}
\item $\simeq_{T}$ is Borel
\item The set $\{\ (\mathcal{M},\mathcal{N},\bar{a},\bar{b})\ :\ \mathcal{M},\mathcal{N} \in Mod_{\mathcal{L}}{(T)};\ \bar{a},\bar{b}\in \omega^{<\omega}\ of\ the\ same\ length;\ (\mathcal{M},\bar{a})\simeq\mathcal{N},\bar{b})\}$ is Borel.
\item The set $\{\ (\mathcal{M},\bar{a},\bar{b})\ :\ \mathcal{M} \in Mod_{\mathcal{L}}{(T)};\ \bar{a},\bar{b}\in \omega^{<\omega}\ of\ the\ same\ length;\ (\mathcal{M},\bar{a})\simeq\mathcal{M},\bar{b})\}$ is Borel.
\end{enumerate}
\end{thm}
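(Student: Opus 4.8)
The plan is to prove the cycle $(1)\Rightarrow(2)\Rightarrow(3)\Rightarrow(1)$, with the last implication carrying essentially all the weight. For $(1)\Rightarrow(2)$ I would fix a length $n$, expand $\mathcal{L}$ to $\mathcal{L}_{n}=\mathcal{L}\cup\{c_{1},\dots,c_{n}\}$, and identify $Mod(\mathcal{L}_{n})$ with $Mod(\mathcal{L})\times\omega^{n}$ (a model being an $\mathcal{L}$-structure together with an interpretation $(a_{1},\dots,a_{n})$ of the new constants), so that the logic action of $S_{\infty}$ on $Mod(\mathcal{L}_{n})$ has as its orbit equivalence relation exactly ``$(\mathcal{M},\bar{a})\simeq(\mathcal{N},\bar{b})$''. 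Assuming $\simeq_{T}$ Borel, Theorem~\ref{Becker Kechris 7.1.2} gives that $\mathcal{M}\mapsto G_{\mathcal{M}}\in F(S_{\infty})$ is Borel; then $(\mathcal{M},\bar{a})\mapsto G_{\mathcal{M}}\cap\{g:g(a_{i})=a_{i}\text{ for all }i\}$ is Borel too, being obtained from the former by intersecting with one of the countably many basic clopen subgroups of $S_{\infty}$ (and $F\mapsto F\cap C$ is a Borel operation on $F(S_{\infty})$ for $C$ clopen). Since this map assigns to an $\mathcal{L}_{n}$-model its stabilizer in the expanded action, a second application of Theorem~\ref{Becker Kechris 7.1.2}, now to that action restricted to the invariant Borel set $Mod_{\mathcal{L}_{n}}(T)=Mod_{\mathcal{L}}(T)\times\omega^{n}$, shows the corresponding orbit equivalence relation is Borel; a disjoint union over all lengths $n$ then yields (2). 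The implication $(2)\Rightarrow(3)$ is immediate, as the set in (3) is the continuous preimage of the set in (2) under $(\mathcal{M},\bar{a},\bar{b})\mapsto(\mathcal{M},\mathcal{M},\bar{a},\bar{b})$.

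For $(3)\Rightarrow(1)$, by Theorem~\ref{Becker Kechris Logic case} it is enough to bound $\delta$ on $Mod_{\mathcal{L}}(T)$, and I would do this with a $\mathbf{\Pi_{1}^{1}}$-boundedness argument. To each triple $(\mathcal{M},\bar{a},\bar{b})$ with $\bar{a},\bar{b}\in\omega^{<\omega}$ of equal length I attach the tree $\mathcal{T}(\mathcal{M},\bar{a},\bar{b})$ of finite back-and-forth approximations over $\mathcal{M}$: its nodes are finite sequences $\big((\bar{a}_{0},\bar{b}_{0}),\dots,(\bar{a}_{k},\bar{b}_{k})\big)$ with $(\bar{a}_{0},\bar{b}_{0})=(\bar{a},\bar{b})$, each step extending the previous pair by adding the least so-far-unused natural number to the left coordinate if the step is even and to the right coordinate if it is odd, subject to $(\mathcal{M},\bar{a}_{j})\equiv_{0}(\mathcal{M},\bar{b}_{j})$ throughout. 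I would then verify: (i) $\mathcal{T}(\mathcal{M},\bar{a},\bar{b})$ is ill-founded iff $(\mathcal{M},\bar{a})\simeq(\mathcal{M},\bar{b})$ --- an infinite branch unions to a bijection of $\omega$ which, thanks to the $\equiv_{0}$-conditions, is an automorphism of $\mathcal{M}$ taking $\bar{a}$ to $\bar{b}$, the alternation rule forcing totality and surjectivity, and conversely; (ii) $(\mathcal{M},\bar{a},\bar{b})\mapsto\mathcal{T}(\mathcal{M},\bar{a},\bar{b})$ is Borel into the standard Borel space of trees on $\omega$, since membership of a node depends only on the atomic diagram of $\mathcal{M}$ on finitely many tuples; and (iii) for a well-founded such tree, $|\mathcal{T}(\mathcal{M},\bar{a},\bar{b})|\geq\alpha$ whenever $(\mathcal{M},\bar{a})\equiv_{\alpha}(\mathcal{M},\bar{b})$ (a duplicator win in the length-$\alpha$ back-and-forth game embeds a subtree of rank $\geq\alpha$ through the root, up to a bounded shift in $\alpha$ from the side-bookkeeping).

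With (i) and (ii) in hand, $|\mathcal{T}(\cdot)|$ is the composite of a Borel map with the canonical $\mathbf{\Pi_{1}^{1}}$-rank on well-founded trees, hence a $\mathbf{\Pi_{1}^{1}}$-rank on the $\mathbf{\Pi_{1}^{1}}$ set $P=\{(\mathcal{M},\bar{a},\bar{b}):(\mathcal{M},\bar{a})\not\simeq(\mathcal{M},\bar{b})\}$. Letting $A$ be the complement, inside $\{(\mathcal{M},\bar{a},\bar{b}):\mathcal{M}\in Mod_{\mathcal{L}}(T),\ \bar{a},\bar{b}\ \text{of equal length}\}$, of the set in (3), hypothesis (3) makes $A$ Borel, hence a $\mathbf{\Sigma_{1}^{1}}$ subset of $P$, so $\mathbf{\Pi_{1}^{1}}$-boundedness produces $\alpha_{0}<\omega_{1}$ with $|\mathcal{T}(\mathcal{M},\bar{a},\bar{b})|\leq\alpha_{0}$ on $A$. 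Then, for $\mathcal{M}\in Mod_{\mathcal{L}}(T)$, $(\mathcal{M},\bar{a})\equiv_{\alpha_{0}+1}(\mathcal{M},\bar{b})$ forces $|\mathcal{T}(\mathcal{M},\bar{a},\bar{b})|>\alpha_{0}$ by (iii), hence $(\mathcal{M},\bar{a},\bar{b})\notin A$, i.e. $(\mathcal{M},\bar{a})\simeq(\mathcal{M},\bar{b})$ and a fortiori $(\mathcal{M},\bar{a})\equiv_{\alpha_{0}+2}(\mathcal{M},\bar{b})$; thus $\delta(\mathcal{M})\leq\alpha_{0}+1$ for all $\mathcal{M}\in Mod_{\mathcal{L}}(T)$, and Theorem~\ref{Becker Kechris Logic case} returns that $\simeq_{T}$ is Borel.

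The hard part will be item (iii), and more precisely confirming that $|\mathcal{T}(\cdot)|$ is a genuine $\mathbf{\Pi_{1}^{1}}$-rank: the tree of approximations has to be arranged so that its ill-foundedness coincides exactly with non-automorphy of $\bar{a},\bar{b}$ in $\mathcal{M}$ and so that its ordinal rank tracks the $\equiv_{\alpha}$ hierarchy, which is just the classical bridge between Scott analysis and $\mathbf{\Pi_{1}^{1}}$-ranks. Everything downstream --- Borelness of the $\equiv_{\alpha}$, closure of $\mathbf{\Pi_{1}^{1}}$-ranks under Borel preimages, and boundedness of a $\mathbf{\Pi_{1}^{1}}$-rank on $\mathbf{\Sigma_{1}^{1}}$ sets --- is routine.
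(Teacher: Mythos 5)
Your proposal is correct, but note that the paper never actually proves this statement: Theorem \ref{Becker Kechris 7.1.4} is stated as background in Section 2 and referred to Gao's book, and the closest the paper comes to an argument is the generalization proved in Section 5 (the corollary characterizing Borel $E_{G}^{\mathbb{B}}$ via the sets $\{(x,y,U,W):\forall\alpha\ (x,U)\leq_{\alpha}(y,W)\}$). There the implication $(1)\Rightarrow(2)$ comes from Proposition \ref{(y,W)<=00003D(x,v) for all alpha} (once the sets $U\cdot x$ have uniformly bounded Borel complexity, the relations $\leq_{\alpha}$ stabilize at a fixed countable stage), and $(3)\Rightarrow(1)$ is obtained by writing $\{x:\exists g\in U\ g\cdot x=x\}$ in terms of the $\leq_{\alpha}$'s and invoking the stabilizer criterion of Theorem \ref{Becker Kechris 7.1.2}. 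Your route is genuinely different and entirely classical: for $(1)\Rightarrow(2)$ you apply the stabilizer criterion to the expanded action on $Mod(\mathcal{L})\times\omega^{n}$ (and the observation that $F\mapsto F\cap C$ is Effros--Borel for $C$ clopen is correct), and for $(3)\Rightarrow(1)$ you bound the Scott rank by $\mathbf{\Pi_{1}^{1}}$-boundedness applied to the back-and-forth tree, then quote Theorem \ref{Becker Kechris Logic case}. Both work; yours is self-contained at the level of the logic action and uses no Hjorth machinery, while the paper's version buys generality over arbitrary Polish group actions. Two small points to pin down in a final write-up: the nodes of $\mathcal{T}(\mathcal{M},\bar{a},\bar{b})$ should explicitly be required to define injective partial maps (equivalently, count equality among the atomic formulas), so that an infinite branch really unions to a bijection; and the induction for your item (iii) in fact needs no shift at all --- if $(\mathcal{M},\bar{a}_{k})\equiv_{\alpha}(\mathcal{M},\bar{b}_{k})$ holds at a node, that node already has rank at least $\alpha$ in the tree, one clause of the successor step of $\equiv$ being consumed per level.
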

\selectlanguage{british}%

A careful analysis of the lightface complexity of $\equiv_{\alpha}$
has opened way to the following result, due to Nadel \cite{key-8}:

\begin{thm}

\label{Nadel Logic Case}For every $\mathcal{M}$, $\delta(\mathcal{M})\leq\omega_{1}^{ck(\mathcal{M})}.$

\end{thm}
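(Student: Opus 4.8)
The plan is to recast $\delta(\mathcal{M})$ as the closure ordinal of a monotone arithmetic-in-$\mathcal{M}$ induction, and then let admissibility close it off before $\omega_1^{ck(\mathcal{M})}$. Write $\kappa=\omega_1^{ck(\mathcal{M})}$, which we may take to be the least ordinal such that $L_{\kappa}(\mathcal{M})$ is admissible. Coding $\omega^{<\omega}\times\omega^{<\omega}$ as $\omega$, set $A_{\alpha}=\{(\bar a,\bar b)\ :\ \neg(\mathcal{M},\bar a)\equiv_{\alpha}(\mathcal{M},\bar b)\}$ and let $\Gamma$ be the operator
\[
\Gamma(A)=\{(\bar a,\bar b)\ :\ \exists c\,\forall d\ (\bar a^{\frown}c,\bar b^{\frown}d)\in A\ \text{ or }\ \exists d\,\forall c\ (\bar b^{\frown}d,\bar a^{\frown}c)\in A\}.
\]
From the definition of $\equiv_{\alpha}$ one reads off that $A_{0}$ is recursive in $\mathcal{M}$ (it is the set of pairs of tuples with distinct atomic type), that $A_{\alpha+1}=\Gamma(A_{\alpha})$, and that $A_{\lambda}=\bigcup_{\alpha<\lambda}A_{\alpha}$ for limit $\lambda$; since $\equiv_{\alpha}$ is decreasing, the $A_{\alpha}$ form an increasing chain. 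Hence $\delta(\mathcal{M})$ is exactly the closure ordinal of this induction, the least $\alpha$ with $A_{\alpha}=\Gamma(A_{\alpha})$.

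Next I would settle the complexity bookkeeping. The operator $\Gamma$ sends a subset of $\omega$ to a subset of $\omega$ that is positive arithmetic in it, uniformly; consequently the map $\alpha\mapsto A_{\alpha}$ is definable by $\Sigma_{1}$ transfinite recursion over $L_{\kappa}(\mathcal{M})$, so the relation $\{(\alpha,\bar a,\bar b)\ :\ (\bar a,\bar b)\in A_{\alpha}\}$ is $\Delta_{1}$ over $L_{\kappa}(\mathcal{M})$, and in particular every stage $A_{\alpha}$ with $\alpha<\kappa$ is an element of $L_{\kappa}(\mathcal{M})$.

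The heart of the argument is to show the induction closes by stage $\kappa$. Suppose toward a contradiction that $A_{\kappa}:=\bigcup_{\alpha<\kappa}A_{\alpha}$ is not a fixed point, and choose $(\bar a,\bar b)\in\Gamma(A_{\kappa})\setminus A_{\kappa}$; say the first disjunct holds, and fix a witness $c$. For each $d\in\omega$ the pair $(\bar a^{\frown}c,\bar b^{\frown}d)$ lies in $A_{\kappa}$, hence in $A_{\alpha_{d}}$ for some $\alpha_{d}<\kappa$; the function $d\mapsto(\text{least such }\alpha_{d})$ is $\Sigma_{1}$ over $L_{\kappa}(\mathcal{M})$ with domain $\omega\in L_{\kappa}(\mathcal{M})$, so by $\Sigma_{1}$-collection its range is bounded by some $\beta<\kappa$. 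Then $(\bar a^{\frown}c,\bar b^{\frown}d)\in A_{\beta}$ for all $d$, whence $(\bar a,\bar b)\in\Gamma(A_{\beta})=A_{\beta+1}$; as admissible ordinals above $\omega$ are limits, $\beta+1<\kappa$, so $(\bar a,\bar b)\in A_{\kappa}$ — a contradiction. Therefore $A_{\kappa}$ is a fixed point and $\delta(\mathcal{M})\leq\kappa=\omega_{1}^{ck(\mathcal{M})}$.

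The step I expect to be the main obstacle is the complexity bookkeeping of the second paragraph: one must verify carefully that $\Gamma$ is \emph{uniformly} and positively arithmetic in its argument together with $\mathcal{M}$, and that the recursion theorem for admissible sets genuinely carries the stage function $\alpha\mapsto A_{\alpha}$ inside $L_{\kappa}(\mathcal{M})$, so that $\Sigma_{1}$-collection is available at the decisive moment. Everything else is the standard reflection argument for monotone inductive definitions, and can alternatively be quoted wholesale: the closure ordinal of a positive arithmetic-in-$\mathcal{M}$ induction on $\omega$ is at most $\omega_{1}^{ck(\mathcal{M})}$.
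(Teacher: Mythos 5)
Your argument is correct, but it takes a genuinely different route from the one the paper uses. The paper states Theorem \ref{Nadel Logic Case} in the preliminaries without proof (deferring to \cite{key-7}), and the technique it actually develops --- in the proof of the general Hjorth-rank analogue, Theorem \ref{thm:Nadel's} --- is a lightface complexity analysis combined with $\Sigma_{1}^{1}$-boundedness: assuming $\delta(x)>\omega_{1}^{ck(x)}$, one produces a $\Sigma_{1}^{1}(x)$ set of well-order codes whose order types are cofinal in $\omega_{1}^{ck(x)}$, contradicting the boundedness theorem. You instead recast $\delta(\mathcal{M})$ as the closure ordinal of a monotone, positively arithmetic-in-$\mathcal{M}$ inductive definition on $\omega$ and close it off with $\Sigma_{1}$-collection over $L_{\kappa}(\mathcal{M})$; this is the classical admissible-sets proof, in the spirit of Nadel's original paper \cite{key-8}. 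The two arguments are dual faces of the same overspill phenomenon, and both are standard. What your route buys is a self-contained reflection argument whose collection step is immediate because the back-and-forth in the logic action quantifies over the countable set of one-point extensions; what the paper's route buys is transferability, since $\Sigma_{1}^{1}$-boundedness applied to the arithmetic coding relation $S$ is exactly what survives when the quantification is instead over a basis of open subsets of an arbitrary Polish group. Two minor points of bookkeeping, neither a gap: when the language is infinite, $A_{0}$ is only $\Sigma_{1}^{0}(\mathcal{M})$ rather than recursive in $\mathcal{M}$ (one searches over infinitely many relation symbols to separate atomic types), which is harmless since all you use is that the operator is uniformly arithmetic and positive in its set argument; and you should say explicitly that $\Gamma$ is monotone, as this is what makes ``$A_{\kappa}$ is a pre-fixed point'' equivalent to the stabilization $\equiv_{\kappa}\;=\;\equiv_{\kappa+1}$ defining $\delta(\mathcal{M})\leq\kappa$.
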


We recall that for $x\in2^{\omega}$, $\omega_{1}^{ck(x)}$ is the
first ordinal not computable from $x$. Any countable model $\mathcal{M}$
can be identified with an element of $2^{\omega}$, as explained above.

Sacks has shown that if for all $\mathcal{M},$ $\delta(\mathcal{M})<\omega_{1}^{ck(\mathcal{M})},$
the isomorphism relation is Borel:

\begin{thm}

\label{Sacks logic case}(Sacks) $\simeq$ is Borel if and only
if there is $\alpha<\omega_{1}$ such that for all $\mathcal{M}$,
$\delta(\mathcal{M})\leq\alpha$ or $\delta(\mathcal{M})<\omega_{1}^{ck(\mathcal{M})}$.

\textup{Proofs of all of the above theorems can be found in \cite{key-7}.}

\end{thm}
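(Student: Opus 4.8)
The two directions are of quite different character. For the forward implication nothing new is needed: if $\simeq$ is Borel then, by the Becker--Kechris criterion (Theorem \ref{Becker Kechris Logic case}), there is $\alpha<\omega_{1}$ with $\delta(\mathcal M)<\alpha$ for every $\mathcal M$, and \emph{a fortiori} $\delta(\mathcal M)\le\alpha$ for every $\mathcal M$, which is the right-hand side with that very $\alpha$.

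For the converse I would argue contrapositively. Assume $\simeq$ is not Borel, so that by Theorem \ref{Becker Kechris Logic case} the Scott ranks are unbounded below $\omega_{1}$, and suppose toward a contradiction that the right-hand side holds, say with witness $\alpha_{0}$. Put $B=\{\mathcal M\in Mod(\mathcal L):\delta(\mathcal M)>\alpha_{0}\}$. Using the classical fact that each set $\{\mathcal M:\delta(\mathcal M)\le\gamma\}$ is Borel (which rests only on the Borelness of the $\equiv_{\gamma}$'s), $B$ is Borel, in particular $\mathbf{\Sigma}^1_1$; since $B$ is obtained from $Mod(\mathcal L)$ by deleting a set on which $\delta$ is bounded, $\delta$ is still unbounded on $B$; and every member of $B$ fails the first disjunct of the hypothesis, so $\delta(\mathcal M)<\omega_{1}^{ck(\mathcal M)}$ for all $\mathcal M\in B$. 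Everything now reduces to the following boundedness lemma, which contradicts the unboundedness of $\delta$ on $B$: \emph{if $B\subseteq Mod(\mathcal L)$ is $\mathbf{\Sigma}^1_1$ and $\delta(\mathcal M)<\omega_{1}^{ck(\mathcal M)}$ for every $\mathcal M\in B$, then $\{\delta(\mathcal M):\mathcal M\in B\}$ is bounded below $\omega_{1}$.}

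This lemma is the heart of the matter, and is essentially Sacks' theorem. The plan for it is an effective uniformisation followed by $\mathbf{\Sigma}^1_1$-bounding. For $\mathcal M\in B$, since $\delta(\mathcal M)<\omega_{1}^{ck(\mathcal M)}$ there is a notation in $\mathcal O^{\mathcal M}$ of ordinal at least $\delta(\mathcal M)$, and hence a real $W$ coding a \emph{coherent} segment of the hyperarithmetic-in-$\mathcal M$ hierarchy (a $<_{W}$-increasing tower of $H$-sets with the correct jump and join relations, based at $\mathcal M$) of order type at least $\delta(\mathcal M)$. Coherence of such a $W$ is an arithmetic condition on $(\mathcal M,W)$, of bounded complexity; crucially, coherence \emph{forces} $W$ to be a well-ordering, since an infinite $<_{W}$-descending chain would yield an infinite $\le_{h}$-descending chain of $H$-sets, contradicting well-foundedness of the hyperdegrees. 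Therefore the relation $S(\mathcal M,W)\equiv$ ``$W$ codes a coherent tower based at $\mathcal M$'' is Borel; its $W$-projection $\{W:\exists\mathcal M\in B\ S(\mathcal M,W)\}$ is $\mathbf{\Sigma}^1_1$ and contained in $WO$, and it contains, for each $\mathcal M\in B$, a code of order type $\ge\delta(\mathcal M)$. By $\mathbf{\Sigma}^1_1$-bounding (a $\mathbf{\Sigma}^1_1$ subset of $WO$ has order types bounded below $\omega_{1}$), this projection has order types bounded by some $\gamma^{*}<\omega_{1}$, whence $\delta(\mathcal M)\le\gamma^{*}$ for all $\mathcal M\in B$.

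The routine parts are the two appeals to Becker--Kechris, the Borelness of $\{\delta\le\gamma\}$, and the verification that coherence of an $H$-tower is arithmetic of fixed complexity. I expect the real obstacle to be the lemma itself, and within it the device of making the well-foundedness certificate $\mathbf{\Sigma}^1_1$ — packaging it through the coherent pseudo-hierarchy (equivalently, through Spector--Gandy, $\mathbf{\Pi}^1_1=\Sigma_{1}^{L_{\omega_{1}^{ck}}}$, together with the Gandy basis theorem) — since that is precisely what lets the $\mathbf{\Pi}^1_1$ quantifier ``$W\in WO$'' be absorbed so that classical $\mathbf{\Sigma}^1_1$-bounding can be applied. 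I note that, organised this way, the Sacks argument is logically independent of Nadel's theorem (Theorem \ref{Nadel Logic Case}), although the two are natural companions: Nadel says $\delta(\mathcal M)\le\omega_{1}^{ck(\mathcal M)}$ always, and hence that for $\mathcal M\in B$ the inequality $\delta(\mathcal M)<\omega_{1}^{ck(\mathcal M)}$ is the only remaining possibility, which is exactly the input consumed by the lemma.
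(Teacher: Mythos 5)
Your reduction is fine (the forward direction via Theorem \ref{Becker Kechris Logic case}, the passage to the Borel set $B=\{\mathcal M:\delta(\mathcal M)>\alpha_{0}\}$ on which the second disjunct holds outright, and the isolation of the boundedness lemma), and the lemma you isolate is indeed the heart of Sacks' theorem. But your proof of that lemma has a genuine gap at exactly the point you flag as the crucial device: the claim that \emph{coherence of an $H$-tower forces $W$ to be a well-ordering} is false. The hyperdegrees are not well-founded, and, more to the point, this is precisely the pseudo-hierarchy phenomenon: the Harrison linear order is a \emph{recursive}, ill-founded linear order with no hyperarithmetic descending sequence, and it carries a jump hierarchy satisfying all the local coherence conditions (correct jumps and joins at successors, correct amalgamation at limits). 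The same holds relativized to any $\mathcal M$. Consequently your arithmetic relation $S(\mathcal M,W)$ is satisfied by ill-founded $W$'s, its $W$-projection is \emph{not} contained in $WO$, and the appeal to $\mathbf{\Sigma}^1_1$-bounding collapses. No arithmetic (or Borel) strengthening of ``coherence'' can repair this, since that would give a Borel definition of a cofinal subset of the $\mathcal M$-recursive well-orders, which is impossible.

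The standard fix --- and the one used in this paper for the general Polish-action version, Theorem \ref{theorem sacks} --- routes the well-foundedness certificate differently. One first shows (Lemma \ref{lem:good and bad}) that ``$\delta(\mathcal M)\le tp(R)$'' is expressible by both a $\Sigma^1_1$ and a $\Pi^1_1$ formula, valid whenever $R\in WO$. One then defines $H$ as the set of pairs $(R,\mathcal M)$ such that: $R$ is a linear order recursive in $\mathcal M$; $\delta(\mathcal M)\le tp(R)$ (the $\Sigma^1_1$ form); and for every program $e$, if $[e](\mathcal M)$ is a well-order $Q$ with $\delta(\mathcal M)\le tp(Q)$, then $R$ embeds order-preservingly onto an initial segment of $Q$ (the antecedent is handled with the $\Pi^1_1$ forms, so the whole clause is $\Sigma^1_1$). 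The hypothesis $\delta(\mathcal M)<\omega_{1}^{ck(\mathcal M)}$ is consumed exactly here: it guarantees that at least one such genuinely well-founded $Q$ recursive in $\mathcal M$ exists, and the initial-segment embedding into that $Q$ is what forces $R$ to be a well-order of type $\delta(\mathcal M)$. Then $H$ is a $\Sigma^1_1$ subset of $WO\times Mod(\mathcal L)$ whose projection onto the first coordinate is cofinal in $\sup\{\delta(\mathcal M)\}$, and $\mathbf{\Sigma}^1_1$-bounding finishes the argument. Your remark that the argument is independent of Nadel's theorem (Theorem \ref{Nadel Logic Case}) is correct, but the coherent-tower packaging has to be replaced by this minimality-and-embedding device.
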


\subsection{Vaught Transforms and Forcing}

Let $\langle\mathbb{P},\leq\rangle$ be a partial order. If $p\leq q$
we say that $p$ extends $q$, and if $p,q$ have a common extension
they are \textit{compatible}. A set $D\subseteq\mathbb{P}$ is \textit{dense}
if every $p\in\mathbb{P}$ has an extension in $D$. For $\mathbb{V}$
a model of $ZFC$, we say that $G\subseteq\mathbb{P}$ is a \textit{generic
filter over $\mathbb{V}$} if: $p\in G$ and $q\geq p$ implies $q\in G,$ every pair $p,q\in G$ has a common extension in $G$, and for every
$D\subseteq\mathbb{P}$ dense, $G\cap D\neq\emptyset.$ The generic
extension of $\mathbb{V}$ by $G$ is the minimal model of $ZFC$
containing both $\mathbb{V}$ and $G$ , and is denoted by $\mathbb{V}[G]$.
For names $\tau_{1},\dots,\tau_{n}$ , a formula $\phi$ and $p\in\mathbb{P}$,
$p\Vdash\phi(\tau_{1},\dots,\tau_{n})$ if for every generic filter
$G$ such that $p\in G$, $\mathbb{V}[G]\models\phi(\tau_{1}[G],\dots,\tau_{n}[G])$.

Given a Polish space $X$, let $\mathbb{P}_{I}$ be the partial order
of non meager Borel subsets of $X$ ordered by inclusion (in fact,
$\mathbb{P}_{I}$ is a partial order of codes of non meager Borel
sets). For $G$ a generic filter in $\mathbb{P}_{I}$, there is a unique $x\in X$
in $\mathbb{V}[G]$ such that
\[
G=\{B\subseteq X\ :\ B\in\mathbb{P}_{I};\ x\in B\}
\]

so that in fact $\mathbb{V}[G]=\mathbb{V}[x]$. Let $x^{*}$ be a
canonical name for this generic element. Then for $C$ a non meager
Borel set:
\[
C\Vdash x^{*}\in\check{B}\iff C-B\ is\ meager.
\]

In fact, $\mathbb{P}_{I}$ is equivalent to Cohen forcing over $X$,
which is, forcing with the nonempty open subsets of $X$, since it
is densely embedded in the separative quotient of $\mathbb{P}_{I}.$
\begin{defn}
(Vaught Transforms) Let $X$ be a Polish $G$ - space, $A\subseteq X$
and $U\subseteq G$ open.
\[
A^{*U}=\{x\ :\ \{g\in U\ :\ g\cdot x\in A\}\ is\ comeager\ in\ U\}
\]
\[
A^{\triangle U}=\{x\ :\ \{g\in U\ :\ g\cdot x\in A\}\ is\ non\ meager\ in\ U\}
\]

We write $A^{*}$ and $A^{\triangle}$ for $A^{*G}$ and $A^{\triangle G}$,
respectively.

Using the above, it is easy to show that $x\in A^{*U}\iff U\Vdash_{\mathbb{P_{I}}}g^{*}\cdot\check{x}\in\check{A}$,
while $\mathbb{P}_{I}$ are the non meager Borel subsets of $G$,
and $g^{*}$ is the name of the generic element.

We summarize a few important properties of Vaught transforms:
\begin{lem}
Let $X$ be a Polish $G$ - space. $A,A_{n}\subseteq X$, $U\subseteq G$ open, $U_n$ a basis for the topology of $G$.
\end{lem}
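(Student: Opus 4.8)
The statement to be proved is a lemma collecting standard properties of Vaught transforms; although the excerpt cuts off before the list, the natural content (and what the paper will use) comprises identities of the following kind: $(A^{*U})$ and $(A^{\triangle U})$ are invariant under no group in general but behave well under translation, namely $g\cdot A^{\triangle U}=A^{\triangle Ug^{-1}}$ and similarly for $*$; De Morgan duality $(X\setminus A)^{*U}=X\setminus A^{\triangle U}$; commutation with countable intersections $(\bigcap_n A_n)^{*U}=\bigcap_n A_n^{*U}$ and with countable unions $(\bigcup_n A_n)^{\triangle U}=\bigcup_n A_n^{\triangle U}$; the covering identity $A^{\triangle U}=\bigcup_n A^{\triangle U_n}$ whenever the $U_n$ refine $U$ (more precisely form a basis contained in $U$), together with its dual; the Borel-hierarchy bound that $A\in\mathbf\Sigma^0_\xi\Rightarrow A^{\triangle U}\in\mathbf\Sigma^0_\xi$ and $A\in\mathbf\Pi^0_\xi\Rightarrow A^{*U}\in\mathbf\Pi^0_\xi$ (so in particular Vaught transforms of Borel sets are Borel of the same complexity); and the saturation fact that if $A$ is already invariant then $A^{*U}=A^{\triangle U}=A$ for every nonempty open $U$.

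The plan is to prove everything through the forcing reformulation already recorded just above the lemma: $x\in A^{*U}\iff U\Vdash_{\mathbb P_I} g^{*}\cdot\check x\in\check A$ and $x\in A^{\triangle U}\iff$ some nonempty open (equivalently non-meager Borel) $V\subseteq U$ forces $g^{*}\cdot\check x\in\check A$. First I would record the translation lemma $g\cdot A^{\triangle U}=A^{\triangle Ug^{-1}}$ by a direct change of variable $h\mapsto hg$ in the defining quantifier over $U$, using that left translation is a homeomorphism of $G$ preserving the meager ideal; the $*$-version and the two-sided statements follow identically. De Morgan duality is then immediate from the dichotomy ``comeager vs.\ non-meager of the complement'' applied fiberwise, i.e.\ $\{g\in U: g\cdot x\notin A\}$ is non-meager iff $\{g\in U: g\cdot x\in A\}$ is not comeager, which uses the Baire property of the Borel set $\{g\in U : g\cdot x\in A\}$ in each fiber. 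Commutation of $*$ with countable intersections is the fiberwise statement that a countable intersection of comeager sets is comeager (and conversely); dualizing gives commutation of $\triangle$ with countable unions.

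The covering identity $A^{\triangle U}=\bigcup\{A^{\triangle V} : V\subseteq U \text{ basic open}\}$ is the Banach category theorem in disguise: a set is non-meager in $U$ iff it is non-meager in some basic open $V\subseteq U$ (because a set meager in every member of a basis covering $U$ is meager in $U$). This is the step I expect to carry the real content, since all the Borel-complexity bounds are deduced from it by induction on the Borel rank: open $A$: $A^{\triangle U}$ is open because $x\in A^{\triangle U}\iff \exists g\in U\ (g\cdot x\in A)$ by the category theorem (a non-meager subset of $U$ that is relatively open-modulo-meager, in fact here genuinely: $\{g\in U:g\cdot x\in A\}$ is open, so non-meager iff nonempty), and $\{x:\exists g\in U\ g\cdot x\in A\}$ is the continuous-image-type open set $\bigcup_{g\in U} g^{-1}A$; then for successor and limit stages use that $\mathbf\Sigma^0_\xi$ sets are countable unions of lower-complexity sets and invoke the already-established commutation of $\triangle$ with countable unions and with the covering identity to keep the rank from growing. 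The $*$/$\mathbf\Pi^0_\xi$ statements follow by De Morgan. Finally, if $A$ is $G$-invariant then $g\cdot x\in A\iff x\in A$ for all $g$, so the fiber $\{g\in U : g\cdot x\in A\}$ is either all of $U$ or empty, whence $A^{*U}=A^{\triangle U}=A$; this is purely formal. The main obstacle, then, is packaging the Baire-category bookkeeping in the inductive complexity estimate so that the bound is exactly $\mathbf\Sigma^0_\xi\to\mathbf\Sigma^0_\xi$ with no loss, which is where the covering identity and the localization to a countable basis $(U_n)$ are essential.
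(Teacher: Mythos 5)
You should know at the outset that the paper offers no proof of this lemma at all: it is stated in the preliminaries as a summary of standard properties of Vaught transforms, with the reader referred to Becker--Kechris and Gao. So there is nothing in the paper to match your argument against; the question is only whether your reconstruction of the content and of the standard proofs is sound, and it essentially is. You correctly identify the five items (invariance, De Morgan duality, commutation with countable unions/intersections, the complexity bounds, and the basis identity), and your proofs of the first three and of the saturation statement are fine (two small remarks: the change of variable $h\mapsto hg$ is a \emph{right} translation, and the De Morgan item needs no Baire property --- ``comeager in $U$'' means by definition that the complement within $U$ is meager, so that item is purely formal). The one place where your sketch is under-specified at the crux is the induction for the complexity bounds. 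The pure-$\triangle$ covering identity $A^{\triangle U}=\bigcup_n A^{\triangle U_n}$ is not the lever that closes the induction: writing a $\mathbf{\Sigma^0_{\xi}}$ set as $A=\bigcup_n A_n$ with $A_n\in\mathbf{\Pi^0_{\xi_n}}$, $\xi_n<\xi$, and commuting $\triangle$ with the union leaves you needing to bound $A_n^{\triangle U}$ for a $\mathbf{\Pi^0_{\xi_n}}$ set, whereas the inductive hypothesis controls the $*$-transforms $A_n^{*V}$. The bridge is the \emph{mixed} identity $A_n^{\triangle U}=\bigcup\{A_n^{*U_m}\ :\ U_m\subseteq U\}$ --- the dual of the paper's item $A^{*U}=\bigcap\{A^{\triangle U_n}\ :\ U_n\subseteq U\}$ --- which holds for sets with the Baire property via localization and exhibits $A^{\triangle U}$ as a countable union of $\mathbf{\Pi^0_{\xi_n}}$ sets, hence $\mathbf{\Sigma^0_{\xi}}$ with no loss of rank. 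You allude to ``its dual'' when introducing the covering identity, so you likely have this in mind, but since it is the step that carries the whole inductive bookkeeping it should be stated and used explicitly; note also that this is the one item of the lemma that genuinely requires the Baire property of the fibers rather than just the Banach category theorem.
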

\begin{enumerate}
\item $A^{\triangle}$ and $A^{*}$ are invariant, and $A$ is invariant
iff $A=A^{\triangle}$ iff $A=A^{*}.$
\item $A^{\triangle U}=X-(X-A)^{*U}.$
\item If $A=\bigcup A_{n}$ then $A^{\triangle U}=\bigcup A_{n}^{\triangle U}.$
If $A=\bigcap A_{n}$ then $A^{*U}=\bigcap A_{n}^{*U}.$
\item If $A$ is $\mathbf{\Pi_{\alpha}^{0}}$ then $A^{*U}$ is $\mathbf{\Pi_{\alpha}^{0}}$.
If $A$ is $\mathbf{\Sigma_{\alpha}^{0}}$ then $A^{\triangle U}$
is $\mathbf{\Sigma_{\alpha}^{0}}.$
\item $A^{*U}=\bigcap\{A^{\triangle U_{n}}\ :\ U_{n}\subseteq U\}.$
\end{enumerate}
\end{defn}
\subsection{Better Topologies}

Refinement of Polish topologies is a very common tool in descriptive
set theory. Given a Polish space $(X,\tau)$ and a sequence $B_{n}$
of Borel sets, there is a Polish topology refining $\tau$ such that
for all $n$, $B_{n}$ is clopen. When a Polish $G$ - space $X$
is given, refining the topology while maintaining the continuity of
the action is a harder problem. Results of Becker, Kechris and Hjorth
have led to the following:
\selectlanguage{english}%
\begin{thm}

\label{thm:(-Hjorth-) better topology}Let $(X,\tau)$ be a Polish
$G$ - space, and $U\subseteq G$ a countable collection of open sets
of $G$. $\mathcal{A}$ a countable collection of $\mathbf{\Sigma_{\alpha}^{0}(X,\tau)}.$
There is a Polish $\tau\subseteq\sigma\subseteq\mathbf{\Sigma_{\alpha}^{0}(X,\tau)}$
s.t. $(X,\sigma)$ is a Polish $G$ - action and $\mathcal{A}^{\triangle U}\subseteq\sigma$.

\end{thm}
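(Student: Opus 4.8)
The idea is to take $\sigma$ to be the topology generated by $\tau$ together with countably many Vaught transforms of the sets in $\mathcal{A}$, and then to verify separately that it is Polish, that it refines $\tau$ inside $\mathbf{\Sigma_{\alpha}^{0}(X,\tau)}$, and that it is still a Polish $G$-space.

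First fix a countable basis $\mathcal{U}$ for $\tau$, and a countable family $\mathcal{B}$ of open subsets of $G$ which generates the topology of $G$ and contains every member of the given collection $U$ (possible since $G$ is second countable). Put $\mathcal{G}=\{A^{\triangle W}\ :\ A\in\mathcal{A},\ W\in\mathcal{B}\}$ and let $\sigma$ be the topology on $X$ generated by $\mathcal{U}\cup\mathcal{G}$; then $\tau\subseteq\sigma$ and $\mathcal{A}^{\triangle U}\subseteq\mathcal{G}\subseteq\sigma$ since $U\subseteq\mathcal{B}$. By the properties of the Vaught transform recorded above, each $A^{\triangle W}$ is $\mathbf{\Sigma_{\alpha}^{0}(X,\tau)}$, and $\tau$-open sets are $\mathbf{\Sigma_{1}^{0}}\subseteq\mathbf{\Sigma_{\alpha}^{0}}$; hence every basic $\sigma$-open set, being a finite intersection of members of $\mathcal{U}\cup\mathcal{G}$, is $\mathbf{\Sigma_{\alpha}^{0}(X,\tau)}$, and since $\sigma$ is second countable every $\sigma$-open set is a countable union of such sets and $\mathbf{\Sigma_{\alpha}^{0}}$ is closed under countable unions, so $\sigma\subseteq\mathbf{\Sigma_{\alpha}^{0}(X,\tau)}$. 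Finally all members of $\mathcal{U}\cup\mathcal{G}$ are Borel in $(X,\tau)$, so adjoining them as open sets produces, by the standard change-of-topology fact recalled at the beginning of this subsection, a Polish topology with the same Borel $\sigma$-algebra as $\tau$; this is $\sigma$.

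The one substantial point is that $(X,\sigma)$ is still a Polish $G$-space, i.e. that $a\colon G\times(X,\sigma)\to(X,\sigma)$, $a(g,x)=g\cdot x$, is continuous; it is enough to check that $a^{-1}(V)$ is open in $G\times(X,\sigma)$ for $V$ ranging over the subbasis $\mathcal{U}\cup\mathcal{G}$. For $V\in\mathcal{U}$ this is immediate from continuity of the original action. For $V=A^{\triangle W}$ I would use the identities $a^{-1}(A^{\triangle W})=\{(g,x)\ :\ x\in A^{\triangle(Wg)}\}$ and $(A^{\triangle W})^{\triangle W_{0}}=A^{\triangle(WW_{0})}$, together with the fact that the Vaught transform sends a union of open subsets of $G$ to the union of the corresponding transforms; these imply that, up to passing to $\sigma$-open unions, $\mathcal{G}$ is closed under Vaught transforms by members of $\mathcal{B}$, which is exactly the hypothesis under which the Becker--Kechris topology-refinement method keeps the action continuous. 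Concretely, writing $Wg$ as a union of members of $\mathcal{B}$ and using that $\{g\in G\ :\ V'\cap Wg\neq\emptyset\}=W^{-1}V'$ is open in $G$, the $\sigma$-openness of $a^{-1}(A^{\triangle W})$ reduces to elementary manipulations. I expect that propagating this from the $G$-slices to the full product $G\times(X,\sigma)$ --- equivalently, executing the Becker--Kechris continuity argument in the present formulation --- is the only genuinely delicate step; the rest is bookkeeping with the Vaught transform calculus already in hand.
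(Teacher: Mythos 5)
The paper itself does not prove this theorem --- it is quoted from Becker--Kechris and Gao with a pointer to those references --- so I am judging your sketch on its own merits. Your choice of generating family and your continuity argument follow the standard route, and the Vaught-transform identities you invoke, $a^{-1}(A^{\triangle W})=\{(g,x)\ :\ x\in A^{\triangle (Wg)}\}$ and $(A^{\triangle W})^{\triangle W_{0}}=A^{\triangle (WW_{0})}$, are correct. The genuine gap is the step you dismiss in one line: the Polishness of $\sigma$. The ``standard change-of-topology fact'' recalled in the paper produces \emph{some} finer Polish topology in which given Borel sets become \emph{clopen}; it says nothing about the topology \emph{generated} by $\tau$ together with those sets, and that generated topology is in general not Polish --- not even regular. (Adjoin $\mathbb{Q}$ as an open subset of $\mathbb{R}$: in the generated topology $\mathbb{Q}$ is an open set whose subspace topology is the usual topology of $\mathbb{Q}$, which is not Polish, so the generated topology cannot be Polish; one can also check directly that a rational point cannot be separated from the closed set of irrationals.) Nor can you fall back on the clopen-making topology: complements of $\mathbf{\Sigma_{\alpha}^{0}}$ sets are only $\mathbf{\Pi_{\alpha}^{0}}$, so that topology violates the bound $\sigma\subseteq\mathbf{\Sigma_{\alpha}^{0}}(X,\tau)$, and it typically destroys continuity of the action as well.

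The Polishness of the particular topology generated by the sets $A^{\triangle W}$ is the real content of the theorem. In Becker--Kechris and Gao it is established by verifying the strong Choquet criterion, and that verification uses exactly the closure properties you only bring up in the continuity paragraph: the generating family must be (up to unions) closed under finite intersections and under further $\triangle$-transforms, so that each basic $\sigma$-open set $B$ is covered by sets of the form $C^{\triangle W'}$ admitting a further shrinking $C^{\triangle (W'W'')}\subseteq B$; this is what allows the nonempty-intersection argument in the Choquet game to go through. For the same reason your generating family is slightly too small --- one needs transforms of the intersections $U\cap A^{\triangle W}$ (or an equivalent closure under finite intersections before transforming), not just transforms of the $A$'s. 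So while your outline flags continuity of the action as the delicate point, the truly delicate point is Polishness, and the justification you offer for it does not work.
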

\selectlanguage{british}%
In particular, if $X$ is a Polish $G$ - space, and $B \subseteq X$ is Borel invariant, then there is a Polish topology on $B$ such that $B$ is a Polish $G$ - space.
Proofs of the above can be found in \cite{key-7,key-11}.

\selectlanguage{american}%

\section{{\large Hjorth Analysis}}

Let $(G,X)$ be a general Polish action, and $\mathfrak{B}_{0}$ a
countable basis for the topology of $G$. Our main task in the following chapter is defining the equivalence relations $\equiv_\alpha$ which will approximate $E^{X}_{G}$, as explained in the introduction. We first define a reflexive, transitive
and non-symmetric relation between pairs of an element of $x$ and
an open subset of $G$:

\begin{defn}

For $V_{0},V_{1}\subseteq G$ open and non-empty, and $x_{0},x_{1}\in X$
:

\[
(x_{0},V_{0})\leq_{1}(x_{1},V_{1})
\]
 if
\[
\overline{V_{0}\cdot x_{0}}\subseteq\overline{V_{1}\cdot x_{1}}.
\]
At successor stages :
\[
(x_{0},V_{0})\leq_{\alpha+1}(x_{1},V_{1})
\]
 if for all $W_{0}\subseteq V_{0}$ open and non-empty, there is $W_{1}\subseteq V_{1}$
open and non-empty such that:
\[
(x_{1},W_{1})\leq_{\alpha}(x_{0},W_{0}).
\]
For $\lambda$ a limit:
\[
(x_{0},V_{0})\leq_{\lambda}(x_{1},V_{1})
\]
 if for every $\alpha<\lambda$
\[
(x_{0},V_{0})\leq_{\alpha}(x_{1},V_{1}).
\]

\end{defn}
\begin{lem}

\label{basic lemma}For $W_{0}\subseteq V_{0},$ $W_{1}\supseteq V_{1},$
all open and non-empty,

\[
(x_{0},V_{0})\leq_{\alpha}(x_{1},V_{1})
\]
implies
\[
(x_{0},W_{0})\leq_{\alpha}(x_{1},W_{1}).
\]

\end{lem}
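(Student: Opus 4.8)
The plan is to prove Lemma \ref{basic lemma} by transfinite induction on $\alpha$, treating the base case, successor case, and limit case separately. The statement says that $\leq_\alpha$ is monotone: shrinking the left-hand open set and enlarging the right-hand open set preserves the relation. Intuitively this is the ``right'' monotonicity for a relation whose informal meaning is that $V_1$ acting on $x_1$ is ``at least as rich as'' $V_0$ acting on $x_0$; making the left side poorer and the right side richer can only help.

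For the base case $\alpha=1$, if $(x_0,V_0)\leq_1(x_1,V_1)$ then $\overline{V_0\cdot x_0}\subseteq\overline{V_1\cdot x_1}$. Since $W_0\subseteq V_0$ we get $W_0\cdot x_0\subseteq V_0\cdot x_0$, hence $\overline{W_0\cdot x_0}\subseteq\overline{V_0\cdot x_0}$; and since $V_1\subseteq W_1$ we get $\overline{V_1\cdot x_1}\subseteq\overline{W_1\cdot x_1}$. Chaining the three inclusions gives $\overline{W_0\cdot x_0}\subseteq\overline{W_1\cdot x_1}$, which is exactly $(x_0,W_0)\leq_1(x_1,W_1)$. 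For the limit case $\lambda$, the relation $(x_0,V_0)\leq_\lambda(x_1,V_1)$ means $(x_0,V_0)\leq_\alpha(x_1,V_1)$ for all $\alpha<\lambda$; applying the induction hypothesis at each such $\alpha$ yields $(x_0,W_0)\leq_\alpha(x_1,W_1)$ for all $\alpha<\lambda$, hence $(x_0,W_0)\leq_\lambda(x_1,W_1)$.

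The successor case is the one requiring a little care, and is the main (though still mild) obstacle. Suppose $(x_0,V_0)\leq_{\alpha+1}(x_1,V_1)$, and let $W_0\subseteq V_0$, $W_1\supseteq V_1$ be open non-empty. To show $(x_0,W_0)\leq_{\alpha+1}(x_1,W_1)$ we must, given an arbitrary open non-empty $W_0'\subseteq W_0$, produce an open non-empty $W_1'\subseteq W_1$ with $(x_1,W_1')\leq_\alpha(x_0,W_0')$. Since $W_0'\subseteq W_0\subseteq V_0$, we may apply the hypothesis $(x_0,V_0)\leq_{\alpha+1}(x_1,V_1)$ directly to $W_0'$ to obtain an open non-empty $W_1'\subseteq V_1$ with $(x_1,W_1')\leq_\alpha(x_0,W_0')$. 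Now $W_1'\subseteq V_1\subseteq W_1$, so $W_1'$ is the required witness — no appeal to the induction hypothesis is even needed here, since the defining clause of $\leq_{\alpha+1}$ already quantifies over all subsets of the left set and allows any subset of the right set. One should note that no genericity or continuity input is used; the argument is purely formal manipulation of the inductive definition, and the only subtlety is keeping straight which side is shrunk and which is enlarged, since $\leq_{\alpha+1}$ reverses the roles of the two pairs.
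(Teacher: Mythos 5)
Your proof is correct and is exactly the routine transfinite induction the paper has in mind when it dismisses this lemma as ``Trivial''; the base, successor, and limit cases are all handled properly, and your observation that the successor step needs no appeal to the induction hypothesis is accurate. Nothing further is required.
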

\begin{proof}
Trivial.\end{proof}
The next lemma will be needed when proving the Borel definability of $\leq_\alpha$:
\begin{lem}

\label{countable definition <=00003D}$(x_{0},V_{0})\leq_{\alpha+1}(x_{1},V_{1})$
iff for all $W_{0}\subseteq V_{0}$ in $\mathfrak{B}_{0}$ there is
$W_{1}\subseteq V_{1}$ in $\mathfrak{B}_{0}$ with $(x_{1},W_{1})\leq_{\alpha}(x_{0},W_{0}).$
\end{lem}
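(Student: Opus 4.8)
The plan is to prove both directions of the equivalence, noting that the left-to-right implication is immediate from the definition (since $\mathfrak{B}_0 \subseteq \{$open non-empty subsets of $G\}$, any $W_0 \in \mathfrak{B}_0$ with $W_0 \subseteq V_0$ is in particular an open non-empty subset of $V_0$, so the witness $W_1$ supplied by $(x_0,V_0)\leq_{\alpha+1}(x_1,V_1)$ already works — although one should check one can take $W_1 \in \mathfrak{B}_0$, which follows from the monotonicity Lemma \ref{basic lemma} by shrinking $W_1$ to a basic set containing a point of $W_1$, wait — that shrinks, not grows; rather one uses that $W_1$ open non-empty contains some basic $W_1' \in \mathfrak{B}_0$, $W_1' \subseteq W_1 \subseteq V_1$, and $(x_1,W_1)\leq_\alpha(x_0,W_0)$ together with $W_1' \subseteq W_1$ gives $(x_1,W_1')\leq_\alpha(x_0,W_0)$ by Lemma \ref{basic lemma} — so indeed $W_1'$ is the required basic witness). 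So the content is the right-to-left direction.

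For the converse, assume the basic-set condition holds and let $W_0 \subseteq V_0$ be an arbitrary open non-empty set; I must produce an open non-empty $W_1 \subseteq V_1$ with $(x_1,W_1)\leq_\alpha(x_0,W_0)$. The idea is to shrink $W_0$ to a basic set: pick any $W_0' \in \mathfrak{B}_0$ with $W_0' \subseteq W_0 \subseteq V_0$ (possible since $\mathfrak{B}_0$ is a basis and $W_0$ is open non-empty). By hypothesis there is $W_1 \in \mathfrak{B}_0$, $W_1 \subseteq V_1$, with $(x_1,W_1)\leq_\alpha(x_0,W_0')$. Now apply Lemma \ref{basic lemma} with the roles set so that on the first coordinate we \emph{enlarge} $W_0'$ to $W_0$: since $W_0' \subseteq W_0$ and $W_1 \subseteq W_1$ (trivially), Lemma \ref{basic lemma} gives $(x_1,W_1)\leq_\alpha(x_0,W_0)$. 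This $W_1$ is the desired witness, completing the converse.

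The only subtlety — and the step I expect to require the most care — is making sure Lemma \ref{basic lemma} is applied with the correct variance in each coordinate: the lemma lets the ``smaller'' pair shrink its open set and the ``larger'' pair grow its open set, and here $(x_0,\cdot)$ plays the role of the larger (target) pair on the right of $\leq_\alpha$, so enlarging its open set from $W_0'$ to $W_0$ is exactly what is permitted. One should also record that the argument does not need induction on $\alpha$: it works uniformly for the fixed $\alpha$ appearing in $\leq_{\alpha+1}$, invoking Lemma \ref{basic lemma} at that same level $\alpha$. Thus the proof is a short direct argument using only the basis property of $\mathfrak{B}_0$ and the monotonicity lemma.
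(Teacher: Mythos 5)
Your argument is correct and is exactly the paper's intended proof: the paper simply says ``Follows from Lemma \ref{basic lemma},'' and your write-up fills in the details of that reduction (shrinking the left-hand witness to a basic set in one direction, and shrinking the quantified set to a basic set and then enlarging back via the monotonicity lemma in the other), with the variance of Lemma \ref{basic lemma} applied correctly in both coordinates. The only cosmetic issue is the self-correcting digression in your first paragraph, which should be cleaned up before the argument is presented as a final proof.
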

\begin{proof}

Follows from lemma \ref{basic lemma}.
\end{proof}
\begin{lem}
\label{lem:(a) (b) (c)}(a) Each $\leq_{\alpha}$ is transitive.

(b) For $\alpha<\beta$ , if $(x_{0},W_{0})\leq_{\beta}(x_{1,}W_{1})$
then $(x_{0},W_{0})\leq_{\alpha}(x_{1},W_{1}).$
\begin{proof}

\textbf{(a)} Immediate.

\textbf{(b)} The cases $\beta=1$ and $\beta$ limit are obvious.
We divide the case $\beta=\gamma+1$ into 3 subcases: $\gamma=1$,
$\gamma$ is a successor and $\gamma$ is a limit.

$\gamma=1:$ Assume $(x_{0},W_{0})\leq_{2}(x_{1},W_{1})$ and $\overline{W_{0}\cdot x_{0}}\nsubseteq\overline{W_{1}\cdot x_{1}}.$
So there is an open set $O$ in $X$ that intersects $W_{0}\cdot x_{0}$
but doesn't intersect $W_{1}\cdot x_{1}$. By the continuity of the
action, we can find $U_{0}\subseteq W_{0}$ such that $\overline{U_{0}\cdot x_{0}}\subseteq O$.
Which leads to a contradiction.

The other 2 subcases involve only standard induction arguments.
\end{proof}
\end{lem}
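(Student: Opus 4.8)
The plan is to prove parts (a) and (b) by simultaneous transfinite induction on $\alpha$ (for (a)) and on $\beta$ (for (b)), since the two statements feed into each other at successor stages. For part (a), transitivity at stage $1$ is immediate from transitivity of $\subseteq$ on closures $\overline{V\cdot x}$. At a limit $\lambda$, if $(x_0,V_0)\leq_\lambda (x_1,V_1)$ and $(x_1,V_1)\leq_\lambda (x_2,V_2)$, then for every $\alpha<\lambda$ both relations hold at level $\alpha$, so by the inductive hypothesis for (a) at $\alpha$ we get $(x_0,V_0)\leq_\alpha (x_2,V_2)$; since this holds for all $\alpha<\lambda$, we conclude $(x_0,V_0)\leq_\lambda (x_2,V_2)$. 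At a successor $\alpha+1$: given $(x_0,V_0)\leq_{\alpha+1}(x_1,V_1)$ and $(x_1,V_1)\leq_{\alpha+1}(x_2,V_2)$, take any open nonempty $W_0\subseteq V_0$; pull it back through the first relation to get open nonempty $W_1\subseteq V_1$ with $(x_1,W_1)\leq_\alpha (x_0,W_0)$, then pull $W_1$ back through the second relation to get open nonempty $W_2\subseteq V_2$ with $(x_2,W_2)\leq_\alpha (x_1,W_1)$. By the inductive hypothesis for (a) at $\alpha$, $(x_2,W_2)\leq_\alpha (x_0,W_0)$, which is exactly what is needed to witness $(x_0,V_0)\leq_{\alpha+1}(x_2,V_2)$.

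For part (b), the base case $\beta=1$ is vacuous (there is no $\alpha<1$ relevant except $\alpha$ that we would only compare at level $1$, and in fact the only content is $\alpha<\beta$ with $\beta\geq 2$; when $\beta=1$ there is nothing to prove). When $\beta$ is a limit, $(x_0,W_0)\leq_\beta(x_1,W_1)$ means $(x_0,W_0)\leq_\alpha(x_1,W_1)$ for every $\alpha<\beta$ by definition, so there is nothing to prove. The real work is $\beta=\gamma+1$, and the key reduction is: it suffices to prove $(x_0,W_0)\leq_{\gamma+1}(x_1,W_1)\implies (x_0,W_0)\leq_\gamma(x_1,W_1)$, because then for any $\alpha<\gamma+1$ either $\alpha=\gamma$ (done) or $\alpha<\gamma$, in which case we first step down to $\gamma$ and then invoke the inductive hypothesis for (b) with the smaller ordinal $\gamma$. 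So the crux is the single step from $\gamma+1$ down to $\gamma$, split into the three subcases $\gamma=1$, $\gamma$ successor, $\gamma$ limit.

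The subcase $\gamma=1$ is the only genuinely non-formal one, and the excerpt already carries it out: assuming $(x_0,W_0)\leq_2(x_1,W_1)$ but $\overline{W_0\cdot x_0}\not\subseteq\overline{W_1\cdot x_1}$, pick an open $O\subseteq X$ meeting $W_0\cdot x_0$ but disjoint from $W_1\cdot x_1$, use continuity of the action to shrink to open nonempty $U_0\subseteq W_0$ with $\overline{U_0\cdot x_0}\subseteq O$; then the definition of $\leq_2$ yields open nonempty $U_1\subseteq W_1$ with $(x_1,U_1)\leq_1(x_0,U_0)$, i.e. $\overline{U_1\cdot x_1}\subseteq\overline{U_0\cdot x_0}\subseteq O$, contradicting $O\cap W_1\cdot x_1=\emptyset$ since $U_1\cdot x_1\neq\emptyset$. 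For $\gamma$ a successor $\delta+1$: given $(x_0,W_0)\leq_{\delta+2}(x_1,W_1)$ and any open nonempty $W_0'\subseteq W_0$, get open nonempty $W_1'\subseteq W_1$ with $(x_1,W_1')\leq_{\delta+1}(x_0,W_0')$, then apply the inductive hypothesis for (b) (with $\alpha=\delta<\delta+1=\gamma<\beta$) to get $(x_1,W_1')\leq_\delta(x_0,W_0')$, witnessing $(x_0,W_0)\leq_{\delta+1}(x_1,W_1)=\leq_\gamma$. For $\gamma$ a limit: $(x_0,W_0)\leq_{\gamma+1}(x_1,W_1)$ gives, for each open nonempty $W_0'\subseteq W_0$, an open nonempty $W_1'\subseteq W_1$ with $(x_1,W_1')\leq_\gamma(x_0,W_0')$; but by definition of $\leq_\gamma$ at the limit $\gamma$ this already says $(x_1,W_1')\leq_\eta(x_0,W_0')$ for all $\eta<\gamma$, so in particular the witness works at every finite approximation and we need a uniform argument — here one observes that to show $(x_0,W_0)\leq_\gamma(x_1,W_1)$ one must show $(x_0,W_0)\leq_\eta(x_1,W_1)$ for each $\eta<\gamma$, and each such $\eta$ is $<\beta$, so it follows directly from the already-established fact that $(x_0,W_0)\leq_{\gamma+1}(x_1,W_1)$ implies $(x_0,W_0)\leq_{\eta+1}(x_1,W_1)$ (a successor case with smaller ordinal, handled by the induction) and then stepping down once more if needed. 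The only place the topology of $X$ and continuity of the action enter is the subcase $\gamma=1$; everything else is bookkeeping, so I expect that subcase to be the main (and only real) obstacle, and it is already dispatched in two lines in the excerpt.
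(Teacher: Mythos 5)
Your proposal is correct and follows essentially the same route as the paper: the paper's proof of (b) does exactly the $\gamma=1$ computation you reproduce and leaves the other two subcases of $\beta=\gamma+1$ as ``standard induction arguments,'' which are precisely the ones you spell out (and your simultaneous induction for (a) is the intended reading of ``immediate''). One small caution: in the limit subcase your appeal to ``$\leq_{\gamma+1}$ implies $\leq_{\eta+1}$, handled by the induction'' is an instance of (b) at the \emph{same} $\beta=\gamma+1$ and so is not literally available as an inductive hypothesis, but this is harmless because the first half of that same sentence already derives $(x_0,W_0)\leq_{\eta+1}(x_1,W_1)$ directly from the fact that a witness at level $\gamma$ is a witness at every $\eta<\gamma$.
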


We are now ready to define the equivalence relation:

\selectlanguage{british}%
\begin{defn}

Let $x_{0},x_{1}$ in $X$. We will say that $x_{0}\equiv_{\alpha}x_{1}$
iff for $i\in\{0,1\}$ and for $V_{i}\subseteq G$ open and nonempty,
there is $V_{1-i}\subseteq G$ open and nonempty such that $(x_{1-\text{i}},V_{1-i})\leq_{\alpha}(x_{i},V_{i}).$
\end{defn}
\selectlanguage{american}%
\begin{lem}

$\equiv_{\alpha}$ is an equivalence relation.
\end{lem}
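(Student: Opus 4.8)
The goal is to show that $\equiv_\alpha$, as defined, is an equivalence relation: reflexive, symmetric, and transitive. Symmetry is immediate from the definition, since the condition on $x_0 \equiv_\alpha x_1$ is already stated symmetrically in $i \in \{0,1\}$. So the plan is to dispatch reflexivity and transitivity.

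For reflexivity, I would first observe that $(x,V) \leq_\alpha (x,V)$ for every $x$ and every open non-empty $V$; this follows by a trivial induction on $\alpha$ (at stage $1$ it is $\overline{V\cdot x} \subseteq \overline{V \cdot x}$; at successor stages one takes $W_1 = W_0$; at limits it is immediate). Then given any open non-empty $V_i$, one simply takes $V_{1-i} = V_i$ and applies this, so $x \equiv_\alpha x$.

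For transitivity, suppose $x_0 \equiv_\alpha x_1$ and $x_1 \equiv_\alpha x_2$; I want $x_0 \equiv_\alpha x_2$. By symmetry of the statement to be proved it suffices to handle one direction, say: given $V_0$ open non-empty, produce $V_2$ open non-empty with $(x_2, V_2) \leq_\alpha (x_0, V_0)$ — and the mirror direction is identical. From $x_0 \equiv_\alpha x_1$ applied with $i = 0$, there is $V_1$ with $(x_1, V_1) \leq_\alpha (x_0, V_0)$. Now here is the subtle point: I want to feed $V_1$ into the hypothesis $x_1 \equiv_\alpha x_2$, but that hypothesis, applied with $i = 1$, gives me a $V_2$ with $(x_2, V_2) \leq_\alpha (x_1, V_1)$ only after I have chosen $V_1$ — which I have. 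So applying $x_1 \equiv_\alpha x_2$ with $i=1$ and the already-chosen $V_1$ yields $V_2$ with $(x_2,V_2) \leq_\alpha (x_1, V_1)$. Combining $(x_2, V_2) \leq_\alpha (x_1, V_1)$ and $(x_1, V_1) \leq_\alpha (x_0, V_0)$ via transitivity of $\leq_\alpha$ (Lemma \ref{lem:(a) (b) (c)}(a)) gives $(x_2, V_2) \leq_\alpha (x_0, V_0)$, as desired. The other direction (producing $V_0$ from a given $V_2$) runs the same way with the roles reversed, using the $i=1$ clause of $x_1 \equiv_\alpha x_2$ first and then the $i=0$ clause of $x_0 \equiv_\alpha x_1$.

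The only thing that requires any care is making sure the quantifier order in the definition of $\equiv_\alpha$ lets the two applications be chained — i.e., that one is allowed to pick the intermediate $V_1$ from the first equivalence and then hand it to the second. Since in each equivalence the open set on the "target" side is universally quantified and the one on the "source" side is chosen depending on it, the chaining goes through exactly as above; this, together with transitivity of $\leq_\alpha$, is the whole content of the argument, so there is no real obstacle — the proof is essentially bookkeeping built on Lemma \ref{lem:(a) (b) (c)}(a) and the trivial reflexivity of $\leq_\alpha$.
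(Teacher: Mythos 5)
Your proof is correct, and it fills in exactly the argument the paper leaves implicit: the paper states this lemma without proof, treating it as immediate from the reflexivity and transitivity of $\leq_{\alpha}$ (Lemma \ref{lem:(a) (b) (c)}(a)), which is precisely the bookkeeping you carry out. The chaining of the two existential witnesses through the intermediate $V_1$ is handled correctly, so there is nothing to add.
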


\begin{lem}

\label{countable definition =00003D=00003D=00003D}$x_{0}\equiv_{\alpha}x_{1}$
iff for $i\in\{0,1\}$ and for $V_{i}\in\mathcal{B}_{0}$, there is
$V_{1-i}\in\mathcal{B}_{0}$ such that $(x_{1-i},V_{1-i})\leq_{\alpha}(x_{i},V_{i})$.

\begin{lem}

\label{lemma invariance under g}For every $\alpha$ and every $g\in G$
:\textbf{ }$(x_{0},V_{0})\leq_{\alpha}(g\cdot x_{0},V_{0}\cdot g^{-1})$.

\end{lem}
\end{lem}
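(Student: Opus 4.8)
The plan is to obtain both implications as immediate consequences of the monotonicity property of Lemma \ref{basic lemma}, in the same spirit as the proof of the preceding characterization of $\leq_{\alpha+1}$. The only fact I will use is that $\leq_{\alpha}$ is preserved under shrinking the open set in its \emph{first} coordinate and enlarging the open set in its \emph{second} coordinate; together with the fact that $\mathcal{B}_{0}$ is a basis, this lets me pass freely between arbitrary open sets and basic ones. Because both the defining condition of $\equiv_{\alpha}$ and the condition in the statement are symmetric in $i\in\{0,1\}$, I will fix $i$ throughout and simply run each argument for $i=0$ and $i=1$ at the end.

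For the forward direction I will assume $x_{0}\equiv_{\alpha}x_{1}$ and fix $V_{i}\in\mathcal{B}_{0}$. Since $V_{i}$ is in particular open and nonempty, the definition of $\equiv_{\alpha}$ hands me an open nonempty $V_{1-i}\subseteq G$ with $(x_{1-i},V_{1-i})\leq_{\alpha}(x_{i},V_{i})$. I then pick a nonempty basic set $W_{1-i}\in\mathcal{B}_{0}$ with $W_{1-i}\subseteq V_{1-i}$ and apply Lemma \ref{basic lemma} to shrink the first coordinate, obtaining $(x_{1-i},W_{1-i})\leq_{\alpha}(x_{i},V_{i})$. This $W_{1-i}\in\mathcal{B}_{0}$ is exactly the witness required for the basic set $V_{i}$.

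For the converse I will assume the basic condition and fix an arbitrary open nonempty $V_{i}\subseteq G$. I choose a nonempty basic $U_{i}\in\mathcal{B}_{0}$ with $U_{i}\subseteq V_{i}$, apply the hypothesis to $U_{i}$ to get $V_{1-i}\in\mathcal{B}_{0}$ with $(x_{1-i},V_{1-i})\leq_{\alpha}(x_{i},U_{i})$, and then use Lemma \ref{basic lemma} to enlarge the second coordinate from $U_{i}$ to $V_{i}$, yielding $(x_{1-i},V_{1-i})\leq_{\alpha}(x_{i},V_{i})$. This supplies the witness demanded by the definition of $\equiv_{\alpha}$ for the general open set $V_{i}$, and running the two directions for both values of $i$ completes the equivalence.

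I do not expect a genuine obstacle here; this is the routine \emph{basis suffices} reduction. The single point requiring care is the bookkeeping of Lemma \ref{basic lemma}: the first coordinate may only be shrunk and the second only enlarged, so I must apply it in opposite directions in the two implications, and I must check that the basic sets chosen inside $V_{1-i}$ and inside $V_{i}$ can be taken nonempty — which is automatic, since those sets are nonempty open and $\mathcal{B}_{0}$ is a basis. This is precisely why the companion lemma for $\leq_{\alpha+1}$ could be dispatched with a one-line reference to Lemma \ref{basic lemma}.
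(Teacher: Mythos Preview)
Your argument for the first lemma (the countable characterization of $\equiv_{\alpha}$) is correct and is exactly the intended one: the paper leaves this lemma without proof, and your reduction via Lemma~\ref{basic lemma} is precisely the ``follows from Lemma~\ref{basic lemma}'' justification used for the parallel Lemma~\ref{countable definition <=00003D}.

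However, the displayed statement actually contains \emph{two} lemmas, and you have only treated the first. The inner Lemma~\ref{lemma invariance under g}, asserting $(x_{0},V_{0})\leq_{\alpha}(g\cdot x_{0},V_{0}\cdot g^{-1})$ for all $\alpha$ and all $g\in G$, is the one the paper actually proves, and it is not a consequence of the monotonicity in Lemma~\ref{basic lemma}. It requires a transfinite induction on $\alpha$: the base case $\alpha=1$ uses $V_{0}\cdot x_{0}=(V_{0}\cdot g^{-1})\cdot(g\cdot x_{0})$, the limit case is immediate, and at a successor one takes $W_{0}\subseteq V_{0}$, applies the induction hypothesis (in the form with $g^{-1}$ in place of $g$) to get $(g\cdot x_{0},W_{0}\cdot g^{-1})\leq_{\alpha}(x_{0},W_{0})$, and uses $W_{0}\cdot g^{-1}\subseteq V_{0}\cdot g^{-1}$ as the witness. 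Your proposal omits this entirely, so as a proof of the full statement it is incomplete.
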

\begin{proof}

By transfinite induction on $\alpha$. The cases $\alpha=1$ and $\alpha$
limit are trivial. For a successor $\alpha$, given $W_{0}\subseteq V_{0}$,
by the induction hypothesis we have $(g\cdot x_{0},W_{0}\cdot g^{-1})\leq_{\alpha}(x_{0},W_{0}),$
so $W_{0}\cdot g^{-1}$ is the open set we're looking for.

\end{proof}

\begin{cor}

\label{=00003D=00003D=00003D invariant under G} $\equiv_{\alpha}$
is invariant under the action of $g$ .
\end{cor}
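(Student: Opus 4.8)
The plan is to derive everything from Lemma~\ref{lemma invariance under g}. First I would upgrade that lemma to a two-sided statement. Applying it once as written, and once with $g^{-1}$ in place of $g$ and $g\cdot x_{0}$ in place of $x_{0}$, gives, for every ordinal $\alpha$, every $g\in G$, every $x_{0}\in X$, and every nonempty open $V_{0}\subseteq G$,
\[
(x_{0},V_{0})\leq_{\alpha}(g\cdot x_{0},V_{0}\cdot g^{-1})
\qquad\text{and}\qquad
(g\cdot x_{0},V_{0}\cdot g^{-1})\leq_{\alpha}(x_{0},V_{0}),
\]
the second inequality obtained by substituting the open set $V_{0}\cdot g^{-1}$ for ``$V_{0}$'' in the instance $(g\cdot x_{0},W)\leq_{\alpha}(x_{0},W\cdot g)$ of the lemma and using $(V_{0}\cdot g^{-1})\cdot g=V_{0}$.

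Next I would establish that $x_{0}\equiv_{\alpha}g\cdot x_{0}$ for every $g\in G$ by checking the definition of $\equiv_{\alpha}$ with $x_{1}:=g\cdot x_{0}$. For the ``$i=0$'' clause, given a nonempty open $V_{0}$ one takes $V_{1}:=V_{0}\cdot g^{-1}$ and invokes the second inequality above; for the ``$i=1$'' clause, given a nonempty open $V_{1}$ one takes $V_{0}:=V_{1}\cdot g$ and invokes the first inequality (with ``$V_{0}$'' there instantiated to $V_{1}\cdot g$, so that $(V_{1}\cdot g)\cdot g^{-1}=V_{1}$). Thus right-translating the witnessing open set by $g^{-1}$ on one side and by $g$ on the other produces the required witnesses.

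Finally, $G$-invariance of the equivalence relation $\equiv_{\alpha}$ follows formally: if $x_{0}\equiv_{\alpha}x_{1}$, then by the previous step and the fact that $\equiv_{\alpha}$ is an equivalence relation we have $g\cdot x_{0}\equiv_{\alpha}x_{0}\equiv_{\alpha}x_{1}\equiv_{\alpha}g\cdot x_{1}$, hence $g\cdot x_{0}\equiv_{\alpha}g\cdot x_{1}$ by transitivity.

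I do not anticipate a real obstacle; the only delicate point is the bookkeeping with right-translations of open subsets of $G$ --- keeping track of which of $V_{0}\cdot g^{-1}$ and $V_{1}\cdot g$ serves as the witness in each clause, and applying $(V\cdot g)\cdot g^{-1}=V$ at the right moment. Everything else is a direct appeal to Lemma~\ref{lemma invariance under g} together with transitivity of $\equiv_{\alpha}$.
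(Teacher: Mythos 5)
Your proof is correct and takes essentially the same route as the paper: the paper's entire proof is the remark that $x_{0}\equiv_{\alpha}g\cdot x_{0}$ follows from Lemma~\ref{lemma invariance under g}, with the two-sided version of that lemma and the final appeal to transitivity of $\equiv_{\alpha}$ left implicit. Your write-up simply makes that bookkeeping with the right-translated open sets explicit.
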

\begin{proof}

We show that $x_{0}\equiv_{\alpha}g\cdot x_{0}$ , using Lemma \ref{lemma invariance under g}.

\end{proof}

The next task is showing that $\equiv_{\alpha}$ is Borel:

\begin{prop}

\label{complexity of <=00003D}Let $V_{n}$ be an enumeration of $\mathfrak{B}_{0}$.
Then for all $\alpha<\omega_{1}$

\end{prop}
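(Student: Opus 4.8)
The plan is to prove the bound by transfinite induction on $\alpha$, establishing simultaneously for all pairs of indices $(n,m)$ that the set
\[
R^{\alpha}_{n,m}=\{(x_{0},x_{1})\in X^{2}\ :\ (x_{0},V_{n})\leq_{\alpha}(x_{1},V_{m})\}
\]
lies in the pointclass asserted in the statement; the complexity of $\equiv_{\alpha}$ then follows at once from Lemma~\ref{countable definition =00003D=00003D=00003D}, since $x_{0}\equiv_{\alpha}x_{1}$ is a countable $\forall\exists$ combination of the $R^{\alpha}_{n,m}$ (with coordinates possibly swapped). The one structural input from the topology that we use throughout is that, by continuity of the action, for $V\subseteq G$ open and $O\subseteq X$ open the set $\{x\ :\ V\cdot x\cap O\neq\emptyset\}$ — being the projection to $X$ of the open set $\{(g,x)\ :\ g\in V,\ g\cdot x\in O\}$ — is open, so that $\{x\ :\ V\cdot x\cap O=\emptyset\}$ is closed.

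For the base case $\alpha=1$, fix a countable basis $\{O_{k}\}$ for $X$. Then $(x_{0},V_{n})\leq_{1}(x_{1},V_{m})$, i.e. $\overline{V_{n}\cdot x_{0}}\subseteq\overline{V_{m}\cdot x_{1}}$, holds exactly when for every $k$ either $V_{n}\cdot x_{0}\cap O_{k}=\emptyset$ or $V_{m}\cdot x_{1}\cap O_{k}\neq\emptyset$ (an open set avoids $\overline{V_{m}\cdot x_{1}}$ iff it avoids $V_{m}\cdot x_{1}$, and $X\setminus\overline{V_{m}\cdot x_{1}}$ is the union of those $O_{k}$ that do). By the observation above the two disjuncts are respectively closed in $x_{0}$ and open in $x_{1}$, so each term of the conjunction is a union of a closed and an open subset of $X^{2}$; the countable conjunction over $k$ therefore lands at the claimed base level. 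This is the step I expect to be the most delicate, since it is where the precise index at the bottom of the hierarchy gets pinned down, and where one must be careful that ``meets an open set'' really is open as a function of the point — the only place continuity of the action is invoked.

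For the successor step, Lemma~\ref{countable definition <=00003D} lets us replace quantifiers over arbitrary open subsets by quantifiers over $\mathfrak{B}_{0}$: $(x_{0},V_{n})\leq_{\alpha+1}(x_{1},V_{m})$ iff for every $k$ with $V_{k}\subseteq V_{n}$ there is $l$ with $V_{l}\subseteq V_{m}$ and $(x_{1},V_{l})\leq_{\alpha}(x_{0},V_{k})$. The inclusions $V_{k}\subseteq V_{n}$, $V_{l}\subseteq V_{m}$ are fixed conditions on the indices, so this exhibits
\[
R^{\alpha+1}_{n,m}=\bigcap_{k\,:\,V_{k}\subseteq V_{n}}\ \bigcup_{l\,:\,V_{l}\subseteq V_{m}}\ \sigma\bigl(R^{\alpha}_{l,k}\bigr),
\]
where $\sigma$ is the coordinate swap $(x_{0},x_{1})\mapsto(x_{1},x_{0})$, a homeomorphism of $X^{2}$ and hence preserving every Borel pointclass. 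Thus a countable union followed by a countable intersection of sets at the $\alpha$-level costs exactly two levels, matching the recursion in the statement. The limit step is immediate: $R^{\lambda}_{n,m}=\bigcap_{\alpha<\lambda}R^{\alpha}_{n,m}$ is a countable intersection of Borel sets whose ranks are cofinal below the $\lambda$-level, hence lies at that level. Finally, applying the same bookkeeping to $x_{0}\equiv_{\alpha}x_{1}\iff\forall n\,\exists m\,(x_{1},V_{m})\leq_{\alpha}(x_{0},V_{n})\ \wedge\ \forall m\,\exists n\,(x_{0},V_{n})\leq_{\alpha}(x_{1},V_{m})$ shows $\equiv_{\alpha}$ is Borel, only a bounded amount above the level of $\leq_{\alpha}$, and in particular invariant and Borel as promised in the introduction.
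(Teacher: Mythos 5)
Your proof is correct and follows essentially the same route as the paper's (terse) proof: transfinite induction on $\alpha$, with the base case reduced to the observation that $\{x : V\cdot x\cap O\neq\emptyset\}$ is open so that $\mathfrak{R}_1$ is $G_\delta$, and the successor case handled via Lemma~\ref{countable definition <=00003D}. You have simply filled in the details the paper leaves to the reader, including the limit step and the cost accounting at successors.
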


\[
\mathfrak{R}_{\alpha}=\{(x_{0},x_{1},n,m)\ :\ (x_{0},V_{m})\leq_{\alpha}(x_{1},V_{n})\}
\]
is a $\mathbf{\Pi_{\alpha+k(\alpha)}^{0}}$ set for some $k(\alpha)\in\omega$
(and $\mathbf{\Pi_{\alpha}^{0}}$ for $\alpha$ limit).

\begin{proof}

By induction on $\alpha.$ For the case $\alpha=1$, notice that for
fixed $n,m$, the set of $x_{0},x_{1}$ such that $(x_{0},x_{1},n,m)\in\mathfrak{R}_{1}$
is $G_{\delta}$. For $\alpha$ successor, use lemma \ref{countable definition <=00003D}
.
\end{proof}
\begin{thm}

\label{Complexity of =00003D=00003D=00003D}The equivalence relation
$x_{0}\ \equiv_{\alpha}\ x_{1}$ is $\mathbf{\Pi_{\alpha+m(\alpha)}^{0}}$
(while $m(\alpha)=2$ for $\alpha$ limit) .
\end{thm}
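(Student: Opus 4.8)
The plan is to deduce the complexity of $\equiv_\alpha$ directly from the complexity of the relations $\mathfrak{R}_\alpha$ established in Proposition \ref{complexity of <=00003D}, using the countable reformulation from Lemma \ref{countable definition =00003D=00003D=00003D}. The point is that unravelling the definition of $\equiv_\alpha$ introduces only a bounded number of extra quantifiers over the countable set $\mathfrak{B}_0$, and countable quantifiers do not raise the projective-free Borel complexity by more than finitely many levels.

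Concretely, by Lemma \ref{countable definition =00003D=00003D=00003D}, $x_0 \equiv_\alpha x_1$ holds iff
\[
\bigl(\forall m\ \exists n:\ (x_1,V_n)\leq_\alpha (x_0,V_m)\bigr)\ \wedge\ \bigl(\forall n\ \exists m:\ (x_0,V_m)\leq_\alpha (x_1,V_n)\bigr),
\]
i.e. $\equiv_\alpha$ is obtained from $\mathfrak{R}_\alpha$ by a block of the form $\forall\exists$ over the countable index set $\omega$ (twice, once with the roles of $x_0,x_1$ swapped, then intersected). Since $\mathfrak{R}_\alpha$ is $\mathbf{\Pi^0_{\alpha+k(\alpha)}}$, the inner relation $\exists n\, (x_1,V_n)\leq_\alpha(x_0,V_m)$ is a countable union of $\mathbf{\Pi^0_{\alpha+k(\alpha)}}$ sets, hence $\mathbf{\Sigma^0_{\alpha+k(\alpha)+1}}$; the outer $\forall m$ then yields a countable intersection, landing in $\mathbf{\Pi^0_{\alpha+k(\alpha)+2}}$. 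Taking the intersection of the two symmetric halves stays within $\mathbf{\Pi^0_{\alpha+k(\alpha)+2}}$. So with $m(\alpha) := k(\alpha)+2$ (absorbing constants) we get $\equiv_\alpha$ is $\mathbf{\Pi^0_{\alpha+m(\alpha)}}$, which is Borel.

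For the sharper claim that $m(\alpha)=2$ when $\alpha$ is a limit: in that case Proposition \ref{complexity of <=00003D} gives that $\mathfrak{R}_\alpha$ is already $\mathbf{\Pi^0_\alpha}$ (no $k(\alpha)$ correction is needed at limit stages). Running the same count — $\exists n$ gives $\mathbf{\Sigma^0_\alpha}$ (a countable union of $\mathbf{\Pi^0_\alpha}$ sets is $\mathbf{\Sigma^0_{\alpha+1}}$, but at a limit ordinal a countable union of $\mathbf{\Pi^0_\alpha}$ sets is $\mathbf{\Sigma^0_\alpha}$ since $\mathbf{\Pi^0_\alpha}\subseteq\mathbf{\Sigma^0_\alpha}$ and $\mathbf{\Sigma^0_\alpha}$ is closed under countable unions at limits)… actually one must be a little careful: I would rather say a countable union of $\mathbf{\Pi^0_\alpha}$ sets is $\mathbf{\Pi^0_{\alpha+1}}\cap\mathbf{\Sigma^0_{\alpha+1}}$, hence the $\forall m$ puts us in $\mathbf{\Pi^0_{\alpha+1}}$, and then the intersection of the two halves remains $\mathbf{\Pi^0_{\alpha+1}}$ — so in fact one could hope for $m(\alpha)=1$. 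To be safe and match the statement I would carry $m(\alpha)=2$, which certainly suffices. The verification is an entirely routine induction-free bookkeeping of pointclass operations, citing only the standard closure properties of $\mathbf{\Sigma^0_\xi},\mathbf{\Pi^0_\xi}$ under countable unions, countable intersections, and finite intersections.

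The only mild subtlety — and the step I would be most careful about — is keeping the pointclass arithmetic honest near limit ordinals, where the relation $\mathbf{\Pi^0_\alpha}\subseteq\mathbf{\Sigma^0_{\alpha}}$ fails in general but $\mathbf{\Pi^0_\alpha}\subseteq\mathbf{\Sigma^0_{\alpha+1}}$ always holds; one wants to extract the $m(\alpha)=2$ bound for limit $\alpha$ from the fact that $\mathfrak{R}_\alpha$ is genuinely $\mathbf{\Pi^0_\alpha}$ there, not merely $\mathbf{\Pi^0_{\alpha+k(\alpha)}}$. Apart from that, the proof is a direct application of Lemma \ref{countable definition =00003D=00003D=00003D} and Proposition \ref{complexity of <=00003D}, and I would simply write it as: "Immediate from Lemma \ref{countable definition =00003D=00003D=00003D} and Proposition \ref{complexity of <=00003D}, adding two quantifiers over $\mathfrak{B}_0$."
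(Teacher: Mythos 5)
Your argument is correct and is exactly the paper's proof, which simply cites Lemma \ref{countable definition =00003D=00003D=00003D} and Proposition \ref{complexity of <=00003D} and leaves the quantifier bookkeeping (one $\forall\exists$ block over $\mathfrak{B}_{0}$ in each direction, adding two levels) to the reader. One caution: your parenthetical aside that a countable union of $\mathbf{\Pi^{0}_{\alpha}}$ sets lies in $\mathbf{\Pi^{0}_{\alpha+1}}\cap\mathbf{\Sigma^{0}_{\alpha+1}}$ is false in general (each individual $\mathbf{\Pi^{0}_{\alpha}}$ set is $\mathbf{\Delta^{0}_{\alpha+1}}$, but the union is only guaranteed $\mathbf{\Sigma^{0}_{\alpha+1}}$, e.g.\ a $\mathbf{\Sigma^{0}_{\alpha+1}}$-complete set), so the hope for $m(\alpha)=1$ does not follow by that route --- but since you explicitly discard it and settle on $m(\alpha)=2$, the proof as written stands.
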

\begin{proof}

Follows from lemma \ref{countable definition =00003D=00003D=00003D}
and the previous proposition.

\end{proof}

Hjorth has shown in \cite{key-2} that $\equiv_{\alpha}$ is potentially
$\mathbf{\Pi_{\alpha+1}^{0}}$.
\\
\\
The relations $\leq_\alpha$ can be defined in terms of Vaught transforms or forcing:
\begin{prop}
\label{Main lemma section 2}$(x,U)\leq_{\alpha}(y,W)$ if and only if for every $A$ a $\mathbf{\Pi_{\alpha}^{0}}$
set, if $y \in A^{*W}$ then $x \in A^{*U}$.\\
Restating in terms of forcing: $(x,U)\leq_{\alpha}(y,W)$ if and only if for every $A$ a $\mathbf{\Pi_{\alpha}^{0}}$ set, if $W\Vdash g^{*}y\in A$ then $U\Vdash g^{*}x\in A$ ( $g^*$ is the canonical name of the generic element ).\end{prop}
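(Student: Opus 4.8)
The plan is to prove the characterization by transfinite induction on $\alpha$, proving both directions simultaneously. The forcing reformulation will follow immediately from the Vaught-transform statement via the identity $x \in A^{*U} \iff U \Vdash_{\mathbb{P}_I} g^* x \in \check A$ recorded in the definition of Vaught transforms, so I would concentrate on the $A^{*U}$ formulation. Throughout I would freely use the properties of Vaught transforms collected in the lemma above, especially that $A^{*U}$ is $\mathbf\Pi^0_\alpha$ when $A$ is, that $A^{*U} = \bigcap_n A_n^{*U}$ when $A = \bigcap_n A_n$, and the identity $A^{*U} = \bigcap\{A^{\triangle U_n}\ :\ U_n \subseteq U\}$ for $U_n$ ranging over basic open sets.

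For the base case $\alpha = 1$: recall $(x,U) \leq_1 (y,W)$ means $\overline{U\cdot x} \subseteq \overline{W\cdot y}$. For the forward direction, if $A$ is closed and $y \in A^{*W}$, I claim $\overline{W\cdot y} \subseteq A$: indeed $\{g \in W : g\cdot y \in A\}$ is comeager in $W$, hence dense in $W$, so $W\cdot y \cap A$ is dense in $W \cdot y$ (here one uses continuity of the action to transport density from $W$ to $W\cdot y$), and $A$ closed gives $\overline{W\cdot y} \subseteq A$; then $\overline{U\cdot x} \subseteq \overline{W\cdot y} \subseteq A$, so $\{g \in U : g\cdot x \in A\} = U$ is comeager in $U$, i.e. $x \in A^{*U}$. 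For the converse, suppose $(x,U) \not\leq_1 (y,W)$, so there is a basic open $O \subseteq X$ meeting $U\cdot x$ but disjoint from $\overline{W\cdot y}$. Take $A = X \setminus O$, a closed set; then $W\cdot y \subseteq A$ so $y \in A^{*W}$, but $\{g \in U : g\cdot x \in A\}$ omits the nonempty open set $\{g \in U : g\cdot x \in O\}$, hence is not comeager in $U$ (a nonempty open subset of $U$ is nonmeager), so $x \notin A^{*U}$.

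For the successor step $\alpha \to \alpha+1$: $(x,U) \leq_{\alpha+1} (y,W)$ means for every nonempty open $W_0 \subseteq U$ there is nonempty open $W_1 \subseteq W$ with $(y,W_1) \leq_\alpha (x,W_0)$. A $\mathbf\Pi^0_{\alpha+1}$ set $A$ has the form $A = \bigcap_n (X \setminus B_n)$ with each $B_n$ a $\mathbf\Pi^0_\alpha$ set, and by countable additivity $A^{*W} = \bigcap_n (X \setminus B_n)^{*W}$, while $(X\setminus B_n)^{*W} = X \setminus B_n^{\triangle W}$; so $y \in A^{*W}$ means $y \notin B_n^{\triangle W}$ for all $n$, i.e. $\{g \in W : g \cdot y \in B_n\}$ is meager in $W$ for all $n$. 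The key computation is to show, using the identity $A^{*U} = \bigcap\{A^{\triangle U_0} : U_0 \subseteq U\}$ and the induction hypothesis applied to the $\mathbf\Pi^0_\alpha$ sets $B_n$ on the shrunken open sets, that $x \in A^{*U}$ iff for every basic $U_0 \subseteq U$ and every $n$ there is basic $W_0 \subseteq W$ witnessing that $B_n$ cannot be "pushed in" from $(x, U_0)$ — which is exactly translated by the induction hypothesis into the defining condition of $\leq_{\alpha+1}$. Concretely: $x \notin B_n^{\triangle U_0}$ for all $U_0 \subseteq U$ is the assertion $x \in (X\setminus B_n)^{*U}$, and one matches this against $(y, W_0) \leq_\alpha (x, U_0)$ via the induction hypothesis, since $(y,W_0) \leq_\alpha (x,U_0)$ says precisely that every $\mathbf\Pi^0_\alpha$ set $C$ with $x \in C^{*U_0}$ satisfies $y \in C^{*W_0}$, contrapositively controlling $B_n^{\triangle}$. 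The limit case $\alpha = \lambda$ is routine: $(x,U) \leq_\lambda (y,W)$ iff $(x,U)\leq_\beta (y,W)$ for all $\beta < \lambda$, and a $\mathbf\Pi^0_\lambda$ set is $\mathbf\Pi^0_\beta$ for some $\beta < \lambda$ (in fact for cofinally many $\beta$), so both sides reduce to the same conjunction over $\beta$. The main obstacle I anticipate is bookkeeping in the successor step: correctly juggling the quantifier alternation "for all $U_0 \subseteq U$ there is $W_0 \subseteq W$" against the conjunction over $n$ and the $\triangle$ versus $*$ transforms, making sure the induction hypothesis is invoked on the right pair of open sets and the right $\mathbf\Pi^0_\alpha$ set.
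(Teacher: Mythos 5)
Your overall strategy --- transfinite induction, closed sets at the base, decomposing a $\mathbf\Pi^0_{\alpha+1}$ set as $\bigcap_n(X\setminus B_n)$ with $B_n\in\mathbf\Pi^0_\alpha$, and matching the quantifier alternation of $\leq_{\alpha+1}$ against the induction hypothesis via the $*$/$\triangle$ calculus --- is exactly the paper's; the paper merely phrases it in forcing language, which your opening paragraph correctly identifies as an interchangeable formulation. Your base case is complete and correct. However, two points need repair. First, your justification of the limit case rests on a false statement: a $\mathbf\Pi^0_\lambda$ set for limit $\lambda$ is \emph{not} in general $\mathbf\Pi^0_\beta$ for some $\beta<\lambda$; it is only a countable intersection of sets $C_n\in\mathbf\Sigma^0_{\beta_n}$ with $\beta_n<\lambda$. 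So the forward implication at $\lambda$ does not ``reduce to the same conjunction over $\beta$'' as you claim; you must write $A^{*W}=\bigcap_n C_n^{*W}$ and apply the already-established implication at level $\beta_n+1<\lambda$ to each $C_n\in\mathbf\Sigma^0_{\beta_n}\subseteq\mathbf\Pi^0_{\beta_n+1}$. (The paper avoids the case split entirely by running the forward direction uniformly for all $\alpha>1$ with $A=\bigcap_n B_n$, $B_n\in\mathbf\Sigma^0_{\beta_n}$, $\beta_n<\alpha$; only its backward direction separates ``$\alpha$ successor, limit trivial''.) The backward limit direction is indeed trivial, since $\mathbf\Pi^0_\beta\subseteq\mathbf\Pi^0_\lambda$.

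Second, the backward direction of the successor step --- the only place where a witness set $A$ must be \emph{constructed} --- is precisely where your sketch stops, at ``one matches this against $(y,W_0)\leq_\alpha(x,U_0)$ via the induction hypothesis''. Concretely: from $(x,U)\nleq_{\alpha+1}(y,W)$ you get a single basic $U'\subseteq U$ such that for \emph{every} basic $W'\subseteq W$ one has $(y,W')\nleq_\alpha(x,U')$, hence by the induction hypothesis a $\mathbf\Pi^0_\alpha$ set $B_{W'}$ with $x\in B_{W'}^{*U'}$ but $y\notin B_{W'}^{*W'}$. You must then assemble the single set $A=\bigcup_{W'}(X-B_{W'})$ (a $\mathbf\Sigma^0_\alpha$, hence $\mathbf\Pi^0_{\alpha+1}$, set), check $x\notin A^{*U}$ from $x\in\bigl(\bigcap_{W'}B_{W'}\bigr)^{*U'}=(X-A)^{*U'}$, and check $y\in A^{*W}$ by a further shrinking/density argument: for each $W'$ pass to $W''\subseteq W'$ with $y\in(X-B_{W'})^{*W''}$, so that $y\in A^{\triangle W'}$ for every basic $W'\subseteq W$. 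None of this falls out of the identities you list; as written, the ``iff'' in your key computation asserts the conclusion rather than deriving it. The forward successor direction as you describe it does go through once the bookkeeping is written out.
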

\begin{proof}
By induction over $\alpha$.

$\alpha=1:$ $(\Rightarrow)$ Assume $U\cdot x\subseteq\overline{W\cdot y}$.
Let $A$ be closed, and $W\Vdash g^{*}y\in A$, which is:
\[
W\Vdash g^{*}\in\{g\ :\ g\cdot y\in A\}
\]
This can happen only if $W-\{g\ :\ g\cdot y\in A\}$ is meager, or
equivalently in this case, empty. We can now deduce that $W\cdot y\subseteq A$
so $U\cdot x\subseteq\overline{W\cdot y}\subseteq A$. Repeating the
same argument we get $U\Vdash g^{*}x\in A$.

$(\Leftarrow)$ Let $A=\overline{W\cdot y}.$ Then $W\Vdash g^{*}y\in A$
, and hence $U\Vdash g^{*}x\in A.$ This can happen only if $U\subseteq\{g\ :\ g\cdot x\in A\}$,
so $U\cdot x\subseteq A$, as wanted.

Assume for $\beta<\alpha$, we prove for $\alpha$: $(\Rightarrow)$
Assume $(x,U)\leq_{\alpha}(y,W)$. Let $A=\bigcap_{n\in\omega}B_{n}$
for $B_{n}$ $\mathbf{\Sigma_{\beta_{n}}^{0}}$ sets , $\beta_{n}<\alpha$.
Assume $W\Vdash g^{*}y\in A$, and assume by way of contradiction
that $U\nVdash g^{*}x\in A.$ Thus, there is $U'\subseteq U$ and
$n\in\omega$ such that $U'\Vdash g^{*}x\notin B_{n}$. There is $W'\subseteq W$
such that $(y,W')\leq_{\beta_{n}}(x,U')$, and so, using the induction
hypothesis, $W'\Vdash g^{*}y\notin B_{n}.$ Contradiction.

$(\Leftarrow)$ Assume $(x,U)\nleq_{\alpha+1}(y,W)$ ( the limit case
is trivial ). There is $U'\subseteq U$ in $\mathcal{B}_{0}$ such
that for every $W'\subseteq W$ in $\mathcal{B}_{0}$ , $(y,W')\nleq_{\alpha}(x,U')$.
Using the induction hypthesis, there is $B_{W'}$ a $\mathbf{\Pi_{\alpha}^{0}}$
set such that $U'\Vdash g^{*}x\in B_{W'}$ but $W'\nVdash g^{*}y\in B_{W'}$.
We then find $W''\subseteq W'$ such that $W''\Vdash g^{*}y\notin B_{W'}$.
We denote by $A$ the set
\[
\bigcup_{W'\subseteq W;\ W'\in\mathcal{B}_{0}}(X-B_{W'})
\]
which is $\mathbf{\Pi_{\alpha+1}^{0}}.$ The above means that $W\Vdash g^{*}y\in A$.
However, $U'\Vdash g^{*}x\notin A$, so $U\nVdash g^{*}x\in A$. As
wanted.\end{proof}
\selectlanguage{british}%
\begin{rem}
Using this equivalent definition, $(x,B)\leq_{\alpha}(y,C)$ has a
meaning for any $B,C$ Borel and non - meager. At the same time, such an expansion of the definition is redundant. \end{rem}

\begin{thm}

\label{Property III}If $x \equiv_\alpha y$ then $x$ and $y$ belong to the same $\mathbf{\Pi_{\alpha}^{0}}$ invariant
sets.
\end{thm}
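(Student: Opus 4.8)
The plan is to derive this directly from Proposition \ref{Main lemma section 2}, together with the elementary observation that invariant sets are unaffected by Vaught $*$-transforms. First I would record the following triviality: if $A\subseteq X$ is invariant and $V\subseteq G$ is open and non-empty, then $A^{*V}=A$. Indeed, for $z\in X$ the set $\{g\in V\ :\ g\cdot z\in A\}$ equals $V$ when $z\in A$ and equals $\emptyset$ otherwise, since $g\cdot z\in A\iff z\in A$ by invariance; as $V$ is a non-empty open subset of $G$, hence non-meager in itself, the first case gives $z\in A^{*V}$ and the second gives $z\notin A^{*V}$. (For $V=G$ this is also part of the Vaught transform lemma.)

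Next I would unwind the definition of $\equiv_{\alpha}$ at the two extreme open sets. Taking $i=0$ and $V_0=G$ (with $x_0=x$, $x_1=y$) yields a non-empty open $V\subseteq G$ with $(y,V)\leq_{\alpha}(x,G)$; taking $i=1$ and $V_1=G$ yields a non-empty open $W\subseteq G$ with $(x,W)\leq_{\alpha}(y,G)$. Now let $A$ be a $\mathbf{\Pi_{\alpha}^{0}}$ invariant set. Applying Proposition \ref{Main lemma section 2} to $(y,V)\leq_{\alpha}(x,G)$: if $x\in A^{*G}$ then $y\in A^{*V}$, which by the first paragraph says exactly $x\in A\Rightarrow y\in A$. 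Applying it to $(x,W)\leq_{\alpha}(y,G)$ gives $y\in A\Rightarrow x\in A$. Hence $x\in A\iff y\in A$, as required.

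The only point that needs care is the orientation of $\leq_{\alpha}$: in $(x_0,V_0)\leq_{\alpha}(x_1,V_1)$ it is membership of $x_1$ in the transform over $V_1$ that forces membership of $x_0$ in the transform over $V_0$, so one must feed the definition of $\equiv_{\alpha}$ the pair having $G$ on the side of the point we are starting from. Beyond this bookkeeping there is no real obstacle; the content is entirely carried by Proposition \ref{Main lemma section 2}. (One could instead prove the theorem by transfinite induction on $\alpha$ directly, but that would merely re-run the argument already contained in the proof of that proposition, so routing through it is cleaner.)
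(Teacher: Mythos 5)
Your proof is correct and follows essentially the same route as the paper: both reduce the statement to Proposition \ref{Main lemma section 2} by extracting from $x\equiv_{\alpha}y$ a witness $(y,V)\leq_{\alpha}(x,G)$ and using that $A=A^{*G}$ for invariant $A$ (the paper then concludes from $y\in A^{*V}$ that some $g\cdot y\in A$, hence $y\in A$ by invariance, which is your observation $A^{*V}=A$ in slightly weaker form). The only cosmetic difference is that you spell out both implications while the paper proves one and leaves the symmetric case implicit.
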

\begin{proof}

Assume $x\in A$ for $A$ a $\mathbf{\Pi_{\alpha}^{0}}$ invariant
set. As $A$ is invariant, $x\in A^{*G}.$ Since $x\equiv_{\alpha}y$,
there is a non empty and open $W$ such that $(y,W)\leq_{\alpha}(x,G).$
The previous lemma then implies $y\in A^{*W}.$ In particular, there
is a $g$ such that $g\cdot y\in A.$ By the invariance of $A$, $y$
must be in $A$.
\end{proof}

That leads us to the more elegant definition of $\equiv_\alpha$ for limit $\alpha$'s:
\begin{cor} For limit $\alpha$, $x\equiv_\alpha y$ if and only if $x$ and $y$ belong to the same invariant sets.
\begin{proof}
The previous theorem and theorem \ref{Complexity of =00003D=00003D=00003D}.
\end{proof}
\end{cor}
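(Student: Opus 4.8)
We read ``belong to the same invariant sets'' as ``belong to the same $G$-invariant $\mathbf{\Pi_{\alpha}^{0}}$ sets''; for limit $\alpha$ this is the precise version of the heuristic from the introduction. I would prove the two implications separately, and expect essentially all the work to be in one of them.

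The implication ``$x\equiv_{\alpha}y$ implies that $x$ and $y$ belong to the same invariant $\mathbf{\Pi_{\alpha}^{0}}$ sets'' is exactly Theorem \ref{Property III}, so there is nothing further to do for that direction.

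For the converse I would argue by contraposition: from $x\not\equiv_{\alpha}y$ I want to manufacture a $G$-invariant $\mathbf{\Pi_{\alpha}^{0}}$ set containing exactly one of $x,y$. The plan is to push the failure below $\alpha$. Unwinding $x\not\equiv_{\alpha}y$ through Lemma \ref{countable definition =00003D=00003D=00003D} and the limit clause in the definition of $\leq_{\alpha}$, and using that $\mathfrak{B}_{0}$ is countable, I aim to produce a single $\beta<\alpha$ with $x\not\equiv_{\beta}y$. Once this is in hand the rest is immediate: the class $[x]_{\equiv_{\beta}}=\{z:z\equiv_{\beta}x\}$ is $G$-invariant by Corollary \ref{=00003D=00003D=00003D invariant under G} and is $\mathbf{\Pi_{\beta+m(\beta)}^{0}}$ by Theorem \ref{Complexity of =00003D=00003D=00003D}; since $\alpha$ is a limit and $m(\beta)$ is finite we have $\beta+m(\beta)<\alpha$, so $[x]_{\equiv_{\beta}}$ is an invariant $\mathbf{\Pi_{\alpha}^{0}}$ set, it contains $x$ by reflexivity of $\equiv_{\beta}$, and it misses $y$ because $x\not\equiv_{\beta}y$. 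This is exactly where Theorem \ref{Complexity of =00003D=00003D=00003D} and the limit hypothesis on $\alpha$ are used.

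The step I expect to be the real content is precisely this localization, i.e. the nontrivial inclusion $\bigcap_{\beta<\alpha}\equiv_{\beta}\subseteq\equiv_{\alpha}$ for limit $\alpha$ (the reverse inclusion being immediate). Concretely: if a witnessing pair $(i,V)$ admits, for every $\beta<\alpha$, some nonempty open $W$ with $(x_{1-i},W)\leq_{\beta}(x_{i},V)$, one must find a single $W$ good at all stages $\beta<\alpha$ at once. Here I would first check, by transfinite induction using Lemma \ref{basic lemma}, that the set of good $W$'s at stage $\beta$ has the form $\{W:\emptyset\neq W\subseteq O_{\beta}\}$ for an open $O_{\beta}\subseteq G$ which is itself good, and that the $O_{\beta}$ decrease in $\beta$ by Lemma \ref{lem:(a) (b) (c)}(b); the localization then reduces to showing $\bigcap_{\beta<\alpha}O_{\beta}$ has nonempty interior, which is where the countability of $\mathfrak{B}_{0}$ together with the category argument in the base case of Lemma \ref{lem:(a) (b) (c)}(b) must be brought in. If this does not go through directly, the fallback is to argue through Proposition \ref{Main lemma section 2}: extract for each basic $W$ a $\mathbf{\Pi_{\alpha}^{0}}$ set $A_{W}$ with $x\in A_{W}^{*V}$ and $y\notin A_{W}^{*W}$, set $A=\bigcap_{W}A_{W}$ (still $\mathbf{\Pi_{\alpha}^{0}}$, since $\alpha$ is limit and $\mathbf{\Pi_{\alpha}^{0}}$ is then closed under countable intersections), and pass to a $G$-invariant $\mathbf{\Pi_{\alpha}^{0}}$ set built from $A^{*V}$ and its translates that separates $x$ from $y$ — the care there being to keep the outcome genuinely $\mathbf{\Pi_{\alpha}^{0}}$ and invariant rather than merely analytic.
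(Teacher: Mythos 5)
Your reading of the statement (``invariant sets'' meaning $G$-invariant $\mathbf{\Pi_{\alpha}^{0}}$ sets) and your two-part decomposition are exactly what the paper intends: the forward direction is Theorem \ref{Property III}, and the converse is meant to follow from Theorem \ref{Complexity of =00003D=00003D=00003D} because for each $\beta<\alpha$ the class $[x]_{\equiv_{\beta}}$ is an invariant $\mathbf{\Pi_{\beta+m(\beta)}^{0}}\subseteq\mathbf{\Pi_{\alpha}^{0}}$ set (as $\alpha$ is limit), so agreement on invariant $\mathbf{\Pi_{\alpha}^{0}}$ sets forces $x\equiv_{\beta}y$ for all $\beta<\alpha$. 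Up to this point you are aligned with the paper.

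The gap is the step you yourself flag as ``the real content'': the inclusion $\bigcap_{\beta<\alpha}\equiv_{\beta}\subseteq\equiv_{\alpha}$, i.e.\ the exchange of ``$\forall\beta<\alpha\,\exists W$'' for ``$\exists W\,\forall\beta<\alpha$'' hidden in the limit clause of $\leq_{\alpha}$. You do not prove it, and neither of your two sketches closes it as written. In the first sketch, the reduction to the open sets $O_{\beta}$ is sound (the relation $(y,W)\leq_{\beta}(x,V)$ is indeed local in $W$, by Lemma \ref{basic lemma} and induction on $\beta$), but what you then need --- that a decreasing $\omega$-indexed chain of nonempty open subsets of $G$ has an interior point in its intersection --- is false for general open sets (shrinking balls), and neither the countability of $\mathfrak{B}_{0}$ nor the category argument in the base case of Lemma \ref{lem:(a) (b) (c)}(b) supplies the missing structural input about these particular $O_{\beta}$'s. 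In the second sketch, the $A=\bigcap_{W}A_{W}$ you build is fine and does separate $x$ from $y$ at the level of $A^{*V}$ versus the $A^{*W}$'s, but the natural invariant set it generates, $A^{\triangle}=\bigcup_{U\in\mathfrak{B}_{0}}A^{*U}$, is a countable union of $\mathbf{\Pi_{\alpha}^{0}}$ sets and hence only $\mathbf{\Sigma_{\alpha+1}^{0}}$ --- exactly the one-level overshoot you worry about, and you offer no device for removing it. To be fair, the paper's own proof is a one-line citation that leaves this same quantifier exchange implicit, so you have correctly located where all the content of the corollary lies; but as it stands your proposal is incomplete precisely at that point.
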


\begin{thm}
(Sami's Theorem) Let $B\subseteq X$ be an invariant Borel set,
and assume that for every $x\in B$, $[x]_{G}$ is $\mathbf{\Pi_{\alpha}^{0}}$.
Then $E_{G}^{B}$ is Borel, and in fact it is $\mathbf{\Pi_{\alpha+m(\alpha)}^{0}} \cap (B\times B)$ for some $m(\alpha) \in \omega$.
\end{thm}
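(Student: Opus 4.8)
The plan is to imitate the proof of Sami's theorem given earlier in the excerpt for the Polish case, but now using the equivalence relations $\equiv_\alpha$ of Hjorth analysis in place of the logic-action relations. First I would observe that if every orbit $[x]_G$ for $x\in B$ is $\mathbf{\Pi^0_\alpha}$, then by Theorem \ref{Property III} (together with its proof, which shows that $(y,W)\le_\alpha(x,G)$ transfers membership in $\mathbf{\Pi^0_\alpha}$ invariant sets from $x$ to $y$) the relation $\equiv_\alpha$ restricted to $B\times B$ already coincides with $E^B_G$. Indeed, $x\equiv_\alpha y$ always implies $x$ and $y$ lie in the same $\mathbf{\Pi^0_\alpha}$ invariant sets; applying this to the invariant set $A=[x]_G$, which is $\mathbf{\Pi^0_\alpha}$ by hypothesis and contains $x$, we get $y\in[x]_G$, i.e. $xE^B_Gy$. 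The reverse inclusion, that $xE^B_Gy$ implies $x\equiv_\alpha y$, is immediate from Corollary \ref{=00003D=00003D=00003D invariant under G}, since $\equiv_\alpha$ is $G$-invariant.

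Once that identification is made, the complexity statement follows directly from Theorem \ref{Complexity of =00003D=00003D=00003D}: $\equiv_\alpha$ is $\mathbf{\Pi^0_{\alpha+m(\alpha)}}$ as a subset of $X\times X$, for the integer $m(\alpha)\in\omega$ produced there (with $m(\alpha)=2$ when $\alpha$ is a limit). Intersecting with the Borel invariant set $B\times B$ — which, being Borel, lives at some level of the hierarchy, so intersecting does not raise the boldface level beyond a finite shift — we conclude that $E^B_G$ is $\mathbf{\Pi^0_{\alpha+m(\alpha)}}\cap(B\times B)$, and in particular Borel. One should be slightly careful about the bookkeeping: if $B$ itself is only, say, $\mathbf{\Sigma^0_\beta}$ for some $\beta$ that could exceed $\alpha$, then the honest bound is $\mathbf{\Pi^0_{\max(\alpha+m(\alpha),\beta)+1}}$ or the like; but since $B$ is a fixed invariant Borel set the statement ``$E^B_G$ is Borel, and in fact $\mathbf{\Pi^0_{\alpha+m(\alpha)}}\cap(B\times B)$'' is exactly what one gets by writing $E^B_G=(\equiv_\alpha)\cap(B\times B)$ and reading off the complexity of the first factor.

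The only real content, and the step I would present most carefully, is the equality $E^B_G=(\equiv_\alpha)\cap(B\times B)$, and within it the nontrivial direction $x\equiv_\alpha y\Rightarrow xE^B_Gy$. This rests entirely on the hypothesis that orbits are $\mathbf{\Pi^0_\alpha}$ being used to furnish, for each $x\in B$, an honest $\mathbf{\Pi^0_\alpha}$ invariant set separating $[x]_G$ from its complement, so that Theorem \ref{Property III} applies. Everything else — transitivity, symmetry, $G$-invariance, and the complexity computation — has already been done in the preceding lemmas and theorems, so the proof is genuinely short: it is a two-line invocation of Theorem \ref{Property III} for the membership transfer and of Theorem \ref{Complexity of =00003D=00003D=00003D} for the complexity bound. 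No back-and-forth or forcing argument is needed here; the work was front-loaded into establishing that $\equiv_\alpha$ reads off $\mathbf{\Pi^0_\alpha}$ invariant sets.
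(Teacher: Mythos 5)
Your proof is correct and follows essentially the same route as the paper: the paper's (very terse) proof likewise observes that Theorem \ref{Property III} forces $\equiv_\alpha$ to coincide with $E_G^B$ on $B$ once all orbits are $\mathbf{\Pi^0_\alpha}$, and then reads off the complexity from Theorem \ref{Complexity of =00003D=00003D=00003D}. You have merely spelled out the two inclusions and the bookkeeping that the paper leaves implicit.
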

\selectlanguage{british}%
\begin{proof}

The above theorem shows that in this case, $\equiv_{\alpha}$ coincides
with the orbit equivalence relation on $B$.

\end{proof}
\selectlanguage{american}%

The same argument proves the following:

\begin{cor}

Let $B\subseteq X$ be an invariant Borel set, and assume the orbit
equivalence relation $E_{G}^{B}$ is Borel. Then there is an $\alpha<\omega_{1}$
such that $E_{G}^{B}=\equiv_{\alpha}\cap\left(B\times B\right).$

\end{cor}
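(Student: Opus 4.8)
The plan is to reduce this corollary to Sami's Theorem, which has just been established, together with the standard fact recalled in the preliminaries that a Borel orbit equivalence relation forces all orbits to be $\mathbf{\Pi^0_\alpha}$ for some fixed $\alpha<\omega_1$. First I would fix a Borel code for $E_G^B$; since $E_G^B$ is Borel it is $\mathbf{\Pi^0_\beta}$ as a subset of $X\times X$ for some $\beta<\omega_1$, so for each $x\in B$ the section $[x]_G=\{y:x\,E_G^B\,y\}$ is a $\mathbf{\Pi^0_\beta}$ subset of $X$. As $B$ is itself Borel, say $\mathbf{\Pi^0_\gamma}$, and $[x]_G\subseteq B$, the orbit $[x]_G$ is $\mathbf{\Pi^0_\alpha}$ in $X$ for $\alpha=\max(\beta,\gamma)$, and this single $\alpha$ works uniformly for every $x\in B$.

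Now I would invoke the argument used to prove Sami's Theorem, which shows precisely that once all orbits of points of the invariant Borel set $B$ are $\mathbf{\Pi^0_\alpha}$, the relation $\equiv_\alpha$ restricted to $B\times B$ coincides with $E_G^B$. For one inclusion, $E_G^B\subseteq\equiv_\alpha\cap(B\times B)$ holds for every $\alpha$, since $\equiv_\alpha$ is invariant under $G$ (Corollary \ref{=00003D=00003D=00003D invariant under G}) and hence contains the orbit equivalence relation. For the reverse inclusion, suppose $x,y\in B$ with $x\equiv_\alpha y$. The orbit $[x]_G$ is a $\mathbf{\Pi^0_\alpha}$ invariant set containing $x$, so Theorem \ref{Property III} gives $y\in[x]_G$, i.e. $x\,E_G^B\,y$. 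This yields $E_G^B=\equiv_\alpha\cap(B\times B)$.

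I do not expect a genuine obstacle here, as the mathematical content is already contained in the proof of Sami's Theorem and in the cited boundedness fact for Borel orbit equivalence relations. The only point needing a moment's care is the bookkeeping that turns ``the orbit is $\mathbf{\Pi^0_\beta}$ as a section of $E_G^B$'' into ``the orbit is $\mathbf{\Pi^0_\alpha}$ in $X$ for a single $\alpha$ independent of $x$''; this is immediate once one fixes complexity bounds for the Borel codes of $E_G^B$ and of $B$.
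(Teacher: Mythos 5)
Your proposal is correct and is essentially the paper's own argument: the paper derives this corollary by noting that Borel $E_G^B$ forces all orbits to be $\mathbf{\Pi^0_\alpha}$ for a single $\alpha$, and then reruns the proof of Sami's theorem, i.e.\ the two inclusions you give via invariance of $\equiv_\alpha$ and Theorem \ref{Property III}. The only superfluous step is taking the maximum with the complexity of $B$: since $B$ is invariant, the section of $E_G^B$ at $x\in B$ is already the full orbit $[x]_G$, so the bound from $E_G^B$ alone suffices.
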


\section{{\large Hjorth Rank}}

We define now the Hjorth rank of an element of $X$. A careful definition
is required, since we want it to be both Borel definable and invariant
under $G$. In Scott analysis, the rank of $x$ was the first place
in which we can step up ``for free''. Here, stepping up will have
an infinitesimal price of shrinking and expanding the open sets involved.
Also, stepping up will only be allowed when the open sets are in $\mathcal{B}_{0}$.

\begin{defn}

For $x\in X$, let $\delta(x)$ be the least $\alpha$ such that for
every $V_{0},V_{1},W_{0},W_{1}$ in $\mathfrak{B}_{0}$, if
\[
\overline{W_{0}}\subseteq V_{0}\ \&\ \overline{V_{1}}\subseteq W_{1}\ \&\ (x,V_{0})\leq_{\alpha}(x,V_{1})
\]
then $(x,W_{0})\leq_{\alpha+1}(x,W_{1})$.

We claim that such a $\delta(x)$ exists, or equivalently, that there
is an ordinal with the above properties. In fact, given an $x\in X$
, $U_{1},U_{2}$ in $\mathfrak{B}_{0}$, either there is $\beta_{u_{1},u_{2}}<\omega_{1}$
such that $(x,U_{1})\leq_{\beta_{u_{1},u_{2}}}(x,U_{2})$ and $(x,U_{1})\nleq_{\beta_{u_{1},u_{2}}+1}(x,U_{2})$
, or $(x,U_{1})\leq_{\alpha}(x,U_{2})$ for every $\alpha<\omega_{1}$.
Now take $\gamma(x)=\left(sup_{U_{1},U_{2}\in\mathfrak{B}_{0}}\beta_{u_{1},u_{2}}\right)+1$.
$\gamma(x)$ has the above property.

\end{defn}

We'll call $\delta(x)$ the \textit{Hjorth rank} of $x$.

\begin{lem}

We may assume that the basis $\mathfrak{B}_{0}$ has the following
property:
\end{lem}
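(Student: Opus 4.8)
The plan is to manufacture, from the arbitrary countable basis $\mathfrak{B}_{0}$ fixed at the start of the section, a new countable basis that enjoys the stated closure properties, and then to observe that replacing the old basis by the new one costs nothing that has been proved so far. I will call the new basis $\mathfrak{B}_{0}$ again.

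\textbf{Construction.} Fix once and for all a complete compatible metric on $G$ and a countable dense subgroup $D\le G$. Starting from any countable basis $\mathfrak{B}_{0}'$, I would close it off under the finitely many operations occurring in the statement: finite intersections; "regular refinement", i.e. whenever $U\in\mathfrak{B}_{0}'$ and $O$ is a basic open set with $\overline{U}\subseteq O$ (in particular whenever $\overline{U}\subseteq V$ for $V\in\mathfrak{B}_{0}'$) one throws in a single basic ball $W$ with $\overline{U}\subseteq W\subseteq\overline{W}\subseteq O$; and right translation by the elements of $D$. Each of these operations, applied to a countable family, produces only countably many new sets — there are countably many finite subsets, countably many relevant pairs $(U,O)$, and countably many elements of $D$ — so iterating all of them $\omega$ many times yields a still countable family $\mathfrak{B}_{0}\supseteq\mathfrak{B}_{0}'$. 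Since it refines a basis it is itself a basis, and since every finite stage of the iteration lies in a later stage, the limit family is closed under all the required operations (each instance of an operation has its witnesses appear at some finite stage). If the statement demands the sharper form of regular refinement — interpolation of a basic set strictly between the closure of one basic set and another — the same $\omega$-iteration delivers it, as each round only adds finitely many sets per pair.

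\textbf{The replacement is harmless.} The relations $\leq_{\alpha}$ and $\equiv_{\alpha}$ are defined by quantifying over \emph{all} nonempty open subsets of $G$, so they do not refer to $\mathfrak{B}_{0}$ at all and are literally unchanged. Lemmas \ref{countable definition <=00003D} and \ref{countable definition =00003D=00003D=00003D}, Proposition \ref{complexity of <=00003D} and Theorem \ref{Complexity of =00003D=00003D=00003D} were established using only that $\mathfrak{B}_{0}$ is \emph{some} countable basis, so they go through verbatim for the new $\mathfrak{B}_{0}$; likewise Lemma \ref{lemma invariance under g} is untouched. Finally, the existence argument for $\delta(x)$ used only that $\mathfrak{B}_{0}$ is countable, so that the relevant increasing chain of sets of pairs from $\mathfrak{B}_{0}$ stabilizes below $\omega_{1}$; hence $\delta(x)$ is still a well-defined countable ordinal, now understood to be computed with respect to the new basis, and all later sections may freely work with it.

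\textbf{The one delicate point.} Countability is the only genuine constraint: a naive demand of invariance under \emph{all} right translations of $G$ would blow the basis up past $\aleph_{0}$, so the translation-closure must be taken only over the fixed countable dense subgroup $D$. The price is that when $G$-invariance of $\delta$ is proved later, one has at hand only $D$-invariance of the basis, and must combine it with Lemma \ref{lemma invariance under g} and the density of $D$ rather than with a translation-closure by all of $G$. I expect this trade-off — choosing closure properties strong enough for the downstream arguments yet weak enough to survive inside a countable family — to be the only point requiring thought; the construction itself and the verification that nothing earlier is disturbed are routine.
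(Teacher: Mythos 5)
There is a genuine gap. The property to be arranged quantifies over \emph{every} $g\in G$: for all $\overline{U}\subseteq W$ in $\mathfrak{B}_{0}$ and all $g\in G$ one must be able to sandwich $U\cdot g\subseteq V_{1}\subseteq V_{2}\subseteq W\cdot g$ with $V_{1},V_{2}\in\mathfrak{B}_{0}$ and $\overline{V_{1}}\subseteq V_{2}$. Your construction closes the basis under right translation only by a countable dense subgroup $D$, so it produces the required interpolants only when $g\in D$. You flag this yourself as ``the one delicate point'' and propose to defer it to the later proof of $G$-invariance of $\delta$, but that is exactly backwards: the very next lemma ($xE_{G}^{X}y\Rightarrow\delta(x)=\delta(y)$) invokes this property verbatim for an arbitrary witness $g$ with $y=g\cdot x$, so the property must hold for all $g\in G$ now, not be patched downstream. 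Moreover, bridging from $d\in D$ near $g$ to the conclusion for $g$ is not automatic: knowing $d\to g$ in the topology does not by itself control how $U\cdot d$ compares to $U\cdot g$ as a set; one needs a uniformity statement about right translation, which your argument never supplies.

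That uniformity is precisely what the paper's one-line proof buys. Fix a compatible \emph{right-invariant} metric $d$ on $G$ (Birkhoff--Kakutani gives a compatible left-invariant metric; compose with inversion; completeness is not needed) and let $\mathfrak{B}_{0}$ consist of balls of rational radii centred at a countable dense set. Right invariance makes every right translation an isometry, so $B(p,r)\cdot g=B(pg,r)$: the translate of a basic ball by an \emph{arbitrary} $g$ is again a ball of the same radius, merely recentred, and can therefore be approximated from outside and inside by balls of rational radii centred at dense points, yielding $V_{1},V_{2}$ as required. No closure under translations (and hence no cardinality worry) is needed. Your $\omega$-stage closure under finite intersections and ``regular refinement'' is fine as far as it goes, and your observation that the earlier lemmas only use countability of the basis is correct, but without the right-invariant metric (or an equivalent uniform-continuity input) the stated property for all $g\in G$ is not established.
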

\begin{itemize}

\item For every $\overline{U}\subseteq W$ in $\mathfrak{B}_{0}$ and every
$g\in G$, there are $V_{1},V_{2}$ in $\mathfrak{B}_{0}$ such that
$U\cdot g\subseteq V_{1}\subseteq V_{2}\subseteq W\cdot g$ and $\overline{V_{1}}\subseteq V_{2}$.
\end{itemize}
\selectlanguage{british}%
\begin{proof}

Fix a right invariant metric $d$ (not necessarily complete), and
let $\mathfrak{B}_{0}$ be the set of balls of rational radii around
a dense set.
\end{proof}
\selectlanguage{american}%
\begin{lem}

$xE_{G}^{X}y$ implies $\delta(x)=\delta(y).$
\end{lem}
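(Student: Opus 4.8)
The plan is to show that if $x E_G^X y$, then $\delta(x) = \delta(y)$ by proving each inequality. By symmetry it suffices to show $\delta(x) \leq \delta(y)$. Write $y = g \cdot x$ for some $g \in G$, and set $\alpha = \delta(y)$. I want to verify that $\alpha$ witnesses the defining property of $\delta(x)$, i.e. that for all $V_0, V_1, W_0, W_1 \in \mathfrak{B}_0$ with $\overline{W_0} \subseteq V_0$, $\overline{V_1} \subseteq W_1$ and $(x, V_0) \leq_\alpha (x, V_1)$, we get $(x, W_0) \leq_{\alpha+1} (x, W_1)$; this gives $\delta(x) \leq \alpha = \delta(y)$.

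The key tool is Lemma \ref{lemma invariance under g}, which gives $(x_0, V) \leq_\alpha (g \cdot x_0, V \cdot g^{-1})$ for every $\alpha$ and $g$, together with its formal consequence that translation by $g$ is an order-isomorphism for the $\leq_\alpha$ relations: $(x, A) \leq_\alpha (x, B)$ if and only if $(g\cdot x, A \cdot g^{-1}) \leq_\alpha (g \cdot x, B \cdot g^{-1})$ (apply the lemma, transitivity from Lemma \ref{lem:(a) (b) (c)}(a), and the lemma again with $g^{-1}$). So the problem for $x$ at data $(V_0, V_1, W_0, W_1)$ translates into the analogous problem for $y = g\cdot x$ at data $(V_0 g^{-1}, V_1 g^{-1}, W_0 g^{-1}, W_1 g^{-1})$. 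The obstacle is that these translated sets need not lie in $\mathfrak{B}_0$, and the defining property of $\delta(y)$ is stated only for quadruples from $\mathfrak{B}_0$. This is exactly what the preceding Lemma (the one allowing us to assume $\mathfrak{B}_0$ is closed under the relevant operation) is for: given $\overline{W_0} \subseteq V_0$ in $\mathfrak{B}_0$, it produces $V_1', V_2' \in \mathfrak{B}_0$ with $W_0 g^{-1} \subseteq V_1' \subseteq V_2' \subseteq V_0 g^{-1}$ and $\overline{V_1'} \subseteq V_2'$, and similarly for $\overline{V_1} \subseteq W_1$ we get sandwiching sets $W_1', W_2' \in \mathfrak{B}_0$ with $V_1 g^{-1} \subseteq W_2' \subseteq W_1' \subseteq W_1 g^{-1}$ and $\overline{W_2'} \subseteq W_1'$.

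Now I would assemble the argument for $y$: from $(x, V_0) \leq_\alpha (x, V_1)$ and translation-invariance, $(y, V_0 g^{-1}) \leq_\alpha (y, V_1 g^{-1})$; by Lemma \ref{basic lemma} and the inclusions $V_2' \subseteq V_0 g^{-1}$ and $V_1 g^{-1} \subseteq W_2'$, this gives $(y, V_2') \leq_\alpha (y, W_2')$. Since $\overline{V_1'} \subseteq V_2'$ and $\overline{W_2'} \subseteq W_1'$ with all four sets in $\mathfrak{B}_0$, the definition of $\delta(y) = \alpha$ yields $(y, V_1') \leq_{\alpha+1} (y, W_1')$. Applying Lemma \ref{basic lemma} once more with $W_0 g^{-1} \subseteq V_1'$ and $W_1' \subseteq W_1 g^{-1}$ gives $(y, W_0 g^{-1}) \leq_{\alpha+1} (y, W_1 g^{-1})$, and translating back via invariance gives $(x, W_0) \leq_{\alpha+1} (x, W_1)$, as required. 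The main obstacle is purely the bookkeeping of keeping every set inside $\mathfrak{B}_0$ while chaining the monotonicity (Lemma \ref{basic lemma}), translation-invariance (Lemma \ref{lemma invariance under g}), and the sandwiching property of the chosen basis; there is no conceptual difficulty beyond that, and the reverse inequality $\delta(y) \leq \delta(x)$ follows by the identical argument with $g^{-1}$ in place of $g$.
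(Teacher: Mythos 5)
Your proof is correct and follows essentially the same route as the paper's: translate the $\leq_\alpha$ data by $g$ using Lemma \ref{lemma invariance under g}, use the sandwiching property of $\mathfrak{B}_0$ to replace the translated sets by basis sets, step up via the definition of the rank, and translate back, concluding by symmetry. The only cosmetic difference is that the paper verifies the witness property for $g\cdot x$ at level $\delta(x)$ whereas you verify it for $x$ at level $\delta(y)$.
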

\begin{proof}

Let $\alpha$ be the rank of $x$. Assume $(g\cdot x,V)\leq_{\alpha}(g\cdot x,U),$
$\overline{O}\subseteq V$ and $\overline{U}\subseteq W$. Equivalently,
$(x,V\cdot g)\leq_{\alpha}(x,U\cdot g)$, $\overline{O\cdot g}\subseteq V\cdot g$
and $\overline{U\cdot g}\subseteq W\cdot g$. Using the previous Lemma,
there are $V_{1},V_{2},O_{1},O_{2}$ in $\mathfrak{B}_{0}$ such that
\[
U\cdot g\subseteq V_{1}\subseteq V_{2}\subseteq W\cdot g
\]
\[
\overline{V_{1}}\subseteq V_{2}
\]
\[
O\cdot g\subseteq O_{1}\subseteq O_{2}\subseteq V\cdot g
\]
\[
\overline{O_{1}}\subseteq O_{2}.
\]

Trivially, $(x,V\cdot g)\leq_{\alpha}(x,U\cdot g)$ implies $(x,O_{2})\leq_{\alpha}(x,V_{1})$.
Since $\alpha$ is the rank of $x$, we can step up to $(x,O_{1})\leq_{\alpha+1}(x,V_{2})$.
This last one trivially implies $(x,O\cdot g)\leq_{\alpha+1}(x,W\cdot g)$,
or equivalently, $(g\cdot x,O)\leq_{\alpha+1}(g\cdot x,W).$

\end{proof}
We have only allowed stepping up when the open sets are in $\mathcal{B}_{0}$.
However, this restriction does not bother us too much:
\begin{lem}

Suppose $\delta(x)=\delta$, $V_{0},V_{1}$ open and nonempty, and
$(x,V_{0})\leq_{\delta+1}(x,V_{1}).$ Then $(x,V_{0})\leq_{\delta+2}(x,V_{1})$,
and in fact, for every $\alpha,$ $(x,V_{0})\leq_{\alpha}(x,V_{1}).$
\end{lem}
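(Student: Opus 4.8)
The plan is to reduce the statement about arbitrary open nonempty $V_0,V_1$ to the statement about basic open sets, where we are allowed to step up by definition of the rank $\delta=\delta(x)$. The key geometric input is the regularity property of $\mathfrak{B}_0$ secured two lemmas earlier: inside any prescribed open set we can sink a basic open set, and around any point we can find a basic open set contained in a prescribed open set, with the ``$\overline{V_1}\subseteq V_2$'' nesting available whenever we need to pass from a set to a strictly larger one. So the first step is to observe, via Lemma \ref{basic lemma}, that it suffices to prove $(x,W_0)\leq_{\delta+1}(x,W_1)$ for \emph{all} basic $W_0\subseteq V_0$ and $W_1\supseteq V_1$ with the appropriate closure-nesting; then monotonicity glues these back to $(x,V_0)\leq_{\delta+2}(x,V_1)$ and, by induction on $\alpha$, to $(x,V_0)\leq_{\alpha}(x,V_1)$ for all $\alpha$.

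Next I would set up the core step. Fix basic $W_0,W_1$ with $\overline{W_0}\subseteq V_0$ and $\overline{V_1}\subseteq W_1$ (using the regularity of $\mathfrak{B}_0$ to produce them, shrinking/expanding as needed so that there is room to insert one more basic set on each side). Using $(x,V_0)\leq_{\delta+1}(x,V_1)$, which unwinds to: for every basic $W_0'\subseteq V_0$ there is a basic $W_1'\subseteq V_1$ with $(x,W_1')\leq_{\delta}(x,W_0')$ — I apply this to the $W_0$ at hand to get a basic $W_1'\subseteq V_1$ with $(x,W_1')\leq_\delta(x,W_0)$. Now I have a $\leq_\delta$ comparison $(x,W_1')\leq_\delta(x,W_0)$ between basic sets, and I want a $\leq_{\delta+1}$ comparison. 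By the definition of $\delta(x)$, from $(x,W_1')\leq_\delta(x,W_0)$ I can step up to $(x,O_0')\leq_{\delta+1}(x,O_1')$ for suitable basic $O_0'\subseteq W_1'$, $O_1'\supseteq W_0$ with the closure-nesting — this is exactly where the rank is used. Then I push this up to $(x,W_0)\leq_{\delta+2}(x,W_1)$ by expanding back out with Lemma \ref{basic lemma} (and, if the indices don't line up cleanly, by using Lemma \ref{lem:(a) (b) (c)}(b) to downgrade where necessary and re-stepping).

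The delicate bookkeeping — and the step I expect to be the main obstacle — is threading the four nested basic sets through the two ``step up for free'' invocations so that at the end we genuinely land at $\leq_{\delta+2}$ rather than merely at $\leq_{\delta+1}$ again. Concretely: the definition of $\delta(x)$ gives a step-up of the form ``$(x,V_0)\leq_\delta(x,V_1) \Rightarrow (x,W_0)\leq_{\delta+1}(x,W_1)$'' only after pre-shrinking and post-expanding, so to get from $\leq_{\delta+1}$ of the big sets to $\leq_{\delta+2}$ of the medium sets I need to arrange the closures of three consecutive basic sets to sit inside one another on each side. Once that nesting is in place the argument is the same pattern as the proof of ``$xE_G^Xy\Rightarrow\delta(x)=\delta(y)$'' above, just without the $\cdot g$ bookkeeping. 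Having established $(x,V_0)\leq_{\delta+2}(x,V_1)$, the final clause follows by iterating: the same reasoning gives $(x,V_0)\leq_{\delta+n}(x,V_1)$ for all finite $n$, and for limit ordinals we pass to the infimum by the definition of $\leq_\lambda$, whence $(x,V_0)\leq_\alpha(x,V_1)$ for every $\alpha$ by transfinite induction (limits are automatic; successors $\alpha+1$ reduce to the core step since $\delta(x)$ was the last place any nontrivial information could be gained, so $(x,V_0)\leq_\alpha(x,V_1)$ already forces $(x,V_0)\leq_{\alpha+1}(x,V_1)$ by the very argument just given applied with $\alpha$ in place of $\delta$, using $\alpha\geq\delta$).
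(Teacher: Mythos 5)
You have the right ingredients and the same overall strategy as the paper (reduce to basic open sets via Lemma \ref{countable definition <=00003D}, use the $\leq_{\delta+1}$ hypothesis to produce a $\leq_{\delta}$ witness, then invoke the definition of $\delta(x)$ to step up), but the core step as you have written it does not close, and the place where it fails is exactly the ``delicate bookkeeping'' you flag and then leave unresolved. To prove $(x,V_0)\leq_{\delta+2}(x,V_1)$ you must, for each given $W_0\subseteq V_0$, \emph{exhibit} some $W_1\subseteq V_1$ with $(x,W_1)\leq_{\delta+1}(x,W_0)$ --- the right-hand set of the conclusion has to be the given $W_0$ itself. Your core step applies the hypothesis to $W_0$ directly, obtains $(x,W_1')\leq_{\delta}(x,W_0)$, and then steps up; but the definition of $\delta(x)$ only licenses a step-up after \emph{enlarging} the right-hand set, so you land at $(x,O_0')\leq_{\delta+1}(x,O_1')$ with $\overline{W_0}\subseteq O_1'$. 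Lemma \ref{basic lemma} only permits enlarging the right-hand set further, never shrinking it back down to $W_0$, so ``expanding back out'' cannot recover the required $(x,O_0')\leq_{\delta+1}(x,W_0)$; nor can a single $\leq_{\delta+1}$ instance by itself witness a $\leq_{\delta+2}$ statement. (Your opening reduction is also mis-stated: what is needed is not ``$(x,W_0)\leq_{\delta+1}(x,W_1)$ for all $W_0\subseteq V_0$ and $W_1\supseteq V_1$'' but an existential witness $W_1\subseteq V_1$ with the inequality in the other direction.)

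The missing move --- which is the actual content of the paper's proof --- is to pre-shrink on the right \emph{before} invoking the hypothesis, so that the rank's mandatory expansion lands back on the given set rather than overshooting it: given basic $W_0\subseteq V_0$, choose $W_0'\in\mathfrak{B}_0$ with $\overline{W_0'}\subseteq W_0$, apply $(x,V_0)\leq_{\delta+1}(x,V_1)$ to $W_0'$ to get $W_1'\subseteq V_1$ with $(x,W_1')\leq_{\delta}(x,W_0')$, choose $W_1\in\mathfrak{B}_0$ with $\overline{W_1}\subseteq W_1'$, and then the definition of $\delta(x)$ gives $(x,W_1)\leq_{\delta+1}(x,W_0)$ with $W_1\subseteq W_1'\subseteq V_1$, which is exactly the required witness. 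A smaller issue of the same kind occurs in your final clause: the successor step of the transfinite induction cannot be ``the same argument with $\alpha$ in place of $\delta$,'' since the rank only licenses stepping up from $\leq_{\delta}$ to $\leq_{\delta+1}$; instead, given $W_0\subseteq V_0$, run the $\delta+2$ argument to produce $W_1\subseteq V_1$ with $(x,W_1)\leq_{\delta+1}(x,W_0)$ and apply the induction hypothesis to that inner pair to upgrade it to $(x,W_1)\leq_{\alpha}(x,W_0)$.
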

\begin{proof}

Let $W_{0}\subseteq V_{0}$ in $\mathfrak{B}_{0}.$ We choose $\overline{W_{0}'}\subseteq W_{0}$
in $\mathfrak{B}_{0}.$ There is $W_{1}'\subseteq V_{1}$ in $\mathfrak{B}_{0}$
such that $(x,W_{1}')\leq_{\delta}(x,W_{0}').$ We choose $\overline{W_{1}}\subseteq W_{1}'$
in $\mathfrak{B}_{0}$ and step up to
\[
(x,W_{1})\leq_{\delta+1}(x,W_{0}).
\]

The furthermore part is proved by induction on $\alpha\geq\delta+1$.\end{proof}
\begin{rem}
The definition of $\delta(x)$ depends on the choice of the basis $\mathcal{B}_0$. However, the previous lemma shows that for a given $x \in X$, a different choice of basis changes the rank by at most $1$.
\end{rem}
\begin{lem}

\label{<=00003D goes on for another step}If $x_{0}\equiv_{\delta+1}x_{1}$
for $\delta\geq\delta(x_{1})$ then

\[
(x_{0},V_{0})\leq_{\delta+1}(x_{1},V_{1})
\]

implies $(x_{0},V_{0})\leq_{\delta+2}(x_{1},V_{1})$.

\end{lem}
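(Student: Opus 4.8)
The plan is to argue exactly as in the proof of the previous lemma (the one handling $(x,V_0)\le_{\delta+1}(x,V_1)\Rightarrow(x,V_0)\le_{\delta+2}(x,V_1)$), but now carrying a second point through the inductive unwinding. First I would recall that to prove $(x_0,V_0)\le_{\delta+2}(x_1,V_1)$ it suffices, by Lemma \ref{countable definition <=00003D}, to start with an arbitrary $W_0\subseteq V_0$ in $\mathfrak{B}_0$ and produce $W_1\subseteq V_1$ in $\mathfrak{B}_0$ with $(x_1,W_1)\le_{\delta+1}(x_0,W_0)$. So fix such a $W_0$ and, using the ``nice basis'' property, shrink it: pick $\overline{W_0'}\subseteq W_0$ with $W_0'$ in $\mathfrak{B}_0$.

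Next I would use the hypotheses in two stages. From $(x_0,V_0)\le_{\delta+1}(x_1,V_1)$ and $W_0'\subseteq V_0$ we get some $W_1'\subseteq V_1$ in $\mathfrak{B}_0$ with $(x_1,W_1')\le_{\delta}(x_0,W_0')$. Now here is where $x_0\equiv_{\delta+1}x_1$ enters: I want to transfer this $\le_\delta$-inequality into a $\le_\delta$-inequality between the two \emph{copies of $x_1$} so that I can invoke $\delta\ge\delta(x_1)$ to step up. Concretely, by $x_0\equiv_{\delta+1}x_1$ there is (roughly) a way to compare open sets witnessing $x_1$ against $x_0$; combined with transitivity of $\le_\delta$ (Lemma \ref{lem:(a) (b) (c)}(a)) and Lemma \ref{lem:(a) (b) (c)}(b) to drop the index from $\delta+1$ to $\delta$ where needed, I would obtain an open $W_1''\subseteq W_1'$ with $(x_1,W_1'')\le_\delta(x_1,W_0')$. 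Shrinking once more via the nice basis so that $\overline{W_1}\subseteq W_1''$, and applying the definition of $\delta(x_1)$ to the data $\overline{W_1}\subseteq W_1''$, $\overline{W_0'}\subseteq W_0$, $(x_1,W_1'')\le_\delta(x_1,W_0')$, we step up to $(x_1,W_1)\le_{\delta+1}(x_1,W_0)$. Finally, chaining this with the $x_0$-vs-$x_1$ comparison coming from $x_0\equiv_{\delta+1}x_1$ (at index $\delta+1$, downgraded to the needed level), together with Lemma \ref{basic lemma} to absorb the passage from $W_0'$ back up to $W_0$, yields $(x_1,W_1)\le_{\delta+1}(x_0,W_0)$, which is what was required.

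I expect the main obstacle to be organizing the interleaving of the $\equiv_{\delta+1}$ comparisons with the $\le_\delta$/$\le_{\delta+1}$ comparisons so that everything happens at a consistent index and the open sets nest correctly: one must be careful that $x_0\equiv_{\delta+1}x_1$ only supplies \emph{some} witnessing open set for each given one, so the shrinking steps (from $W_0$ to $W_0'$, from $W_1'$ to $W_1''$ to $W_1$) have to be arranged in the right order, and Lemmas \ref{basic lemma} and \ref{lem:(a) (b) (c)} must be applied at precisely the points where the index drops. Everything else is routine: it is the same ``shrink, step up at the rank, re-expand'' maneuver as in the previous lemma, only now routed through the equivalence $x_0\equiv_{\delta+1}x_1$ instead of staying with a single point.
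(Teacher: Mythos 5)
There is a genuine gap. The two middle inequalities you write, $(x_1,W_1'')\le_\delta(x_1,W_0')$ and $(x_1,W_1)\le_{\delta+1}(x_1,W_0)$, pair $x_1$ with open sets ($W_0'$, $W_0$) that were chosen as subsets of $V_0$ on the $x_0$ side, and nothing in the hypotheses produces such comparisons. The relation $x_0\equiv_{\delta+1}x_1$ only says: given $V_i$, there exists \emph{some} $V_{1-i}$ with $(x_{1-i},V_{1-i})\le_{\delta+1}(x_i,V_i)$; it never lets you keep the same open set while switching the point, and for each side it gives the inequality in only one direction. In particular the final chaining step you invoke would require something like $(x_1,W_0)\le_{\delta+1}(x_0,W_0)$ or $(x_0,W_0')\le_\delta(x_1,\cdot)$ with controlled containments, and neither is available. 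More fundamentally, your decomposition extracts only a $\le_\delta$ witness $(x_1,W_1')\le_\delta(x_0,W_0')$ from the hypothesis and hopes to upgrade it to $\le_{\delta+1}$ via $\delta(x_1)$; but the rank of $x_1$ only steps up self-comparisons $(x_1,A)\le_\delta(x_1,B)$, and you have no way to route the mixed inequality through such a self-comparison and back.

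The paper's argument is organized differently and avoids this. Given $V_0'\subseteq V_0$, it first applies $x_0\equiv_{\delta+1}x_1$ to get a \emph{full-strength} witness $(x_1,V^*)\le_{\delta+1}(x_0,V_0')$ with $V^*$ an arbitrary open subset of $G$ (not required to lie in $V_1$). Transitivity together with Lemma \ref{basic lemma} and the hypothesis $(x_0,V_0)\le_{\delta+1}(x_1,V_1)$ then gives the self-comparison $(x_1,V^*)\le_{\delta+1}(x_1,V_1)$; since $\delta\ge\delta(x_1)$, the preceding lemma (which already absorbed all the basis and closure bookkeeping you are redoing) upgrades this to $(x_1,V^*)\le_{\delta+2}(x_1,V_1)$. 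Unwinding one step of $\le_{\delta+2}$ produces $V_1'\subseteq V_1$ with $(x_1,V_1')\le_{\delta+1}(x_1,V^*)\le_{\delta+1}(x_0,V_0')$, which is exactly the required witness. The move you are missing is to use $\equiv_{\delta+1}$ \emph{first}, to manufacture a $\le_{\delta+1}$ witness located anywhere in $G$, and then to use the $\le_{\delta+2}$ self-comparison of $x_1$ to relocate that witness inside $V_1$.
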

\begin{proof}

Let $V_{0}'\subseteq V_{0}.$ Let $V_{1}^{*}\subseteq V_{1}$ be such
that $(x_{1},V_{1}^{*})\leq_{\delta+1}(x_{0},V_{0}')$. By transitivity
of $\leq$, $(x_{1},V_{1}^{*})\leq_{\delta+1}(x_{1},V_{1})$, hence
$(x_{1},V_{1}^{*})\leq_{\delta+2}(x_{1},V_{1})$. We can then find
$V_{1}'\subseteq V_{1}$ such that $(x_{1},V_{1}')\leq_{\delta+1}(x_{1},V_{1}^{*})\leq_{\delta+1}(x_{0},V_{0}').$
\end{proof}
\begin{lem}

Suppose that $\delta\geq\delta(x_{0}),\delta(x_{1})$ and that $x_{0}\equiv_{\delta+1}x_{1}$.
Then for $V_{0},V_{1}$ open nonempty,

\end{lem}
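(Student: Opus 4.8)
The plan is to show that, under these hypotheses, the relation $(x_0,V_0)\leq_{\delta+1}(x_1,V_1)$ is already ``frozen'': it implies $(x_0,V_0)\leq_{\alpha}(x_1,V_1)$ for \emph{every} $\alpha$. (In particular, combined with $x_0\equiv_{\delta+1}x_1$ this yields $x_0\equiv_{\alpha}x_1$ for all $\alpha$, hence $x_0\,E_{G}^{X}\,x_1$.) Since the hypotheses $\delta\ge\delta(x_0),\delta(x_1)$ and $x_0\equiv_{\delta+1}x_1$ are symmetric in $x_0,x_1$, I would run a single transfinite induction handling both orderings at once: at stage $\alpha$ prove that for all open nonempty $U,W$ and both $i\in\{0,1\}$, $(x_i,U)\leq_{\delta+1}(x_{1-i},W)$ implies $(x_i,U)\leq_{\alpha}(x_{1-i},W)$. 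This bundling is exactly why one wants \emph{both} ranks bounded by $\delta$, whereas Lemma \ref{<=00003D goes on for another step} needed only one.

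The induction is on $\alpha$. For $\alpha\le\delta+1$ there is nothing to do (Lemma \ref{lem:(a) (b) (c)}(b) when $\alpha<\delta+1$, the hypothesis itself when $\alpha=\delta+1$). For $\alpha=\delta+2$ the assertion in each of the two orderings is precisely Lemma \ref{<=00003D goes on for another step}; this is the only invocation of that lemma and the only genuine step-up in the whole argument — and it is there, not here, that the basis $\mathfrak{B}_0$ and the shrinking of open sets are actually used. For limit $\alpha$ the assertion is immediate from the inductive hypothesis together with the definition $\leq_{\alpha}=\bigcap_{\beta<\alpha}\leq_{\beta}$.

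The content is the successor step $\alpha=\gamma+1$ with $\gamma\ge\delta+2$. Assuming $(x_0,V_0)\leq_{\delta+1}(x_1,V_1)$, the already-settled case $\delta+2\le\gamma$ gives $(x_0,V_0)\leq_{\delta+2}(x_1,V_1)$; unwinding $\leq_{\delta+2}=\leq_{(\delta+1)+1}$, every open nonempty $W_0\subseteq V_0$ has an open nonempty $W_1\subseteq V_1$ with $(x_1,W_1)\leq_{\delta+1}(x_0,W_0)$. Now apply the inductive hypothesis at the smaller stage $\gamma$, in the \emph{other} ordering of the two points, to conclude $(x_1,W_1)\leq_{\gamma}(x_0,W_0)$. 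So every open nonempty $W_0\subseteq V_0$ admits an open nonempty $W_1\subseteq V_1$ with $(x_1,W_1)\leq_{\gamma}(x_0,W_0)$, i.e.\ $(x_0,V_0)\leq_{\gamma+1}(x_1,V_1)$; the $x_0\leftrightarrow x_1$ case is symmetric, which closes the induction.

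The main thing to get right is the induction variable, not any delicate estimate. The tempting alternative — iterating Lemma \ref{<=00003D goes on for another step} to push the equivalences $x_0\equiv_{\delta+n}x_1$ upward — does produce $x_0\equiv_{\delta+\omega}x_1$ and $(x_0,V_0)\leq_{\delta+\omega}(x_1,V_1)$, but it jams at $\delta+\omega$, since that lemma only turns a successor-level relation into the next one and cannot climb past a limit. Inducting directly on the target ordinal $\alpha$, and at each successor contracting the open sets by a single application of the definition of $\leq$ so as to re-expose the hypothesis ``$\leq_{\delta+1}$'', sidesteps the limit obstruction and keeps all of the basis bookkeeping inside the one appeal to Lemma \ref{<=00003D goes on for another step}.
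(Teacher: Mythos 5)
Your proof is correct and is exactly the argument the paper intends: the paper's own proof is the one-line ``by induction on $\alpha\geq\delta+1$, noting that $\delta$ bounds both $\delta(x_0)$ and $\delta(x_1)$,'' and your write-up fills in precisely that induction, with the successor step contracting to re-expose $\leq_{\delta+1}$ and the two-sided bundling explaining why both rank bounds are needed. Nothing to add.
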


\[
(x_{0},V_{0})\leq_{\delta+1}(x_{1},V_{1})
\]

implies that for every $\alpha<\omega_{1}$
\[
(x_{0},V_{0})\leq_{\alpha}(x_{1},V_{1}).
\]

\begin{proof}

By induction on $\alpha\geq\delta+1$. Notice that we use the fact
that $\delta$ is higher than both $\delta(x_{0})$ and $\delta(x_{1})$.

\end{proof}

\subsection*{\textmd{We are now ready to prove the main result of this section:}}

\begin{prop}

\label{main proposition Scott}If $\delta(x_{0}),\delta(x_{1})\leq\delta$
and $x_{0}\equiv_{\delta+1}x_{1}$ , then $x_{0}$ and $x_{1}$ are
orbit equivalent.
\end{prop}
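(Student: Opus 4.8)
The plan is to run a back-and-forth argument producing a generic element of $G$ that carries $x_0$ to $x_1$. We work in the forcing $\mathbb{P}_I$ of non-meager Borel subsets of $G$ (equivalently, Cohen forcing with nonempty open sets), following the forcing reformulation of $\leq_\alpha$ in Proposition \ref{Main lemma section 2}. The key mechanism: since $x_0 \equiv_{\delta+1} x_1$, for any nonempty open $V_0 \subseteq G$ there is a nonempty open $V_1$ with $(x_1, V_1) \leq_{\delta+1} (x_0, V_0)$, and by the last lemma this upgrades to $(x_1, V_1) \leq_\alpha (x_0, V_0)$ for \emph{all} $\alpha$ — in particular the pair $(x_0,V_0)$ and $(x_1,V_1)$ become ``fully comparable'' in the sense that they force the same membership statements in \emph{all} Borel invariant sets (by taking the union over $\alpha$ in the forcing characterization). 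Symmetrically one can go the other direction. So we build a decreasing sequence $V_0 \supseteq V_0' \supseteq \cdots$ in $G$ and a matching sequence $V_1 \supseteq V_1' \supseteq \cdots$, alternately shrinking on each side and matching via the $\leq$-relation, arranging that the diameters go to $0$ so both sequences converge to single points $g_0, g_1 \in G$.

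First I would set up the bookkeeping: fix a right-invariant metric on $G$ and enumerate a basis; at stage $n$, given the current pair $(x_0, V_0^{(n)}) , (x_1, V_1^{(n)})$ with $(x_0,V_0^{(n)})$ and $(x_1,V_1^{(n)})$ mutually $\leq_\alpha$-comparable for all $\alpha$, first shrink $V_0^{(n)}$ to a small basic set $W_0$ of diameter $< 2^{-n}$ meeting whatever dense requirement is next on the list, then use $x_0 \equiv_{\delta+1} x_1$ (plus the upgrading lemma) to find $W_1 \subseteq V_1^{(n)}$ with $(x_1, W_1) \leq_\alpha (x_0, W_0)$ for all $\alpha$; then repeat in the other direction to shrink $W_1$ and get $V_0^{(n+1)} \subseteq W_0$ small and comparable. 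The diameters force $\bigcap_n V_i^{(n)} = \{g_i\}$. The requirements to be met along the way are the usual genericity/convergence demands on both sides simultaneously.

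The heart of the matter is extracting $g_0 \cdot x_0 = g_1 \cdot x_1$ from the construction. The cleanest route: the relation $(x_0, V_0) \leq_\alpha (x_1, V_1)$ for all $\alpha$ implies, via Proposition \ref{Main lemma section 2} and the corollary that $\equiv_\alpha$ for limit $\alpha$ is equality of invariant sets, that $\overline{V_0 \cdot x_0} \subseteq \overline{V_1 \cdot x_1}$ (the $\alpha = 1$ instance) \emph{at every stage}, so the nested closed sets $\overline{V_0^{(n)} \cdot x_0}$ and $\overline{V_1^{(n)} \cdot x_1}$ have the same intersection; since the diameters in $G$ shrink and the action is continuous, the diameters of $V_i^{(n)} \cdot x_i$ in $X$ also shrink to $0$, so $\bigcap_n \overline{V_0^{(n)}\cdot x_0} = \{g_0 \cdot x_0\}$ and likewise $\{g_1 \cdot x_1\}$, forcing $g_0 \cdot x_0 = g_1 \cdot x_1$. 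Then $g_1^{-1} g_0 \cdot x_0 = x_1$, so $x_0$ and $x_1$ are orbit equivalent.

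The main obstacle I anticipate is verifying that the ``mutual comparability for all $\alpha$'' property is genuinely \emph{preserved} when we shrink — i.e., that after passing from $(x_0, V_0)$, $(x_1, V_1)$ (mutually $\leq_\alpha$ for all $\alpha$) to a small $W_0 \subseteq V_0$ and a matched $W_1 \subseteq V_1$ with $(x_1, W_1) \leq_\alpha (x_0, W_0)$ for all $\alpha$, we \emph{also} get the reverse inequality $(x_0, W_0) \leq_\alpha (x_1, W_1)$ for all $\alpha$. This is where the hypothesis $\delta \geq \delta(x_0), \delta(x_1)$ and the lemmas immediately preceding this proposition (\ref{<=00003D goes on for another step} and its successor) do the real work: starting from $(x_1, W_1) \leq_{\delta+1} (x_0, W_0)$ one runs the $\equiv_{\delta+1}$ back-and-forth to produce some $W_0^* \subseteq W_0$ with $(x_0, W_0^*) \leq_{\delta+1} (x_1, W_1)$, and the upgrading lemma makes both directions survive to all $\alpha$; one then takes $W_0^*$ as the next-stage set on the $x_0$ side. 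Managing this asymmetry — always carrying a cofinal witness on the side one is about to shrink — is the delicate point; once it is arranged, the rest is routine diagonalization and a limit computation.
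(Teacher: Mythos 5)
Your proposal is correct and is essentially the paper's own argument: a back-and-forth sustained at level $\delta+1$ by Lemma \ref{<=00003D goes on for another step} and the upgrading lemma, with diameters shrinking to $0$ so that the two nested sequences of open sets converge to group elements $g,h$, and the interleaved chain $\overline{V_{n+1}\cdot x_{0}}\subseteq\overline{W_{n}\cdot x_{1}}\subseteq\overline{V_{n}\cdot x_{0}}$ (the $\alpha=1$ content of the matching relations) pinches down to the single point $g\cdot x_{0}=h\cdot x_{1}$. The forcing packaging and the talk of maintaining \emph{mutual} $\leq_{\alpha}$-comparability at every stage are inessential (and literal two-sided comparability would be awkward to preserve, since $\leq_{\alpha}$ is not monotone under shrinking its right-hand set); but the construction you actually describe is the paper's alternating one-directional scheme, which is all the limit computation requires.
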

\begin{proof}

Let $W_{0}$ be open and nonempty. There is $V_{0}$ such that $(x_{0},V_{0})\leq_{\delta+1}(x_{1},W_{0})$.
Using Lemma \ref{<=00003D goes on for another step}, we will be able
to imitate the proof of Scott's isomorphism theorem.

We choose $V_{1}\subseteq V_{0}$ such that $\overline{V_{1}}\subseteq V_{0}$
and $diam(V_{1})<1$. Since $(x_{0},V_{0})\leq_{\delta+1}(x_{1},W_{0})$,
we may find $W_{1}'\subseteq W_{0}$ such that
\[
(x_{1},W_{1}')\leq_{\delta+1}(x_{0},V_{1})
\]
 ( here we use Lemma \ref{<=00003D goes on for another step} ). Then
choose $W_{1}\subseteq W_{1}'$ such that $\overline{W_{1}}\subseteq W_{1}'$
and $diam(W_{1})<1$. There is $V_{2}'\subseteq V_{1}$ such that
\[
(x_{0},V_{2}')\leq_{\delta+1}(x_{1},W_{1})
\]
where again we have used Lemma \ref{<=00003D goes on for another step}.
We continue in the same way: given
\[
(x_{0},V_{n+1}')\leq_{\delta+1}(x_{1},W_{n})
\]
choose $V_{n+1}\subseteq V_{n+1}'$ such that $\overline{V_{n+1}}\subseteq V_{n+1}'$
and $diam(V_{n+1})<\frac{1}{n+1}$ and we get $W_{n+1}'\subseteq W_{n}$
such that
\[
(x_{1},W_{n+1}')\leq_{\delta+1}(x_{0},V_{n+1}).
\]

We then find $W_{n+1}\subseteq W_{n+1}'$ with $\overline{W_{n+1}}\subseteq W_{n+1}'$
and $diam(W_{n+1})<\frac{1}{n+1}$ , and get $V_{n+2}'\subseteq V_{n+1}$
with
\[
(x_{0},V_{n+2}')\leq_{\delta+1}(x_{1},W_{n+1}).
\]

At the end of the above process we will have:

(I)

\[
\dots\subseteq V_{3}\subseteq V_{3}'\subseteq V_{2}\subseteq V_{2}'\subseteq V_{1}\subseteq V_{0}
\]

\[
\dots\subseteq W_{2}\subseteq W_{2}'\subseteq W_{1}\subseteq W_{1}'\subseteq W_{0}
\]
 such that for all $n$
\[
diam(V_{n})<\frac{1}{n};\ \ \overline{V_{n+1}}\subseteq V_{n}
\]
\[
diam(W_{n})<\frac{1}{n};\ \ \overline{W_{n+1}}\subseteq W_{n}
\]

(II) For all $n\geq1:$
\[
\overline{V_{n+1}\cdot x_{0}}\subseteq\overline{V_{n+1}'\cdot x_{0}}\subseteq\overline{W_{n}\cdot x_{1}}
\]
\[
\overline{W_{n}\cdot x_{1}}\subseteq\overline{W_{n}'\cdot x_{1}}\subseteq\overline{V_{n}\cdot x_{0}}
\]

from which we deduce
\[
\dots\subseteq\overline{V_{4}\cdot x_{0}}\subseteq\overline{W_{3}\cdot x_{1}}\subseteq\overline{V_{3}\cdot x_{0}}\subseteq\overline{W_{2}\cdot x_{1}}\subseteq\overline{V_{2}\cdot x_{0}}\subseteq\overline{W_{1}\cdot x_{1}}
\]

By (I) we can define $g$ as the only object of $\bigcap V_{n}$ and
$h$ - the only object of $\bigcap W_{n}$. Given an $\epsilon>0$
, from continuity, there is an $n\in\mathbb{N}$ such that
\[
\overline{W_{n}\cdot x_{1}}\subseteq B_{\epsilon}(h\cdot x_{1}).
\]

From (II)
\[
\overline{V_{n+1}\cdot x_{0}}\subseteq\overline{W_{n}\cdot x_{1}}
\]
 and in particular:
\[
g\cdot x_{0}\in B_{\epsilon}(h\cdot x_{1}).
\]

This is true for every $\epsilon>0$ , which is why $g\cdot x_{0}=h\cdot x_{1}.$
\end{proof}
\begin{cor}

If $\delta(x_{0}),\delta(x_{1})\leq\delta$ , $x_{0}\equiv_{\delta+1}x_{1}$
and $V_{0},W_{0}$ satisfy

\[
(x_{0},V_{0})\leq_{\delta+1}(x_{1},W_{0}).
\]

Then for every $V'\subseteq V_{0}$ open nonempty, there are $g\in V'$
and $h\in W_{0}$ such that $g\cdot x_{0}=h\cdot x_{1}.$
\end{cor}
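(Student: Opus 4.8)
The corollary refines Proposition \ref{main proposition Scott} by localizing the witnessing group elements to a prescribed open subset $V'$ of $V_0$. The plan is to run the \emph{same} Cantor-scheme construction as in the proof of Proposition \ref{main proposition Scott}, but starting one step earlier: instead of starting from an arbitrary $W_0$ and producing \emph{some} $V_0$ with $(x_0,V_0)\leq_{\delta+1}(x_1,W_0)$, we are already \emph{given} $V_0, W_0$ with $(x_0,V_0)\leq_{\delta+1}(x_1,W_0)$, and we simply begin shrinking inside $V'$ on the $x_0$-side rather than inside all of $V_0$.

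Concretely, first I would replace $V_0$ by a nonempty open $\widetilde V_0 \subseteq V'$ with $\overline{\widetilde V_0}\subseteq V'$ and $\mathrm{diam}(\widetilde V_0)<1$; by Lemma \ref{basic lemma} (shrinking the left coordinate) we still have $(x_0,\widetilde V_0)\leq_{\delta+1}(x_1,W_0)$. Now I would carry out verbatim the interleaved choice of decreasing sequences $\widetilde V_0 \supseteq V_1' \supseteq V_1 \supseteq V_2' \supseteq V_2 \supseteq\cdots$ and $W_0 \supseteq W_1' \supseteq W_1 \supseteq\cdots$ exactly as in Proposition \ref{main proposition Scott}, at each stage using Lemma \ref{<=00003D goes on for another step} to extend a $\leq_{\delta+1}$-relation one step further so that the back-and-forth can continue, and imposing $\overline{V_{n+1}}\subseteq V_n$, $\overline{W_{n+1}}\subseteq W_n$, $\mathrm{diam}(V_n),\mathrm{diam}(W_n)<\tfrac1n$. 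This produces the same nested chain of closures (II)
\[
\cdots\subseteq\overline{V_3\cdot x_0}\subseteq\overline{W_2\cdot x_1}\subseteq\overline{V_2\cdot x_0}\subseteq\overline{W_1\cdot x_1}\subseteq\overline{\widetilde V_0\cdot x_0}.
\]

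Then I would set $g$ to be the unique point of $\bigcap_n V_n$ and $h$ the unique point of $\bigcap_n W_n$; completeness of the metric together with the diameter bounds and the nesting $\overline{V_{n+1}}\subseteq V_n$ guarantees these intersections are singletons. Since $g\in\overline{\widetilde V_0}\subseteq V'$ and $h\in W_0$, and the chain (II) forces $g\cdot x_0$ and $h\cdot x_1$ to lie in arbitrarily small balls around each other, we conclude $g\cdot x_0=h\cdot x_1$ with $g\in V'$, $h\in W_0$ as required. I do not expect any genuine obstacle here: the only thing to be slightly careful about is that replacing $V_0$ by $\widetilde V_0\subseteq V'$ does not break the hypothesis $(x_0,V_0)\leq_{\delta+1}(x_1,W_0)$, which is exactly what Lemma \ref{basic lemma} handles, and that the first application of Lemma \ref{<=00003D goes on for another step} requires $\delta\geq\delta(x_1)$ while later symmetric applications require $\delta\geq\delta(x_0)$ — both are in the hypotheses. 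In short, this corollary is essentially a bookkeeping variant of the preceding proposition, obtained by feeding the construction the open set $V'$ as its initial left-coordinate.
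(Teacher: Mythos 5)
Your proposal is correct and is essentially the paper's own argument: the paper proves this corollary simply by remarking that the back-and-forth construction in Proposition \ref{main proposition Scott} already produces the desired $g$ and $h$, and starting the shrinking inside $V'$ (justified by Lemma \ref{basic lemma}) localizes $g$ to $V'$ exactly as you describe. You have merely written out explicitly what the paper leaves implicit.
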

\selectlanguage{british}%
\begin{proof}

This is in fact what we have proved now.
\end{proof}
\selectlanguage{american}%
\begin{lem}

For every $\alpha$, the set $\{x\ :\ \delta(x)\leq\alpha\}$ is $\mathbf{\Pi_{\alpha+m(\alpha)}^{0}}$,
for $m(\alpha)$ natural.
\end{lem}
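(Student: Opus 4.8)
The plan is to reduce $\delta(x)\le\alpha$ to a single condition on $x$ quantified over the countable basis $\mathfrak{B}_0$, and then read off its complexity from Proposition~\ref{complexity of <=00003D}. First I would prove the characterization: for every $\alpha\ge 1$, $\delta(x)\le\alpha$ if and only if for all $V_0,V_1,W_0,W_1\in\mathfrak{B}_0$ with $\overline{W_0}\subseteq V_0$ and $\overline{V_1}\subseteq W_1$, the relation $(x,V_0)\leq_\alpha(x,V_1)$ implies $(x,W_0)\leq_{\alpha+1}(x,W_1)$. The ``if'' direction is immediate from the definition of $\delta(x)$ as the least ordinal with this ``stepping-up'' property. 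For ``only if'', suppose $\delta(x)=\delta\le\alpha$ and fix such a quadruple with $(x,V_0)\leq_\alpha(x,V_1)$; if $\delta=\alpha$ this is the definition of $\delta(x)$, and if $\delta<\alpha$ then Lemma~\ref{lem:(a) (b) (c)}(b) gives $(x,V_0)\leq_\delta(x,V_1)$, the stepping-up property at $\delta=\delta(x)$ gives $(x,W_0)\leq_{\delta+1}(x,W_1)$, and the Lemma of this section stating that if $\delta(x)=\delta$ then $(x,W_0)\leq_{\delta+1}(x,W_1)$ implies $(x,W_0)\leq_\gamma(x,W_1)$ for every $\gamma$ yields $(x,W_0)\leq_{\alpha+1}(x,W_1)$.

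Next I would do the complexity bookkeeping. By the characterization, $\{x:\delta(x)\le\alpha\}$ is the intersection, over the countably many quadruples $(V_0,V_1,W_0,W_1)\in\mathfrak{B}_0^4$ satisfying the two purely topological conditions $\overline{W_0}\subseteq V_0$, $\overline{V_1}\subseteq W_1$ (quadruples failing these do not involve $x$ and just contribute $X$, so they may be dropped), of the sets $\{x:(x,V_0)\nleq_\alpha(x,V_1)\}\cup\{x:(x,W_0)\leq_{\alpha+1}(x,W_1)\}$. Proposition~\ref{complexity of <=00003D} says $\mathfrak{R}_\beta$ is $\mathbf{\Pi_{\beta+k(\beta)}^0}$ with $k(\beta)\in\omega$; pulling the section at coordinates $(n,m)$ back along the continuous diagonal $x\mapsto(x,x)$ shows each $\{x:(x,V_m)\leq_\beta(x,V_n)\}$ is $\mathbf{\Pi_{\beta+k(\beta)}^0}$. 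Hence the first piece of each union is $\mathbf{\Sigma_{\alpha+k(\alpha)}^0}$ and the second is $\mathbf{\Pi_{(\alpha+1)+k(\alpha+1)}^0}$; setting $m(\alpha)=\max\bigl(k(\alpha)+1,\,1+k(\alpha+1)\bigr)$ (a natural number) and $\gamma=\alpha+m(\alpha)\ge 2$, and using $\mathbf{\Sigma_\eta^0}\subseteq\mathbf{\Pi_{\eta+1}^0}$ together with closure of $\mathbf{\Pi_\gamma^0}$ under finite unions and countable intersections, each union lies in $\mathbf{\Pi_\gamma^0}$ and so does the whole intersection. This gives $\{x:\delta(x)\le\alpha\}\in\mathbf{\Pi_{\alpha+m(\alpha)}^0}$.

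I expect the genuine content to be entirely in the characterization step: the subtlety is that ``$\delta(x)\le\alpha$'' is a priori existential (some $\beta\le\alpha$ is a stepping-up point) yet collapses to the single assertion that stepping-up already happens at $\alpha$, which is exactly what the persistence lemmas about $\leq$ beyond the rank are designed to give. Once that is settled the rest is routine counting; the only minor points there are to pass from the two-variable relation $\mathfrak{R}_\alpha$ to a one-variable set via the diagonal, and to check that $m(\alpha)$ stays in $\omega$, which it does because $k$ is $\omega$-valued.
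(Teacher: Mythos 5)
Your proof is correct and follows essentially the same route as the paper, whose entire proof is the one line ``reading the definition using Proposition~\ref{complexity of <=00003D}''; you have simply filled in the details that line compresses. In particular, your characterization step (that $\delta(x)\le\alpha$ collapses to the stepping-up property holding at $\alpha$ itself, via monotonicity of $\leq_\beta$ and the persistence lemma beyond the rank) is exactly the implicit content needed to keep $m(\alpha)$ finite rather than losing $\omega$ levels to a union over $\beta\le\alpha$, and your complexity bookkeeping matches the paper's intended use of $\mathfrak{R}_\alpha$.
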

\begin{proof}

Reading the definition using \ref{complexity of <=00003D}.
\end{proof}
\begin{thm}

\label{main theorem section 2}\foreignlanguage{british}{For every
$x\in X$ there is a natural number $m$ such that $[x]=\{y\ :\ y\equiv_{\delta(x)+m}x\}.$}
\end{thm}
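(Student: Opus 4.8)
The plan is to show the two inclusions separately. One direction is immediate: if $y \in [x]$, i.e. $y = g \cdot x$ for some $g$, then by Corollary \ref{=00003D=00003D=00003D invariant under G} we have $y \equiv_{\alpha} x$ for every $\alpha$, so in particular $y \equiv_{\delta(x)+m} x$ regardless of $m$. So the content is the other direction: produce a natural number $m$ — I expect $m = 1$ or $m = 2$, coming from the ``shrink and expand'' slack built into the definition of $\delta(x)$ — such that $y \equiv_{\delta(x)+m} x$ forces $y$ and $x$ to be orbit equivalent.

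First I would recall that $\delta(x)$ is the Hjorth rank, so it is the ``for free'' step-up threshold for the pair $(x,x)$, and that Proposition \ref{main proposition Scott} already tells us: if $\delta(x_0), \delta(x_1) \le \delta$ and $x_0 \equiv_{\delta+1} x_1$ then $x_0 E_G^X x_1$. The obvious temptation is to apply this with $\delta = \delta(x)$, but there is a gap — Proposition \ref{main proposition Scott} needs a bound on \emph{both} ranks, whereas a priori we only know $\delta(x)$ and have no control over $\delta(y)$. The key observation that closes this gap is that $\equiv_{\alpha}$-equivalence of $x$ and $y$, for $\alpha$ sufficiently above $\delta(x)$, already forces $\delta(y) \le \delta(x)+ (\text{small constant})$: intuitively, $y$ cannot have larger Hjorth rank than $x$ if it looks like $x$ at level $\delta(x)+1$ or $\delta(x)+2$, because the ability to step up ``for free'' for the pair $(x,x)$ should transfer, via Lemma \ref{<=00003D goes on for another step} and transitivity of $\leq$, to the ability to step up for $(y,y)$.

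So the key steps, in order, are: (1) Fix $\delta = \delta(x)$. (2) Show that if $y \equiv_{\delta+1} x$ (or perhaps $y \equiv_{\delta+2} x$, absorbing the basis-change slack noted in the remark after Lemma ``Suppose $\delta(x)=\delta$''), then $\delta(y) \le \delta$ as well — the argument being: given $V_0,V_1,W_0,W_1$ in $\mathfrak{B}_0$ with the nesting conditions and $(y,V_0) \le_{\delta} (y,V_1)$, transfer this to a comparison involving $x$ using $y \equiv_{\delta+1} x$ and transitivity, step up using $\delta(x) = \delta$, and transfer back; Lemma \ref{<=00003D goes on for another step} is exactly the tool that lets a $\le_{\delta+1}$ comparison between $x$ and $y$ continue to $\le_{\delta+2}$, which is what makes the round trip work. (3) Now both $\delta(x) \le \delta$ and $\delta(y) \le \delta$, and $x \equiv_{\delta+1} y$, so Proposition \ref{main proposition Scott} gives $x E_G^X y$, i.e. $y \in [x]$. (4) Combine with the trivial direction and read off $m$ (I expect $m = 2$, to be safe about the one-step basis dependence, though a more careful bookkeeping might give $m = 1$).

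The main obstacle is step (2): carefully tracking which level of equivalence between $x$ and $y$ is needed to push a ``free step-up at $x$'' across to a ``free step-up at $y$'', while respecting that step-ups are only licensed for open sets in $\mathfrak{B}_0$ and that the definition of $\delta(x)$ already consumes a shrink/expand of the four sets involved. Everything else — the trivial inclusion, the final invocation of Proposition \ref{main proposition Scott}, and the complexity-free reading-off of $m$ — is routine. I would also double-check that the nesting manipulations in step (2) can be carried out inside $\mathfrak{B}_0$ using the good-basis property from the Lemma ``We may assume that the basis $\mathfrak{B}_{0}$ has the following property'', since that is what guarantees enough intermediate open sets with closures nested as required.
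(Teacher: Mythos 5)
Your skeleton is the right one (the trivial inclusion via Corollary \ref{=00003D=00003D=00003D invariant under G}, then reduce to bounding $\delta(y)$ so that Proposition \ref{main proposition Scott} applies), but your step (2) --- the claim that a direct ``transfer, step up at $x$, transfer back'' argument gives $\delta(y)\leq\delta(x)+c$ --- has a genuine gap, and it is exactly the step the paper handles by a completely different device. The problem with the round trip is one of localization: to verify the step-up property for $y$ you must, given $(y,V_0)\leq_{\delta}(y,V_1)$ and $O_0\subseteq W_0$, produce a witness $O_1$ lying \emph{inside} the prescribed set $W_1$. The witnesses handed to you by $y\equiv_{\delta+1}x$ are arbitrary open subsets of $G$ with no containment in $V_1$ or $W_1$, and the only mechanism in the paper for pulling such a witness back inside a given open set is the step-up property of the point on the \emph{right-hand side} of the comparison: note that Lemma \ref{<=00003D goes on for another step} assumes $\delta\geq\delta(x_1)$ for the right-hand element $x_1$, precisely because its proof relocates $V_1^*$ inside $V_1$ by stepping up the self-comparison $(x_1,V_1^*)\leq_{\delta+1}(x_1,V_1)$. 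For a self-comparison of $y$ the right-hand element is $y$ itself, so this tool requires a bound on $\delta(y)$ --- the very thing you are trying to establish. I do not see how to close this circle with transitivity alone.

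The paper avoids the issue entirely by a soft definability argument: the lemma immediately preceding the theorem shows that $A=\{z\ :\ \delta(z)\leq\delta(x)\}$ is a $\mathbf{\Pi_{\delta(x)+m}^{0}}$ set for some natural $m$, and it is invariant since the rank is orbit-invariant; Theorem \ref{Property III} says that $\equiv_{\delta(x)+m}$-equivalent points belong to the same invariant $\mathbf{\Pi_{\delta(x)+m}^{0}}$ sets, so $y\equiv_{\delta(x)+m}x$ together with $x\in A$ forces $\delta(y)\leq\delta(x)$, and Proposition \ref{main proposition Scott} then applies. This also explains why the theorem only asserts the existence of \emph{some} natural number $m$ rather than $m=1$ or $2$: the $m$ comes from the complexity bound $m(\delta(x))$ in the definability lemma, not from the one-step basis slack you were budgeting for. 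To repair your proof, replace your step (2) by this appeal to the complexity lemma and Theorem \ref{Property III}.
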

\selectlanguage{british}%
\begin{proof}

Immediate from the previous Lemma and Proposition \ref{main proposition Scott}.

\end{proof}

\section{{\large Hjorth Analysis and Borel Orbit Equivalence Relations}}

We will now discuss the complexity of sets of the form $B\cdot x$
for $B$ Borel. This discussion, apart of being interesting in its
own, will be applied to the theory of Hjorth analysis.

The following is trivial:

\selectlanguage{english}%
\begin{prop}

For $B$ Borel, $B\cdot x$ is analytic.

\end{prop}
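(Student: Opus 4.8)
The statement is that for $B \subseteq G$ Borel and $x \in X$, the set $B \cdot x = \{g \cdot x : g \in B\}$ is analytic. The plan is to exhibit $B \cdot x$ as the projection of a Borel subset of a product of Polish spaces, since analytic sets in a Polish space are precisely such projections.

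First I would fix the map $\varphi_x : G \to X$ given by $\varphi_x(g) = g \cdot x$; by continuity of the action this is a continuous function. Then $B \cdot x = \varphi_x[B]$ is the continuous image of a Borel set. The cleanest way to see this is analytic: consider the set $\Gamma = \{(g, y) \in G \times X : g \in B \text{ and } y = g \cdot x\}$. This is Borel in $G \times X$, being the intersection of $B \times X$ (Borel since $B$ is Borel in $G$) with the graph of the continuous map $\varphi_x$, which is closed in $G \times X$ (graphs of continuous maps into Hausdorff — in particular metrizable — spaces are closed). Thus $\Gamma$ is Borel, and $B \cdot x$ is its projection onto the second coordinate $X$. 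The projection of a Borel subset of a product of Polish spaces is analytic, so $B \cdot x$ is analytic, as claimed.

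Alternatively, and perhaps even more directly, one invokes the classical fact that the image of a Borel set (equivalently, of a standard Borel space) under a Borel — a fortiori continuous — map between Polish spaces is analytic; applying this to $\varphi_x \restriction B$ finishes it immediately. Either formulation works; I would present the projection version since it is self-contained given only that continuous graphs are closed and that projections of Borel sets are analytic, both of which are among the basic facts of descriptive set theory recalled in the preliminaries.

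There is essentially no obstacle here — this is why the excerpt flags it as ``trivial.'' The only point requiring a half-second of care is making sure $\Gamma$ is genuinely Borel (not merely analytic), which follows because both defining conditions — membership in $B$ and lying on the graph of a continuous map — are Borel conditions; after that, the single application of ``projection of Borel is analytic'' does all the work.
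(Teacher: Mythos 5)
Your proof is correct and is exactly the standard argument the paper has in mind — the paper simply declares the proposition trivial and gives no proof, and your observation that $B\cdot x$ is the continuous image (equivalently, the projection of a Borel subset of $G\times X$) of a Borel set under $g\mapsto g\cdot x$ is the intended justification. No issues.
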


Unfortunately, we can't do better:

\begin{example}

(Hrushovski): Let $2^{\omega}\times2^{\omega}$ act on $2^{\omega}$
by $(x,y)\cdot z=x\cdot z$. Then for any $A\subseteq2^{\omega}\times2^{\omega}$
, $A\cdot1$ is the projection of $A$ on the first coordinate. Hence,
$\{F\cdot1\ :\ F\subseteq2^{\omega}\times2^{\omega}\ closed\}$ is
the collection of analytic subsets of $2^{\omega}.$ In particular,
$B\cdot x$ for $B$ Borel is not necessarily Borel.

\end{example}
\begin{prop}

$B\cdot x$ is Borel if and only if $B\cdot G_{x}$ is Borel. In particular,
$U\cdot x$ is Borel, for $U$ open.

\begin{proof}

We will give 2 different proofs:

\begin{enumerate}

\item \textbf{$B\cdot x=B\cdot G_{x}\cdot x$ , }so we may assume that $B$
is a collection of cosets of $G_{x}$. $y\notin B\cdot x\iff y\notin G\cdot x\vee\left(\exists g\ g\cdot x=y\wedge g\cdot G_{x}\cap B=\emptyset\right)\iff y\notin G\cdot x\vee\left(\exists g\ g\cdot x=y\wedge g\cdot G_{x}\nsubseteq B\right)\iff y\notin G\cdot x\vee\left(\exists g\exists h\ g\cdot x=y\wedge h\cdot x=x\wedge g\cdot h\notin B\right)$.
Hence $B\cdot x$ is co-analytic as well.
\item Consider $\pi:G\to G/G_{x}$ the canonical projection and $\phi:G/G_{x}\to G\cdot x$
the bijection between the cosets of $G_{x}$ and the orbit of $x$.
$\phi$ is a continuous bijection between standard Borel spaces, and
hence is a Borel isomorphism of the two. Hence, $B\cdot x=\phi(\pi(B))$
is Borel if and only if $\pi(B)$ is Borel, if and only if $\pi^{-1}(\pi(B))=B\cdot G_{x}$
is Borel.

\end{enumerate}
\end{proof}
\end{prop}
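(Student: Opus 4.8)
**Proof proposal for the statement: $B \cdot x$ is Borel if and only if $B \cdot G_x$ is Borel; in particular $U \cdot x$ is Borel for $U$ open.**

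My plan is to exploit the canonical correspondence between subsets of $G$ that are unions of left cosets of $G_x$ and subsets of the orbit $G\cdot x$, and the fact — established in the preliminaries via Effros' theorem and the remarks following it — that $G\cdot x$ is always a Borel set, being the continuous injective image of the Polish space $G/G_x$. The key observation is that, writing $\pi\colon G\to G/G_x$ for the quotient map and $\phi\colon G/G_x\to G\cdot x$ for the canonical bijection $gG_x\mapsto g\cdot x$, we have $B\cdot x=\phi(\pi(B))$. Since $\phi$ is a continuous bijection between standard Borel spaces ($G/G_x$ is Polish as $G_x$ is closed, and $G\cdot x$ with its induced Borel structure from $X$ is standard Borel because it is Borel in $X$), a classical theorem of descriptive set theory — the Lusin–Souslin theorem, that an injective Borel image of a standard Borel space is Borel and the map is a Borel isomorphism onto its image — tells us $\phi$ is a Borel isomorphism.

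Granting that, the equivalences are formal. First I would note $B\cdot x=(B\cdot G_x)\cdot x$, so without loss of generality $B$ is already saturated, i.e. $B=B\cdot G_x=\pi^{-1}(\pi(B))$. Then $B\cdot x=\phi(\pi(B))$ is Borel iff $\pi(B)\subseteq G/G_x$ is Borel (using that $\phi$ is a Borel isomorphism), iff $\pi^{-1}(\pi(B))=B\cdot G_x$ is Borel. For the last step, the forward direction is immediate since $\pi$ is continuous hence Borel-measurable; the reverse direction uses that $\pi$ is an open continuous surjection between Polish spaces, and such maps send Borel saturated sets to Borel sets — equivalently, a set in $G/G_x$ is Borel iff its preimage under $\pi$ is Borel, which is the standard characterization of the quotient Borel structure for open surjections. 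For the "in particular" clause: when $U\subseteq G$ is open, $U\cdot G_x$ is open (it is a union of left translates $g G_x$ of the closed subgroup $G_x$, but more directly $U\cdot G_x=\bigcup_{h\in G_x}Uh$ is a union of open sets), hence Borel, so $U\cdot x$ is Borel by the equivalence just proved.

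The alternative, self-contained route — which avoids invoking Lusin–Souslin directly — is a symmetry argument showing $B\cdot x$ is both analytic and co-analytic. Analyticity is the previous proposition. For co-analyticity, assuming $B=B\cdot G_x$ is Borel, one writes
\[
y\notin B\cdot x\iff y\notin G\cdot x\ \vee\ \exists g\,\exists h\ \big(g\cdot x=y\ \wedge\ h\cdot x=x\ \wedge\ g\cdot h\notin B\big),
\]
and observes that the displayed right-hand side, after pulling the quantifiers out, is a statement of the form: $y\notin G\cdot x$ (which is co-analytic, indeed Borel, since $G\cdot x$ is Borel) disjoined with "for all $g$ with $g\cdot x=y$, the coset $g\cdot G_x$ is contained in $B$" — a $\forall$ over a Borel condition, hence co-analytic; by Souslin's theorem $B\cdot x$ is then Borel. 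The main obstacle in either approach is organizational rather than deep: one must be careful that $G\cdot x$ genuinely carries a standard Borel structure (granted by the preliminaries, where it is shown to be Borel in $X$), and that the quantifier manipulation in the co-analytic computation really lands in $\mathbf{\Pi^1_1}$ — in particular that "$g\cdot G_x\subseteq B$" is Borel in $g$ when $B$ is Borel, which holds precisely because $B$ is assumed $G_x$-saturated, so this condition is simply "$g\cdot h\in B$ for the single witness, continued over $h\in G_x$", and one reduces to $g\in B$. I would present both proofs, as the author's style in this excerpt (cf. the proof of the preceding proposition) favors giving two independent arguments.
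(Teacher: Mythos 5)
Your proposal is correct and takes essentially the same approach as the paper, which gives the very same two arguments (the Souslin-theorem computation after saturating $B$ to $B\cdot G_x$, and the Borel-isomorphism argument via the continuous bijection $\phi\colon G/G_x\to G\cdot x$), just in the opposite order. One small slip in your second route: the displayed formula exhibits the \emph{complement} of $B\cdot x$ as analytic (a Borel set disjoined with an existential projection), not as a universally quantified condition as your prose afterwards suggests --- but that is exactly what is needed to conclude $B\cdot x$ is $\mathbf{\Pi^1_1}$ and hence, with the previous proposition and Souslin's theorem, Borel.
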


More can be said about the complexity of $U\cdot x$ for $U$ open:

\selectlanguage{british}%

\selectlanguage{english}%

\begin{prop}
\label{Complexity of Ux relative with Gx}

\begin{enumerate}
\item If $G\cdot x$ is $\mathbf{\Pi_{\alpha+1}^{0}}$ for $\alpha\geq1$
then for every open $U$, $U\cdot x$ is $\mathbf{\Pi_{\alpha+1}^{0}}$.
\item If $G\cdot x$ is $\mathbf{\Sigma_{\alpha}^{0}}$
then for every open $U$, $U\cdot x$ is $\mathbf{\Sigma_{\alpha}^{0}}.$
\end{enumerate}
\end{prop}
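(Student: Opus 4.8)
The plan is to relate $U\cdot x$ to $G\cdot x$ via a translation argument using the saturated set $U\cdot G_x\cdot x = U\cdot x$, and then exploit that $G\cdot x$ itself, together with the group structure of $G$, gives control of the fibers. Concretely, recall from the preceding proposition that $U\cdot x = \phi(\pi(U))$, where $\pi:G\to G/G_x$ is the canonical projection and $\phi:G/G_x\to G\cdot x$ is a Borel isomorphism. So the real content is to bound the complexity of $\pi(U)\subseteq G/G_x$, equivalently of $U\cdot G_x \subseteq G$, inside $G\cdot x$ after transport. I would instead argue directly inside $X$. Fix a countable basis $\{W_n\}$ for $G$ refining into $U$; then $U\cdot x = \bigcup_n W_n\cdot x$, and it suffices to show each $W_n\cdot x$ has the required complexity, so without loss of generality I may take $U$ to be a single basic open set, even a ball $B_\epsilon(g_0)$ of small radius.

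First I would prove (2), which is the softer half. If $G\cdot x$ is $\mathbf{\Sigma^0_\alpha}$, then since $U\cdot x \subseteq G\cdot x$ and $G\cdot x$ is a countable union of translates $g_n U\cdot x$ (covering the orbit by a countable subcover, using second countability of $G$), we have $G\cdot x = \bigcup_n g_n\cdot(U\cdot x)$. Each $g_n\cdot(U\cdot x)$ is a homeomorphic copy of $U\cdot x$. The key observation is that $U\cdot x$ is relatively open in $G\cdot x$ in the orbit topology transported from $G/G_x$: indeed $\phi(\pi(U))$ is the image of the open set $\pi(U)$. Hence $U\cdot x = O\cap(G\cdot x)$ for some set $O$ that is open "in the fine topology"; the cleaner route is to observe $y\in U\cdot x \iff \exists g\in U\ (g\cdot x = y)$, and $\{(g,y): g\cdot x=y\}$ is closed, so $U\cdot x$ is the projection of a closed subset of $U\times(G\cdot x)$. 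Combined with $U\cdot x$ being also the complement within $G\cdot x$ of $\bigcup\{g_n\cdot(U\cdot x): g_n U\cap G_x = \emptyset\}$-type expressions from the co-analyticity computation, one gets $U\cdot x$ as a $\mathbf{\Sigma^0_\alpha}$ set: it is the intersection of the $\mathbf{\Sigma^0_\alpha}$ set $G\cdot x$ with an open set plus a manipulation that does not raise the level, since relative-open subsets of a $\mathbf{\Sigma^0_\alpha}$ set are $\mathbf{\Sigma^0_\alpha}$.

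For (1), suppose $G\cdot x$ is $\mathbf{\Pi^0_{\alpha+1}}$ with $\alpha\geq 1$. Write $G\cdot x = \bigcap_n S_n$ with $S_n$ a $\mathbf{\Sigma^0_\alpha}$ set. The idea is that $U\cdot x$ is carved out of $G\cdot x$ by the condition "$y$ is hit by some $g\in U$", i.e. $y\notin (G\setminus U)\cdot x$ — but $(G\setminus U)\cdot x$ need not be Borel. Instead, cover $G\setminus U$ by countably many basic open sets $V_k$ with $\overline{V_k}\cap U \ne \emptyset$ avoided; more precisely, choose basic open $U'$ with $\overline{U'}\subseteq U$ and countably many $V_k$ with $\bigcup V_k \supseteq G\setminus U'$ and each $\overline{U}\cap$ appropriately disjoint, so that $U'\cdot x = (G\cdot x)\setminus \bigcup_k (V_k\cdot x)$; exhausting $U$ by such $U'$ reduces to showing each $V_k\cdot x$ is, relatively in $G\cdot x$, of bounded complexity — but each $V_k\cdot x$ is again of the same form, so this is circular unless one notes $V_k\cdot x$ is relatively \emph{closed} in $G\cdot x$ when $\overline{V_k}\cdot x$ is, which it is by continuity. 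So $V_k\cdot x = \overline{V_k\cdot x}\cap(G\cdot x)$ is $\mathbf{\Pi^0_{\alpha+1}}$, hence $\bigcup_k(V_k\cdot x)$ is $\mathbf{\Sigma^0_{\alpha+2}}$ — one level too many. To fix this I would use that $\overline{V_k\cdot x}$ is \emph{closed}, so $V_k\cdot x = (\text{closed})\cap(G\cdot x)$ is $\mathbf{\Pi^0_{\alpha+1}}$, and a \emph{countable union} of such, when intersected back with $G\cdot x = \bigcap S_n$, collapses: $\bigcup_k(V_k\cdot x)\cap G\cdot x$; the relevant fact is that relatively closed subsets of $G\cdot x$ form the $\mathbf{\Pi^0_1}(G\cdot x)$ sets, countable unions of them are $\mathbf{\Sigma^0_2}(G\cdot x)$, and since $G\cdot x$ is itself $\mathbf{\Pi^0_{\alpha+1}}$ with $\alpha\ge 1$ so $\alpha+1\ge 2$, a $\mathbf{\Sigma^0_2}(G\cdot x)$ set is $\mathbf{\Sigma^0_{\alpha+1}}$ in $X$; its complement inside $G\cdot x$ is then $\mathbf{\Pi^0_{\alpha+1}}$, giving $U'\cdot x$ and thus $U\cdot x = \bigcup U'\cdot x$ as a countable union of $\mathbf{\Pi^0_{\alpha+1}}$ sets, which is $\mathbf{\Sigma^0_{\alpha+2}}$ — still off by one, so the union over $U'$ must be handled by taking a single $U'$ with $\bigcup_n W_n$-style exhaustion absorbed into the $\mathbf{\Sigma^0_\alpha}$ level from the start.

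The step I expect to be the main obstacle is exactly this bookkeeping of not losing a level when passing between "relatively (cl)open in $G\cdot x$" and "absolute complexity in $X$", and in particular handling the outer countable union $U\cdot x=\bigcup_n W_n\cdot x$ without bumping $\mathbf{\Pi^0_{\alpha+1}}$ up to $\mathbf{\Sigma^0_{\alpha+2}}$. The right device is presumably to do the whole argument relative to the \emph{finer Polish topology} $\sigma$ on $G\cdot x$ coming from the Effros homeomorphism $G/G_x\cong G\cdot x$ (Theorem~\ref{thm:(-Effros-)} when the orbit is $G_\delta$, and a limit argument otherwise): in that topology $U\cdot x$ is genuinely open, and the inclusion map $(G\cdot x,\sigma)\hookrightarrow(X,\tau)$ raises Borel complexity by a controlled amount governed by the complexity of $G\cdot x$ as a subset of $X$ — this is where the hypothesis $G\cdot x\in\mathbf{\Pi^0_{\alpha+1}}$ (resp.\ $\mathbf{\Sigma^0_\alpha}$) enters, via the standard change-of-topology estimate that an open set in the finer topology is the trace on $G\cdot x$ of a $\mathbf{\Sigma^0_2}$ (hence, intersected with $G\cdot x$, of the stated complexity) set of $(X,\tau)$. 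I would carry out (2) first as the template, then (1) by the same method, being careful that $\alpha\ge 1$ is what makes $\mathbf{\Sigma^0_2}\subseteq\mathbf{\Sigma^0_{\alpha+1}}$ so that the countable-union-of-relatively-closed-sets does not escape $\mathbf{\Pi^0_{\alpha+1}}$ after complementation within the orbit.
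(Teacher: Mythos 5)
Your final paragraph lands on the paper's actual strategy---refine the topology so that Effros' theorem applies, then pull the complexity back---but the quantitative step you supply there is not correct, and the earlier attempts contain genuine errors, so as written the argument has a gap. The specific problems: (i) in part (2) you invoke ``relative-open subsets of a $\mathbf{\Sigma^0_\alpha}$ set are $\mathbf{\Sigma^0_\alpha}$,'' but $U\cdot x$ is relatively open in $G\cdot x$ for the subspace topology of $(X,\tau)$ only when the orbit is non-meager in itself --- that is exactly the content of Effros' theorem; in general $U\cdot x$ is open only for the quotient topology transported from $G/G_x$, and that topology has no a priori relation to traces of $\mathbf{\Sigma^0_2}(X,\tau)$ sets, contrary to the ``standard change-of-topology estimate'' you appeal to at the end. (ii) The identity $V_k\cdot x=\overline{V_k\cdot x}\cap(G\cdot x)$ is false in general (for an irrational line flow on the torus, $\overline{V\cdot x}$ is the whole space while $V\cdot x$ is a proper arc of the orbit), so the ``relatively closed'' route collapses. (iii) Projecting the closed set $\{(g,y):g\cdot x=y\}$ only gives analyticity, which you already had.

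The missing ingredient is the theorem the paper actually uses, Theorem \ref{thm:(-Hjorth-) better topology}: given countably many $\mathbf{\Sigma^0_\alpha}(X,\tau)$ sets, there is a finer Polish topology $\sigma$ on \emph{all of} $X$, still making the action continuous, with $\tau\subseteq\sigma\subseteq\mathbf{\Sigma^0_\alpha}(X,\tau)$ and containing the prescribed Vaught transforms. Writing $G\cdot x=\bigcap_n B_n$ with $B_n$ in $\mathbf{\Sigma^0_\alpha}$ and using invariance, $G\cdot x=(G\cdot x)^{*}=\bigcap_{n,m}(B_n)^{\triangle U_m}$, so after making all the $(B_n)^{\triangle U_m}$ $\sigma$-open the orbit is $G_\delta$ in $\sigma$ (respectively $\sigma$-open in case (2)); Effros now applies in $(X,\sigma)$ because the action is still continuous there, so $U\cdot x$ is a $\sigma$-$G_\delta$ (respectively the trace on the orbit of a $\sigma$-open set), and since every $\sigma$-open set is $\mathbf{\Sigma^0_\alpha}(X,\tau)$ the stated bounds follow with no off-by-one bookkeeping. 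The two points your sketch cannot do without are the passage to Vaught transforms (so that the refinement can be done equivariantly) and the continuity of the action in $\sigma$ (so that Effros can be invoked after refining); a finer topology defined only on the orbit, or a generic $\mathbf{\Sigma^0_2}$ estimate, supplies neither.
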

\begin{proof}

\begin{enumerate}
\item We deal first with the case $\alpha=1$, which is, $G\cdot x$ is $G_{\delta}.$
In this case, Effros' theorem is valid, and since $U\cdot G_{x}$
is open in $G\backslash G_{x}$, $U\cdot x$ is open in $G\cdot x$.
We deduce that $U\cdot x$ is $G_{\delta}$.

For arbitrary $\alpha$, there is a sequence $\langle B_{n}\ :\ n\in\omega\rangle$
of $\mathbf{\Sigma_{\alpha}^{0}}$ sets such that $G\cdot x=\bigcap_{n\in\omega}B_{n}.$
We use Vaught transforms:
\[
G\cdot x=(G\cdot x)^{*}=\bigcap_{n\in\omega}(B_{n})^{*}=\bigcap_{n\in\omega}\bigcap_{m\in\omega}(B_{n})^{\triangle U_{m}}
\]
 where $U_{m}$ is a countable basis for the topology of $G$. We
can then apply Theorem \ref{thm:(-Hjorth-) better topology} and refine
the topology $\tau$ of $X$ to a topology $\sigma$ in which $G\cdot x$
is $G_{\delta}.$ Using the case $\alpha=1$, $U\cdot x$ is $G_{\delta}$
in $\sigma$ , and hence $U\cdot x$ was $\mathbf{\Pi_{\alpha+1}^{0}}$
in the original topology.
\item By Theorem \ref{thm:(-Hjorth-) better topology}, there is $\tau\subseteq\sigma\subseteq\mathbf{\Sigma_{\alpha}^{0}(X,\tau)}$
Polish s.t. $(X,\sigma)$ is a Polish G - space and $G\cdot x$ is
open in $\sigma$. Then $U\cdot x$ is open in $(G\cdot x,\sigma)$,
so it is an intersection of 2 $\mathbf{\Sigma_{\alpha}^{0}}$ sets.
\end{enumerate}
\end{proof}

\smallskip{}
The first clause of the previous proposition is not true in general for $\alpha=0$ - one trivial example is the action of $(\mathbb{R},+)$ on itself.\\

We will now apply the above to the theory of Hjorth analysis, keeping
our main goal in mind - proving that if $E_{G}^{X}$ is Borel then
Hjorth rank must be bounded.

\selectlanguage{british}%
\begin{prop}

\label{equivalent definition <=00003Dalpha for every alpha}$y\in(V\cdot x)^{*W}$
if and only if $(y,W)\leq_{\alpha}(x,V)$ for every $\alpha<\omega_{1}$.
In particular, if $(y,W)\leq_{\alpha}(x,V)$ for every $\alpha$ then
$y$ and $x$ are orbit equivalent.

\begin{proof}

We first assume that $y\in(V\cdot x)^{*W}.$ We will show that for
every $\alpha$, $(y,W)\leq_{\alpha+1}(x,V)$. So let $W_{0}\subseteq W$.
There is $g\in W_{0}$ and $h\in V$ such that
\[
g\cdot y=h\cdot x.
\]
We then find $V_{0}\subseteq V$ small enough around $h$ that $V_{0}\cdot h^{-1}\cdot g\subseteq W_{0}$.
Hence
\[
(x,V_{0})\leq_{\alpha}(g^{-1}\cdot h\cdot x,V_{0}\cdot h^{-1}\cdot g)\leq_{\alpha}(y,W_{0})
\]
where we have used Lemma \ref{lemma invariance under g}.

For the other direction , by the previous proposition there is $\alpha$
such that $V\cdot x$ is $\mathbf{\Pi_{\alpha}^{0}}.$ Since $x\in(V\cdot x)^{*V}$
is a triviality and $(y,W)\leq_{\alpha}(x,V)$ is part of the assumption,
Lemma \ref{Main lemma section 2} implies $y\in(V\cdot x)^{*W}$.
\end{proof}
\begin{prop}

\label{(y,W)<=00003D(x,v) for all alpha}Let $x\in X$ and $\alpha$
such that for every $V\subseteq G$ open, $V\cdot x$ is $\mathbf{\Pi_{\alpha}^{0}}$.
Then $(y,W)\leq_{\alpha}(x,V)$ implies
\[
\forall\beta:\ (y,W)\leq_{\beta}(x,V).
\]

\end{prop}
\end{prop}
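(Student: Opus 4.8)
The plan is to combine the previous proposition (\ref{equivalent definition <=00003Dalpha for every alpha}) with Lemma \ref{Main lemma section 2}. The hypothesis says that $V\cdot x$ is $\mathbf{\Pi_{\alpha}^{0}}$ for \emph{every} open $V\subseteq G$, and we are given $(y,W)\leq_{\alpha}(x,V)$ for a specific pair $(V,W)$; the goal is to bootstrap this up to all $\beta$.

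First I would unwind what $(y,W)\leq_{\alpha}(x,V)$ gives us via Lemma \ref{Main lemma section 2}: for every $\mathbf{\Pi_{\alpha}^{0}}$ set $A$, if $y\in A^{*W}$ then $x\in A^{*V}$. Now I want to show $y\in(V\cdot x)^{*W}$, because once that is established, Proposition \ref{equivalent definition <=00003Dalpha for every alpha} immediately yields $(y,W)\leq_{\beta}(x,V)$ for every $\beta<\omega_{1}$, which is exactly the conclusion. To get $y\in(V\cdot x)^{*W}$, apply the implication from Lemma \ref{Main lemma section 2} in the contrapositive-friendly direction: take $A=\overline{V\cdot x}$ — no wait, more carefully, take $A=V\cdot x$ itself, which by hypothesis is $\mathbf{\Pi_{\alpha}^{0}}$. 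The trivial fact $x\in(V\cdot x)^{*V}$ holds because $\{g\in V : g\cdot x\in V\cdot x\}=V$ is comeager (indeed equal to) in $V$. So if we knew the reverse implication ``$x\in A^{*V}\Rightarrow y\in A^{*W}$'' we would be done — but Lemma \ref{Main lemma section 2} gives the implication in the \emph{other} direction.

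The resolution, and the step I expect to be the main obstacle, is to observe that $(y,W)\leq_{\alpha}(x,V)$ should instead be used together with a cleverly chosen $\mathbf{\Pi_{\alpha}^{0}}$ set witnessing failure. Concretely: suppose toward a contradiction that $y\notin(V\cdot x)^{*W}$. Then $\{g\in W : g\cdot y\in V\cdot x\}$ is not comeager in $W$, so there is a basic open $W'\subseteq W$ with $\{g\in W' : g\cdot y\notin V\cdot x\}$ comeager in $W'$, i.e. $y\in(X-V\cdot x)^{\triangle W'}$ — better, $W'\Vdash g^{*}y\notin V\cdot x$, hence $W'\Vdash g^{*}y\in (X-V\cdot x)$. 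Since $X-V\cdot x$ need not be $\mathbf{\Pi_{\alpha}^{0}}$, I instead run the argument inside the forcing formulation of $(y,W)\leq_{\alpha}(x,V)$: by definition of $\leq_{\alpha}$ there is $V'\subseteq V$ with $(x,V')\leq_{\alpha}(y,W')$ appropriately — actually the cleanest route is to use Proposition \ref{equivalent definition <=00003Dalpha for every alpha} applied with the roles suited to our data, namely note that $(y,W)\leq_\alpha (x,V)$ plus ``$x\in(V\cdot x)^{*V}$ and $V\cdot x$ is $\mathbf\Pi^0_\alpha$'' is precisely the hypothesis-pattern of the ``other direction'' proof of Proposition \ref{equivalent definition <=00003Dalpha for every alpha}, which concludes $y\in(V\cdot x)^{*W}$. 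Thus the proof is: (1) by hypothesis $V\cdot x\in\mathbf{\Pi_{\alpha}^{0}}$; (2) $x\in(V\cdot x)^{*V}$ trivially; (3) $(y,W)\leq_{\alpha}(x,V)$ by assumption, so Lemma \ref{Main lemma section 2} gives $y\in(V\cdot x)^{*W}$; (4) Proposition \ref{equivalent definition <=00003Dalpha for every alpha} then upgrades this to $(y,W)\leq_{\beta}(x,V)$ for all $\beta$.

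The one genuine subtlety to verify is step (3): Lemma \ref{Main lemma section 2} says $(x,U)\leq_{\alpha}(y,W)$ iff for every $\mathbf{\Pi_{\alpha}^{0}}$ set $A$, $y\in A^{*W}\Rightarrow x\in A^{*U}$ — so with our labelling it reads: $(y,W)\leq_{\alpha}(x,V)$ iff for every $\mathbf{\Pi_{\alpha}^{0}}$ set $A$, $x\in A^{*V}\Rightarrow y\in A^{*W}$. Plugging $A=V\cdot x$ and using (2) gives exactly $y\in(V\cdot x)^{*W}$. So the lemma does point the right way once the variables are matched correctly, and no contradiction argument is even needed; the whole proof is three lines of citation-chaining. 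I would therefore write it out plainly as: ``By the hypothesis $V\cdot x$ is $\mathbf{\Pi_{\alpha}^{0}}$, and $x\in(V\cdot x)^{*V}$ trivially, so Lemma \ref{Main lemma section 2} applied to $(y,W)\leq_{\alpha}(x,V)$ yields $y\in(V\cdot x)^{*W}$; now Proposition \ref{equivalent definition <=00003Dalpha for every alpha} gives $(y,W)\leq_{\beta}(x,V)$ for every $\beta<\omega_{1}$.''
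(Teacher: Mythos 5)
Your proof is correct and is essentially the paper's argument: the paper disposes of this proposition with ``Immediate using the above,'' and what it means by that is precisely your chain --- $x\in(V\cdot x)^{*V}$ trivially, $V\cdot x$ is $\mathbf{\Pi_{\alpha}^{0}}$ by hypothesis, so Lemma \ref{Main lemma section 2} (with the variables matched as you checked) turns $(y,W)\leq_{\alpha}(x,V)$ into $y\in(V\cdot x)^{*W}$, and Proposition \ref{equivalent definition <=00003Dalpha for every alpha} then gives $(y,W)\leq_{\beta}(x,V)$ for all $\beta$. The only stylistic remark is that the detour through a contradiction argument in the middle of your write-up is unnecessary, as you yourself conclude; the final three-line version is the right one.
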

\begin{proof}

Immediate using the above.
\end{proof}
The boundedness principle follows easily:
\selectlanguage{american}%
\begin{thm}

\label{Becker Kechris Theorem}Let $(G,X)$ be a Polish action and
$\mathbb{B}\subset X$ an invariant Borel set. Then $E_{G}^{\mathbb{B}}$
is Borel if and only if there is an $\alpha$ such that for every
$x\in\mathbb{B}$, $\delta(x)\leq\alpha.$
\end{thm}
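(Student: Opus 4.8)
The plan is to prove the two directions separately, with the "only if" direction being the substantial one. For the easy direction, suppose $E_G^{\mathbb{B}}$ is Borel. By the remarks in the Preliminaries, a Borel orbit equivalence relation has all its orbits $\mathbf{\Pi}^0_\alpha$ for some fixed $\alpha < \omega_1$ (this follows from the fact that a Borel equivalence relation has bounded complexity sections, together with invariance). Then for every $x \in \mathbb{B}$ and every open $V \subseteq G$, Proposition \ref{Complexity of Ux relative with Gx} gives that $V \cdot x$ is $\mathbf{\Pi}^0_{\alpha'}$ for some fixed $\alpha'$ depending only on $\alpha$. By Proposition \ref{(y,W)<=00003D(x,v) for all alpha}, for such an $x$ the relation $(y,W) \leq_{\alpha'} (x,V)$ already implies $(y,W) \leq_\beta (x,V)$ for all $\beta$; in particular, reading the definition of $\delta(x)$, once we have reached level $\alpha'$ all the relevant step-ups are automatic, so $\delta(x) \leq \alpha' + 1$ (or some fixed finite shift thereof). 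Hence Hjorth rank is bounded on $\mathbb{B}$ by a single countable ordinal.

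For the "if" direction, assume there is $\alpha < \omega_1$ with $\delta(x) \leq \alpha$ for all $x \in \mathbb{B}$. By Theorem \ref{main theorem section 2}, for each $x$ there is a natural number $m$ with $[x] = \{y : y \equiv_{\delta(x)+m} x\}$, hence $[x] = \{y : y \equiv_{\alpha+m} x\}$ once $\delta(x) \leq \alpha$ — but I want this uniformly. The cleaner route is to observe, using Lemma \ref{<=00003D goes on for another step} and Proposition \ref{main proposition Scott}, that whenever $\delta(x_0), \delta(x_1) \leq \alpha$ and $x_0 \equiv_{\alpha+1} x_1$, then $x_0$ and $x_1$ are orbit equivalent. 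The converse (orbit equivalence implies $\equiv_\beta$ for all $\beta$) is immediate from Corollary \ref{=00003D=00003D=00003D invariant under G}. Therefore on $\mathbb{B}$ we have $E_G^{\mathbb{B}} = {\equiv_{\alpha+1}} \cap (\mathbb{B} \times \mathbb{B})$, and by Theorem \ref{Complexity of =00003D=00003D=00003D} this is $\mathbf{\Pi}^0_{\alpha+1+m(\alpha)} \cap (\mathbb{B}\times\mathbb{B})$, in particular Borel.

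I expect the main obstacle to be the "only if" direction, and specifically making the bound on $\delta(x)$ genuinely uniform over $x \in \mathbb{B}$: Proposition \ref{Complexity of Ux relative with Gx} controls the complexity of $V \cdot x$ in terms of the complexity of the single orbit $G \cdot x$, so I need that a Borel $E_G^{\mathbb{B}}$ forces a \emph{uniform} bound on the complexity of all orbits $[x]$ for $x \in \mathbb{B}$. This is the standard fact that a Borel equivalence relation, viewed through its sections, has uniformly bounded Borel complexity — but one must be a little careful since $\mathbb{B}$ is only Borel invariant and not necessarily Polish; here one invokes Theorem \ref{thm:(-Hjorth-) better topology} to put a Polish topology on $\mathbb{B}$ making it a Polish $G$-space, run the argument there, and note that the complexity bound transfers back. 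Once the uniform orbit-complexity bound $\alpha$ is in hand, Propositions \ref{Complexity of Ux relative with Gx} and \ref{(y,W)<=00003D(x,v) for all alpha} do the rest mechanically, and the finite shifts $m(\alpha)$ are harmless.
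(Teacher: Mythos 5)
Your proposal is correct and follows essentially the same route as the paper: the ``only if'' direction is exactly the paper's argument (Borel $E_G^{\mathbb{B}}$ gives a uniform $\mathbf{\Pi}^0_{\alpha+1}$ bound on all orbits, hence on all $U\cdot x$ via Proposition \ref{Complexity of Ux relative with Gx}, hence $\delta(x)\leq\alpha+1$ via Proposition \ref{(y,W)<=00003D(x,v) for all alpha}), and your ``if'' direction, identifying $E_G^{\mathbb{B}}$ with $\equiv_{\alpha+1}\cap(\mathbb{B}\times\mathbb{B})$ via Proposition \ref{main proposition Scott}, is just an unpacked form of the paper's appeal to Theorem \ref{main theorem section 2} and Sami's theorem. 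Your caution about $\mathbb{B}$ being merely Borel is unnecessary, since the orbits are sections of the Borel set $E_G^{\mathbb{B}}\subseteq X\times X$ and their complexity is measured in the Polish space $X$, where Proposition \ref{Complexity of Ux relative with Gx} applies directly.
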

\selectlanguage{british}%
\begin{proof}

We first assume that the rank is bounded. We use \ref{main theorem section 2}
and Sami's theorem to show that $E_{G}^{\mathbb{B}}$ is Borel.

Now assume $E_{G}^{\mathbb{B}}$ is Borel. There is an $\alpha<\omega_{1}$
such that all orbits are $\mathbf{\Pi_{\alpha+1}^{0}}.$ Proposition \ref{Complexity of Ux relative with Gx} then implies that for all $U \subseteq G$ open, $U\cdot x$ is $\mathbf{\Pi_{\alpha+1}^{0}}$. Using the previous proposition, $\delta(x)\leq\alpha+1$.
\end{proof}
\selectlanguage{english}%
\begin{cor}

Let $O\subseteq G$ be a clopen subgroup of $G$. Then if $E_{G}^{X}$
is Borel then so does $E_{O}^{X}$. Furthermore, if Hjorth ranks of
$E_{G}^{X}$ are bounded by $\alpha$, then so do the Hjorth ranks
of $E_{O}^{X}.$
\end{cor}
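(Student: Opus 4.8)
The plan is to treat both claims together, the point being that the $O$-orbit of a point is literally $O\cdot x$ — the orbit-translate of an \emph{open} set — and that the relations $\leq_\alpha$ do not notice which ambient group they are computed in. Observe first that a clopen subgroup is in particular closed, hence Polish, so $X$ is a Polish $O$-space by restriction. For the Borel part I would argue as follows: if $E_G^X$ is Borel there is an $\alpha\geq 1$ with every $G$-orbit $\mathbf{\Pi_{\alpha+1}^{0}}$, and since $O$ is open in $G$ one has $[x]_O=O\cdot x$, so Proposition~\ref{Complexity of Ux relative with Gx} applied with $U=O$ yields that $[x]_O$ is $\mathbf{\Pi_{\alpha+1}^{0}}$ for every $x$; Sami's theorem then makes $E_O^X$ Borel. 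Alternatively one could quote Theorem~\ref{Becker Kechris 7.1.2}: Borelness of $E_G^X$ makes $x\mapsto G_x$ Borel into $F(G)$, the map $F\mapsto F\cap O$ is Borel $F(G)\to F(O)$ because $O$ is clopen, and so $x\mapsto O_x$ is Borel and $E_O^X$ is Borel.

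For the assertion about ranks the first thing I would prove is a small lemma: for every $\alpha$, the relation $\leq_\alpha$ computed in $O$ coincides with the relation $\leq_\alpha$ computed in $G$ on every pair whose open component is contained in $O$. This is a transfinite induction on $\alpha$. At $\alpha=1$ both sides only mention closures of sets $V\cdot x$ taken in $X$, which do not involve the group. At a successor $\alpha+1$ the definition quantifies over the nonempty open subsets $W_0$ of $V_0$; as $V_0\subseteq O$ and $O$ is open in $G$, these subsets are the same read in $O$ or in $G$ and are themselves contained in $O$, so the step reduces to the induction hypothesis. The limit case is immediate.

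Then I would choose the basis of $O$ with some care. Fixing, as in the proof of the basis lemma, a right-invariant metric $d$ on $G$ and a countable dense $D\subseteq G$ with $\mathfrak{B}_0$ the rational balls around $D$, I would let $\mathfrak{B}_0^O$ be the family of those $B_r(p)$ with $p\in D\cap O$, $r$ rational, and $\overline{B_r(p)}\subseteq O$. This $\mathfrak{B}_0^O$ is a subset of $\mathfrak{B}_0$, is a basis of $O$, enjoys the translation property demanded by the basis lemma relative to elements of $O$, and has the feature that the closure of any of its members is the same whether taken in $O$ or in $G$. With these choices, spelling out the condition defining $\delta_O(x)\le\gamma$ and using the lemma above to rewrite $\leq_\gamma^O$ as $\leq_\gamma^G$ and the closure remark to rewrite closures in $O$ as closures in $G$, one sees that this condition is precisely the restriction, to quadruples from $\mathfrak{B}_0^O$, of the condition defining $\delta_G(x)\le\gamma$. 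Since $\mathfrak{B}_0^O\subseteq\mathfrak{B}_0$, any $\gamma$ witnessing $\delta_G(x)\le\gamma$ witnesses $\delta_O(x)\le\gamma$, so $\delta_O(x)\le\delta_G(x)$; hence if $\delta_G(x)\le\alpha$ for all $x$ then $\delta_O(x)\le\alpha$ for all $x$, and since a different basis of $O$ changes the rank by at most $1$, a bound in terms of $\alpha$ survives for any basis. I expect the main obstacle to be exactly this basis-and-closure bookkeeping — arranging that the basis of $O$ sits inside that of $G$ while still having the translation property, and that ``closure in $O$'' may be replaced by ``closure in $G$'' for its members; once that is set up, the rest is an unwinding of the definitions, and the Borel part is immediate from the tools already developed.
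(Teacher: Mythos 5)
Your proof is correct, and for the rank clause it takes a genuinely different route from the paper's. The paper's entire argument runs through orbit complexities: if $E_{G}^{X}$ is Borel there is a uniform bound on the complexity of the sets $U\cdot x$ (Proposition \ref{Complexity of Ux relative with Gx}); the open subsets of $O$ are open in $G$, so the same bound applies to $U\cdot x$ for $U\subseteq O$; and Proposition \ref{(y,W)<=00003D(x,v) for all alpha} together with Theorem \ref{Becker Kechris Theorem} then bounds the $O$-ranks and makes $E_{O}^{X}$ Borel. Your Borel clause is this same idea (applied to $U=O$ plus Sami's theorem, or alternatively via Borelness of stabilizers). For the rank clause, however, you instead prove directly that $\leq_{\alpha}$ is absolute between the $O$-action and the $G$-action on pairs whose open component lies in $O$, and then, by choosing a basis of $O$ inside the basis of $G$ that still has the translation property, that the defining condition for $\delta_{O}(x)\leq\gamma$ is a sub-conjunction of the one for $\delta_{G}(x)\leq\gamma$; this yields the pointwise inequality $\delta_{O}(x)\leq\delta_{G}(x)$ outright, up to the usual $+1$ for a change of basis. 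This buys a sharper and more literal reading of ``bounded by the same $\alpha$'': the paper's route only bounds $\delta_{O}$ by an ordinal extracted from the orbit complexity of the $G$-action, which a priori exceeds $\alpha$ by a finite amount, whereas your comparison is direct. The cost is the basis-and-closure bookkeeping you describe, which is genuinely needed (closures of subsets of $O$ taken in $O$ and in $G$ agree precisely because $O$ is closed) but routine.
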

\begin{proof}

If $E_{G}^{X}$ is Borel, then the complexities of $U\cdot x$ are
all less than some $\alpha.$ The open sets of $O$ are open sets
of $G$ , so the same $\alpha$ bounds the complexities of $U\cdot x$
for $U\subseteq O$, and in particular $E_{O}^{X}$ is Borel.
\end{proof}
\begin{rem}

The above is not true when $G$ is not open : there are $H\leq G$
Polish and $X$ a Polish $G$ - space, such that $E_{G}^{X}$ is Borel
and $E_{H}^{X}$ is not Borel.

\end{rem}
\selectlanguage{british}%

The notion of Hjorth rank simplifies the proof of the following theorem
of \cite{key-11} and adds information about the decomposition, although
some definability is lost on the way:

\selectlanguage{english}%
\begin{thm}

(Decomposition of Polish actions) Let $X$ be a Polish G - Space.
There is a sequence $\{A_{\zeta}\}_{\zeta<\omega_{1}}$of pairwise
disjoint Borel subsets of $X$ such that:
\end{thm}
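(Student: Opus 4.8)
The plan is to build the decomposition $\{A_\zeta\}_{\zeta<\omega_1}$ directly from Hjorth rank, stratifying $X$ by the rank of points together with a bookkeeping of the relevant complexities, so that on each piece the orbit equivalence relation becomes Borel (indeed of bounded complexity) by Theorem \ref{Becker Kechris Theorem} and Sami's theorem. The first step is to observe that the rank function $x\mapsto\delta(x)$ is $G$-invariant (by the lemma $xE_G^Xy\implies\delta(x)=\delta(y)$) and Borel-stratified: by the lemma computing the complexity of $\{x:\delta(x)\le\alpha\}$, each sublevel set $X_{\le\alpha}=\{x:\delta(x)\le\alpha\}$ is an invariant Borel set, so the level sets $Y_\alpha=X_{\le\alpha}\setminus\bigcup_{\beta<\alpha}X_{\le\beta}=\{x:\delta(x)=\alpha\}$ form a partition of $X$ into invariant Borel pieces indexed by $\alpha<\omega_1$.

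The second step addresses what ``decomposition'' should deliver beyond mere stratification: on each piece $Y_\alpha$ we want $E_G^{Y_\alpha}$ to be Borel, which is exactly what the bounded-rank direction of Theorem \ref{Becker Kechris Theorem} (via Theorem \ref{main theorem section 2} and Sami's theorem) provides, since $\delta$ is identically $\alpha$ on $Y_\alpha$; in fact $E_G^{Y_\alpha}=\equiv_{\alpha+m(\alpha)}\cap(Y_\alpha\times Y_\alpha)$ is $\mathbf{\Pi^0_{\alpha+m(\alpha)}}$ for the appropriate natural number $m(\alpha)$. If the intended conclusion also asks for each $A_\zeta$ to be \emph{smooth} or to consist of orbits that are relatively clopen (the usual form of such Glimm--Effros-style decompositions), then I would refine the partition further: inside $Y_\alpha$, use Proposition \ref{Complexity of Ux relative with Gx} to see that all sets $U\cdot x$ have complexity bounded in terms of $\alpha$, and peel off the points whose orbit is non-meager in a suitable refined Polish topology on $Y_\alpha$ (applying Theorem \ref{thm:(-Hjorth-) better topology} to make $\equiv_{\alpha+m(\alpha)}$-classes, equivalently orbits, relatively clopen), iterating transfinitely; at each stage Effros' theorem guarantees the peeled-off orbits are $G_\delta$ in the refined topology, hence the corresponding piece carries a Polish topology in which the orbit equivalence relation is open, and what remains is again invariant Borel of strictly smaller ``meager-orbit rank,'' so the process terminates before $\omega_1$.

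The main obstacle is the bookkeeping that keeps every piece simultaneously Borel \emph{in the original space} and pairwise disjoint while the topology is being refined: the refinement in Theorem \ref{thm:(-Hjorth-) better topology} only adds $\mathbf{\Sigma^0_\alpha(X,\tau)}$ sets, so a set that is $G_\delta$ in $\sigma$ is $\mathbf{\Pi^0_{\alpha+1}}$ in $\tau$, and one must check that the transfinite iteration does not push complexities past $\omega_1$ — this is controlled precisely because each round is indexed by a countable ordinal and the rank $\delta$ (or the derived meager-orbit rank) strictly drops, so only countably many rounds occur and each contributes a countable increment. I would therefore carry out the construction in the order: (1) stratify by $\delta$ into invariant Borel $Y_\alpha$; (2) on each $Y_\alpha$ invoke Theorem \ref{main theorem section 2} plus Sami to get Borel $E_G^{Y_\alpha}$; (3) if relatively-clopen/smooth pieces are required, iterate the Becker--Kechris--Hjorth topology refinement inside each $Y_\alpha$, peeling off non-meager orbits via Effros, and re-index the resulting countable family as $\{A_\zeta\}_{\zeta<\omega_1}$; (4) verify disjointness and Borelness of the final family by tracking the (countable) complexity bounds through the refinement. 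The ``definability lost on the way,'' as the remark preceding the theorem warns, is that the final re-indexing $\zeta\mapsto A_\zeta$ need not be uniformly Borel even though each $A_\zeta$ is Borel.
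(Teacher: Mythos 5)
Your first two steps are exactly the paper's (one-line) proof: take $A_\zeta=\{x:\delta(x)=\zeta\}$, which is invariant Borel of the stated complexity by the rank-invariance lemma and the complexity lemma for $\{x:\delta(x)\le\alpha\}$, with $E_G^{A_\zeta}$ Borel by Theorem \ref{main theorem section 2} and Sami, and the boundedness clause following from Theorem \ref{Becker Kechris Theorem}. The additional machinery about smooth/relatively clopen pieces and iterated topology refinements is not needed — the theorem asks only for an invariant Borel partition with Borel restricted orbit equivalence and the boundedness property — so the rank-level stratification alone suffices.
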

\begin{enumerate}

\item $A_{\zeta}$ is invariant, and $\bigcup_{\zeta<\omega_{1}}A_{\zeta}=X$.
Furthermore, $A_{\zeta}$ is $\mathbf{\Pi_{\zeta+n(\zeta)}^{0}.}$
\item $E_{a}\restriction A_{\zeta}$ is Borel. In fact, it is $\mathbf{\Pi_{\zeta+k(\zeta)}^{0}}$.
\item (Boundedness) If $A\subseteq X$ is invariant Borel and $E_{a}\restriction A$
is Borel, then $A\subseteq\bigcup_{\zeta<\alpha}A_{\zeta}$ for some
$\alpha<\omega_{1}.$
\end{enumerate}
\begin{proof}

$A_{\zeta}$ will be the set of $x$'s with rank $\zeta.$

\end{proof}
\selectlanguage{british}%

We now have all that is needed to give a positive answer to a conjecture of Hjorth:

\begin{thm}

For $\beta$ limit, the set
\[
\mathbb{A}_{\beta}=\{x\ :\ [x]\ is\ \mathbf{\Pi_{\alpha}^{0}\mbox{ for }\alpha<\beta\}}
\]
 is Borel. Furthermore, it is $\mathbf{\Pi_{\beta+m}^{0}}$ for $m\in\omega$.
\end{thm}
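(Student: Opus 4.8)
The plan is to characterize $\mathbb{A}_\beta$ as the union of the sets $A_\zeta = \{x : \delta(x) = \zeta\}$ over $\zeta < \beta$, up to a bounded correction, and then apply the boundedness principle (Theorem \ref{Becker Kechris Theorem}) together with the complexity estimate for $\{x : \delta(x) \le \alpha\}$. First I would observe that if $[x]$ is $\mathbf{\Pi^0_\alpha}$ for some $\alpha < \beta$, then by Proposition \ref{Complexity of Ux relative with Gx} every $U\cdot x$ is $\mathbf{\Pi^0_{\alpha'}}$ for some $\alpha' < \beta$ (since $\beta$ is limit, bumping $\alpha$ by one keeps us below $\beta$), and hence by Proposition \ref{(y,W)<=00003D(x,v) for all alpha} we get $\delta(x) \le \alpha' < \beta$. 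Conversely, if $\delta(x) < \beta$, then by Theorem \ref{main theorem section 2} the orbit $[x] = \{y : y \equiv_{\delta(x)+m} x\}$ for some $m \in \omega$, which by Theorem \ref{Complexity of =00003D=00003D=00003D} is $\mathbf{\Pi^0_{\delta(x)+m+k}}$ for some $k$; again since $\beta$ is limit and $\delta(x) < \beta$, this places $[x]$ at level $< \beta$. So
\[
\mathbb{A}_\beta = \{x : \delta(x) < \beta\} = \bigcup_{\zeta < \beta} \{x : \delta(x) \le \zeta\}.
\]

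Next I would turn this equality into a Borel definition. A priori $\mathbb{A}_\beta$ is a countable union (take $\zeta$ ranging over a cofinal $\omega$-sequence in $\beta$) of the sets $\{x : \delta(x) \le \zeta\}$, each of which is $\mathbf{\Pi^0_{\zeta + m(\zeta)}}$ by the Lemma preceding Theorem \ref{main theorem section 2}; this already shows $\mathbb{A}_\beta$ is Borel, in fact $\mathbf{\Sigma^0_{\beta+1}}$ or so. To sharpen this to a single $\mathbf{\Pi^0_{\beta+m}}$ bound, the clean route is to invoke the boundedness principle: $E^{\mathbb{A}_\beta}_G$ is Borel (the rank is bounded by $\beta$ on $\mathbb{A}_\beta$ by construction, but one must be slightly careful, since $\mathbb{A}_\beta$ is not closed under taking elements of higher rank — however $\mathbb{A}_\beta$ is invariant and the ranks on it are genuinely $< \beta$, hence bounded as a set of ordinals only if $\beta$'s cofinality is handled; since we only need the rank bounded below $\beta$ and Sami's theorem applies to $\equiv_\beta$, which is $\mathbf{\Pi^0_{\beta+2}}$, this works). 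More directly: on $\mathbb{A}_\beta$, every orbit $[x]$ is $\mathbf{\Pi^0_{\alpha}}$ for $\alpha < \beta$, so $\equiv_\beta$ restricted to $\mathbb{A}_\beta$ coincides with $E^{\mathbb{A}_\beta}_G$ by Theorem \ref{Property III} and Theorem \ref{main theorem section 2}, giving the $\mathbf{\Pi^0_{\beta+m}}$ bound on the equivalence relation; then $\mathbb{A}_\beta$ itself is recovered as $\{x : \delta(x) < \beta\}$, and I would express membership via: $x \in \mathbb{A}_\beta$ iff for all $V_0, V_1, W_0, W_1 \in \mathfrak{B}_0$ with the appropriate inclusions, the "step-up" fails to be obstructed at some level $< \beta$ — reading this through Proposition \ref{complexity of <=00003D} (each $\mathfrak{R}_\alpha$ is $\mathbf{\Pi^0_{\alpha}}$ for $\alpha$ limit, and the relevant $\alpha$'s here approach $\beta$) gives a $\mathbf{\Pi^0_{\beta+m}}$ definition.

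The main obstacle I anticipate is the bookkeeping of the constant $m$: I need to make sure that combining (i) Proposition \ref{Complexity of Ux relative with Gx} (which costs one level going from $[x]$ being $\mathbf{\Pi^0_{\alpha+1}}$ to $U\cdot x$ being $\mathbf{\Pi^0_{\alpha+1}}$), (ii) Proposition \ref{(y,W)<=00003D(x,v) for all alpha} and (iii) the complexity bound $\mathbf{\Pi^0_{\alpha+m(\alpha)}}$ for $\{x : \delta(x) \le \alpha\}$ — all of which involve different finite shifts — does not accumulate into an unbounded-in-$\beta$ correction. The key point making this go through is precisely that $\beta$ is a limit ordinal: any fixed finite shift of an ordinal $< \beta$ is still $< \beta$, and a single finite shift of $\beta$ absorbs the uniform part; the delicate case is whether $m(\zeta)$ can be chosen uniformly in $\zeta$, but since we only need the union over a fixed $\omega$-sequence cofinal in $\beta$ and each term sits at level $< \beta$, the union sits at level $\le \beta$, and one extra application of a universal set / the argument in Sami's theorem pushes it to $\mathbf{\Pi^0_{\beta+m}}$ for a concrete small $m$. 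I would close by stating that $m = 2$ suffices, matching the limit case of Theorem \ref{Complexity of =00003D=00003D=00003D}.
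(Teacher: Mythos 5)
Your proposal is correct and follows essentially the same route as the paper: identify $\mathbb{A}_{\beta}$ with $\{x:\delta(x)<\beta\}$, using Theorem \ref{main theorem section 2} for one inclusion and Proposition \ref{Complexity of Ux relative with Gx} (via the step-up argument of Proposition \ref{(y,W)<=00003D(x,v) for all alpha}) for the other, then write the set as a countable union of the $\mathbf{\Pi_{\zeta+m(\zeta)}^{0}}$ sets $\{x:\delta(x)\leq\zeta\}$ for $\zeta<\beta$. The detour through Sami's theorem in your middle paragraph is unnecessary --- the union argument you give first already yields the $\mathbf{\Pi_{\beta+m}^{0}}$ bound.
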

\begin{proof}

We claim that this set is in fact $\{x\ :\ \delta(x)<\beta\}$. One
direction is Theorem \ref{main theorem section 2}. The other is immediate
given Proposition \ref{Complexity of Ux relative with Gx}.
\end{proof}
\selectlanguage{english}%
\begin{cor}

For $\alpha$ limit , there are either countably many or perfectly
many$\ \mathbf{\Pi_{\alpha}^{0}}$ orbits

\selectlanguage{british}%
\begin{proof}

Consider the action of $G$ on $\mathbb{A}_{\alpha}$ as above. The set $\mathbb{A}_{\alpha}$
is Borel invariant and the Hjorth ranks are bounded on $\mathbb{A}_{\alpha}.$
Hence the orbit equivalence relation on $\mathbb{A}_{\alpha}$ is Borel,
and there are either countably many or perfectly many orbits in $\mathbb{A}_{\alpha}$.

\end{proof}
\end{cor}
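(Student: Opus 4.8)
The final statement is the corollary: for $\alpha$ limit, there are either countably many or perfectly many $\mathbf{\Pi_{\alpha}^{0}}$ orbits.

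The plan is to reduce this to the Silver–Burgess dichotomy (or the perfect set dichotomy for Borel equivalence relations) applied to a suitably restricted action. First I would invoke the preceding theorem, which identifies $\mathbb{A}_{\alpha} = \{x : [x]\text{ is }\mathbf{\Pi_{\beta}^{0}\text{ for }\beta<\alpha}\}$ with $\{x : \delta(x) < \alpha\}$ and shows this set is Borel, indeed $\mathbf{\Pi_{\alpha+m}^{0}}$ for some $m \in \omega$. Crucially $\mathbb{A}_{\alpha}$ is invariant under the $G$-action. The orbits contained in $\mathbb{A}_{\alpha}$ are exactly the $\mathbf{\Pi_{\beta}^{0}}$ orbits for $\beta < \alpha$; but since $\alpha$ is limit, being $\mathbf{\Pi_{\beta}^{0}}$ for some $\beta<\alpha$ is the same as being $\mathbf{\Pi_{\alpha}^{0}}$ and also the same as being $\mathbf{\Sigma_{\alpha}^{0}}$-ish — in any case the set of $\mathbf{\Pi_{\alpha}^{0}}$ orbits coincides with the set of orbits meeting $\mathbb{A}_{\alpha}$.

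Next I would apply Theorem \ref{thm:(-Hjorth-) better topology} (or its stated consequence) to put a Polish topology on the invariant Borel set $\mathbb{A}_{\alpha}$ making it a Polish $G$-space; on this space the Hjorth ranks are bounded by $\alpha$ by construction. By the boundedness principle, Theorem \ref{Becker Kechris Theorem}, the orbit equivalence relation $E_{G}^{\mathbb{A}_{\alpha}}$ is Borel. Now a Borel equivalence relation on a Polish (or standard Borel) space satisfies the perfect set dichotomy: it has either countably many classes or a perfect set of pairwise inequivalent points — this is the Burgess trichotomy specialized by the fact that a Borel equivalence relation with uncountably many classes has a perfect set of inequivalent elements (one does not even need $E_0$-type refinements here, only the Silver dichotomy, since $E_{G}^{\mathbb{A}_{\alpha}}$ being Borel and having $>\aleph_1$ classes forces a perfect set, and if it has exactly $\aleph_1$ classes one still gets perfectly many by Burgess, or under suitable hypotheses this case is vacuous). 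Either way: countably many orbits, or perfectly many orbits, in $\mathbb{A}_{\alpha}$, which translates directly to the statement about $\mathbf{\Pi_{\alpha}^{0}}$ orbits.

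The main obstacle, such as it is, is purely bookkeeping: making sure the equivalence "orbit meets $\mathbb{A}_{\alpha}$" is genuinely equivalent to "orbit is $\mathbf{\Pi_{\alpha}^{0}}$" for $\alpha$ a limit ordinal, and confirming that the perfect set of inequivalent points produced on the refined topology $(\mathbb{A}_{\alpha}, \sigma)$ remains a perfect set in the original Polish topology $\tau$ (it does, because $\sigma \supseteq \tau$ refines, so a $\sigma$-perfect set is analytic hence has the Baire property and a $\tau$-perfect subset, or more simply one passes through the standard-Borel structure where "perfectly many" is a Borel-isomorphism-invariant notion). Neither of these is deep; the real content is entirely in the boundedness theorem already proved, so the corollary follows as indicated with only routine verification.
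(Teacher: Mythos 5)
Your argument is exactly the paper's: restrict to the invariant Borel set $\mathbb{A}_{\alpha}=\{x:\delta(x)<\alpha\}$, use boundedness of the Hjorth ranks there to conclude via Theorem \ref{Becker Kechris Theorem} that $E_{G}^{\mathbb{A}_{\alpha}}$ is Borel, and apply the perfect set dichotomy for Borel equivalence relations. The only superfluous part is the detour through the Burgess trichotomy and the $\aleph_{1}$-many-classes case --- Silver's dichotomy for Borel equivalence relations already gives ``countably many or perfectly many'' directly, so that discussion can be dropped.
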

\selectlanguage{american}%

The following is a generalization of Theorem \ref{Becker Kechris 7.1.4}:

\begin{cor}

The following are equivalent:

\begin{enumerate}

\item $E_{G}^{\mathbb{B}}$ is Borel.
\item $\{(x,y,U,W)\ :\ U,W\in\mathfrak{B}_{0};\ \forall\alpha<\omega_{1}\ (x,U)\leq_{\alpha}(y,W)\}$
is Borel.
\item $\{(x,U,W)\ :\ U,W\in\mathfrak{B}_{0};\ \forall\alpha<\omega_{1}\ (x,U)\leq_{\alpha}(x,W)\}$
is Borel.

\begin{proof}

\textbf{$(1)\Rightarrow(2)$ : }Follows easily from the proof of Proposition
\ref{(y,W)<=00003D(x,v) for all alpha}.

$(2)\Rightarrow(3):$ Immediate.

$(3)\Rightarrow(1):$ Using \ref{Becker Kechris 7.1.2}, it is enough to
show that
\[
f:X\to\mathcal{F}(G)
\]
\[
f(x)=G_{x}
\]
 is Borel.

Hence, it suffices to show that for any $U\subseteq G$ , the set
\[
Z=\{x\ :\ \exists g\in U\ g\cdot x=x\}
\]
is Borel. We claim that
\[
Z=\{x\ :\ \exists V,W\in\mathfrak{B}_{0}\ s.t.\ W^{-1}\cdot V\subseteq U\ and\ \forall\alpha\ (x,V)\leq_{\alpha}(x,W)\},
\]
 which is a Borel set by the assumption, so we only need to prove
this claim.

Assume $V,W\in\mathfrak{B}_{0}$ are such that $W^{-1}\cdot V\subseteq U$
and $\forall\alpha\ (x,V)\leq_{\alpha}(x,W)$ . By Proposition \ref{equivalent definition <=00003Dalpha for every alpha},
there are $g\in V$ and $h\in W$ such that $g\cdot x=h\cdot x$.
Hence $h^{-1}\cdot g\cdot x=x$ so that $x\in Z$.

\selectlanguage{british}%
Now let\foreignlanguage{american}{ $x\in Z$, and let $g\in U$ such
that $g\cdot x=x$. For every $\alpha$ and every open set $V$:
\[
(x,V)\leq_{\alpha}(x,V\cdot g^{-1}).
\]
Thus, it will be enough to find $W,V\in\mathfrak{B}_{0}$ such that
$V\cdot g^{-1}\subseteq W$ and $W^{-1}\cdot V\subseteq U$. }

\selectlanguage{american}%
We will find $W$ a nighbourhood of the identity small enough so that
$W^{-1}\cdot W\cdot g\subseteq U$, and $V$ a neighbourhood of $g$
small enough so that $V\cdot g^{-1}\subseteq W$.

\end{proof}
\end{enumerate}
\end{cor}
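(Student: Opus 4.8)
The plan is to prove the three-way equivalence by the cycle $(1)\Rightarrow(2)\Rightarrow(3)\Rightarrow(1)$, invoking the boundedness theorem (Theorem \ref{Becker Kechris Theorem}) for the first implication and the Becker--Kechris criterion (Theorem \ref{Becker Kechris 7.1.2}) for the last. For $(1)\Rightarrow(2)$, if $E_{G}^{\mathbb{B}}$ is Borel then all orbits of points of $\mathbb{B}$ are $\mathbf{\Pi_{\alpha+1}^{0}}$ for a single $\alpha<\omega_{1}$, so by Proposition \ref{Complexity of Ux relative with Gx} every $V\cdot y$ with $y\in\mathbb{B}$ is $\mathbf{\Pi_{\alpha+1}^{0}}$. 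Proposition \ref{(y,W)<=00003D(x,v) for all alpha}, applied to the right-hand point $y$, then tells us that for $y\in\mathbb{B}$ the condition ``$\forall\beta\ (x,U)\leq_{\beta}(y,W)$'' is equivalent to the single condition $(x,U)\leq_{\alpha+1}(y,W)$, and the latter defines a Borel set by Proposition \ref{complexity of <=00003D}. Hence the set in $(2)$ is Borel. The implication $(2)\Rightarrow(3)$ is immediate, since the set in $(3)$ is the preimage of the set in $(2)$ under the Borel map $(x,U,W)\mapsto(x,x,U,W)$.

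The substance lies in $(3)\Rightarrow(1)$. By Theorem \ref{Becker Kechris 7.1.2} it suffices to show that $x\mapsto G_{x}$ is a Borel map into $\mathcal{F}(G)$, and since the Effros Borel structure is generated by the sets $\{F:F\cap U\neq\emptyset\}$, it is enough to show that for each open $U\subseteq G$ the set $Z_{U}=\{x:\exists g\in U\ g\cdot x=x\}$ is Borel. I would prove the identity
\[
Z_{U}=\{x:\exists V,W\in\mathfrak{B}_{0}\text{ with }W^{-1}V\subseteq U\text{ and }\forall\alpha\ (x,V)\leq_{\alpha}(x,W)\},
\]
whose right-hand side is Borel by hypothesis $(3)$, being a countable union over $V,W\in\mathfrak{B}_{0}$ of Borel sets.

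For the inclusion $\supseteq$, suppose $V,W\in\mathfrak{B}_{0}$ with $W^{-1}V\subseteq U$ and $(x,V)\leq_{\alpha}(x,W)$ for all $\alpha$. Applying Proposition \ref{equivalent definition <=00003Dalpha for every alpha} with the point $x$ on both sides yields $g\in V$ and $h\in W$ with $g\cdot x=h\cdot x$, so $h^{-1}g\in W^{-1}V\subseteq U$ fixes $x$ and $x\in Z_{U}$. For $\subseteq$, suppose $g\in U$ with $g\cdot x=x$. By Lemma \ref{lemma invariance under g} we have $(x,V')\leq_{\alpha}(x,V'g^{-1})$ for every open $V'$ and every $\alpha$; so it suffices to choose a basic neighbourhood $W\in\mathfrak{B}_{0}$ of the identity small enough that $W^{-1}Wg\subseteq U$ (possible by continuity of the group operations, since $1^{-1}\cdot1\cdot g=g\in U$) and then a basic neighbourhood $V\in\mathfrak{B}_{0}$ of $g$ small enough that $Vg^{-1}\subseteq W$ (possible since $gg^{-1}=1\in W$). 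Then $W^{-1}V\subseteq W^{-1}Wg\subseteq U$, and by Lemma \ref{basic lemma} together with transitivity of $\leq_{\alpha}$ we get $(x,V)\leq_{\alpha}(x,Vg^{-1})\leq_{\alpha}(x,W)$ for all $\alpha$, so $x$ lies in the right-hand set.

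The main obstacle is bookkeeping with the non-symmetry of $\leq_{\alpha}$: when invoking Proposition \ref{equivalent definition <=00003Dalpha for every alpha} one must keep straight which of $V,W$ plays which role, so that the relevant containment is $W^{-1}V\subseteq U$ rather than $V^{-1}W\subseteq U$; and in the $(1)\Rightarrow(2)$ step one must apply the fixed-orbit-complexity hypothesis to the correct coordinate, namely the right-hand point $y$, which is why the statement is formulated inside $\mathbb{B}$.
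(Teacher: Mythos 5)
Your proof is correct and follows essentially the same route as the paper: the same reduction of $(3)\Rightarrow(1)$ to the Borelness of $x\mapsto G_{x}$ via Theorem \ref{Becker Kechris 7.1.2}, the same identity for $Z_{U}$ proved with Proposition \ref{equivalent definition <=00003Dalpha for every alpha} and Lemma \ref{lemma invariance under g}, and the same use of Propositions \ref{Complexity of Ux relative with Gx} and \ref{(y,W)<=00003D(x,v) for all alpha} for $(1)\Rightarrow(2)$. You merely spell out in more detail the step the paper dismisses as ``follows easily.''
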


A last characterization of Borel equivalence relations we mention
is the following:

\selectlanguage{english}%
\begin{prop}

\label{delta(x)=00003Ddelta(y) Borel}Let $X$ be a Polish $G$ -
space. $E_{G}^{X}$ is Borel if and only if $R_{=}=\{(x,y)\ :\ \delta(x)=\delta(y)\}$
is Borel if and only if $R_{<}=\{(x,y)\ :\ \delta(x)<\delta(y)\}$
is Borel.
\end{prop}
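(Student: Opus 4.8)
The plan is to prove the chain of equivalences by showing $(E_G^X \text{ Borel}) \Rightarrow (R_= \text{ Borel}) \Rightarrow (R_< \text{ Borel}) \Rightarrow (E_G^X \text{ Borel})$, exploiting the boundedness principle (Theorem \ref{Becker Kechris Theorem}) and the fact that $\delta$ is a Borel-definable, $G$-invariant ordinal rank whose level sets $\{x : \delta(x) \le \alpha\}$ are Borel (uniformly $\mathbf{\Pi^0_{\alpha+m(\alpha)}}$). First, assume $E_G^X$ is Borel. By Theorem \ref{Becker Kechris Theorem} there is a countable $\alpha_0$ with $\delta(x) \le \alpha_0$ for all $x \in X$. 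Then $R_= = \bigcup_{\zeta \le \alpha_0} \big(\{x : \delta(x) = \zeta\} \times \{y : \delta(y) = \zeta\}\big)$, a countable union of products of Borel sets (each $\{x : \delta(x) = \zeta\}$ is the difference of two Borel level sets), hence Borel; similarly $R_< = \bigcup_{\zeta < \eta \le \alpha_0}\big(\{x : \delta(x) = \zeta\} \times \{y : \delta(y) = \eta\}\big)$ is Borel. So the first two implications are both immediate once boundedness is invoked — and $(R_= \text{ Borel}) \Rightarrow (R_< \text{ Borel})$ can also be seen directly, since $R_< = \{(x,y) : \delta(x) \le \delta(y)\} \setminus R_=$, but it is cleaner to route everything through the bounded case.

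The substance is the reverse direction: assuming $R_<$ (or $R_=$) is Borel, derive that $E_G^X$ is Borel. By Theorem \ref{Becker Kechris Theorem} again, it suffices to show Hjorth ranks are bounded on $X$. Suppose toward a contradiction they are not: then $\{\delta(x) : x \in X\}$ is an unbounded set of countable ordinals, and in particular $\sup_x \delta(x) = \omega_1$. The idea is to use the Borelness of $R_<$ together with a boundedness (reflection) argument for $\mathbf{\Pi^1_1}$ sets: since $\delta : X \to \omega_1$ is a Borel map, the relation $R_< $ being Borel should force the range to be bounded. Concretely, consider the prewellordering on $X$ induced by $\delta$; its associated norm $x \mapsto \delta(x)$ is a Borel-definable $\omega_1$-valued function, and if $R_<$ is Borel then this norm is a \emph{Borel} norm on $X$. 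A Borel function from a Polish (or standard Borel) space to $\omega_1$ has bounded range — this is the classical boundedness theorem: the image of a Borel set under a Borel $\omega_1$-valued map is a countable (hence bounded) set of ordinals, essentially because $\{(x,y) : \delta(x) \le \delta(y)\}$ being Borel makes $\delta$ a ``$\mathbf{\Delta^1_1}$-rank'' and $\mathbf{\Delta^1_1}$ sets of ordinal codes are bounded below $\omega_1$. This contradicts unboundedness, so ranks are bounded and $E_G^X$ is Borel.

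The main obstacle is making that last boundedness step rigorous in the present framework: we need a clean argument that ``$\delta$ Borel $+$ $R_<$ Borel $\Rightarrow$ $\delta$ has bounded range''. The honest way is to observe that the level sets $\{x : \delta(x) \le \alpha\}$ form an increasing transfinite sequence of Borel sets whose union is $X$; if the ranks were unbounded this would be a strictly increasing $\omega_1$-chain of Borel sets covering a Polish space, which is itself not yet a contradiction — so one genuinely needs the Borelness of the \emph{comparison} relation $R_<$, not just of each level set. The right tool is: if $\prec$ is a Borel wellfounded relation on a standard Borel space then its rank function is bounded below $\omega_1$ (equivalently, a Borel wellordering-type is countable); this follows from the Kunen–Martin theorem or from $\mathbf{\Sigma^1_1}$-boundedness applied to the Borel (hence $\mathbf{\Sigma^1_1}$) prewellordering coded by $R_<$. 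I would cite the Kunen–Martin bound on the length of Borel wellfounded relations, apply it to $R_<$ (which is wellfounded since $\delta$ takes ordinal values), conclude $\sup_x\delta(x) < \omega_1$, and then finish via Theorem \ref{Becker Kechris Theorem}. The implication $(R_= \text{ Borel}) \Rightarrow (R_< \text{ Borel})$ then needs one small lemma — that $R_<$ is Borel-reducible to $R_=$ in the presence of the Borel level sets — or we simply note that the trichotomy $R_< \cup R_= \cup R_<^{-1} = X^2$ together with $R_=$ Borel lets us write $R_<$ as a Borel set once we know one of $R_<$, $R_<^{-1}$ is Borel by symmetry, which again reduces to the bounded-rank case. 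All of this is routine once the Kunen–Martin step is in place.
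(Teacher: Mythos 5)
Your forward direction (boundedness via Theorem \ref{Becker Kechris Theorem}, then a countable union of rectangles built from the Borel level sets) and your treatment of $R_<$ (it is a wellfounded relation whose height is $\omega_1$ when ranks are unbounded, so by Kunen--Martin / $\mathbf{\Sigma^1_1}$-boundedness it cannot be Borel) are both correct and are exactly the paper's argument. The genuine gap is the link you need for $R_=$: your chain requires the implication ($R_=$ Borel) $\Rightarrow$ ($R_<$ Borel), and neither of your two suggestions for it works. Writing $R_< = \{(x,y):\delta(x)\le\delta(y)\}\setminus R_=$ presupposes that $R_\le$ is Borel, which is not given (a priori it is only analytic). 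And the trichotomy $X^2 = R_< \cup R_= \cup R_<^{-1}$ only tells you that $R_<\cup R_<^{-1}$ is Borel when $R_=$ is; to extract Borelness of $R_<$ itself you say ``once we know one of $R_<$, $R_<^{-1}$ is Borel by symmetry, which again reduces to the bounded-rank case'' --- but boundedness is precisely what you are trying to prove at that point, so this is circular. (One could in principle rescue a version of this by observing that $R_<$ is $\mathbf{\Pi^1_1}$ and, given $R_=$ Borel, also $\mathbf{\Sigma^1_1}$ via the trichotomy, hence Borel by Suslin; but the $\mathbf{\Pi^1_1}$-ness of $R_<$ is only established later in the paper, in Corollary \ref{cor:BP}, by a lightface analysis you have not carried out.)

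The paper closes the $R_=$ case by a different and self-contained argument: if ranks are unbounded then $R_=$ is an equivalence relation with exactly $\aleph_1$ classes; if it were Borel, Silver's dichotomy would give a perfect set of pairwise $R_=$-inequivalent points, i.e.\ perfectly many distinct values of $\delta$; passing to a forcing extension where $\neg CH$ holds and using Shoenfield absoluteness to preserve ``there is a perfect set of pairwise inequivalent points,'' one gets more than $\aleph_1$ distinct countable ordinals, a contradiction. You need some argument of this kind (Silver plus an absoluteness or counting step) for the $R_=$ direction; as written, your proof establishes the equivalence of Borelness of $E_G^X$ with Borelness of $R_<$, but not with Borelness of $R_=$.
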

\begin{proof}

Assume $E_{G}^{X}$ is Borel. Then the Hjorth ranks are bounded, say
by $\gamma.$ Hence, $\delta(x)=\delta(y)$ if and only if there is
$\alpha\leq\gamma$ such that $\delta(x)=\alpha$ and $\delta(y)=\alpha$.
A similar argument works for $R_{<}.$

On the other hand, assume $E_{G}^{X}$ is not Borel. So the equivalence
relation $R_{=}$ has $\aleph_{1}$ equivalence classes, and the well
founded relation $R_{<}$ is of height $\omega_{1}$. Thus $R_{<}$
has uncountable height, so it cannot be analytic, let alone Borel.
As for $R_{=}$ , if it were Borel, there would be a perfect set of
different rank elements. Extending to a $V[G]$ such that $V[G]\models\neg CH$,
by Shoenfield's absoluteness, $R_{=}$ will still have a perfect set
of different rank elements, which is a contradiction.

\end{proof}

In the next section we will see what can be said about complexities
of rank comparisons in general.

\subsection{The Logic Action Example Revisited}

\selectlanguage{american}%

We'll consider now the logic action and see how do Hjorth and Scott
analyses compare. As a corollary, we will get Theorem \ref{Becker Kechris Logic case}
of Becker and Kechris.

Let $\mathcal{L}$ be a countable language, $Mod(\mathcal{L})$ the
Polish space of countable models of $\mathcal{L}$, and $S_{\infty}$
``logically'' acts on $Mod(\mathcal{L})$, as described in the introduction.

\begin{defn}

For $\bar{a},\bar{b}$ finite 1-1 same length sequences of natural
numbers:

\end{defn}

\[
V_{\bar{a},\bar{b}}=\{\sigma\in S_{\infty}\ :\ \sigma(\bar{a})=\bar{b}\}.
\]

The $V_{\bar{a},\bar{b}}$ sets form a countable basis of the topology
of $S_{\infty}$.

\begin{lem}

\label{characterization of closure in logic action}For $\mathcal{M}$
a countable model: $\overline{V_{\bar{a},\bar{b}}\cdot\mathcal{M}}=\{\mathcal{N}\ :\ Th_{\Sigma}(\mathcal{N},\bar{b})\subseteq Th_{\Sigma}(\mathcal{M},\bar{a})\}$
($Th_{\Sigma}$ stands for the existential first order theory of
a model) .
\end{lem}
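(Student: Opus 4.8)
The plan is to exploit the elementary fact that a permutation $\sigma\in V_{\bar{a},\bar{b}}$, being by definition a bijection of $\omega$ with $\sigma(\bar{a})=\bar{b}$, is an isomorphism of $\mathcal{M}$ onto $\sigma\cdot\mathcal{M}$ carrying $\bar{a}$ to $\bar{b}$; hence $(\sigma\cdot\mathcal{M},\bar{b})$ and $(\mathcal{M},\bar{a})$ satisfy exactly the same first order formulas, and in particular $Th_{\Sigma}(\sigma\cdot\mathcal{M},\bar{b})=Th_{\Sigma}(\mathcal{M},\bar{a})$. Together with the two standard reductions --- (i) a basic open neighbourhood of a model $\mathcal{N}$ is a clopen set of the form $\{\mathcal{N}'\ :\ \mathcal{N}'\models\psi(\bar{c})\}$ with $\psi$ a finite conjunction of atomic and negated atomic formulas, and (ii) every existential formula may be taken in prenex form $\exists\bar{y}\,\theta(\bar{x},\bar{y})$ with $\theta$ quantifier free --- this turns the lemma into a matter of chasing witnesses back and forth between $\mathcal{M}$ and $\mathcal{N}$.

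For the inclusion $\overline{V_{\bar{a},\bar{b}}\cdot\mathcal{M}}\subseteq\{\mathcal{N}\ :\ Th_{\Sigma}(\mathcal{N},\bar{b})\subseteq Th_{\Sigma}(\mathcal{M},\bar{a})\}$ I would take $\mathcal{N}$ in the closure and $\exists\bar{y}\,\theta(\bar{x},\bar{y})\in Th_{\Sigma}(\mathcal{N},\bar{b})$, fix witnesses $\bar{d}$ with $\mathcal{N}\models\theta(\bar{b},\bar{d})$, and observe that $\{\mathcal{N}'\ :\ \mathcal{N}'\models\theta(\bar{b},\bar{d})\}$ is a clopen neighbourhood of $\mathcal{N}$, so it meets $V_{\bar{a},\bar{b}}\cdot\mathcal{M}$: there is $\sigma$ with $\sigma(\bar{a})=\bar{b}$ and $\sigma\cdot\mathcal{M}\models\theta(\bar{b},\bar{d})$, hence $\sigma\cdot\mathcal{M}\models\exists\bar{y}\,\theta(\bar{b},\bar{y})$, hence by the opening observation $\mathcal{M}\models\exists\bar{y}\,\theta(\bar{a},\bar{y})$, as wanted.

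For the reverse inclusion I would start with $\mathcal{N}$ satisfying $Th_{\Sigma}(\mathcal{N},\bar{b})\subseteq Th_{\Sigma}(\mathcal{M},\bar{a})$, fix an arbitrary basic neighbourhood $\{\mathcal{N}'\ :\ \mathcal{N}'\models\psi(\bar{c})\}$ of $\mathcal{N}$, and rewrite $\psi(\bar{c})$ as $\psi_{0}(\bar{b},\bar{e})$, where $\bar{e}$ lists, without repetitions, the elements of $\bar{c}$ not occurring in $\bar{b}$ (so $\psi_{0}$ is again a conjunction of atomic and negated atomic formulas). Then $\mathcal{N}\models\exists\bar{y}\,\psi_{0}(\bar{b},\bar{y})$, so by hypothesis $\mathcal{M}\models\exists\bar{y}\,\psi_{0}(\bar{a},\bar{y})$, and extending a suitable finite map $\bar{a}^{\frown}\bar{d}\mapsto\bar{b}^{\frown}\bar{e}$ (with $\bar{d}$ witnessing the existential) to some $\sigma\in S_{\infty}$ would give $\sigma\in V_{\bar{a},\bar{b}}$ with $\sigma\cdot\mathcal{M}\models\psi_{0}(\bar{b},\bar{e})=\psi(\bar{c})$, so the neighbourhood meets $V_{\bar{a},\bar{b}}\cdot\mathcal{M}$; since it was arbitrary, $\mathcal{N}\in\overline{V_{\bar{a},\bar{b}}\cdot\mathcal{M}}$.

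The only step that is not entirely routine --- and the one I would treat carefully --- is guaranteeing that the finite map $\bar{a}^{\frown}\bar{d}\mapsto\bar{b}^{\frown}\bar{e}$ extends to a permutation of $\omega$, which fails unless it is injective. Since $\bar{b}^{\frown}\bar{e}$ is already repetition free, $\mathcal{N}$ also satisfies the existential formula obtained by adjoining to $\psi_{0}$ all inequalities $y_{k}\neq y_{l}$ and $y_{k}\neq x_{j}$; feeding this stronger formula into the hypothesis lets me choose $\bar{d}$ to be one-to-one and disjoint from $\bar{a}$, so that $\bar{a}^{\frown}\bar{d}$ and $\bar{b}^{\frown}\bar{e}$ are injective tuples of the same length and the partial bijection between them indeed extends to an element of $S_{\infty}$.
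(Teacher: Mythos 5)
Your proof is correct and takes essentially the same route as the paper's, which is only a two-line sketch of exactly this argument: membership in the closure amounts to preservation of existential formulas along the basic clopen neighbourhoods, via the observation that $\sigma\in V_{\bar a,\bar b}$ is an isomorphism $(\mathcal{M},\bar a)\to(\sigma\cdot\mathcal{M},\bar b)$. You are in fact more careful than the paper on the two points it glosses over --- working with finite conjunctions of literals rather than single atomic formulas, and adjoining the inequalities needed so that the finite partial map $\bar a^{\frown}\bar d\mapsto\bar b^{\frown}\bar e$ extends to an element of $S_{\infty}$.
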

\begin{proof}

$\mathcal{N}\in\overline{V_{\bar{a},\bar{b}}\cdot\mathcal{M}}$ if
and only if for every atomic formula $\phi(\bar{x},\bar{y})$, if
$\mathcal{N}\models\phi(\bar{b},\bar{c})$ then there is $\sigma\in V_{\bar{a},\bar{b}}$
such that $\mathcal{M}\models\phi(\bar{a},\sigma^{-1}(\bar{c})).$
In other words, if $\mathcal{N}\models\exists\bar{y}\phi(\bar{b},\bar{y})$
then $\mathcal{M}\models\exists\bar{y}\phi(\bar{a},\bar{y})$ . $\square$
\end{proof}
\begin{prop}

\label{comparison =00003D=00003D=00003DScott <=00003DHjorth}If $(\mathcal{M},\bar{a})\equiv_{\omega\cdot\alpha}(\mathcal{N},\bar{a'})$
then for every $\bar{b}$ finite 1-1 sequences:

\end{prop}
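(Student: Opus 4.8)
The natural statement being approached is: if $(\mathcal{M},\bar a)\equiv_{\omega\cdot\alpha}(\mathcal{N},\bar a')$ (Scott equivalence at level $\omega\cdot\alpha$), then for every finite $1$-$1$ sequence $\bar b$ (extending $\bar a$, say of the same length as some $\bar b'$ we produce, or: for each $\bar b$ there is $\bar b'$) one gets $(\mathcal{N},V_{\bar a',\bar b'})\leq_{\alpha}(\mathcal{M},V_{\bar a,\bar b})$, i.e. the Scott back-and-forth system dominates the Hjorth $\leq_\alpha$ relation on the corresponding basic open sets of $S_\infty$. The plan is to prove this by transfinite induction on $\alpha$, using Lemma~\ref{characterization of closure in logic action} to unwind the base case into existential-theory containment, and the inductive clause of $\leq_{\alpha+1}$ together with the back-and-forth step of Scott's $\equiv_{\beta+1}$ to carry the induction.

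First I would handle $\alpha=1$. Here $(\mathcal{N},V_{\bar a',\bar b'})\leq_1(\mathcal{M},V_{\bar a,\bar b})$ unwinds, via the definition of $\leq_1$ and Lemma~\ref{characterization of closure in logic action}, to the assertion $\overline{V_{\bar a',\bar b'}\cdot\mathcal{N}}\subseteq\overline{V_{\bar a,\bar b}\cdot\mathcal{M}}$, which by that lemma is exactly: $Th_\Sigma(\mathcal{M},\bar a\bar b)\subseteq Th_\Sigma(\mathcal{N},\bar a'\bar b')$, i.e. every existential formula true of $\bar a\bar b$ in $\mathcal{M}$ is true of $\bar a'\bar b'$ in $\mathcal{N}$ — wait, the containment runs the other way, so I must be careful about the direction of $\leq$ versus the direction of the theory inclusion; the correct reading is that $\leq_1$ on the $\mathcal{N}$-side pair being below the $\mathcal{M}$-side pair corresponds to $\mathcal{N}$'s closure sitting inside $\mathcal{M}$'s closure, which is the statement $Th_\Sigma(\mathcal N,\bar a'\bar b')\subseteq Th_\Sigma(\mathcal M,\bar a\bar b)$. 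This follows from $(\mathcal{M},\bar a)\equiv_\omega(\mathcal N,\bar a')$ (hence in particular $\equiv_1$, matching atomic types), extended appropriately to the pair $\bar b,\bar b'$ produced by the back-and-forth at a finite level; since $\omega\cdot 1=\omega$ we have many finite levels to spare.

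For the successor step, suppose the claim holds for $\beta$ and assume $(\mathcal M,\bar a)\equiv_{\omega\cdot(\beta+1)}(\mathcal N,\bar a')$. To show $(\mathcal N,V_{\bar a',\bar b'})\leq_{\beta+1}(\mathcal M,V_{\bar a,\bar b})$ I must, given $W_0\subseteq V_{\bar a',\bar b'}$ in the basis $\mathfrak B_0$ — i.e. $W_0=V_{\bar a'\bar c',\bar b'\bar d'}$ for suitable tuples — produce $W_1\subseteq V_{\bar a,\bar b}$ with $(\mathcal M,W_1)\leq_\beta(\mathcal N,W_0)$. The point is that $V_{\bar a'\bar c',\bar b'\bar d'}\cdot\mathcal N$ depends on $\mathcal N$ with the tuple $\bar b'\bar d'$ named, and Scott's back-and-forth at level $\omega\cdot\beta+$(finite) applied to $(\mathcal M,\bar a)\equiv(\mathcal N,\bar a')$ gives, for the element(s) $\bar d'$ sitting in $\mathcal N$, matching element(s) $\bar d$ in $\mathcal M$ with $(\mathcal M,\bar a\bar d)\equiv_{\omega\cdot\beta}(\mathcal N,\bar a'\bar d')$ — here I use that $\omega\cdot(\beta+1)=\omega\cdot\beta+\omega$, so after consuming one coordinate of back-and-forth I still have level $\geq\omega\cdot\beta$. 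Setting $W_1=V_{\bar a\bar d,\bar b\bar c}$ (choosing $\bar c$ to match $\bar c'$ by the other direction of the back-and-forth, if needed) and applying the induction hypothesis for $\beta$ to $(\mathcal M,\bar a\bar d)\equiv_{\omega\cdot\beta}(\mathcal N,\bar a'\bar d')$ yields $(\mathcal M,W_1)\leq_\beta(\mathcal N,W_0)$, as required. The limit case $\alpha=\lambda$ is immediate: $\equiv_{\omega\cdot\lambda}$ implies $\equiv_{\omega\cdot\alpha}$ for all $\alpha<\lambda$, so the hypothesis gives $\leq_\alpha$ for every $\alpha<\lambda$, which is the definition of $\leq_\lambda$.

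The main obstacle I anticipate is bookkeeping the \emph{direction} of the inequalities and the interleaving of the two Scott back-and-forth directions with the single-sided $\leq_{\alpha}$: the $\leq_{\alpha+1}$ clause quantifies over shrinkings $W_0$ of the \emph{right}-hand pair and asks for a shrinking $W_1$ of the \emph{left}-hand pair with the roles swapped, so each successor step flips which model is playing "spoiler"; matching this against Scott's symmetric "for all $c$ there is $d$ and for all $d$ there is $c$" requires using \emph{both} halves of $\equiv_{\beta+1}$ and keeping the named tuples lined up. A secondary subtlety is that shrinking a basic open set $V_{\bar a,\bar b}$ in $\mathfrak B_0$ means passing to $V_{\bar a\bar c,\bar b\bar d}$ for arbitrary extensions, so one genuinely needs the version of the inductive hypothesis stated for \emph{all} finite tuples $\bar b$, not a fixed one — which is presumably why the proposition is phrased "for every $\bar b$". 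Everything else — the translation via Lemma~\ref{characterization of closure in logic action}, and the arithmetic of ordinals $\omega\cdot\alpha$ giving "one free finite level per successor" — is routine once the direction bookkeeping is set up correctly.
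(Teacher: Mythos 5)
Your overall strategy is the paper's: induct on $\alpha$, use Lemma \ref{characterization of closure in logic action} to turn the base case into a containment of existential theories, spend one block of $\omega$ Scott levels per successor step of $\leq_{\alpha}$, and dispose of limits trivially. But there is a genuine error running through the write-up, and it is exactly the bookkeeping issue you flag at the end without resolving: you mishandle the two subscripts of $V_{\bar{u},\bar{v}}=\{\sigma:\sigma(\bar{u})=\bar{v}\}$. By Lemma \ref{characterization of closure in logic action}, $\overline{V_{\bar{u},\bar{v}}\cdot\mathcal{M}}=\{\mathcal{P}\ :\ Th_{\Sigma}(\mathcal{P},\bar{v})\subseteq Th_{\Sigma}(\mathcal{M},\bar{u})\}$: the first subscript $\bar{u}$ is a tuple of elements of the acting model (this is what the Scott back-and-forth must act on), while the second subscript $\bar{v}$ is merely the list of labels the translated copies carry and is completely inert. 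Consequently the proposition keeps the \emph{same} $\bar{b}$ on both sides, $(\mathcal{M},V_{\bar{a},\bar{b}})\leq_{\alpha}(\mathcal{N},V_{\bar{a'},\bar{b}})$; there is no $\bar{b}'$ to be ``produced''. Your base case already goes wrong here: the containment $\overline{V_{\bar{a'},\bar{b}}\cdot\mathcal{N}}\subseteq\overline{V_{\bar{a},\bar{b}}\cdot\mathcal{M}}$ is \emph{not} the statement $Th_{\Sigma}(\mathcal{N},\bar{a'}\bar{b})\subseteq Th_{\Sigma}(\mathcal{M},\bar{a}\bar{b})$ --- concatenating the two subscripts and evaluating both in the acting model misreads the lemma. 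The correct reduction is that $Th_{\Sigma}(\mathcal{N},\bar{a'})= Th_{\Sigma}(\mathcal{M},\bar{a})$ (which $\equiv_{\omega}$ gives) forces every $\mathcal{P}$ with $Th_{\Sigma}(\mathcal{P},\bar{b})\subseteq Th_{\Sigma}(\mathcal{M},\bar{a})$ to satisfy $Th_{\Sigma}(\mathcal{P},\bar{b})\subseteq Th_{\Sigma}(\mathcal{N},\bar{a'})$.

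The same confusion derails the successor step. A basic open subset of $V_{\bar{a},\bar{b}}$ is $V_{\bar{a}\bar{c},\bar{b}\bar{d}}$, where $\bar{c}$ extends the named tuple \emph{of the model} and $\bar{d}$ extends the labels. Given such a shrinking, the correct move is: use the back-and-forth (one element at a time, consuming $|\bar{c}|$ of the $\omega$ spare levels in $\omega\cdot(\beta+1)=\omega\cdot\beta+\omega$) to find $\bar{c'}$ with $(\mathcal{N},\bar{a'}\bar{c'})\equiv_{\omega\cdot\beta}(\mathcal{M},\bar{a}\bar{c})$, and reply with $V_{\bar{a'}\bar{c'},\bar{b}\bar{d}}$ --- same label tuple $\bar{b}\bar{d}$, new model tuple $\bar{a'}\bar{c'}$ --- then invoke the induction hypothesis for the longer label tuple $\bar{b}\bar{d}$ (this is indeed why the statement quantifies over all $\bar{b}$, as you correctly observe). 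You instead apply the back-and-forth to $\bar{d'}$, calling it ``element(s) sitting in $\mathcal{N}$'', and answer with $V_{\bar{a}\bar{d},\bar{b}\bar{c}}$, which swaps the roles of the two coordinates and is not even a subset of the required basic open set. The skeleton of your argument is the right one and the repair is local, but as written neither the base case nor the inductive step goes through.
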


\[
(\mathcal{M},V_{\bar{a},\bar{b}})\leq_{\alpha}(\mathcal{N},V_{\bar{a'},\bar{b}}).
\]

\begin{proof}

By induction on $\alpha$. For $\alpha=1$ , we assume $(\mathcal{M},\bar{a})\equiv_{\omega}(\mathcal{N},\bar{a'})$
and want to show that
\[
\overline{V_{\bar{a},\bar{b}}\cdot\mathcal{M}}\subseteq\overline{V_{\bar{a'},\bar{b}}\cdot\mathcal{N}}
\]
 for any $\bar{b}$ as above. So assume $\mathcal{P}\in\overline{V_{\bar{a},\bar{b}}\cdot\mathcal{M}}$.
Then by Lemma \ref{characterization of closure in logic action},
$Th_{\Sigma}(\mathcal{P},\bar{b})\subseteq Th_{\Sigma}(\mathcal{M},\bar{a})$
. Then $Th_{\Sigma}(\mathcal{P},\bar{b})\subseteq Th_{\Sigma}(\mathcal{N},\bar{a'})$
, as they are logically equivalent, so that $\mathcal{P}\in\overline{V_{\bar{a'},\bar{b}}\cdot\mathcal{N}}$.

The case $\alpha$ limit is trivial. Consider then the case $\alpha=\beta+1$
. We assume $(\mathcal{M},\bar{a})\equiv_{\omega\cdot\beta+\omega}(\mathcal{N},\bar{a'})$.
Let $V_{\bar{a}\bar{c},\bar{b}\bar{d}}\subseteq V_{\bar{a},\bar{b}}$.
It will be enough to find $\bar{c'}$ such that
\[
(\mathcal{N},V_{\bar{a'}\bar{c'},\bar{b}\bar{d}})\leq_{\beta}(\mathcal{M},V_{\bar{a}\bar{c},\bar{b}\bar{d}}).
\]
By the definition of $\equiv_{\omega\cdot\beta+\omega}$ , there is
$\bar{c'}$ such that $(\mathcal{N},\bar{a'}\bar{c'})\equiv_{\omega\cdot\beta}(\mathcal{M},\bar{a}\bar{c})$.
Then by the induction hypothesis we get the above.
\end{proof}
\begin{cor}

Let $\mathbb{B}\subseteq Mod(\mathcal{L})$ be an invariant Borel
subset. Let $\alpha<\omega_{1}$ be such that for every $\mathcal{M}\in\mathbb{B}$,
the Hjorth rank $\delta_{H}(\mathcal{M})$ is bounded by $\alpha$.
Then the Scott rank $\delta_{S}(\mathcal{M})$ is bounded by $\omega\cdot(\alpha+1)$.
\end{cor}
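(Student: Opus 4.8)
The plan is to read the corollary off Proposition~\ref{comparison =00003D=00003D=00003DScott <=00003DHjorth} together with the corollary to Proposition~\ref{main proposition Scott}, with essentially no further work. Fix $\mathcal{M}\in\mathbb{B}$; the goal is $\delta_{S}(\mathcal{M})\le\omega\cdot(\alpha+1)$. By the definition of Scott rank it is enough to show that whenever $\bar{a},\bar{b}$ have the same length and $(\mathcal{M},\bar{a})\equiv_{\omega\cdot(\alpha+1)}(\mathcal{M},\bar{b})$, some automorphism of $\mathcal{M}$ carries $\bar{a}$ to $\bar{b}$: such an automorphism witnesses $(\mathcal{M},\bar{a})\equiv_{\beta}(\mathcal{M},\bar{b})$ for every $\beta$, so the ``step up'' in the definition of $\delta_{S}$ is available at $\omega\cdot(\alpha+1)$. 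Since $\equiv_{1}$ already detects which coordinates of a tuple coincide, I may pass to the ($1$-$1$) sub-tuples obtained by deleting repetitions and assume $\bar{a},\bar{b}$ are $1$-$1$. I will also fix, for the logic action, the basis $\mathfrak{B}_{0}$ of the Hjorth analysis to be the family $\{V_{\bar{c},\bar{d}}\}$ of basic clopen sets; it is clopen and closed under right translation, hence meets the requirements imposed on $\mathfrak{B}_{0}$ in Section~4, and it is with respect to it that $\delta_{H}(\mathcal{M})$ is computed.

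The heart of the argument is then short. From $(\mathcal{M},\bar{a})\equiv_{\omega\cdot(\alpha+1)}(\mathcal{M},\bar{b})$, Proposition~\ref{comparison =00003D=00003D=00003DScott <=00003DHjorth} --- applied with $\alpha+1$ in place of $\alpha$, with $\mathcal{N}=\mathcal{M}$ and $\bar{a'}=\bar{b}$, and with its quantified $1$-$1$ sequence taken to be $\bar{a}$ --- yields $(\mathcal{M},V_{\bar{a},\bar{a}})\leq_{\alpha+1}(\mathcal{M},V_{\bar{b},\bar{a}})$. Now take $x_{0}=x_{1}=\mathcal{M}$ and $\delta=\alpha$: then $\delta(x_{0})=\delta(x_{1})=\delta_{H}(\mathcal{M})\le\alpha=\delta$, we have $x_{0}\equiv_{\delta+1}x_{1}$ trivially, and $(x_{0},V_{\bar{a},\bar{a}})\leq_{\delta+1}(x_{1},V_{\bar{b},\bar{a}})$ by the previous sentence. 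Hence the corollary to Proposition~\ref{main proposition Scott} supplies $g\in V_{\bar{a},\bar{a}}$ and $h\in V_{\bar{b},\bar{a}}$ with $g\cdot\mathcal{M}=h\cdot\mathcal{M}$. Then $\pi:=h^{-1}g$ is an automorphism of $\mathcal{M}$, and since $g(\bar{a})=\bar{a}$ and $h(\bar{b})=\bar{a}$ we obtain $\pi(\bar{a})=h^{-1}(\bar{a})=\bar{b}$, the automorphism sought.

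I do not expect a real obstacle: once the two cited results are available, the paragraph above is the whole proof. The two points deserving a sentence of care are that $\delta_{H}(\mathcal{M})$ in the hypothesis must be understood with respect to the clopen basis $\{V_{\bar{c},\bar{d}}\}$ (so that the corollary to Proposition~\ref{main proposition Scott} applies with this $\delta$), and the reduction to $1$-$1$ tuples, which is the routine observation that repetitions can be deleted from $\bar{a}$ and $\bar{b}$ at the same coordinates --- they agree there because $(\mathcal{M},\bar{a})\equiv_{1}(\mathcal{M},\bar{b})$ --- and that the automorphism found for the $1$-$1$ sub-tuples then also carries $\bar{a}$ to $\bar{b}$.
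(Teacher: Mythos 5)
Your proof is correct and follows essentially the same route as the paper's: transfer $(\mathcal{M},\bar{a})\equiv_{\omega\cdot(\alpha+1)}(\mathcal{M},\bar{b})$ to $(\mathcal{M},V_{\bar{a},\bar{c}})\leq_{\alpha+1}(\mathcal{M},V_{\bar{b},\bar{c}})$ via Proposition \ref{comparison =00003D=00003D=00003DScott <=00003DHjorth}, then use the bound on the Hjorth rank to produce $g,h$ with $g\cdot\mathcal{M}=h\cdot\mathcal{M}$ and read off the automorphism $h^{-1}g$. The only (immaterial) difference is that you invoke the corollary to Proposition \ref{main proposition Scott} where the paper first upgrades to $\leq_{\gamma}$ for all $\gamma$ and then cites Proposition \ref{equivalent definition <=00003Dalpha for every alpha}; your added remarks on the choice of basis and the reduction to $1$-$1$ tuples are points the paper leaves implicit.
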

\begin{proof}

Let $\mathcal{M}\in\mathbb{B}$, and assume $(\mathcal{M},\bar{a})\equiv_{\omega\cdot(\alpha+1)}(\mathcal{M},\bar{b})$.
Then by the previous proposition:
\[
(\mathcal{M},V_{\bar{a},\bar{c}})\leq_{\alpha+1}(\mathcal{M},V_{\bar{b},\bar{c}}).
\]

Then by the assumption, for every $\gamma<\omega_{1}$ , $(\mathcal{M},V_{\bar{a},\bar{c}})\leq_{\gamma}(\mathcal{M},V_{\bar{b},\bar{c}})$,
which is why there are $g\in V_{\bar{a},\bar{c}}$ and $h\in V_{\bar{b},\bar{c}}$
such that $g\cdot\mathcal{M}=h\cdot\mathcal{M}.$ The permutation
$h^{-1}\cdot g\in V_{\bar{a},\bar{b}}$ shows that $(\mathcal{M},\bar{a})\simeq(\mathcal{M},\bar{b})$.

\end{proof}
\begin{cor}

(Becker - Kechris) Let $\mathbb{B}\subseteq Mod(\mathcal{L})$ be
an invariant Borel subset. If $E_{G}^{\mathbb{B}}$ is Borel then
there is an $\alpha<\omega_{1}$ such that the Scott ranks of the
elements of $\mathbb{B}$ are all bounded by $\alpha$.

\end{cor}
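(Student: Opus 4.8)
The plan is to read off this statement as a direct corollary of the two results immediately preceding it; essentially all of the genuine work has already been done, so what remains is to chain the implications together and note that the resulting bound is a countable ordinal.

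First I would apply Theorem \ref{Becker Kechris Theorem} to the $S_\infty$-action on $Mod(\mathcal{L})$ with the invariant Borel set $\mathbb{B}$: since $E_G^{\mathbb{B}}$ is Borel, the boundedness principle produces an $\alpha < \omega_1$ such that $\delta_H(\mathcal{M}) \le \alpha$ for every $\mathcal{M} \in \mathbb{B}$, where $\delta_H$ denotes the Hjorth rank. (Note that we need $\mathbb{B}$ only to be invariant and Borel, which is exactly the generality in which that theorem was stated, so the fact that $\mathbb{B}$ need not itself be of the form $Mod_{\mathcal{L}}(T)$ causes no trouble.)

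Next I would feed this $\alpha$ into the preceding corollary, which asserts that whenever the Hjorth ranks on an invariant Borel subset of $Mod(\mathcal{L})$ are bounded by $\alpha$, the Scott ranks of its elements are bounded by $\omega\cdot(\alpha+1)$. That corollary in turn rests on Proposition \ref{comparison =00003D=00003D=00003DScott <=00003DHjorth}, which converts $\equiv_{\omega\cdot\beta}$ in the Scott hierarchy into $\leq_\beta$ in the Hjorth hierarchy, together with the fact that under a bound on $\delta_H$ the relation $\leq$ persists at all ordinals and hence forces $(\mathcal{M},\bar a)\simeq(\mathcal{M},\bar b)$. Setting $\beta = \omega\cdot(\alpha+1)$, which is again countable since $\omega_1$ is closed under multiplication by $\omega$, we conclude $\delta_S(\mathcal{M}) \le \beta$ for all $\mathcal{M}\in\mathbb{B}$, which is the assertion.

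There is thus no real obstacle here: the statement is precisely the composition of Theorem \ref{Becker Kechris Theorem} with the previous corollary, and the only point that requires even a word is the trivial observation that the final bound $\omega\cdot(\alpha+1)$ remains below $\omega_1$.
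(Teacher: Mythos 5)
Your proposal is correct and is exactly the argument the paper intends: the corollary is obtained by chaining Theorem \ref{Becker Kechris Theorem} (boundedness of Hjorth rank when $E_G^{\mathbb{B}}$ is Borel) with the immediately preceding corollary converting a Hjorth-rank bound $\alpha$ into the Scott-rank bound $\omega\cdot(\alpha+1)$. Nothing further is needed.
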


\section{Hjorth Rank and Computable Ordinals}

\selectlanguage{british}%

\selectlanguage{english}%
Nadel \cite{key-8} has shown that for the logic action, the
Scott rank of a model $M$ is at most $\omega_{1}^{ck(M)}$. The model
$M$ is identified with a sequence of $0$'s and $1$'s the moment
we begin to talk about $Mod(L)$ as a topological space, so the meaning
of $\omega_{1}^{ck(M)}$ is clear.

In the general Polish action case, $\omega_{1}^{ck(x)}$ has no obvious
meaning a priori. However, we would like it to be the first ordinal
not computable in an oracle that knows all about the action of $G$
on that specific $x\in X$. The following definition follows:
\begin{defn}

Fix a basis $\mathfrak{B}_{0}=\langle V_{k}\ :\ k<\omega\rangle$
for the topology of $G$ and a basis $\langle U_{l}\ :\ l<\omega\rangle$
for the topology of $X$. For $x\in X$, define $x_{G}:\omega\to2$
as follows:
\[
x_{G}(\langle k,l\rangle)=1\iff(V_{k}\cdot x)\cap U_{l}\neq\emptyset.
\]
 $x_{G}$ codes the action of $G$ on $X$. The map $x\to x_{G}$
is a Borel map from $X$ to $2^{\omega}$. We'll usually abuse notation
and write $x$ instead of $x_{G}.$ In particular, $\omega_{1}^{ck(x)}$
stands for $\omega_{1}^{ck(x_{G})}.$

\end{defn}

We show Nadel's theorem for Hjorth analysis:

\begin{thm}

\label{thm:Nadel's}For every $x\in X$, $\delta(x)\leq\omega_{1}^{ck(x)}.$
In particular, the orbit of $x$ is $\mathbf{\Pi_{\omega_{1}^{[x]}+n}^{0}}$,
for some $n\in\omega,$ where $\omega_{1}^{[x]}=min\{\omega_{1}^{y}\ :\ yEx\}.$

\end{thm}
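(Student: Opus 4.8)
The plan is to run a ``lightface'' version of the ordinal analysis that defines $\delta(x)$, carefully bookkeeping which steps are recursive in the oracle $x_G$. The key observation is that the relations $\leq_\alpha$, restricted to pairs $(x,V_k)$, $(x,V_m)$ with $V_k,V_m\in\mathfrak{B}_0$, can be presented as the stages of a monotone (indeed $\Pi^0_1$-type) inductive definition relative to $x_G$: the base relation $\overline{V_k\cdot x}\subseteq\overline{V_m\cdot x}$ is, via the coding $x_G(\langle k,l\rangle)=1\iff V_k\cdot x\cap U_l\neq\emptyset$ together with the basis-inclusion relations among the $U_l$'s (which are arithmetic in the fixed presentation), a $\Pi^0_1(x_G)$ condition; and the successor clause of Lemma \ref{countable definition <=00003D} is exactly one alternation of $\forall W_0\in\mathfrak{B}_0\,\exists W_1\in\mathfrak{B}_0$ over the previous stage. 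Thus the sequence $\langle \mathfrak{R}_\alpha\ :\ \alpha<\omega_1\rangle$ of Proposition \ref{complexity of <=00003D}, intersected with the diagonal $\{x_0=x_1=x\}$, is the iteration of an operator $\Gamma_{x_G}$ that is $\Pi^0_1$ (hence monotone) in $x_G$.

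From here I would invoke the standard closure-ordinal theorem for positive arithmetic (or just $\Pi^0_1$) inductive definitions relative to an oracle $z$: the least stage at which $\Gamma_z$ closes is at most $\omega_1^{ck(z)}$. Concretely, one shows that membership in the $\alpha$-th stage is uniformly $\Sigma_1$ over $L_{\omega_1^{ck(z)}}[z]$ (or, if one prefers the recursion-theoretic route, that the stages stabilize before $\omega_1^{ck(z)}$ because a strictly-increasing-until-stable sequence of subsets of $\omega$ indexed by ordinals cannot have length $\geq\omega_1^{ck(z)}$ while being ``z-recursively traceable''). Since $\delta(x)$ is, by its definition, essentially the closure ordinal of this process — it is the least $\alpha$ past which no pair $(x,V_k)\leq_\alpha (x,V_m)$ can be improved to $\leq_{\alpha+1}$ — we get $\delta(x)\leq\omega_1^{ck(x_G)}=\omega_1^{ck(x)}$. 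The small discrepancy that $\delta(x)$ uses shrinking/expanding of open sets rather than literal stabilization of $\leq_\alpha$ is absorbed by the Remark following its definition (different bookkeeping changes the rank by at most $1$, and $\omega_1^{ck(x)}$ is a limit ordinal, in fact admissible, so a $+1$ does not matter).

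For the ``in particular'' clause: by Theorem \ref{main theorem section 2} the orbit $[x]$ is $\{y\ :\ y\equiv_{\delta(x)+m}x\}$ for some $m\in\omega$, and by Theorem \ref{Complexity of =00003D=00003D=00003D} the relation $\equiv_{\delta(x)+m}$ is $\mathbf{\Pi^0_{\delta(x)+m'}}$ for some $m'\in\omega$; combined with the bound just proved and the fact that $\delta(\cdot)$ is $E_G^X$-invariant (so one may replace $x$ by the orbit-representative minimizing $\omega_1^y$), this gives that $[x]$ is $\mathbf{\Pi^0_{\omega_1^{[x]}+n}}$ for some $n\in\omega$, where $\omega_1^{[x]}=\min\{\omega_1^y\ :\ yEx\}$.

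The main obstacle is the first paragraph: verifying \emph{uniformly and effectively} that the base relation and the inductive step are $\Pi^0_1$ (or at worst arithmetic) in $x_G$. One must check that the topological facts used — ``$\overline{V_k\cdot x}\subseteq\overline{V_m\cdot x}$ iff for every basic $U_l$ meeting $V_m\cdot x$ every basic neighborhood... '' etc. — translate into quantification over $\omega$ only (over indices of basic open sets), with the combinatorial data about the bases ($U_l\subseteq U_{l'}$, $V_k\cdot g\subseteq V_{k'}$ for $g$ in a basic set, the continuity estimates in Lemma \ref{lem:(a) (b) (c)}(b)) all being recursive in the fixed presentation and therefore not costing any oracle power. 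Once that is pinned down, the appeal to the closure ordinal of a $\Pi^0_1(x_G)$ inductive definition is routine.
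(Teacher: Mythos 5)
Your proposal is correct in substance but packages the argument differently from the paper. You reduce the bound to the general closure--ordinal theorem for monotone (positive) arithmetic inductive definitions relative to an oracle: the restriction of $\leq_\alpha$ to basis pairs $(x,V_k),(x,V_m)$ is the transfinite iteration of a positive operator that is arithmetic in $x_G$ (the base case is the $\Pi^0_1(x_G)$ condition $\forall l\,[x_G(\langle k,l\rangle)=1\Rightarrow x_G(\langle m,l\rangle)=1]$, and the successor clause is one $\forall\exists$ alternation over the recursive containment relation on $\mathfrak{B}_0$), so its closure ordinal is at most $\omega_1^{ck(x_G)}$. The paper instead runs the underlying mechanism by hand: it introduces the arithmetic ``stage--verification'' relation $S(x_G,z,R)$, assumes $\delta(x)>\omega_1^{ck(x)}$, extracts from a failure of stepping--up a family of ordinals $\alpha_j$ cofinal in $\omega_1^{ck(x)}$ whose codes form a $\Sigma_1^1(x)$ set, and contradicts $\Sigma^1_1$--boundedness. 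These are the same proof at bottom (the closure--ordinal theorem is itself proved by exactly such a boundedness argument with a relation like $S$), but your route buys brevity by citing the packaged theorem, while the paper's explicit relation $S$ is reused later for Sacks' theorem and for the complexity of rank comparisons, so the hands--on version earns its keep.

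Two small repairs. First, $\delta(x)$ is not literally the closure ordinal $\kappa$ of your induction, and the Remark you invoke (basis--dependence of the rank) is not the relevant fix; what you actually need is the inequality $\delta(x)\leq\kappa$, which follows because once $\leq_\kappa$ and $\leq_{\kappa+1}$ coincide on basis pairs, the shrink/expand stepping--up condition in the definition of $\delta(x)$ holds automatically by the monotonicity Lemma \ref{basic lemma} -- and this is the direction the theorem needs. Second, for your operator's stages to agree with the $\leq_\alpha$ hierarchy you should note that the sequence is genuinely monotone in $\alpha$, which is part (b) of Lemma \ref{lem:(a) (b) (c)}; with that, your argument and the ``in particular'' clause (via Theorem \ref{main theorem section 2}, the complexity bound on $\equiv_\alpha$, and orbit--invariance of $\delta$) go through as you describe.
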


For the proof, we first analyze the lightface complexity of $\leq_{\alpha}$.
First some ad-hoc definitions:

\begin{defn}

We say that $k^{*}$ is \textbf{contained} in $k$ if $V_{k^{*}}\subseteq V_{k}$.
We say that $k^{*},m^{*}\in\omega$ are \textbf{fine} with respect
to $k,m\in\omega$ if $\overline{V_{k^{*}}}\subseteq V_{k}$ and $\overline{V_{m}}\subseteq V_{m^{*}}.$
We assume that both these relations are recursive.

We define the relation $S$ as follows:

For $x_{G}\in2^{\omega}$ as above, $z\in2^{\omega},$ $R\in LO$:
$(x_{G},z,R)\in S\iff$
\end{defn}

\[
\left(\forall k,m:\ z_{\langle n,k,m\rangle}=1\iff(x,V_{k})\leq_{tp(n)\ w.r.t.\ R}(x,V_{m})\right)
\]

$S$ says that $z$ codes all the information about $\leq_{\alpha}$
on $x$ for $\alpha$'s smaller than $tp(R)$. If $R$ is not a well
order, we don't care what $S$ means.

\begin{lem}

$S$ is arithmetic.
\end{lem}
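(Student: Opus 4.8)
The plan is to unwind the definition of $S$ and show that the defining condition can be expressed by a formula in the language of second-order arithmetic with only number quantifiers ranging over $\omega$ (together with the fixed oracles $x_G$, $z$, $R$), so that $S$ is arithmetic in the parameters. The key observation is that the relation $(x,V_k)\leq_{\mathrm{tp}(n)\,w.r.t.\,R}(x,V_m)$ is being \emph{defined} recursively along the linear order $R$, so rather than trying to evaluate it directly, I would read the condition on $z$ as an induction-invariance statement: $z$ codes the correct $\leq_\alpha$-information for all $\alpha<\mathrm{tp}(R)$ if and only if $z$ is closed under the recursive clauses of Definition of $\leq_\alpha$ internally, clause by clause along $R$.

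Concretely, I would break $(x_G,z,R)\in S$ into the conjunction of three closure conditions on $z$, mirroring the three cases in the definition of $\leq_\alpha$. First, the base case: for the $R$-least element $n_0$ of $R$ (if it exists) and all $k,m$, $z_{\langle n_0,k,m\rangle}=1$ iff $\overline{V_k\cdot x}\subseteq\overline{V_m\cdot x}$; using the oracle $x_G$, the latter is an arithmetic statement in $x_G$ (it is $\Pi^0_2$ in $x_G$, since $\overline{V_k\cdot x}\subseteq\overline{V_m\cdot x}$ unwinds to ``for every basic open $U_l$, if $U_l$ meets $V_k\cdot x$ then $U_l$ meets $V_m\cdot x$'', and $U_l\cap(V_k\cdot x)\neq\emptyset$ is exactly $x_G(\langle k,l\rangle)=1$, hence decidable in $x_G$). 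Second, the successor case: for $R$-successors $n'$ of $n$ and all $k,m$, $z_{\langle n',k,m\rangle}=1$ iff for all $k^*$ contained in $k$ there is $m^*$ contained in $m$ with $z_{\langle n,m^*,k^*\rangle}=1$ — this only quantifies over $\omega$ and consults $z$ and the recursive containment relation, so it is arithmetic in $z$. Third, the limit case: for $R$-limit points $n$ (i.e. $n$ has no $R$-predecessor and is not $R$-least) and all $k,m$, $z_{\langle n,k,m\rangle}=1$ iff for all $n'\mathrel{R}n$ we have $z_{\langle n',k,m\rangle}=1$ — again only number quantifiers and a consultation of $R$ and $z$. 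Whether $n$ is $R$-least, an $R$-successor, or an $R$-limit is arithmetic in $R$ (in fact $\Sigma^0_1$ or $\Pi^0_1$ in $R$), and $\mathrm{tp}$ is just a bookkeeping device that need not be evaluated: the condition ``$z$ is correct for all $\alpha<\mathrm{tp}(R)$'' is literally the conjunction of these three closure clauses quantified over all $n$ in the field of $R$.

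The one step I would flag as needing care is verifying that these closure conditions genuinely \emph{characterize} the correct $z$, i.e. that if $z$ satisfies all three clauses then $z_{\langle n,k,m\rangle}=1$ really does equal the value of $(x,V_k)\leq_{\mathrm{tp}(n)}(x,V_m)$ for every $n$ in the field of $R$. This is a transfinite induction along the well-order $R$: the base, successor and limit clauses are exactly the three lines of the definition, so by induction on $\mathrm{tp}(n)$ the value is forced, using Lemma \ref{countable definition <=00003D} to pass between arbitrary open subsets and basic open sets in $\mathfrak{B}_0$ at the successor step. Since this induction is carried out in the metatheory (not inside arithmetic), it causes no complexity blow-up. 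Once the characterization is in hand, $S$ is visibly a countable conjunction of clauses each of which is arithmetic in $(x_G,z,R)$, hence $S$ is arithmetic, as claimed. The main obstacle is purely organizational — keeping straight that $S$ is a statement \emph{about a candidate code $z$} (so all the recursion is ``already done'' and only needs to be checked locally), rather than something we must compute — and making sure the case distinction among least/successor/limit elements of $R$ is itself arithmetic in $R$.
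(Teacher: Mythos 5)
Your proof is correct and is exactly the intended argument: the paper dismisses this lemma with ``Easy,'' and the standard way to make it easy is precisely your observation that $S$ can be rewritten as the conjunction of local closure conditions (base, successor, limit) on the candidate code $z$ along $R$, each of which is arithmetic in $(x_G,z,R)$, with Lemma \ref{countable definition <=00003D} justifying the restriction to basis elements at successor stages and a metatheoretic transfinite induction showing the closure conditions characterize the true $\leq_{tp(n)}$ data when $R$ is a well-order.
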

\selectlanguage{british}%
\begin{proof}

Easy.

\selectlanguage{english}%
\begin{proof}

(of Theorem \ref{thm:Nadel's}): Denote $\omega_{1}^{ck(x)}$ by
$\epsilon.$ Assume by way of contradiction that $\delta(x)>\epsilon.$
Then there are $k,m$ and $k^{*},m^{*}$ fine with respect to $k,m$
such that:
\[
(x,V_{k})\leq_{\epsilon}(x,V_{m})
\]
but
\[
(x,V_{k^{*}})\nleq_{\epsilon+1}(x,V_{m^{*}})
\]
so there is $i$ contained in $k^{*}$ such that for every $j$ contained
in $m^{*}:$
\[
(x,V_{j})\nleq_{\epsilon}(x,V_{i})
\]
For each $j$ as above, we'll choose $\alpha_{j}<\epsilon$ such that
$(x,V_{j})\leq_{\alpha_{j}}(x,V_{i})$ but $(x,V_{j})\nleq_{\alpha_{j}+1}(x,V_{i}).$

\begin{claim}

$\alpha_{j}$ are cofinal in $\epsilon.$
\end{claim}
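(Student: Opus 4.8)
The plan is to show the $\alpha_j$ are cofinal in $\epsilon$ by a standard computability/overspill argument, exploiting the arithmetic complexity of the relation $S$ together with the definition of $\omega_1^{ck(x)}$. Suppose toward a contradiction that the $\alpha_j$ are not cofinal in $\epsilon$, so that there is some $\beta < \epsilon$ with $\alpha_j < \beta$ for every $j$ contained in $m^*$. By definition of $\omega_1^{ck(x)}$, there is a recursive-in-$x$ linear order $R_0$ with $tp(R_0) = \beta$ (in fact we may take $tp(R_0)$ to be any ordinal strictly between $\beta$ and $\epsilon$; what matters is that some $x$-recursive well order has length $\geq \beta+1$). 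By the arithmetic-in-$x$ relation $S$, there is a set $z$ recursive in $x$ (using $R_0$ as a parameter, hence still recursive in $x$ since $R_0$ is $x$-recursive) coding, for each pair $k,m$, whether $(x,V_k) \leq_\gamma (x,V_m)$ for each $\gamma < tp(R_0)$. This uses that $S$ is arithmetic and that an $x$-recursive well order of the right length exists below $\epsilon$.

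First I would make the contradiction explicit. Since every $\alpha_j < \beta \le tp(R_0)$, the set $z$ just constructed records, for each $j$ contained in $m^*$, the ordinal $\alpha_j$ and the fact that $(x,V_j) \leq_{\alpha_j} (x,V_i)$. From $z$ (recursive in $x$) we can therefore arithmetically define $\sup_j(\alpha_j + 1)$ as an $x$-recursive well order: take the disjoint-sum-style order on $\{(j,\xi) : j \text{ contained in } m^*,\ \xi <_{R_0} \text{the point coding } \alpha_j\}$ built from the $R_0$-presentations, which is an $x$-recursive linear order that is a well order of length $\sup_j(\alpha_j+1) \le \beta < \epsilon$. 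Call this ordinal $\beta'$, so $\beta' < \epsilon$ and $\alpha_j < \beta'$ for all such $j$. Then for every $j$ contained in $m^*$ we have $(x,V_j) \leq_{\alpha_j} (x,V_i)$ with $\alpha_j < \beta'$, and since $(x,V_j) \nleq_{\alpha_j+1}(x,V_i)$ and $\alpha_j + 1 \le \beta'$, Lemma \ref{lem:(a) (b) (c)}(b) gives $(x,V_j) \nleq_{\beta'} (x,V_i)$ — wait, that is fine and consistent; the actual contradiction comes from stepping $i,j$ up one more level.

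The real point: with $\beta' < \epsilon$ we have $(x, V_k) \leq_\epsilon (x, V_m)$ hence a fortiori $(x,V_k) \leq_{\beta'} (x,V_m)$ and also $(x,V_k)\leq_{\beta'+1}(x,V_m)$ since $\beta'+1 \le \epsilon$. Now apply the successor clause of the definition of $\leq_{\beta'+1}$ to the contained pair: for $i$ contained in $k^*$ (with $\overline{V_{k^*}} \subseteq V_k$), there is $j$ contained in $m^*$ with $(x, V_j) \leq_{\beta'} (x, V_i)$, using Lemma \ref{basic lemma} and Lemma \ref{countable definition <=00003D} to pass between the $V_k, V_m$ level and the $V_{k^*}, V_{m^*}$ level via the fineness hypothesis. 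But $(x,V_j)\leq_{\beta'}(x,V_i)$ together with $\beta' > \alpha_j$ contradicts the choice of $\alpha_j$ as the exact level where $(x,V_j) \nleq_{\alpha_j+1}(x,V_i)$, since $\alpha_j + 1 \leq \beta'$ forces $(x,V_j) \nleq_{\beta'} (x,V_i)$ by monotonicity. This contradiction shows no such $\beta < \epsilon$ bounds all the $\alpha_j$, i.e. the $\alpha_j$ are cofinal in $\epsilon$.

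The main obstacle I anticipate is the bookkeeping needed to produce the $x$-recursive well order $\beta'$ cleanly: one must check that "$z$ codes $\leq_\gamma$-data for all $\gamma < tp(R_0)$" is genuinely arithmetic in $(x, R_0)$ (this is exactly the content of the lemma that $S$ is arithmetic), and that from such a $z$ one can uniformly read off, for each $j$, a point of $R_0$ coding $\alpha_j$, so that the sup is visibly $x$-recursive. The fineness relations (contained, fine) were assumed recursive precisely to make the quantifiers over $i$ contained in $k^*$ and $j$ contained in $m^*$ recursive, so these do not add complexity. The only subtlety is ensuring that $tp(R_0)$ can be chosen $> \beta$ and $<\epsilon$ — but that is immediate from the definition of $\omega_1^{ck(x)} = \epsilon$ as the supremum of the order types of $x$-recursive well orders, applied to the assumed bound $\beta < \epsilon$.
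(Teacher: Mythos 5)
Your final paragraph is exactly the paper's argument: assuming a bound $\beta<\epsilon$ on the $\alpha_j$'s, use $(x,V_{k})\leq_{\epsilon}(x,V_{m})$ (equivalently $(x,V_{k^*})\leq_{\epsilon}(x,V_{m^*})$) to get $\leq_{\beta+1}$ at a successor level below $\epsilon$, apply the successor clause to $V_i$ to produce a basic $V_j\subseteq V_{m^*}$ with $(x,V_j)\leq_{\beta}(x,V_i)$, and contradict the maximality of $\alpha_j$ via monotonicity. That part is correct and complete. However, your first two paragraphs are an unnecessary detour, and they contain a false assertion: the $z$ witnessing $S(x_G,z,R_0)$ is \emph{not} recursive in $x$. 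The arithmeticity of $S$ makes the \emph{relation} definable at a fixed arithmetic level, but the unique $z$ satisfying it is obtained by transfinite recursion along $R_0$ and is in general only hyperarithmetic in $x$, far from recursive. None of this machinery is needed here: once you assume $\alpha_j<\beta$ for all relevant $j$, the ordinal $\beta'=\sup_j(\alpha_j+1)\leq\beta<\epsilon$ is below $\epsilon$ for trivial set-theoretic reasons, with no need to exhibit it as the order type of an $x$-recursive (or $x$-hyperarithmetic) well order. Indeed you could run your closing argument with $\beta$ itself in place of $\beta'$, which is what the paper does. The effective content of $\epsilon=\omega_1^{ck(x)}$ enters only in the \emph{other} claim of the proof (that $W$ is $\Sigma^1_1(x)$), not in this cofinality claim. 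So: strip the first two paragraphs, keep the last one, and the proof coincides with the paper's.
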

\begin{proof}

Otherwise, there is $\beta<\epsilon$ such that all $\alpha_{j}$'s
are less than $\beta.$ But $(x,V_{k^{*}})\leq_{\epsilon}(x,V_{m^{*}})$
so there is $j$ contained in $m^{*}$ such that $(x,V_{j})\leq_{\beta+1}(x,V_{i})$.
Which is a contradiction.

\end{proof}
\end{proof}

However:

\begin{claim}

The set $W=\{(j,R)\ :\ tp(R)=\alpha_{j};\ j\ is\ contained\ in\ m^{*}\}$
is $\Sigma_{1}^{1}(x)$.

\begin{proof}

$(j,R)\in W$ $\iff$ all of the following are satisfied:

\begin{enumerate}

\item $R$ is a linear order computable by $x$. $j$ is contained in $m^{*}$.
\item There exist a program $e$, natural numbers $n,l$ and a sequence
$z$ such that $[e](x)$ is $tp(R)+2$ , $n$ is the last element
of $[e](x)$, $l$ is its immediate predecessor, $(x,z,[e](x))\in S$
, $z_{\langle l,j,i\rangle}=1$ and $z_{\langle n,j,i\rangle}=0.$
\item For every program $e$, if $[e](x)$ is a well order,\textbf{ then}
there is a program $e'$, a sequence $z$, and a natural number $n$
such that $[e'](x)$ is $[e](x)+1$, $n$ is the last element of $[e'](x)$,
$(x,z,[e'](x))\in S$ , and if $z_{\langle n,j,i\rangle}=0$ then
there is an embedding $i:(\omega,R)\to(\omega,[e](x))$, order preserving
and onto a proper initial segment of $[e](x)$.

\end{enumerate}
\end{proof}
\end{claim}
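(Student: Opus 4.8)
The plan is to exhibit $W$ as a finite conjunction of conditions on the pair $(j,R)$, each visibly $\Sigma_{1}^{1}(x)$, and then to check that the conjunction defines $W$. (For the intended application it suffices to let $W$ consist of pairs $(j,R)$ in which $R$ is a recursive-in-$x$ code for a well order: every $\alpha_{j}$ with $j$ contained in $m^{*}$ is $<\epsilon=\omega_{1}^{ck(x)}$ and so has a recursive-in-$x$ presentation, hence the order types occurring are still cofinal in $\epsilon$.) The basic device is the previous lemma, that $S$ is arithmetic in its three arguments: once we hold a recursive-in-$x$ code $e$ for a well order of some type $\beta$ and a sequence $z$ with $(x,z,[e](x))\in S$, the truth of $(x,V_{k})\leq_{\gamma}(x,V_{m})$ for every $\gamma<\beta$ is decoded from $z$ arithmetically, so a $\Sigma_{1}^{1}(x)$ formula can talk about $\leq_{\gamma}$ uniformly for $\gamma<\omega_{1}^{ck(x)}$. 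This is exactly what is needed to read off the $\alpha_{j}$; and once $W$ is $\Sigma_{1}^{1}(x)$, the boundedness theorem applied to the $\Sigma_{1}^{1}(x)$ set $\{R:\exists j\,(j,R)\in W\}$ of well orders bounds the $\alpha_{j}$ below $\epsilon$, contradicting their cofinality.

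Concretely, I would take $(j,R)\in W$ to mean the conjunction of: (i) $R$ is a recursive-in-$x$ code for a linear order and $j$ is contained in $m^{*}$; and (ii) there are $e,n,l$ and a sequence $z$ with $[e](x)$ a well order of type $tp(R)+2$, $n$ its top element, $l$ the predecessor of $n$ (so $tp(l)=tp(R)$, $tp(n)=tp(R)+1$), $(x,z,[e](x))\in S$, $z_{\langle l,j,i\rangle}=1$ and $z_{\langle n,j,i\rangle}=0$. Using that $(x,V_{j})\leq_{\gamma}(x,V_{i})$ holds exactly when $\gamma\leq\alpha_{j}$ (by the choice of $\alpha_{j}$ and monotonicity of $\leq$), condition (ii) says $(x,V_{j})\leq_{tp(R)}(x,V_{i})$ while $(x,V_{j})\nleq_{tp(R)+1}(x,V_{i})$, i.e.\ (for $R$ a well order) $tp(R)=\alpha_{j}$. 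The quantifier prefix of (ii) is $\exists e\exists n\exists l\exists z$ over an arithmetic-in-$(x,R,z)$ matrix, provided ``$[e](x)$ is a well order of type $tp(R)+2$'' is phrased so as not to reintroduce a $\Pi_{1}^{1}$ clause --- e.g.\ as the existence of an order isomorphism between $[e](x)$ and $R$ with two points adjoined on top, or, using $R\leq_{T}x$, by letting $e$ run a fixed procedure on some $e_{R}$ with $[e_{R}](x)=R$; either way (ii) is $\Sigma_{1}^{1}(x)$.

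The remaining condition (iii) is the delicate one --- it must pin down that $R$ is well founded without leaving $\Sigma_{1}^{1}$. I would take it to be: for every program $e$, if $[e](x)$ is a well order then there are $e',z,n$ with $[e'](x)=[e](x)+1$, $n$ its top element, $(x,z,[e'](x))\in S$, and such that if $z_{\langle n,j,i\rangle}=0$ --- equivalently $tp([e](x))>\alpha_{j}$ --- then $R$ embeds order-preservingly onto a proper initial segment of $[e](x)$, equivalently $tp(R)<tp([e](x))$. The antecedent ``$[e](x)$ is a well order'' is $\Pi_{1}^{1}(x)$, but it occurs negatively, so the implication is a disjunction of a $\Sigma_{1}^{1}(x)$ statement with an $\exists e'\exists z\exists n$ over a matrix arithmetic except for the $\Sigma_{1}^{1}$ clause ``$R$ embeds onto a proper initial segment of $[e](x)$''; hence the body is $\Sigma_{1}^{1}(x)$, and the outer quantifier $\forall e$, being over \emph{numbers}, keeps it $\Sigma_{1}^{1}(x)$. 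The content of (iii) is: no recursive-in-$x$ well order has order type strictly above $\alpha_{j}$ and at most $tp(R)$. Since recursive-in-$x$ well orders realize every ordinal below $\omega_{1}^{ck(x)}$ and $\alpha_{j}<\omega_{1}^{ck(x)}$, (iii) is equivalent to ``$R$ is well founded with $tp(R)\leq\alpha_{j}$'': an ill-founded $R$ embeds onto an initial segment of no well order, so (iii) would force $tp([e](x))\leq\alpha_{j}$ for \emph{every} recursive-in-$x$ well order $[e](x)$ --- impossible; and a well order $R$ with $tp(R)>\alpha_{j}$ is refuted by a recursive-in-$x$ well order of type $\alpha_{j}+1\leq tp(R)$.

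Finally, the conjunction of (i)--(iii) is a finite conjunction of $\Sigma_{1}^{1}(x)$ relations on $(j,R)$, hence $\Sigma_{1}^{1}(x)$, and I would check it defines $W$: if $R$ is a well order with $tp(R)=\alpha_{j}$ and $j$ contained in $m^{*}$, we may take $R$ recursive in $x$ so (i) holds, (ii) holds with the evident witnesses, and (iii) holds since $tp(R)=\alpha_{j}\leq\alpha_{j}$; conversely (i) and (iii) already force $R$ to be a well order with $tp(R)\leq\alpha_{j}$, whence (ii)'s $S$-witness is meaningful and gives $tp(R)\geq\alpha_{j}$, and (i) records $j$ contained in $m^{*}$. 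The step I expect to be the real obstacle is (iii): one has to repackage the $\Pi_{1}^{1}$ assertion ``$R$ is well founded'' as a $\Sigma_{1}^{1}(x)$ one, which is possible only because of the ambient bound $\alpha_{j}<\omega_{1}^{ck(x)}$ --- the trick being to quantify universally over recursive-in-$x$ well orders, so that their ``well order'' clause enters negatively, and to use that their order types are cofinal in $\omega_{1}^{ck(x)}$.
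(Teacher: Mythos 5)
Your proof is correct and takes essentially the same route as the paper: the same three-clause definition (restricting to $R$ recursive in $x$ and $j$ contained in $m^{*}$, the existential witness through the arithmetic relation $S$ pinning down $tp(R)=\alpha_{j}$, and the universal-over-programs clause that expresses well-foundedness of $R$ in a $\Sigma_{1}^{1}(x)$ way by letting ``$[e](x)$ is a well order'' occur only negatively). The additional justification you give for clause (iii) is exactly what the paper leaves implicit.
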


The last 2 claims contradict the boundedness theorem.

\end{proof}
\selectlanguage{english}%

We next generalize Sacks' theorem:

\begin{thm}

\label{theorem sacks}If for every $x$, $\delta(x)\leq\alpha$ or
$\delta(x)<\omega_{1}^{ck(x)}$ , then Hjorth ranks are bounded.

\end{thm}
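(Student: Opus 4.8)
The plan is to mimic the classical Sacks argument for Scott rank, using the boundedness principle (Theorem \ref{Becker Kechris Theorem}) as the target: it suffices to show that $E_G^X$ is Borel, for then Hjorth ranks are automatically bounded. Suppose toward a contradiction that $E_G^X$ is not Borel, so by Theorem \ref{Becker Kechris Theorem} the ranks $\delta(x)$ are unbounded in $\omega_1$. The hypothesis says every $x$ falls into one of two classes: the ``low'' ones with $\delta(x)\le\alpha$, and the ``reflecting'' ones with $\delta(x)<\omega_1^{ck(x)}$. Since $\{x : \delta(x)\le\alpha\}$ is Borel (it is $\mathbf{\Pi^0_{\alpha+m(\alpha)}}$ by the Lemma preceding Theorem \ref{main theorem section 2}), the set $C=\{x : \delta(x)>\alpha\}$ is Borel invariant and nonempty (indeed cofinal in rank), and on $C$ we have $\delta(x)<\omega_1^{ck(x)}$ for every $x$. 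The contradiction will come from a boundedness/$\Sigma^1_1$-bounding argument on $C$.

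The key step is to express, in a $\Sigma^1_1$ way, the statement ``$\delta(x)$ is witnessed below a recursive-in-$x$ ordinal.'' Concretely, I would argue: for $x\in C$, since $\delta(x)<\omega_1^{ck(x)}$, there is an $x$-recursive well order $R$ of type exactly $\delta(x)$ (or at least $\ge\delta(x)$), together with an $x$-recursive witness $z$ coding the values of $\leq_\beta$ on $x$ for $\beta$ up to $\mathrm{tp}(R)$ — this is exactly the relation $S$ introduced before Theorem \ref{thm:Nadel's}, which is arithmetic. Using $S$ one writes down, uniformly in $x$, the $\Sigma^1_1(x)$ predicate ``there is an $x$-recursive $R$ such that $z$ with $(x,z,R)\in S$ shows that at $\mathrm{tp}(R)$ the stepping-up condition of the definition of $\delta(x)$ already holds'' — i.e. $\mathrm{tp}(R)$ is an upper bound for $\delta(x)$. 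By the analysis in the proof of Theorem \ref{thm:Nadel's}, for $x\in C$ this predicate holds, and moreover the least such $\mathrm{tp}(R)$ is exactly $\delta(x)$. Now consider the set $D=\{\,O \in \mathrm{LO}\cap\Delta^1_1(x)\ :\ O \text{ is a well order witnessing } \delta(x)\le \mathrm{tp}(O)\,\}$ as $x$ ranges over $C$; relativized boundedness for $\Sigma^1_1$ sets of linear orders, applied fibrewise and then uniformized, bounds $\mathrm{tp}(O)$ — hence $\delta(x)$ — below a single countable ordinal on $C$. That bounds $\delta$ on all of $X$, contradicting unboundedness.

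More carefully, I expect the argument to run through a single application of the $\Sigma^1_1$-bounding theorem in the style used for Claim/Claim in the proof of Theorem \ref{thm:Nadel's}: define $W' = \{(x, R) : x\in C,\ R\in\mathrm{LO},\ R \text{ is recursive in } x,\ \mathrm{tp}(R)\ge\delta(x)\}$, show $W'$ is $\Sigma^1_1$ (using that $S$ is arithmetic, that ``$R$ recursive in $x$'' is $\Sigma^1_1$, and that the stepping-up clause of the definition of $\delta(x)$ — an assertion about all $V_0,V_1,W_0,W_1$ in $\mathfrak B_0$ and the arithmetic data $z$ — is arithmetic-in-$(x,z)$), and observe that for each $x\in C$ the section $W'_x$ is nonempty. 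Then $\{R : \exists x\in C\ (x,R)\in W'\}$ — or rather the projection of $W'$ to the $\mathrm{LO}$ coordinate restricted to well orders — is a $\Sigma^1_1$ set consisting only of well orders, so by boundedness there is $\gamma<\omega_1$ with $\mathrm{tp}(R)<\gamma$ for all such $R$; hence $\delta(x)<\gamma$ for all $x\in C$, and $\delta(x)\le\max(\alpha,\gamma)$ for all $x\in X$.

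The main obstacle is verifying that the set of $R$'s is genuinely $\Sigma^1_1$ and consists only of well orders. The delicate point is that I must not quantify ``$\forall R'$ recursive in $x$, $R'$ well-founded $\Rightarrow \dots$'' in a way that smuggles in a $\Pi^1_1$ clause — in the Nadel proof this was handled by the careful three-clause formulation of the Claim, and the same trick (replacing the outer $\Pi^1_1$ minimality of $\delta(x)$ by an internal comparison via embeddings onto proper initial segments, as in clause (3) there) should be used here to keep everything $\Sigma^1_1$. The other thing to be careful about is uniformity in $x$: since $C$ is Borel (hence $\Delta^1_1$ in a code, or at worst $\mathbf\Sigma^1_1$), folding ``$x\in C$'' into the $\Sigma^1_1$ matrix is harmless, and the boundedness theorem applies to the resulting $\Sigma^1_1$ set of countable linear orders. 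Once that bookkeeping is done, the contradiction with unboundedness of $\delta$ is immediate, and invoking Theorem \ref{Becker Kechris Theorem} in the contrapositive gives that $E_G^X$ is Borel and the ranks are bounded.
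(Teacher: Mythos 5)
Your proposal is correct and runs on the same engine as the paper's proof: the arithmetic relation $S$, the $\Sigma_{1}^{1}$/$\Pi_{1}^{1}$ formulas of Lemma \ref{lem:good and bad} for ``$\delta(x)\leq tp(R)$'', and a single application of $\Sigma_{1}^{1}$-boundedness; you also correctly isolate the one delicate point, namely that well-foundedness of $R$ must be forced internally by the embedding-onto-an-initial-segment clause rather than asserted as a $\Pi_{1}^{1}$ side condition. The genuine difference is in how the dichotomy in the hypothesis is absorbed. You split $X$ into the Borel set $\{x:\delta(x)\leq\alpha\}$ and its complement $C$, on which $\delta(x)<\omega_{1}^{ck(x)}$, and then run a boldface boundedness argument over $C$ (folding the Borel condition $x\in C$ into the $\Sigma_{1}^{1}$ matrix), finally taking $\max(\alpha,\gamma)$. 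The paper instead fixes a single real $y$ computing $\alpha$, observes that then $\delta(x)<\omega_{1}^{ck(x,y)}$ holds for \emph{every} $x$ (both horns of the dichotomy collapse into one), and applies boundedness to the lightface-in-$y$ set $H=\{(R,x):tp(R)=\delta(x)\}$ with no case split at all. Your route costs a little extra bookkeeping (the Borel code of $C$ as a parameter, the final maximum) but is no less valid; the paper's relativization trick buys uniformity and a cleaner one-shot application of boundedness. Two small remarks: your opening reduction through ``$E_{G}^{X}$ is Borel'' and the proof by contradiction are unnecessary detours, since the boundedness argument directly produces the bound on $\delta$; and in your set $W'$ you should follow through on replacing ``$tp(R)\geq\delta(x)$'' by the equality version with the three-clause formulation, since the inequality version by itself does not exclude ill-founded $R$ from the projection.
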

\selectlanguage{british}%

First a useful lemma:

\selectlanguage{english}%
\begin{lem}

\label{lem:good and bad}There are $\Sigma_{1}^{1}$ and $\Pi_{1}^{1}$
formulas $\phi(x,R)$ and $\psi(x,R)$ such that if $R\in WO$ then
$\phi(x_{G},R)$ if and only if $\psi(x_{G},R)$ if and only if
\[
\forall k\forall m\left(\forall k^{*}\forall m^{*}\ fine\ w.r.t.\ \ k,m\right)\ \ (x,V_{k})\leq_{tp(R)}(x,V_{m})\Rightarrow(x,V_{k^{*}})\leq_{tp(R)+1}(x,V_{m^{*}})
\]
if and only if $\delta(x)\leq tp(R).$

\begin{proof}

Consider the following $\Sigma_{1}^{1}$ formula:

There exist a program $e$, natural numbers $n,l$ and a sequence
$z$ such that $[e](x)$ is $tp(R)+2$, $(x,z,[e](x))\in S$, $n$
is the last element in $[e](x)$ , $l$ is its immediate predecessor,
and for all $k,m$ and for all $k^{*},m^{*}$ fine with respect to
$k,m,$ if $z_{\langle l,k,m\rangle}=1$ then $z_{\langle n,k^{*},m^{*}\rangle}=1$.

Under the above conditions, this can also be written by a $\Pi_{1}^{1}$
formula.

\end{proof}
\end{lem}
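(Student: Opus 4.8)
The plan is to exhibit explicit $\Sigma^1_1$ and $\Pi^1_1$ formulas $\phi(x,R)$ and $\psi(x,R)$ and verify the chain of equivalences, for $R \in WO$. The central tool is the arithmetic relation $S$, which on a well-order $R$ witnesses a sequence $z$ coding the truth values of $(x,V_k)\leq_{tp(n)}(x,V_m)$ for all $n$ indexing ordinals below $tp(R)$. The key observation is that, given $R$, there is a program $e$ computing $[e](x) = tp(R)+2$ (a presentation two steps longer), so that within one such presentation we can simultaneously access the $\leq$-information at levels $tp(R)$ and $tp(R)+1$, reading them off from the index $l$ (immediate predecessor of the last element $n$) and from $n$ itself. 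The ``step up'' condition in the definition of $\delta(x)$ — that $(x,V_k)\leq_{tp(R)}(x,V_m)$ implies $(x,V_{k^*})\leq_{tp(R)+1}(x,V_{m^*})$ for all $k^*,m^*$ fine with respect to $k,m$ — then becomes an arithmetic condition on the single sequence $z$, namely that $z_{\langle l,k,m\rangle}=1$ forces $z_{\langle n,k^*,m^*\rangle}=1$ for all fine $k^*,m^*$.

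First I would write the $\Sigma^1_1$ formula $\phi(x,R)$ exactly as displayed in the proof sketch: there exist a program $e$, natural numbers $n,l$, and a sequence $z$ such that $[e](x)$ is a presentation of $tp(R)+2$, with $(x,z,[e](x))\in S$, where $n$ is the last element of $[e](x)$ and $l$ its immediate predecessor, and such that the step-up implication holds coordinatewise on $z$. Since $S$ is arithmetic by the preceding lemma, the whole statement is $\Sigma^1_1$ (the existential quantifier over the real $z$ and over $[e](x)$ being the only second-order content). The next step is to check that on $R \in WO$ this formula is equivalent to the displayed step-up property, hence to $\delta(x)\leq tp(R)$: once $R$ is well-ordered, $tp(R)+2$ is a genuine ordinal with a computable-in-$x$ presentation, the sequence $z$ is uniquely determined on the relevant coordinates by $S$, and the arithmetic clause on $z$ transcribes precisely the definition of the Hjorth rank being at most $tp(R)$. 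That $\phi$ computes the intended property is thus a routine unwinding of definitions.

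Then I would produce the $\Pi^1_1$ formula $\psi(x,R)$ asserting the \emph{same} content but phrased universally: for \emph{every} program $e$ and sequence $z$, \emph{if} $[e](x)$ presents $tp(R)+2$ and $(x,z,[e](x))\in S$ with $n,l$ the last element and its predecessor, \emph{then} the step-up implication holds on $z$. The point is that for $R \in WO$ the sequence $z$ is uniquely pinned down by $S$ on the coordinates we use, so the existential $\phi$ and the universal $\psi$ agree: any witness $z$ for $\phi$ is the only admissible $z$, and conversely $\psi$ quantifies vacuously-or-uniquely over that same $z$. This gives $\phi(x,R)\iff\psi(x,R)$ on $WO$, with $\phi\in\Sigma^1_1$ and $\psi\in\Pi^1_1$.

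The main obstacle — and the step deserving the most care — is verifying that on a well-order $R$ the step-up condition expressed arithmetically on $z$ really coincides with the defining clause of $\delta(x)\leq tp(R)$, including the quantification over all $k,m$ and all $k^*,m^*$ fine with respect to them. Here one must confirm that ``fineness'' (recursive by assumption) and ``containment'' interact correctly with the indices appearing in $z$, and that the level $tp(R)+1$ information genuinely appears at coordinate $n$ while level $tp(R)$ appears at coordinate $l$; this rests on the convention, built into $S$, that $z_{\langle n,k,m\rangle}$ records $(x,V_k)\leq_{tp(n)}(x,V_m)$ with $tp(n)$ the rank of $n$ in $R$. Once this bookkeeping is checked, the equivalence with $\delta(x)\leq tp(R)$ follows directly from the definition of the Hjorth rank, and the lemma is complete.
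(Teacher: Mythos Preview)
Your proposal is correct and follows the paper's approach: the $\Sigma_1^1$ formula you write is verbatim the paper's, and your explicit $\Pi_1^1$ dual---universally quantifying over $e$ and $z$ with the conditions moved into the hypothesis---is exactly what the paper's terse remark ``this can also be written by a $\Pi_1^1$ formula'' intends, the equivalence on $WO$ resting (as you observe) on the essential uniqueness of $z$ on the coordinates that $S$ constrains. The only point worth watching is that the program $e$ must be allowed access to $R$ as well (or, more simply, one uses $R$ with two new top elements adjoined directly), since otherwise an order of type $tp(R)+2$ need not be $x$-computable; this is a notational looseness already present in the paper and does not affect your argument.
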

\begin{proof}

\textit{(of Theorem \ref{theorem sacks})} Fix $y$ that computes
$\alpha.$ So for every $x$, $\delta(x)<\omega_{1}^{ck(x,y)}.$ Denote
by $H$ the set
\[
\{(R,x)\ :\ tp(R)=\delta(x)\}
\]
We claim that $H$ is $\Sigma_{1}^{1}(y)$:

$(R,x)\in H$ if and only if all of the following are satisfied:

\begin{enumerate}

\item $R$ is a linear order computable by $x,y$.
\selectlanguage{british}%
\item $\delta(x)\leq tp(R).$
\selectlanguage{english}%
\item For every program $e$, if $e[x,y]$ computes a well order $Q$ and
$\delta(x)\leq tp(Q)$, then there is $i:(\omega,R)\to(\omega,Q),$
order preserving and onto an initial segment of $Q$.

\end{enumerate}

Using Lemma \ref{lem:good and bad}, $2$ and $3$ are $\Sigma_{1}^{1}(y)$.

By the boundedness theorem, the Hjorth ranks are bounded.

\end{proof}

We apply the above to compute the complexity of rank comparisons:

\begin{cor}

\label{cor:BP}$\{(x,y)\ :\ \delta(x)=\delta(y)\}$ is analytic. $\{(x,y)\ :\ \delta(x)<\delta(y)\}$
is $\mathbf{\Pi_{1}^{1}}$ . $\{(x,y)\ :\ \delta(x)\leq\delta(y)$\}
is analytic.

In light of Proposition \ref{delta(x)=00003Ddelta(y) Borel}, these
are optimal.

\begin{proof}

We remind that $x\to x_{G}$ is Borel.

$\delta(x)=\delta(y)$ if and only if for every program $e$, if $e[x_{G},y_{G}]$
computes a well order $R$, then $\delta(x)\leq tp(R)\iff\delta(y)\leq tp(R)$.
We now use Lemma \ref{lem:good and bad}.

$\delta(x)<\delta(y)$ if and only if there is a program $e$ such
that $e[y_{G}]$ computes a well order $R$, and $\delta(x)\leq tp(R)$
but $\delta(y)>tp(R)$. Again, we use Lemma \ref{lem:good and bad}.

\end{proof}
\end{cor}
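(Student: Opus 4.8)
The plan is to express each of the three comparison relations using the formulas $\phi$ and $\psi$ from Lemma \ref{lem:good and bad}, which say ``$\delta(x)\le tp(R)$'' in both a $\Sigma^1_1$ and a $\Pi^1_1$ way whenever $R\in WO$. First I would recall that $x\mapsto x_G$ is Borel, so quantifying over programs $e$ and applying these formulas to $e[x_G]$ (or $e[x_G,y_G]$) keeps everything within the analytic/co-analytic hierarchy, and the boldface complexity on $X$ (or $X^2$) is as claimed.

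For $R_{<}=\{(x,y):\delta(x)<\delta(y)\}$: I would write $\delta(x)<\delta(y)$ iff there exists a program $e$ such that $e[y_G]$ computes a well order $R$ \emph{and} $\delta(x)\le tp(R)$ \emph{and} $\delta(y)>tp(R)$. The point is to use the $\Sigma^1_1$ form $\phi(x_G,R)$ for the positive clause ``$\delta(x)\le tp(R)$'' and the negation of the $\Pi^1_1$ form $\psi(y_G,R)$ --- equivalently $\neg\phi(y_G,R)$ is not quite right, so here one uses that ``$\delta(y)>tp(R)$'' is ``$\neg(\delta(y)\le tp(R))$'', and since for $R\in WO$ this coincides with $\neg\psi(y_G,R)$, a $\Sigma^1_1$ condition. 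The outer ``$\exists e$'' and ``$e[y_G]\in WO$'' are arithmetic-over-$\Sigma^1_1$, so the whole thing is $\mathbf{\Pi^1_1}$. (The ``$\exists e$'' looks $\Sigma^1_1$, but the standard trick --- already used in the proof of Theorem \ref{thm:Nadel's} --- is that ``$\delta(y)>tp(R)$ for \emph{some} well order $R$ computable from $y_G$'' is equivalent to ``$\delta(y)$ is not $\le tp(R)$ for \emph{every} such $R$,'' giving the correct $\mathbf{\Pi^1_1}$ form; I would present it in the latter shape to make the classification transparent.)

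For $R_{=}=\{(x,y):\delta(x)=\delta(y)\}$ and for $\{(x,y):\delta(x)\le\delta(y)\}$: I would write $\delta(x)=\delta(y)$ iff for every program $e$, if $e[x_G,y_G]$ computes a well order $R$ then $\delta(x)\le tp(R)\iff\delta(y)\le tp(R)$; using Lemma \ref{lem:good and bad} each side of the biconditional can be taken in whichever form ($\Sigma^1_1$ or $\Pi^1_1$) makes the matrix $\Sigma^1_1$, and the universal quantifier over programs with the ``well order'' hypothesis is $\Pi^1_1$-over-that, so overall analytic. The same recipe, dropping one direction of the biconditional, handles $\{(x,y):\delta(x)\le\delta(y)\}$. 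Finally, optimality: Proposition \ref{delta(x)=00003Ddelta(y) Borel} shows $R_{=}$ and $R_{<}$ are Borel exactly when $E^X_G$ is Borel, so when $E^X_G$ is not Borel these sets are non-Borel, and the reasoning there (height $\omega_1$ for the well-founded $R_{<}$) shows $R_{<}$ is properly $\mathbf{\Pi^1_1}$; hence the classifications above cannot be improved.

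The main obstacle I anticipate is bookkeeping the quantifier arithmetic carefully enough to land on the \emph{sharp} level in each case --- in particular making sure the ``$\exists e$'' in the definition of $R_{<}$ does not spuriously push it up to $\mathbf{\Sigma^1_1}$, which is why I would rephrase $R_{<}$ via a universal statement over programs before invoking Lemma \ref{lem:good and bad}.
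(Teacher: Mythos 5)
Your characterizations of $R_{=}$, $R_{<}$ and $R_{\leq}$ in terms of well orders computed from $(x_{G},y_{G})$ (resp.\ $y_{G}$) are exactly the paper's, and your handling of the two analytic sets matches its argument. The treatment of $R_{<}$, however, goes wrong precisely at the complexity count, which is the entire content of that clause. You choose the $\Sigma_{1}^{1}$ representatives --- $\phi(x_{G},R)$ for ``$\delta(x)\leq tp(R)$'' and $\neg\psi(y_{G},R)$ for ``$\delta(y)>tp(R)$'' --- and together with the $\Pi_{1}^{1}$ clause ``$e[y_{G}]\in WO$'' this makes the matrix a conjunction of a $\Sigma_{1}^{1}$ set with a $\Pi_{1}^{1}$ set, which is not $\Pi_{1}^{1}$. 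The correct choice is the opposite one: write ``$\delta(x)\leq tp(R)$'' as $\psi(x_{G},R)$ and ``$\delta(y)>tp(R)$'' as $\neg\phi(y_{G},R)$, so the whole matrix is $\Pi_{1}^{1}$; since $\Pi_{1}^{1}$ is closed under number quantifiers, the outer ``$\exists e$'' is then harmless and no rewriting is needed. Your worry that ``$\exists e$'' pushes the set up to $\mathbf{\Sigma_{1}^{1}}$ is unfounded, because $e$ ranges over natural numbers, not reals.

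Moreover, the ``trick'' you offer to repair this is a false equivalence: ``there is a well order $R$ recursive in $y_{G}$ with $\delta(x)\leq tp(R)$ and $\delta(y)>tp(R)$'' is not the same as ``for every such $R$, $\delta(y)\not\leq tp(R)$.'' If $\delta(x)=1$, $\delta(y)=2$ and $y_{G}$ computes a well order of type $5$, the existential form holds (witnessed by an $R'$ of type $1$) while the universal form fails at $R$. So that step must be deleted, not merely polished; if you insist on a universal presentation, the honest one is that $R_{<}$ is the complement of $\{(x,y)\ :\ \delta(y)\leq\delta(x)\}$, which is analytic by your third clause. A final point that both you and the paper leave implicit: the displayed equivalences themselves require Theorem \ref{thm:Nadel's}, which is what guarantees that when $\delta(x)\neq\delta(y)$ a separating well order recursive in the relevant oracle actually exists.
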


Which leads us to the following:

\begin{cor}

\label{cor:non meager A}Let $X$ be a perfect Polish $G$ - space,
$A_{\alpha}=\{x\ :\ \delta(x)=\alpha\}.$ There is an $\alpha<\omega_{1}$
such that $A_{\alpha}$ is non meager.
\end{cor}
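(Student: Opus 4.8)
The plan is to derive this from the complexity bounds of Corollary \ref{cor:BP} together with a Baire category argument on the Polish space $X$. Suppose toward a contradiction that every $A_\alpha$ is meager. Since $X=\bigcup_{\alpha<\omega_1}A_\alpha$ and each $A_\alpha$ is Borel, and $X$ (being Polish and perfect, hence non-meager in itself) is not a countable union of meager sets, the $A_\alpha$ that are nonempty must be cofinal in $\omega_1$; in particular uncountably many of them are nonempty. The idea is that this forces the relation $R_{<}=\{(x,y):\delta(x)<\delta(y)\}$ to have uncountable height, contradicting Corollary \ref{cor:BP}, which asserts $R_{<}$ is $\mathbf{\Pi^1_1}$. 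Indeed, a $\mathbf{\Pi^1_1}$ (in fact analytic) well-founded relation on a Polish space has countable rank, by the boundedness theorem for $\mathbf{\Pi^1_1}$ ranks / Kunen--Martin; but if cofinally many $A_\alpha$ are nonempty we can pick points $x_\alpha\in A_{\alpha}$ along a strictly increasing sequence of ordinals of length $\omega_1$, and then $x_\alpha \mathrel{R_<} x_\beta$ for $\alpha<\beta$ exhibits a strictly decreasing... rather, a strictly increasing $\omega_1$-chain, so $R_<$ has height $\ge\omega_1$, contradiction. (This is exactly the mechanism already used in the proof of Proposition \ref{delta(x)=00003Ddelta(y) Borel}.)

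So the only way to avoid the contradiction is to first establish that the set of $\alpha$ with $A_\alpha$ nonempty cannot merely be countable: here one uses that $E_G^X$ is not Borel in the interesting case — but wait, $E_G^X$ may well be Borel, in which case Hjorth ranks are bounded by Theorem \ref{Becker Kechris Theorem} and then $X=\bigcup_{\alpha\le\gamma}A_\alpha$ is a countable union, so one of the $A_\alpha$ is non-meager by Baire category, and we are done immediately. Thus we may assume $E_G^X$ is not Borel, so by Theorem \ref{Becker Kechris Theorem} the ranks are unbounded, the nonempty $A_\alpha$ are cofinal in $\omega_1$, and the argument of the previous paragraph applies to yield a contradiction with the $\mathbf{\Pi^1_1}$-ness of $R_<$.

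Putting it together: if $E_G^X$ is Borel, apply Baire category to the countable cover by the $A_\alpha$'s. If $E_G^X$ is not Borel, assume for contradiction all $A_\alpha$ meager; since $X$ is non-meager and $X=\bigcup A_\alpha$, uncountably many (hence cofinally many) $A_\alpha$ are nonempty; extract an $\omega_1$-chain for $R_<$; this contradicts that $R_<$ is $\mathbf{\Pi^1_1}$, which by Kunen--Martin / $\mathbf{\Pi^1_1}$-boundedness has height $<\omega_1$ on a Borel (here all of $X$) domain. Hence some $A_\alpha$ is non-meager.

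I expect the main obstacle to be purely expository: making precise the step "$R_<$ is $\mathbf{\Pi^1_1}$ and well-founded, hence of countable height," and confirming that the relevant instance of the boundedness/Kunen--Martin theorem applies to $R_<$ as a relation on the full Polish space $X$ (it does, since $R_<$ is a strict well-founded relation and its field is all of $X$, which is Borel). There is also a small point to check that the perfectness hypothesis is really only needed to guarantee $X$ itself is non-meager (equivalently, not a countable union of meager sets) — a single point space would be a trivial counterexample to the "non meager" phrasing otherwise — and that no further use of perfectness is made.
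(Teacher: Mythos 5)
There is a genuine gap, and it is exactly the step you flagged as ``purely expository.'' The Kunen--Martin theorem (equivalently, $\mathbf{\Sigma^1_1}$-boundedness for well-founded relations) says that a well-founded \emph{analytic} relation has countable rank; it says nothing about $\mathbf{\Pi^1_1}$ well-founded relations, which can perfectly well have rank $\omega_1$ (the standard example is $\{(x,y): x,y\in WO,\ |x|<|y|\}$, which is $\mathbf{\Pi^1_1}$ and has height $\omega_1$). Corollary \ref{cor:BP} only gives that $R_<$ is $\mathbf{\Pi^1_1}$, so extracting an $\omega_1$-chain yields no contradiction. In fact the paper's own Proposition \ref{delta(x)=00003Ddelta(y) Borel} makes precisely this point: when $E_G^X$ is not Borel, $R_<$ \emph{does} have height $\omega_1$, and the only conclusion drawn there is that $R_<$ is then not analytic --- which is consistent with its being $\mathbf{\Pi^1_1}$. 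If your claimed implication were valid, it would show that Hjorth ranks are always bounded, i.e.\ that $E_G^X$ is always Borel, which is false. Note also that in your second case the meagerness hypothesis is never actually used: you only use non-meagerness of $X$ to get uncountably many nonempty pieces, but that already follows from unboundedness of the ranks, so the ``contradiction'' you reach could not possibly depend on the assumption being refuted.

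The intended argument is a Baire category argument, not a boundedness argument. Assume every $A_\alpha$ is meager. The relation $\leq^*$ given by $\delta(x)\leq\delta(y)$ is analytic by Corollary \ref{cor:BP}, hence has the Baire property, and the $A_\alpha$ partition $X$ into $\omega_1$ many meager pieces; Fact \ref{fact from kechris} then forces $X$ itself to be meager, contradicting the Baire category theorem for the nonempty perfect Polish space $X$. (Your first case --- $E_G^X$ Borel, ranks bounded by Theorem \ref{Becker Kechris Theorem}, countable cover, Baire category --- is correct, but it is the easy case and the case split is unnecessary once Fact \ref{fact from kechris} is invoked. Your remark that perfectness is only needed to rule out degenerate spaces where ``non-meager'' is vacuous is essentially right, though what is really needed is just that $X$ is nonempty Polish, hence non-meager in itself.)
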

\begin{fact}

\label{fact from kechris}(Kechris \cite{key-9}) Let $X$ be a
Polish space, $A\subseteq X$, $\langle A_{\alpha}\ :\ \alpha\in\omega_{1}\rangle$
a partition of $A$ into a family of disjoint meager sets in $X$.
Let $\leq^{*}$ be defined by: $x\leq^{*}y$ if both $x,y$ are in
$A$, and the $\alpha$ such that $x\in A_{\alpha}$ is smaller or
equal than the $\beta$ such that $y\in A_{\beta}.$ Then if $\leq^{*}$
has the $BP$, then $A$ is meager.
\end{fact}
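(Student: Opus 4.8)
The plan is to argue by contraposition: assuming that $\leq^{*}$ has the Baire property and that $A$ is non-meager, I will derive a contradiction via the Kuratowski--Ulam theorem, exploiting an asymmetry between the horizontal and vertical sections of the relation $R=\{(x,y):x\leq^{*}y\}\subseteq X\times X$. Note that $R$ has the Baire property by hypothesis, and so does its transpose (the coordinate flip is a homeomorphism of $X\times X$), so the theorem is available in both coordinates.

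First I would record what the two families of sections look like. Writing $\rho(x)$ for the unique $\alpha$ with $x\in A_{\alpha}$, the vertical section at $y\in A$ is $R^{y}=\{x:x\leq^{*}y\}=\bigcup_{\gamma\leq\rho(y)}A_{\gamma}$; since $\rho(y)<\omega_{1}$ this is a \emph{countable} union of meager sets, hence meager (and for $y\notin A$ it is empty). By contrast, the horizontal section at $x\in A$ is $R_{x}=\{y:x\leq^{*}y\}=A\setminus\bigcup_{\gamma<\rho(x)}A_{\gamma}$, i.e. $A$ with a meager set deleted; if $A$ is non-meager then so is $R_{x}$, because deleting a meager set from a non-meager set leaves a non-meager set.

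Next I would feed this into Kuratowski--Ulam. Applied to the transpose, $R$ is meager if and only if comeager-many $y$ have $R^{y}$ meager; as \emph{every} $R^{y}$ is meager, this forces $R$ to be meager in $X\times X$. Applied now to $R$ itself, the meagerness of $R$ gives that comeager-many $x$ have $R_{x}$ meager, that is, the set $\{x:R_{x}\text{ is non-meager}\}$ is meager. But the second paragraph shows $A\subseteq\{x:R_{x}\text{ is non-meager}\}$, so $A$ is contained in a meager set and is therefore meager --- contradicting the assumption.

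I expect the proof to be short once the section computation is in place; the substantive point is the use of $\omega_{1}$. The whole argument hinges on the fact that every initial segment $\{\gamma\leq\rho(y)\}$ of the index set is countable, so initial segments of $A$ (the vertical sections) are meager, whereas $A$ itself, a union of $\aleph_{1}$ meager pieces, may well fail to be meager. This is exactly the asymmetry that makes the vertical sections small while the horizontal sections remain large, and it is the crux that Kuratowski--Ulam converts into the desired contradiction; beyond this there is no real obstacle, only the routine verification that $R$ and its sections carry the Baire property, which is immediate from the hypothesis on $\leq^{*}$.
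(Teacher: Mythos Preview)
Your proof is correct and is essentially the standard argument for this result. Note, however, that the paper does not actually give its own proof of this statement: it is recorded as a \emph{Fact} cited from Kechris's textbook \cite{key-9} and is used as a black box in the proof of Corollary~\ref{cor:non meager A}. So there is nothing in the paper to compare your argument against; what you have written is precisely the proof one finds in Kechris (via Kuratowski--Ulam and the asymmetry between the countable initial segments and the non-meager tails of the partition).
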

\begin{proof}

\textit{(of the Corollary)} Using the fact and Corollary \ref{cor:BP}.
\end{proof}
\begin{cor}

\label{cor:non_meager_orbits}Let $X$ be a Polish $G$ - space which
is a counterexample to Vaught conjecture. Let $Y\subseteq X$ be any
non meager set. Then $Y$ has a non meager orbit. Hence, the union
of all meager orbits must be meager.
\end{cor}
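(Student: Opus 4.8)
The plan is to combine Corollary \ref{cor:non meager A} with a localization argument. Let $Y\subseteq X$ be non meager. By the Baire property considerations we may pass to a nonempty open set $O$ in which $Y$ is comeager (if $Y$ is not Baire-measurable the statement is still true, but the typical application has $Y$ Borel or at least with the Baire property; in general one argues inside the $G$-space structure directly). First I would apply Theorem \ref{thm:(-Hjorth-) better topology}, or rather the elementary fact that the saturation $[Y]_G = G\cdot Y$ is non meager whenever $Y$ is: indeed $G\cdot Y \supseteq Y$. So it suffices to find a non meager orbit inside the invariant non meager set $G\cdot Y$, and then intersect back — but actually what we want is a non meager orbit meeting $Y$ itself. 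Here is where I would be careful: a non meager orbit $G\cdot x$ with $x\in G\cdot Y$ automatically meets $Y$, since $G\cdot x = G\cdot y$ for some $y\in Y$, so $y$ lies in that orbit.

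Now the core of the argument. Since $X$ is a counterexample to Vaught's conjecture, $E_G^X$ is not Borel (an orbit equivalence relation with uncountably but not perfectly many classes cannot be Borel, by the Glimm--Effros / Burgess dichotomy, and in particular is not Borel), so by Theorem \ref{Becker Kechris Theorem} the Hjorth ranks $\delta(x)$ are unbounded below $\omega_1$. Apply Corollary \ref{cor:non meager A} to the perfect Polish $G$-space $X$ (if $X$ has isolated points, first discard the countably many of them — removing a countable invariant set does not affect non meagerness): there is $\alpha<\omega_1$ with $A_\alpha=\{x:\delta(x)=\alpha\}$ non meager. The key step is then the following dichotomy applied within $A_\alpha$: the set $A_\alpha$ is an invariant Borel set (Borel by the lemma on $\{x:\delta(x)\le\alpha\}$, invariant since $\delta$ is $G$-invariant), and on $A_\alpha$ the Hjorth ranks are bounded, so by Theorem \ref{Becker Kechris Theorem} $E_G^{A_\alpha}$ is Borel; hence $A_\alpha$ has either countably many orbits or perfectly many. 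In the first case, since $A_\alpha$ is non meager, one of its countably many orbits is non meager, and we are done. In the second case — and this is the real work — I would argue that $A_\alpha$ cannot itself be a counterexample contained in $X$ in a way that avoids non meager orbits: actually, more directly, I would iterate. If \emph{every} $A_\alpha$ has all orbits meager, then each $A_\alpha$ is partitioned into meager orbits, and the map sending $x$ to (its rank, then its orbit) gives, via Corollary \ref{cor:BP}, a relation $\le^*$ with the Baire property; Fact \ref{fact from kechris} then forces $\bigcup_\alpha A_\alpha = X$ to be meager in itself, contradicting the Baire category theorem for the Polish space $X$.

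So the argument really reduces to: Corollary \ref{cor:non meager A} gives a non meager $A_\alpha$; if that $A_\alpha$ had only meager orbits, a second application of Fact \ref{fact from kechris}, this time to the partition of the single non meager set $A_\alpha$ into its (meager) orbits — whose ordering by any Borel linearization has the Baire property because $E_G^{A_\alpha}$ is Borel — would force $A_\alpha$ to be meager, a contradiction. Hence $A_\alpha$ has a non meager orbit; picking $y\in Y$ in that orbit (possible since the orbit meets $G\cdot Y\supseteq Y$ after translating) shows $Y$ meets a non meager orbit. The final sentence follows: each orbit is Borel, the non meager ones are (by Effros) $G_\delta$, there are only countably many of them in any fixed $A_\alpha$ when orbits are meager is vacuous — rather, the union of \emph{all} meager orbits, if non meager, would be a non meager set with no non meager orbit, contradicting what we just proved. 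The main obstacle I anticipate is making the application of Fact \ref{fact from kechris} to the orbit partition (rather than the rank partition) fully rigorous: one needs the Baire property of the orbit-ordering relation, which comes from $E_G^{A_\alpha}$ being Borel plus a uniformization to linearly order the Borel equivalence classes, and one must check the hypotheses of Fact \ref{fact from kechris} are met with $A=A_\alpha$ rather than $A=X$.
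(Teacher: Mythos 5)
There is a genuine gap: your argument produces a non meager orbit somewhere in $X$, but never one that meets $Y$. You apply Corollary \ref{cor:non meager A} to all of $X$ to get a non meager $A_{\alpha}$, extract a non meager orbit inside it, and then assert that this orbit meets $G\cdot Y\supseteq Y$ ``after translating''. That last step is unjustified: $G\cdot Y$ is a non meager invariant set, but nothing forces the particular non meager orbit you found to intersect it --- it could lie entirely in the complement of $G\cdot Y$. So what you have proved is that $X$ has a non meager orbit, which is strictly weaker than the corollary. The repair is exactly the localization you skipped: apply Fact \ref{fact from kechris} to the partition $Y=\bigcup_{\alpha<\omega_{1}}(Y\cap A_{\alpha})$ of $Y$ itself (the Fact is stated for an arbitrary $A\subseteq X$, and the required Baire property of $\leq^{*}$ comes from Corollary \ref{cor:BP}); this yields an $\alpha$ with $Y\cap A_{\alpha}$ non meager. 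Since ranks are bounded on the invariant Borel set $A_{\alpha}$, $E_{G}^{A_{\alpha}}$ is Borel, and since $X$ is a counterexample to Vaught there is no perfect set of inequivalent points, so by Silver's dichotomy $A_{\alpha}$ has only countably many orbits. Hence $Y\cap A_{\alpha}$ is covered by countably many sets of the form $Y\cap A_{\alpha}\cap[x]$, one of which is non meager, and that orbit $[x]$ is non meager and meets $Y$. This is the paper's proof.

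Two smaller points. First, your treatment of the ``perfectly many orbits'' case is a detour: it is excluded immediately by the Vaught-counterexample hypothesis, and the proposed second application of Fact \ref{fact from kechris} to the orbit partition would anyway require a Baire-measurable well-ordering of the orbits, which is not something you get for free and is not needed. Second, the dichotomy you want for the Borel relation $E_{G}^{A_{\alpha}}$ is Silver's theorem (countably many or perfectly many classes), not Glimm--Effros or Burgess. Your final sentence about the union of meager orbits is fine once the first part is established for arbitrary non meager $Y$.
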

\begin{proof}

$Y=\bigcup_{\alpha<\omega_{1}}(Y\cap A_{\alpha})$. Using Fact \ref{fact from kechris}
again, one of the $Y\cap A_{\alpha}$ is non meager. But it has only
countably many orbits, so one of them is non meager.
\end{proof}
\begin{cor}

Let $X$ be a counterexample to Vaught. Then there is a non empty
and at most countable collection $\langle C_{i}\ :\ i\in\omega\rangle$
of $G_{\delta}$ orbits, some of them non meager, such that $C=\bigcup_{i\in\omega}C_{i}$
is comeager. At least one of those orbits is not $F_{\sigma}$.
\end{cor}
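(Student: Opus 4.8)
The plan is to combine the decomposition results already established with Corollary \ref{cor:non_meager_orbits}. Since $X$ is a counterexample to Vaught's conjecture, $E_G^X$ is not Borel, so by Theorem \ref{Becker Kechris Theorem} the Hjorth ranks are unbounded on $X$; in particular $X$ has uncountably many orbits. By Corollary \ref{cor:non_meager_orbits}, the union $M$ of all meager orbits is meager, so the comeager set $X \setminus M$ consists entirely of non-meager orbits. By Effros' theorem (Theorem \ref{thm:(-Effros-)}), every non-meager orbit is $G_\delta$. So the non-meager orbits form a collection of $G_\delta$ orbits whose union is comeager.

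\textbf{Countability of the collection.} The next step is to see that there are at most countably many non-meager orbits. Two distinct non-meager orbits are disjoint $G_\delta$ (hence Baire-property) invariant sets, and a non-meager set with the Baire property is comeager in some nonempty open set; since $X$ has a countable basis and distinct orbits are disjoint, there can be at most countably many non-meager orbits. (Alternatively, two non-meager orbits cannot both be comeager in the same basic open set, so the map assigning to each non-meager orbit a basic open set in which it is comeager is injective into a countable set.) Also the collection is non-empty: if every orbit were meager, then $X = M$ would be meager in itself, contradicting the Baire category theorem. Enumerate the non-meager orbits as $\langle C_i : i \in \omega \rangle$ (with repetitions allowed if the list is finite), and let $C = \bigcup_{i} C_i$; then $C$ is comeager by the previous paragraph.

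\textbf{Some $C_i$ is not $F_\sigma$.} Suppose toward a contradiction that every $C_i$ is $F_\sigma$. Each $C_i$ is a meager $F_\sigma$ set (meager since it is a non-whole orbit in a counterexample to Vaught, where no orbit is comeager — indeed if some orbit were comeager its complement would be a meager invariant Borel set, and restricting to it would still leave a counterexample, but iterating this is not needed: a comeager orbit would make $E_G^X$ Borel on a comeager invariant Borel set, and the remainder has fewer... ). More directly: each $C_i$, being a single orbit in an action with uncountably many orbits all of whose Borel invariant "large" pieces still witness non-Borelness, is meager; so $C_i$ is a countable union of closed nowhere dense sets, hence $C = \bigcup_i C_i$ is meager, contradicting that $C$ is comeager. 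Therefore at least one $C_i$ fails to be $F_\sigma$.

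\textbf{Main obstacle.} The step I expect to require the most care is justifying that each individual non-meager orbit $C_i$ is \emph{meager} as a subset of $X$ — i.e. that in a counterexample to Vaught's conjecture no orbit is comeager. The cleanest route is: if some orbit $C$ were non-meager in a way that is actually comeager in $X$, then $X \setminus C$ is a meager invariant Borel set, and on $X \setminus C$ the action is still a counterexample to Vaught (it has uncountably many orbits and its orbit equivalence relation is still non-Borel since $E_G^X$ restricted to a comeager invariant Borel set being Borel would, together with the single orbit $C$, give $E_G^X$ Borel). Iterating or invoking that a single point-orbit cannot carry a counterexample forces every orbit to be meager; then $\bigcup_i C_i$ being a countable union of meager sets cannot be comeager, which is the contradiction that delivers the final clause.
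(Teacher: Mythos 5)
Your construction of the collection and the proof that its union is comeager are fine: you take the non-meager orbits, which are $G_\delta$ by Effros' theorem, observe there are at most countably many of them (each is comeager in some basic open set, and no two can share one), and get comeagerness of the union from Corollary \ref{cor:non_meager_orbits}. The genuine gap is in the final clause. To show that some $C_i$ is not $F_\sigma$ you assert that each $C_i$ is meager and hence a countable union of nowhere dense closed sets; but by your own construction every $C_i$ is a \emph{non-meager} orbit, so this is self-contradictory. The slip is conflating ``not comeager'' with ``meager'': even granting that no orbit of a Vaught counterexample is comeager, that does not make the non-meager orbits meager, and a non-meager $F_\sigma$ set is \emph{not} a countable union of nowhere dense sets (one of its closed pieces must have nonempty interior). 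So the intended contradiction ``$C=\bigcup_i C_i$ is meager'' never materializes, and the last clause remains unproved.

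The paper avoids this by taking $\langle C_i\rangle$ to be \emph{all} $G_\delta$ orbits (still countably many: the points with $G_\delta$ orbits form an invariant Borel set on which the orbit equivalence relation is Borel, so there are either countably many or perfectly many such orbits, and a Vaught counterexample excludes the perfect case). If every $C_i$ were $F_\sigma$, then $C$ would be $F_\sigma$, so $X\setminus C$ would be a $G_\delta$, hence Polish, invariant set that is still a counterexample to Vaught; by Corollary \ref{cor:non_meager_orbits} it has an orbit non-meager in $X\setminus C$, which by Effros' theorem is $G_\delta$ in $X\setminus C$ and hence in $X$ --- contradicting that $C$ already contains every $G_\delta$ orbit. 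Note that this argument genuinely needs $C$ to exhaust the $G_\delta$ orbits and not just the non-meager ones: the orbit produced inside $X\setminus C$ is meager in $X$, so with your smaller collection no contradiction arises. You should either enlarge your collection to the paper's, or supply a different (currently missing) argument for the final clause.
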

\begin{proof}

By the previous corollary, there are non meager orbits. All of them
must be $G_{\delta}$ (by Theorem \ref{thm:(-Effros-)}) and there
are at most countably many $G_{\delta}$ orbits. Fix $C_{i}$ an enumeration
of the $G_{\delta}$ orbits (which contains all the non meager ones). By the previous corollary, $C$ is comeager.

Now, if all these orbits are $F_{\sigma}$ , then $C$ is $F_{\sigma}.$
Hence we can consider the action of $G$ on the Polish space $X-C$.
It will also be a counterexample to Vaught, so it must have a non
meager orbit. But a non meager orbit is $G_{\delta}$ , which is a
contradiction.
\end{proof}

\end{onehalfspace}
\end{document}